\documentclass[reqno]{amsart}
\usepackage{geometry}
\usepackage{enumitem}

\usepackage{amsmath,amssymb,amsthm,amsfonts}
\usepackage{physics}
\allowdisplaybreaks[4]
\usepackage{mathrsfs}
\usepackage{appendix}
\usepackage{graphicx}
\usepackage{setspace}
\usepackage{mathtools}
\usepackage{latexsym}
\usepackage{mleftright}
\usepackage{soul}
\usepackage{cite}
\usepackage{color}
\definecolor{purple}{rgb}{0.7,0.2,0.9}

\usepackage{hyperref}

\newtheorem{lemma}{Lemma}[section]
\newtheorem{theorem}{Theorem}[section]
\newtheorem{definition}{Definition}[section]
\newtheorem{proposition}{Proposition}[section]
\newtheorem{remark}{Remark}[section]
\newtheorem{corollary}{Corollary}[section]
\numberwithin{equation}{section}
\newcommand\Om{\Omega}

\DeclareMathOperator{\Div}{div}
\DeclareMathOperator{\Exp}{exp}

\newcommand{\beq}{\begin{equation}}
\newcommand{\eeq}{\end{equation}}
\newcommand{\ben}{\begin{eqnarray}}
\newcommand{\een}{\end{eqnarray}}
\newcommand{\beno}{\begin{eqnarray*}}
\newcommand{\eeno}{\end{eqnarray*}}

\newcommand{\wc}{\rightharpoonup}
\newcommand{\wsc}{\rightharpoonup}
\newcommand{\w}{\quad\text{weakly in }}
\newcommand{\ws}{\quad\text{weakly-* in }}
\newcommand{\s}{\quad\text{strongly in }}
\begin{document}
\title[ ]{Well-posedness and Blow-up Criterion for \\ Strong Solutions of the compressible Navier-Stokes/Allen-Cahn System with Vacuum}
\thanks{$^*$Corresponding author.}
\thanks{{\it Keywords}: Navier-Stokes/Allen-Cahn equations, initial vacuum, strong solutions, Blow-up criterion. }
\thanks{{\it AMS Subject Classification}: {35B44, 35D35, 76T10, 76N05}}%
\author[Yinghua Li]{Yinghua Li}
\address[Y. Li]{School of Mathematical Sciences, South China Normal University,
Guangzhou, 510631, China}
\email{yinghua@scnu.edu.cn}
\author[Haoran Zheng]{Haoran Zheng}
\address[H. Zheng]{School of Mathematical Sciences, South China Normal University,
Guangzhou, 510631, China}
\email{hrzheng@scnu.edu.cn}
\author[Junquan Zhou]{Junquan Zhou$^*$}
\address[J. Zhou]{School of Mathematical Sciences, South China Normal University,
Guangzhou, 510631, China}
\email{zhoujunquan@m.scnu.edu.cn}
\date{\today}

\begin{abstract}
This paper is devoted to the study of strong solutions for the compressible Navier-Stokes/Allen-Cahn system in bounded domain $\Omega\subset\mathbb R^3$, allowing for the presence of initial vacuum. A characteristic of this system is the strong coupling between density and the Allen-Cahn equation, which leads to strong degeneracy in vacuum regions. Under a compatibility condition on the initial phase-field variable, we establish the local existence and uniqueness of strong solutions for $0\le\rho_0\in W^{1,q}$ with $q\in(3,6)$, $u_0\in H_0^1$ and $\chi_0\in H^2$. Owing to time-weighted estimates, no compatibility condition is required for the velocity, but these estimates introduce a singularity in proving uniqueness. We then establish a criterion for the possible breakdown of such a local strong solution at finite time in terms of blow-up of the quantities $\|\nabla u\|_{L_t^{1} L_x^{\infty}}$,
	$\|u\|_{L_t^{2} L_x^{\infty}}$ and $\|\nabla \chi\|_{L_t^{2} L_x^{\infty}}$.
\end{abstract}

\maketitle



\vspace{-2mm}

\section{Introduction}
The dynamic behavior of a binary mixture of two immiscible fluids subject to phase separation is usually described by diffuse interface models. These models consist of the Navier-Stokes equations governing the evolution of the mixture and a phase-field system that introduces diffuse effects, such as the Cahn-Hilliard, Allen-Cahn, or other types of phase-field equations.
This paper is concerned with the compressible Navier-Stokes/Allen-Cahn (NSAC) system proposed by Blesgen \cite{BLesgen99}
\begin{align*}
\begin{cases}
\partial_t\rho+{\rm div}(\rho u)=0,
\\
\partial_t(\rho u)+{\rm div}(\rho u\otimes u)={\rm div}{\mathbb T},
\\
\partial_t(\rho\chi)+{\rm div}(\rho\chi u)=-\mu,
\\
\rho\mu=-\Delta\chi+\rho\dfrac{\partial \tilde F(\rho, \chi)}{\partial\chi},
\end{cases}
\end{align*}
where $(x, t)\in\Omega\times(0, T)$, $\Omega\subset\mathbb R^3$ is a bounded domain with smooth boundary $\partial\Omega$ and $T>0$ is a given time.
$\rho=\rho(x,t)$ denotes the total density, $u=u(x,t)$ is the mean velocity,
phase-field variable $\chi=\chi(x,t)$ represents the concentration difference of the two fluids,
$\mu=\mu(x,t)$ is the chemical potential, the Cauchy stress-tensor is given by
\begin{align*}
{\mathbb T}=&{\mathbb S}-\left(\nabla\chi\otimes\nabla\chi-\frac{|\nabla\chi|^2}{2}{\mathbb I}\right)-p(\rho, \chi){\mathbb I},
\\
{\mathbb S}=&\nu\left(\nabla u+\nabla u^T-\frac23{\rm div}u{\mathbb I}\right)+\eta\,{\rm div}u\,{\mathbb I},
\end{align*}
where $\nu>0$, $\eta\ge0$ are viscosity coefficients,
and $p(\rho,\chi)=\rho^2\dfrac{\partial \tilde F(\rho, \chi)}{\partial\rho}$ denotes the pressure.
In this paper we take the specific free energy $\tilde F$ as follows
$$
\tilde F(\rho,\chi)=\frac{A\rho^{\gamma-1}}{\gamma-1}+\frac{(\chi^2-1)^2}{4}.
$$
For simplicity, we use the Lam\'{e} operator instead of ${\rm div}\,{\mathbb S}$, i.e.
$$
\mathcal{L}u=\nu\Delta u + (\nu+\lambda)\nabla (\Div u),
$$
where $\nu>0$ and $\lambda=\eta-\frac23\nu$ satisfy $2\nu+3\lambda \geq 0$.
Then the compressible NSAC system can be rewritten as follows
\begin{equation}\label{NSAC}
	\begin{cases}
		\rho_t + \Div (\rho u)=0,\\
		\rho u_t + \rho (u\cdot\nabla) u + \nabla (P(\rho))
		= \mathcal{L}u - \Div( \nabla\chi\otimes\nabla\chi
		- \frac{|\nabla \chi|^2}{2} \mathbb{I}), \\
		\rho \chi_t + \rho ( u \cdot \nabla ) \chi = - \mu, \\
		\rho \mu = - \Delta\chi + \rho f(\chi),
	\end{cases}
\end{equation}
where $P(\rho)=A\rho^\gamma$ with $A>0$ and $\gamma>1$, the free energy density $f(s)=F'(s)$, and $F(s)=\frac{1}{4}(s^2-1)^2$ for $s\in\mathbb R$
is the Landau type potential.

\vskip1mm
There have been many theoretical results concerning the 1D compressible NSAC system.
When the initial density is away from vacuum, Ding-Li-Luo \cite{DLL13} proved the existence and uniqueness of global classical solutions, the existence of weak solutions, and the uniqueness of strong solutions for the initial boundary value problem.
Later, Zhang \cite{Z21} improved the regularity to $H^i (i=2,4)$.
Then Ding-Li-Tang \cite{DLT19} considered the free boundary problem and obtained the existence and uniqueness of strong solutions.
In the case where the viscosity coefficient depends on the density,
Ding-Li-Wang \cite{DLW24} obtained the $H^{2m}$-solution $(m\in\mathbb N)$ of the free boundary problem.
Moreover, the existence of time-periodic solutions have been obtained by Song-Zhang-Wang \cite{SZW20}.
For the Cauchy problem, Chen-Huang-Shi \cite{CHS24} investigated the global well-posedness of the solutions and their sharp interface limit,
while Luo-Yin-Zhu \cite{LYZ18, LYZ20} proved the stability of the rarefaction wave and the composite wave.
For the non-isentropic NSAC system without vacuum, we refer the readers to \cite{YDL22, CHHS21, CHHS25} and the references therein.
When vacuum is allowed, Chen-Guo \cite{CG17} established the existence and uniqueness of strong solutions for the initial boundary value problem under the compatibility condition
$$
\nu u_{0xx}-(A\rho_0^\gamma)_x-\frac12(\chi_{0x}^2)_x=\rho_0 f(x),  \quad f\in L^2,
$$
and the existence of unique classical solution under the compatibility conditions
\begin{align*}
\begin{cases}
\nu u_{0xx}-(A\rho_0^\gamma)_x-\frac12(\chi_{0x}^2)_x=\rho_0 g(x), ~ & g\in H^1,
\\
\mu_0=\rho_0h(x),  & h_{xx}\in L^2
\end{cases}
\end{align*}
for $x\in(0,1)$. Chen-Zhu \cite{CZ21} assumed that the viscosity coefficient depends both on the density and the phase variable, and they established the local existence of strong solutions for the initial boundary value problem along with the corresponding blow-up criteria, imposing the compatibility condition
$$
[\nu(\rho_0,\chi_0)u_{0x}]_x-(A\rho_0^\gamma)_x-\frac12(\chi_{0x}^2)_x=\rho_0f(x),  \quad f\in L^2
$$
for $x\in(0,1)$. When the viscosity coefficient depends only on the density, local strong solutions can be improved to classical solutions under more complicated compatibility conditions.
Meanwhile, Su \cite{S21} established the existence of unique strong solutions using time-weighted estimates without any compatibility condition,
and these strong solutions become classical if additional compatibility conditions are imposed.
It should be pointed out that all results concerning initial vacuum assume the phase-field variable satisfies a Dirichlet boundary condition, i.e. $\chi\big|_{x=0,1}=0$.

\vskip1mm
For the 3D compressible NSAC system, Feireisl et al. \cite{FPRS10} first established the global existence of weak solutions to the initial boundary boundary problem under the assumption $\gamma>6$ and without initial vacuum. Later, Chen-Wen-Zhu \cite{CWZ19} relaxed the range to $\gamma>2$, allowing for initial vacuum, and obtained the existence of weak solutions for system with Landau potential.
Li-Xie \cite{LX25} proved that if the Mach number and initial data are small and away from vacuum,
global strong spatial-periodic solutions exist.
Under small initial perturbations away from vacuum, Zhao \cite{Zhao22} proved the global existence of classical solutions to the 3D Cauchy problem.
For the strong solutions to the initial boundary value problem of the NSAC system, only Kotschote \cite{Kotschote12} derived the local well-posedness result for the non-isentropic case.
Whether the strong solutions to the initial boundary value problem of the multi-dimensional compressible NSAC system exist is still an open problem.

\vskip1mm
Noticing that if the phase variable is constant, then the NSAC system reduces to the compressible Navier-Stokes equations, for which the mathematical theory is considerably more developed.
For the compressible Navier-Stokes equations with vacuum,  the first result was given by Salvi and Stra\v{s}kraba \cite{SS93},
where the compatibility condition on the initial data was introduced to guarantee the uniqueness of local strong solutions,
and this analytical framework was subsequently refined and extended in \cite{CK03,CCK04,CK06}.
Later, Huang-Li-Xin \cite{HLX12} established a global-in-time existence result under a small initial energy assumption.
In 1D, Jiu-Li-Ye \cite{JLY14} proved the global existence of classical solutions for arbitrarily large initial data with vacuum.
Recently, the compatibility condition above was removed in \cite{Li17,LZ23}, where the local existence and uniqueness of strong solutions were established.
While extensive literature exists on this topic, we do not enumerate all contributions here.
Comparing with the compressible Navier-Stokes equations, results on the multi-dimensional compressible NSAC system are relatively rare.
In this paper, we establish the local existence and uniqueness of strong solutions allowing for initial vacuum, with a compatibility condition
imposed on the phase-field variable but none required for the velocity.

\vskip1mm
When deriving the existence of local strong solutions, it is natural to consider global well-posedness.
However, as is known, the existence of global strong solutions remains an open problem even for the Navier-Stokes equations,
so an alternative approach is to consider either smallness restrictions or blow-up criteria, and
in this paper, we adopt the latter.
As we mentioned before, there is no local existence result for strong solutions to the compressible multi-dimensional NSAC system.
But some results have been established for the nonhomogeneous incompressible case, including both the NSAC and Navier-Stokes/Cahn-Hilliard (NSCH) system.
The local well-posedness without vacuum was established in \cite{LH18}.
Subsequently, Li-Ding-Huang \cite{LDH16} derived the blow-up criterion for the 3D compressible NSAC system: for the maximum existence time $T^*$, it holds that
$$
\lim_{T\to T^*}\left(\|Du\|_{L^1(0,T; L^\infty)}+\|u\|_{L^2(0,T; L^\infty)}
+\|\nabla\chi\|_{L^2(0,T; L^\infty)}\right)=\infty.
$$
Then Zhang \cite{Z16} and Fan-Li \cite{FL19} improved the criterion by relaxing the restriction $\|\nabla\chi\|_{L^2(0,T; L^\infty)}$ to
$\|\nabla\chi\|_{L^2(0,T; BMO)}$ and removing it, respectively.
Recently, Fang-Nei-Guo \cite{FNG25} gave a blow-up criterion for local strong solutions to the initial boundary value problem of the NSCH system in the case of initial density away from zero.
Moreover, there have been global existence and uniqueness results for both nonhomogeneous NSAC and nonhomogeneous NSCH systems, such as \cite{LY25, GT20, JWX25}.
For NSCH systems with a non-Newtonian term and initial vacuum, Fang-Duan-Guo \cite{FDG25} proved the global existence of weak solutions for the
initial-boundary value problem and the global existence of spatial-periodic strong solutions.
In this paper, we establish a blow-up criterion for our local strong solutions analogous to that in \cite{LDH16}.

\vskip1mm
This paper aims to establish the mathematical theory of compressible NSAC systems with vacuum in 3D.
Throughout this paper, we consider the following initial value conditions:
\begin{align}
	\label{I}
	(\rho,\,  \rho u , \, \chi)\Big|_{t=0}=(\rho_0,\,\rho_0 u_0, \, \chi_0) \quad \text{in }\Om,
\end{align}
and boundary value conditions
\begin{align}
	\label{B1}
	(u, \partial_{\boldsymbol{n}}\chi)\Big|_{\partial\Om} = (0, 0)  \quad  \text{on }\partial\Om,
\end{align}
where $\boldsymbol{n}$ is the unit outward normal vector of $\partial\Om$. We emphasize that the initial density $\rho_0$ is allowed to vanish on an open subset of $\Omega$; in other words, the presence of an initial vacuum is permitted. Throughout this paper, we always assume that $\int_\Om \rho_0 \,dx = 1$,
thus $\eqref{NSAC}_1$ implies that
\begin{equation*}
	\int_{\Om} \rho dx = \int_{\Om} \rho_0 dx = 1.
\end{equation*}

Before stating our main result, we first explain the notations used throughout this paper.
For $1 \leq p \leq \infty$ and integer $k \geq 0$, the standard Sobolev spaces are denoted by
$$
L^p=L^p(\Omega),~~ W^{k, p}=W^{k, p}(\Omega), ~~H^k=H^{k, 2}(\Omega),~~ H_0^1=\left\{u \in H^1; \left.u\right|_{\partial \Omega}=0\right\}.
$$
Now we define precisely what we mean by strong solutions to the system \eqref{NSAC}--$\eqref{B1}$.
\begin{definition}[Strong solutions]\label{def1}
	For $T>0$, $(\rho, u, \chi, \mu)$ is called a strong solution to the system
	$\eqref{NSAC}$--$\eqref{B1}$ in $\Om \times(0,T)$ if for some $q\in(3,6)$, it has the regularities
	\begin{equation*}
		\begin{cases}
			0 \le \rho \in L^{\infty}(0, T; W^{1, q}) \cap C([0, T]; C(\overline{\Om})),
			\quad \rho_t \in L^\infty(0, T; L^2), \\
			u \in L^\infty(0, T; H_0^1) \cap L^2(0, T; H^2) \cap L^1(0, T; W^{2,q}),
			\quad \sqrt{\rho}u \in C([0, T]; L^2), \\
			\sqrt{t} \nabla^2 u \in L^{\infty}(0, T; L^2)
			\cap  L^{2}(0, T; L^q), \\
			\sqrt{\rho} u_t\in L^2(0, T; L^2), \quad \sqrt{t} u_t \in L^2(0, T; H_0^1), \\
			\chi \in C([0, T]; H^1) \cap L^{\infty}(0, T; H^2) \cap L^2(0, T; H^{3}),
			\quad \sqrt{t} \nabla^3 \chi \in L^{\infty}(0, T; L^2), \\
			\chi_t \in L^2(0, T; H^1), \quad \rho \chi_t \in L^\infty(0, T; L^2),
			\quad \sqrt{t} \nabla\chi_t \in L^\infty(0, T; L^2), \\
			\mu \in L^{\infty}(0, T; L^{2}).
		\end{cases}
	\end{equation*}
	Besides, $(\rho, u, \chi, \mu)$ satisfies \eqref{NSAC} almost everywhere
	in $\Om \times(0,T)$ and fulfills the initial condition \eqref{I}.
	The boundary condition $\partial_{\boldsymbol{n}} \chi = 0$ holds almost
	everywhere on $\partial{\Om}\times(0, T)$.
\end{definition}

Our first result is the local existence and uniqueness of strong solutions.
\begin{theorem}\label{th1}
	Let $\Om$ be a smooth bounded domain in ${\mathbb R}^3$  and the initial data $(\rho_0, u_0, \chi_0)$ satisfies  $0 \le \rho_0 \in W^{1,q}$, with $q \in (3, 6)$,  $u_0 \in H_0^1 $ and $\chi_0 \in H^2$. If, in addition, the following compatibility conditions hold:
\begin{equation}\label{com1}
	\left\{
		\begin{aligned}
			\Delta \chi_0 &= \rho_0 {h},
			&&\text{for some }{h} \in L^2,  \\
		\partial_{\boldsymbol{n}} \chi_0
			&= 0,
			&& \text{on } \partial\Om,
		\end{aligned}
	\right.
\end{equation}	
	then, there exists a positive time $T_0>0$, depending only on $A$, $\gamma$, $\nu$,
	$\lambda$, $q$, $\Om$ and $\Phi_0$, such that the system \eqref{NSAC}, subject to \eqref{I}--\eqref{B1},
	admits a unique strong solution  $(\rho, u, \chi, \mu)$ in the sense of Definition~\ref{def1}, where
	\begin{align*}
		\Phi_0=\|\rho_0\|_{W^{1,q}} + \|\nabla u_0\|_{L^{2}} + \| \chi_0 \|_{H^2} + \| h \|_{L^2}.
	\end{align*}
\end{theorem}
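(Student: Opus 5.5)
The plan follows the standard approximation scheme for vacuum problems (Cho--Kim, Li, Li--Zheng). We first regularize the density. Set $\rho_0^\delta=\rho_0+\delta$, and take $u_0^\delta\in H_0^1\cap H^2$ and $\chi_0^\delta$ smooth with $\partial_{\boldsymbol n}\chi_0^\delta=0$, all converging to the data. We also require $\Delta\chi_0^\delta=\rho_0^\delta h^\delta$ with $h^\delta\to h$ in $L^2$; this is arranged by solving the Neumann problem $\Delta\chi_0^\delta=\rho_0^\delta h^\delta-\frac{1}{|\Om|}\int\rho_0^\delta h^\delta$ and correcting $h^\delta$ slightly so that the mean vanishes. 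For the nondegenerate problem, local existence of a strong solution on some interval follows from a linearization/contraction argument. In that argument $\eqref{NSAC}_3$--$\eqref{NSAC}_4$ are combined into the parabolic equation
\begin{equation*}
\rho\chi_t+\rho u\cdot\nabla\chi=\Delta\chi-\rho f(\chi),
\end{equation*}
which is uniformly parabolic while $\rho\ge\delta e^{-Ct}$. The core of the proof is a priori estimates independent of $\delta$ on a time $T_0$ that depends only on $\Phi_0$. Once we have them, we pass to the limit $\delta\to0$ by Aubin--Lions compactness and obtain a solution with the regularity of Definition~\ref{def1}.

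For the uniform estimates we define
\begin{align*}
\Phi(t)=1+\|\rho\|_{W^{1,q}}+\|\nabla u\|_{L^2}+\|\chi\|_{H^2}+\|\sqrt\rho\chi_t\|_{L^2}+\|\mu\|_{L^2}
\end{align*}
and aim for an inequality $\Phi(t)\le C\Phi_0^{m}+C\int_0^t\Phi^{N}ds$, which closes on a short time. The steps come in this order.

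\textbf{Step (i): energy.} The basic energy identity controls $\sqrt\rho u$, $\nabla\chi$, $F(\chi)$ and $\int_0^t\|\nabla u\|^2+\|\mu\|^2$.

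\textbf{Step (ii): phase field.} Differentiate the $\chi$ equation in time and test it with $\chi_t$. This bounds $\sup\|\sqrt\rho\chi_t\|_{L^2}^2+\int\|\nabla\chi_t\|_{L^2}^2$. The initial value is $\|\sqrt{\rho_0}\chi_t(0)\|_{L^2}$, and it is finite precisely because of \eqref{com1}: $\rho_0\chi_t(0)=\Delta\chi_0-\rho_0 f(\chi_0)-\rho_0u_0\cdot\nabla\chi_0=\rho_0(h-f(\chi_0)-u_0\cdot\nabla\chi_0)$, so $\sqrt{\rho_0}\chi_t(0)=\sqrt{\rho_0}(h-\dots)\in L^2$. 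Neumann elliptic theory then gives $\|\chi\|_{H^2}$ and $\int\|\chi\|_{H^3}^2$. From $\mu=-\rho\chi_t-\rho u\cdot\nabla\chi$ we get $\mu\in L^\infty L^2$.

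\textbf{Step (iii): velocity.} Test the momentum equation with $u_t$ to obtain $\|\nabla u\|_{L^2}^2+\int\|\sqrt\rho u_t\|^2$. The capillary term is rewritten as $-\Delta\chi\nabla\chi$, and its time derivative is controlled by Step (ii). We then do a time-weighted estimate: multiply the time-differentiated momentum equation by $t u_t$. This gives $\sup t\|\sqrt\rho u_t\|^2+\int t\|\nabla u_t\|^2$ without any compatibility condition on $u_0$. Lam\'e elliptic estimates then give $\sqrt t\nabla^2u\in L^\infty L^2\cap L^2L^q$, and hence $u\in L^1W^{2,q}$ by Hölder in time, since $\int t^{-1/2}\cdot\sqrt t\|\nabla^2u\|_{L^q}$ is finite.

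\textbf{Step (iv): density.} Transport estimates give $\|\nabla\rho\|_{L^q}\le\|\nabla\rho_0\|_{L^q}\exp(C\int\|\nabla u\|_{L^\infty})$, and $\|\nabla u\|_{L^\infty}$ is integrable by Step (iii). Similar time-weighted estimates give $\sqrt t\nabla^3\chi$ and $\sqrt t\nabla\chi_t$.

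The main obstacle is twofold. The first part is the degeneracy of the Allen--Cahn equation in the vacuum region: the term $\rho\chi_t$ only yields $\sqrt\rho\chi_t$, while the couplings $\rho_t\chi_t$ and $\rho u\cdot\nabla\chi_t$ must be absorbed using $\nabla\chi_t\in L^2$. This is where the Neumann condition and \eqref{com1} are essential. I would handle it by writing $\rho_t=-\Div(\rho u)$, integrating by parts, and using $\|\chi_t\|_{L^6}\le C(\|\sqrt\rho\chi_t\|_{L^2}+\|\nabla\chi_t\|_{L^2})$, which holds because $\int\rho=1$.

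The second part is uniqueness. The time weights give only $\int_0^T\|\nabla u\|_{L^\infty}<\infty$ and $\sqrt t\nabla u_t$, so the classical Gronwall estimate for the difference $(\bar\rho,\bar u,\bar\chi)$ has the singular coefficient $t^{-1/2}$. Following \cite{Li17,LZ23}, I would estimate $\bar\rho$ in $L^{3/2}$ (or in $H^{-1}$ via $\bar\rho=\Div\psi$), $\sqrt\rho\bar u$ and $\nabla\bar u$ in $L^2$, and $\sqrt\rho\bar\chi$ and $\nabla\bar\chi$ in $L^2$. Then $\|\bar\rho(t)\|\le C\int_0^t\|\nabla\bar u\|_{L^2}$, and $\|\bar\rho(t)\|\le C\sqrt t\,\big(\int_0^t\|\nabla\bar u\|^2\big)^{1/2}$ by Cauchy--Schwarz. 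The factor $\sqrt t$ then cancels the $t^{-1/2}$ weight, and a Gronwall argument closes to give $\bar\rho=\bar u=\bar\chi=0$.
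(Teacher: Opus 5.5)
Two steps of the existence argument fail as written. First, your elimination of $\mu$ is wrong. Multiplying $\eqref{NSAC}_3$ by $\rho$ and using $\eqref{NSAC}_4$ gives $\rho^2\chi_t+\rho^2u\cdot\nabla\chi=\Delta\chi-\rho f(\chi)$ (the paper's \eqref{NSAC_34}), not $\rho\chi_t+\rho u\cdot\nabla\chi=\Delta\chi-\rho f(\chi)$. The degeneracy is quadratic, so differentiating in $t$ and testing with $\chi_t$ controls $\|\rho\chi_t\|_{L^2}$, not $\|\sqrt\rho\chi_t\|_{L^2}$. Your initial-value computation therefore does not hold for the actual system. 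Condition \eqref{com1} gives $\rho_0\chi_t(0)=h-f(\chi_0)-\rho_0u_0\cdot\nabla\chi_0\in L^2$. By contrast, $\sqrt{\rho_0}\chi_t(0)=\rho_0^{-1/2}\big(h-f(\chi_0)-\rho_0u_0\cdot\nabla\chi_0\big)$ is in general not square integrable near the vacuum set. Step (ii) and your functional $\Phi$ must be built on $\rho\chi_t$ instead; Lemma~\ref{lem1} with $\alpha=1$ still gives $\|\chi_t\|_{L^6}\le C(\|\rho\chi_t\|_{L^2}+\|\nabla\chi_t\|_{L^2})$. After this correction Step (ii) is the paper's Lemma~\ref{lem:chi1}, and \eqref{com1} turns out to be tailored exactly to $\rho_0\chi_t(0)$.

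Second, in Step (iii) the conclusion $u\in L^1(0,T;W^{2,q})$ does not follow ``by H\"older'' from $\sqrt t\,\nabla^2u\in L^2(0,T;L^q)$, because $t^{-1/2}\notin L^2(0,T)$. For example, $g(t)=t^{-1/2}/\log(e/t)$ is square integrable on $(0,1)$, yet $\int_0^1t^{-1/2}g\,dt=\infty$. This step carries the whole argument. Step (iv) needs $\int_0^T\|\nabla u\|_{L^\infty}dt$, and also $\int_0^T\|\nabla^2u\|_{L^q}dt$, since the transport inequality for $\nabla\rho$ has an additive source $\|\rho\|_{L^\infty}\|\nabla^2u\|_{L^q}$ that your exponential formula omits. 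The missing idea is Lemma~\ref{lem3}. In the $W^{2,q}$ Lam\'e estimate, interpolate $\|\rho u_t\|_{L^q}\le C\|\sqrt\rho u_t\|_{L^2}^{\frac{6-q}{2q}}\|\nabla u_t\|_{L^2}^{\frac{3q-6}{2q}}$. Then pair the \emph{unweighted} bound $\sqrt\rho u_t\in L^2(0,T;L^2)$ with $\sqrt t\nabla u_t\in L^2(0,T;L^2)$. The leftover weight $t^{-\frac{3q-6}{4q}}$ is square integrable precisely because $q<6$, which yields $\int_0^T\|\nabla^2u\|_{L^q}dt\le CT^{\frac{6-q}{4q}}\varPhi^{\gamma+1}(T)$. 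This is where the restriction $q<6$ enters; your proposal never uses it. It also fixes the form of the closure: a small factor $T^{\frac{6-q}{4q}}$ multiplying a functional that includes the dissipation integrals, as in the condition $T^{\frac{6-q}{4q}}\varPhi^{\gamma+1}(T)\le\varepsilon_0$.

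Finally, your uniqueness sketch handles only the singular coefficient coming from $\bar\rho\,\partial_tu_2$. The capillary cross terms $\int(\nabla\chi_1\otimes\nabla\bar\chi+\nabla\bar\chi\otimes\nabla\chi_2):\nabla\bar u\,dx$ leave a term $C(\|\nabla^3\chi_1\|_{L^2}+\|\nabla^3\chi_2\|_{L^2})\|\nabla\bar\chi\|_{L^2}^2$. Its coefficient is only $O(t^{-1/2})$, and it multiplies a quantity available only as dissipation of the degenerate $\bar\chi$-equation, whose energy is $\|\rho_1\bar\chi\|_{L^2}^2$. The paper handles this with a second singular weight. It first proves $\|\rho_1\bar\chi\|_{L^2}^2\le Ct$, then multiplies the $\bar\chi$-estimate by $t^{-1/2}$ and uses $\sqrt t\nabla^3\chi_i\in L^\infty(0,T;L^2)$.
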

\begin{remark}
The main difficulty of this problem arises from the strong coupling between density and the Allen-Cahn equation,
which leads to strong degeneracy within the vacuum region.
A compatibility condition is imposed on the initial phase-field variable $\chi_0$ (see \eqref{com1}).
This is necessary for two reasons. Mathematically, the equation for $\chi$ involves $\rho^2\chi_t$,
the stronger degeneracy in vacuum prevents the time-weighted estimates used for the velocity from controlling the initial singularity of $\chi_t$.
The compatibility condition ensures that the initial chemical potential $\mu_0$ is well defined in $L^2$.
Physically, in the vacuum region where $\rho_0=0$, there is no matter and hence no driving force for phase separation.
This forces $\Delta\chi_0=0$ and thus $\mu_0=0$, which is exactly encoded in the compatibility condition.
\end{remark}
\begin{remark}
The compatibility condition \eqref{com1} is also necessary.
Let $(\rho, u, \chi, \mu)$ be a strong solution as in Definition~\ref{def1}. From the regularity,
$\mu \wc g$ in $L^2$ as $t\to 0^+$ for some $g\in L^2$. Using $\eqref{NSAC}_4$ and passing to the limit, we obtain
\begin{equation*}
	\Delta \chi_0 = \rho_0 (f(\chi_0) - g) \quad \text{a.e. in } \Om.
\end{equation*}
Hence $\Delta\chi_0=\rho_0 h$ with $h=f(\chi_0) - g \in L^2$.
Moreover, from the regularity we can deduce $\chi\in C([0,T]; H^s)$ for some $3/2<s<2$. 
Then by the continuity of the trace operator, the boundary condition $\partial_{\boldsymbol{n}}\chi=0$ on $\partial\Omega$ for $t>0$ implies $\partial_{\boldsymbol{n}}\chi_0=0$. Thus \eqref{com1} is necessary.
\end{remark}
\begin{remark}
Under the assumptions of Theorem~\ref{th1}, if the compatibility conditions
	\begin{equation*}
	\begin{cases}
	\mathcal{L} u_0 - \nabla P(\rho_0) - \Delta \chi_0 \nabla \chi_0 = \sqrt\rho_0 {g}, & {g} \in L^2, \\
	\mu_0 = \rho_0 {h}, & {h} \in H^1
	\end{cases}
	\end{equation*}
are imposed and the initial data are more regular, namely $u_0 \in H_0^1 \cap H^2$ and $\chi_0 \in H^3$, then a local strong solution $(\rho, u, \chi, \mu)$ can be obtained in the sense of Definition \ref{def1}, with all time-weighted norms in Definition~\ref{def1} replaced by their
	corresponding non-weighted counterparts.
The corresponding a priori estimates are analogous to those in Section 2, and the proof, with only minor modifications, follows arguments similar to those in Theorem 1.2 of \cite{HWW12}.
\end{remark}

Our second result is a criterion for the possible breakdown of such a local strong solution at finite time.
To the best of our knowledge, this is the first blow-up criterion for the compressible NSAC system in 3D bounded domain with vacuum.
\begin{theorem}\label{th2}
	Let $(\rho, u, \chi, \mu)$ be the strong solution of \eqref{NSAC} with the initial boundary conditions \eqref{I} and \eqref{B1}. Assume that the initial data $(\rho_0, u_0, \chi_0)$ have the same regularity as in Theorem \ref{th1} and satisfy \eqref{com1}. If $0 < T^* < \infty $ is the maximum time of existence, then
	\begin{equation}\label{Blowup_Criteria}
\lim\limits_{T\to T^*} \left(\|\nabla u\|_{L^1(0,T; L^\infty)}+\| u\|_{L^2(0,T; L^\infty)}+\|\nabla \chi\|_{L^2(0,T; L^\infty)}\right) = \infty.
	\end{equation}
\end{theorem}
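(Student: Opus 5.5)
We argue by contradiction. Suppose $T^*<\infty$ and the quantity in \eqref{Blowup_Criteria} stays bounded, say
\begin{equation*}
\|\nabla u\|_{L^1(0,T^*;L^\infty)}+\|u\|_{L^2(0,T^*;L^\infty)}+\|\nabla\chi\|_{L^2(0,T^*;L^\infty)}\le M_0<\infty .
\end{equation*}
The goal is to show that $\Phi(t)=\|\rho\|_{W^{1,q}}+\|\nabla u\|_{L^2}+\|\chi\|_{H^2}+\|h(t)\|_{L^2}$ stays bounded on $[0,T^*)$, where $h(t)$ is defined by $\Delta\chi=\rho h$, so that $h=f(\chi)-\mu$. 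Note also that $\partial_{\boldsymbol n}\chi=0$ holds for $t>0$. Theorem~\ref{th1} can then be applied at any time $t_0$ close to $T^*$. Its existence time $T_0$ depends only on the uniform bound of $\Phi(t_0)$, so the solution extends beyond $T^*$, a contradiction. The compatibility condition \eqref{com1} is automatically propagated, since $\mu\in L^\infty L^2$ and $\chi\in L^\infty H^2$.

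\textbf{Step 1: basic and density bounds.} The energy identity gives uniform bounds on $\sqrt\rho u$, $\nabla\chi$ and $\rho^\gamma$ in $L^\infty L^1$, and on $\nabla u$ and $\mu$ in $L^2L^2$. A maximum-principle argument for $\chi$ gives $\chi$ bounded in $L^\infty$ whenever $|\chi_0|\le 1$; otherwise we bound $\chi$ in $L^\infty L^4$ from $F$ and then use the $H^2$ bounds below. The transport equation along characteristics gives
\begin{equation*}
\|\rho\|_{L^\infty}\le\|\rho_0\|_{L^\infty}\exp\Big(\int_0^T\|\Div u\|_{L^\infty}\,dt\Big)\le C(M_0).
\end{equation*}

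\textbf{Step 2: first-order estimates.} Test the momentum equation with $u_t$ and the phase equation, rewritten as
\begin{equation*}
\rho^2(\chi_t+u\cdot\nabla\chi)=\Delta\chi-\rho f(\chi),
\end{equation*}
with $\chi_t$ and with $-\Delta\chi_t$, or differentiate in time. The aim is bounds for $\|\nabla u\|_{L^2}^2+\|\Delta\chi\|_{L^2}^2+\|\mu\|_{L^2}^2$ in $L^\infty$, and for $\|\sqrt\rho u_t\|_{L^2}^2+\|\nabla\chi_t\|_{L^2}^2$ in $L^1$. The capillary term $\Div(\nabla\chi\otimes\nabla\chi)$ tested against $u_t$ is moved onto $\partial_t$ plus $\nabla\chi_t\cdot\nabla u\,\nabla\chi$. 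This remainder is bounded by $\|\nabla\chi\|_{L^\infty}\|\nabla u\|_{L^2}\|\nabla\chi_t\|_{L^2}$, and the square $\|\nabla\chi\|_{L^\infty}^2$ is integrable in time by assumption. The convective terms use $\|u\|_{L^\infty}^2\in L^1$ and $\|\nabla u\|_{L^\infty}\in L^1$, and the pressure term is handled by writing $P_t=-\Div(Pu)-(\gamma-1)P\Div u$. Gronwall's inequality closes the estimate. Elliptic estimates for the Lam\'e system then give $\nabla^2u$, and the Neumann problem gives $\|\chi\|_{H^2}\le C(\|\rho\mu\|_{L^2}+\|\rho f(\chi)\|_{L^2}+\|\chi\|_{L^2})$.

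\textbf{Step 3: higher-order estimates.} We obtain time-weighted bounds for $\sqrt t\,\nabla u_t$ and $\sqrt t\,\nabla^3\chi$ by differentiating the momentum equation in $t$ and testing with $tu_t$, as in the local theory. These give $\nabla u\in L^1W^{1,q}$ via $W^{2,q}$ Lamé estimates. The Kato-type inequality
\begin{equation*}
\frac{d}{dt}\|\nabla\rho\|_{L^q}\le C\|\nabla u\|_{L^\infty}\|\nabla\rho\|_{L^q}+C\|\nabla^2u\|_{L^q}
\end{equation*}
then bounds $\|\rho\|_{W^{1,q}}$ uniformly.

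\textbf{Main obstacle.} The hardest part is bounding $\|h\|_{L^2}=\|f(\chi)-\mu\|_{L^2}$, that is $\|\mu\|_{L^\infty L^2}$, without any lower bound on $\rho$. From $\mu=-\rho(\chi_t+u\cdot\nabla\chi)$, I would first derive the weighted bound for $\|\rho\chi_t\|_{L^2}$ by differentiating the phase equation in time and testing with $\chi_t$. This produces $\frac{d}{dt}\|\rho\chi_t\|_{L^2}^2$ together with the good term $\|\nabla\chi_t\|_{L^2}^2$. The dangerous terms contain $\rho_t$ and $(\rho^2)_t$. These are handled by the continuity equation, integrating by parts to move derivatives onto $u$ and $\chi_t$, and are controlled using $\|u\|_{L^\infty}^2+\|\nabla u\|_{L^\infty}\in L^1$. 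This estimate uses no time weight, thanks to the compatibility condition, and yields $\mu\in L^\infty L^2$.
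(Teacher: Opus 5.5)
Your outline has the same architecture as the paper: contradiction, $\|\rho\|_{L^\infty}$ from $\|\nabla u\|_{L^1L^\infty}$, a first-order Gronwall coupling the $u_t$-test of the momentum equation with the time-differentiated phase equation tested by $\chi_t$, then weighted and $W^{1,q}$ bounds, then restarting Theorem~\ref{th1}. But Step 2 does not close as you describe it, because it is missing one structural identity.

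Moving the capillary term onto $\partial_t$ produces the boundary term $B(t)=\int_\Omega\Delta\chi\,\nabla\chi\cdot u\,dx$. After time integration this term must be controlled pointwise in $t$, whereas the hypothesis gives $\|u\|_{L^\infty}$ and $\|\nabla\chi\|_{L^\infty}$ only in $L^2_t$. The natural H\"older--Sobolev bound is superlinear in the energy:
\[
|B|\le C\|\Delta\chi\|_{L^2}\|\nabla\chi\|_{L^3}\|\nabla u\|_{L^2}\le C(1+\|\mu\|_{L^2})^{3/2}\|\nabla u\|_{L^2},
\]
so Gronwall yields nothing up to $T^*$. The missing idea is the sign structure of $B$. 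Inserting $\Delta\chi=\rho^2\chi_t+\rho^2u\cdot\nabla\chi+\rho F'(\chi)$ and $\rho u\cdot\nabla\chi=-\mu-\rho\chi_t$ gives
\[
B=\tfrac12\|\mu\|_{L^2}^2+\tfrac12\|\rho(u\cdot\nabla)\chi\|_{L^2}^2-\tfrac12\|\rho\chi_t\|_{L^2}^2+\int_\Omega\rho F'(\chi)\,u\cdot\nabla\chi\,dx .
\]
Thus $\|\mu\|^2$ and $\|\rho(u\cdot\nabla)\chi\|^2$ enter the energy with the good sign. Only $\tfrac12\|\rho\chi_t\|^2$ has to be compensated, and that is exactly what $\frac{d}{dt}\|\rho\chi_t\|_{L^2}^2+\|\nabla\chi_t\|_{L^2}^2\le\cdots$ supplies.

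This also corrects your ``main obstacle'' paragraph. The $\rho\chi_t$ estimate cannot be closed on its own. Its right-hand side contains $\int\rho^2(u_t\cdot\nabla\chi)\chi_t$, which needs $\|\sqrt\rho u_t\|_{L^2}$ from the momentum estimate. It also contains $\int\rho^2\Div u\,(u\cdot\nabla\chi)\chi_t$ and $\int\rho^2 u\otimes u:\nabla^2\chi\,\chi_t$, which need $\|\mu\|_{L^2}$ pointwise in $t$. Moreover, even with $\|\rho\chi_t\|_{L^2}$ in hand, recovering $\mu=-\rho\chi_t-\rho(u\cdot\nabla)\chi$ requires $\|\rho(u\cdot\nabla)\chi\|_{L^2}$ pointwise in $t$, which the hypotheses give only in $L^2_t$. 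The two estimates must therefore be closed jointly, as in Lemma~\ref{be2}.

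Two further couplings are also missing.

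First, in Step 2 the pressure term $\int_\Omega P(\rho)\,u\cdot\nabla\Div u\,dx$ needs $\|\nabla^2u\|_{L^2}$. The Lam\'e $H^2$ estimate in turn contains $\|\nabla P(\rho)\|_{L^2}$, so $\|\nabla\rho\|_{L^2}^2$ must be included in the first-order Gronwall functional; you only treat $\nabla\rho$ in Step 3.

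Second, the order in Step 3 (weighted bounds, then $L^1W^{2,q}$, then the Kato estimate) is circular. The weighted estimate for $\sqrt t\,\nabla\chi_t$ and the $H^3$ bound for $\chi$ need $\|\nabla\rho\|_{L^3}$, through $\|\rho_t\|_{L^3}$ and $\nabla\rho\,\chi_t$. Meanwhile $\|\nabla^2u\|_{L^q}$ contains both $\|\nabla\rho\|_{L^q}$ and $\|\rho u_t\|_{L^q}$. Saying ``as in the local theory'' does not resolve this, because there $\|\rho\|_{W^{1,q}}$ was controlled by smallness of $T$, not by Gronwall. The paper instead runs one Gronwall for $\|\nabla\rho\|_{L^q}^2+\|\sqrt t\sqrt\rho u_t\|_{L^2}^2+\|\sqrt t\nabla\chi_t\|_{L^2}^2$ (Lemma~\ref{be3}), using two devices:
\begin{itemize}
\item it bounds $\|\rho u_t\|_{L^q}\le\eta\|\sqrt t\nabla u_t\|_{L^2}^2+C(\|\sqrt\rho u_t\|_{L^2}^2+t^{-\frac{3q-6}{2q}})$, where $q<6$ makes the weight integrable;
\item it interpolates $\|\nabla\rho\|_{L^3}$ between $L^2$ and $L^q$, so that for $q\in(3,6)$ all powers of $\|\nabla\rho\|_{L^q}$ stay at most quadratic.
\end{itemize}
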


\vskip1mm
The main contributions of this paper are as follows.

\vskip1mm
\noindent
$\bullet$\quad {\it Existence of strong solutions with vacuum for the 3D NSAC system.} To the best of our knowledge, this is the first result establishing the local existence of strong solutions to the initial-boundary value problem of the 3D compressible NSAC system allowing initial vacuum. The proof relies on time-weighted estimates that remove the compatibility condition on the velocity, while a mild compatibility condition on the phase-field variable $\chi_0$ is retained due to the stronger degeneracy $\rho^2\chi_t$.
Moreover, in contrast to existing 1D results that impose Dirichlet conditions, we handle the physically relevant Neumann condition $\partial_{\boldsymbol{n}}\chi\big|_{\partial\Omega}=0$.

\vskip1mm
\noindent
$\bullet$\quad {\it Uniqueness without Lagrangian coordinates.} Unlike the heat-conductive compressible Navier-Stokes equations where the initial data only prescribe $\rho_0\theta_0$ and strong thermal nonlinearities force a Lagrangian approach \cite{LZ23}, our NSAC system prescribes $\chi_0$ directly with sufficient regularity and involves no thermal effect. Consequently, we are able to prove uniqueness entirely in Eulerian coordinates using singular time-weighted estimates. This simplifies the argument and highlights the structural difference introduced by the Allen-Cahn coupling.

\vskip1mm
	The remainder of this paper is organized as follows.
	In Section~\ref{section priori}, we derive the a priori estimates, which play a fundamental role in the subsequent analysis.
	Section~\ref{Galerkin scheme} is devoted to the construction of approximate solutions via a Galerkin scheme.
	In Section~\ref{Proof of Theorem 1.1}, we present the proof of Theorem~\ref{th1}.
	Finally, Section~\ref{Proof of Theorem 1.2} is devoted to the proof of Theorem~\ref{th2}.
    Moreover, one can find the proof of Proposition~\ref{appro_sol} in Appendix.

\vskip6mm
\section{A priori estimates}\label{section priori}
Our main purpose of this section is to derive some a priori estimates for the strong
or smooth solutions {$(\rho, u, \chi, \mu)$} to the system \eqref{NSAC}, associated with the
initial conditions \eqref{I} and the boundary conditions \eqref{B1}, provided that the
initial density function has a positive lower bound, $\rho_0\geq\delta>0$.
All these a priori estimates we will obtain are independent of $\delta>0$.

Throughout the paper, we denote by $C$ the generic positive constants that depending
only on $A$, $\gamma$, $\nu$, $\lambda$, $q$, ${\Om}$ and $\Phi_0$, but independent of $\delta>0$.
With out loss of generality, we always assume $0 < T \le 1$.
For each $t \in (0, T)$ and $q \in (3,6)$, denote
\begin{equation*}
	\begin{aligned}
		\varPhi(t) &:= \sup_{0 \le s \le t} \left(\|\rho\|_{W^{1, q}}
        + \|\nabla u\|^2_{L^2}+ \| \sqrt{s} \sqrt \rho  u_t \|^2_{L^2}
		+ \|\chi\|^2_{H^2} + \|\rho \chi_t\|^2_{L^2}
		+ \| \sqrt{s} \nabla\chi_t \|^2_{L^2} \right)
        \\
		&  + \int_0^t {\left( \|\nabla^2 u\|^2_{L^2}+ \| \sqrt{\rho} u_t \|^2_{L^2}
        + \|\sqrt{s} \nabla u_t \|^2_{L^2}
        + \| \nabla^3 \chi \|^2_{L^2}+\|\nabla\chi_t\|^2_{L^2}
		 \right)} ds + 1.
	\end{aligned}
\end{equation*}

The following lemma in \cite[Lemma 3.2]{F04} provides a weighted Poincar\'{e}-type
inequality that will be frequently used throughout the paper.
\begin{lemma}\label{lem1}
	If $\varphi \in H^1(\Om)$, $\Om \subset\subset \mathbb{R}^3$, and
	$\int_{\Om} \rho^{\gamma} \,dx \le E_0 $  for some positive number $E_0$, then
	\begin{equation*} 
		\| \varphi\|_{L^2}^2  \le C(E_0) \left( \| \nabla \varphi \|_{L^2}^2
		+ \left(\int_{\Om} \rho |\varphi| dx\right)^2 \right),
	\end{equation*}
	and
	\begin{equation*} 
		\| \varphi\|_{L^2}^2  \le C(\Om,E_0)\left(\| \nabla \varphi \|_{L^2}^2
		+ \|\rho^{\alpha} \varphi\|_{L^2}^2  \right), \quad\alpha=\frac{1}{2}, 1.
	\end{equation*}
\end{lemma}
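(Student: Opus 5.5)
The plan is a direct argument rather than compactness, so that the constant visibly depends only on $E_0$, $\Omega$ and the normalization of the mass. Throughout I use the standing assumption $\int_\Omega\rho\,dx=1$ (in \cite{F04} one only needs $\int_\Omega\rho\,dx\ge M>0$, and the constant then also depends on $M$). Write $\bar\varphi=|\Omega|^{-1}\int_\Omega\varphi\,dx$. By the classical Poincaré--Wirtinger inequality, $\|\varphi-\bar\varphi\|_{L^2}\le C(\Omega)\|\nabla\varphi\|_{L^2}$. Hence it suffices to bound $|\bar\varphi|$ by $\|\nabla\varphi\|_{L^2}+\int_\Omega\rho|\varphi|\,dx$, with a constant depending only on $E_0$ and $\Omega$.

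The key step is to find a set of controlled measure on which $\rho$ is bounded below. Set $E=\{x\in\Omega:\rho(x)\ge \frac1{2|\Omega|}\}$. Then $\int_{\Omega\setminus E}\rho\,dx\le \frac12$, so $\int_E\rho\,dx\ge\frac12$. Hölder's inequality with $\gamma>1$ gives
\begin{equation*}
\tfrac12\le\int_E\rho\,dx\le\Big(\int_\Omega\rho^\gamma dx\Big)^{1/\gamma}|E|^{1-1/\gamma}\le E_0^{1/\gamma}|E|^{1-1/\gamma},
\end{equation*}
and therefore $|E|\ge c(E_0,\gamma)>0$. On $E$ we have $1\le 2|\Omega|\rho$, so
\begin{equation*}
|\bar\varphi|\,|E|\le\int_E|\varphi|\,dx+\int_E|\varphi-\bar\varphi|\,dx\le 2|\Omega|\int_\Omega\rho|\varphi|\,dx+|\Omega|^{1/2}C(\Omega)\|\nabla\varphi\|_{L^2}.
\end{equation*}
Dividing by $|E|\ge c$ and combining with $\|\varphi\|_{L^2}\le\|\varphi-\bar\varphi\|_{L^2}+|\Omega|^{1/2}|\bar\varphi|$ gives the first inequality after squaring.

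For the second inequality I would reduce to the first by Cauchy--Schwarz. When $\alpha=\frac12$,
\begin{equation*}
\int_\Omega\rho|\varphi|\,dx\le\|\sqrt\rho\|_{L^2}\|\sqrt\rho\varphi\|_{L^2}=\|\sqrt\rho\varphi\|_{L^2},
\end{equation*}
using $\int_\Omega\rho\,dx=1$. When $\alpha=1$, I would not pass through $\int_\Omega\rho|\varphi|\,dx$ (this would require $\rho^0\in L^2$, which is harmless but unnecessary). Instead I would return to the estimate on $E$ and use $\int_E|\varphi|\,dx\le 2|\Omega|\int_E\rho|\varphi|\,dx\le 2|\Omega|^{3/2}\|\rho\varphi\|_{L^2}$. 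Either way the constant depends only on $\Omega$, $E_0$ and $\gamma$.

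The only genuinely delicate point is the lower bound on $|E|$. A compactness argument would need $\rho\varphi$ to converge, which fails for $\gamma$ close to $1$ since $\varphi\in L^6$ only pairs with $\rho\in L^{6/5}$. The level-set construction above sidesteps this, because it uses only $\gamma>1$ and the mass normalization. I expect no further obstacle.
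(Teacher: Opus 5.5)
Your proof is correct. The paper itself gives no argument for this lemma; it only quotes \cite[Lemma 3.2]{F04}. Your level-set proof is therefore a self-contained replacement rather than a reconstruction of something in the text.

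Compared with the usual routes to such generalized Poincar\'e inequalities, your argument has two merits. First, the naive direct estimate $|\bar\varphi|\int_\Omega\rho\,dx\le\int_\Omega\rho|\varphi|\,dx+\|\rho\|_{L^\gamma}\|\varphi-\bar\varphi\|_{L^{\gamma/(\gamma-1)}}$ closes with Sobolev--Poincar\'e only when $\gamma/(\gamma-1)\le 6$, i.e.\ $\gamma\ge 6/5$. Your bound $|E|\ge 2^{-\gamma/(\gamma-1)}E_0^{-1/(\gamma-1)}$ works for every $\gamma>1$, which is all the paper assumes. Second, it yields an explicit constant. It also makes visible hypotheses the statement leaves implicit: the mass normalization $\int_\Omega\rho\,dx=1$ (without it the lemma fails, e.g.\ for $\rho\equiv 0$), $\gamma>1$, and connectedness of $\Omega$, which Poincar\'e--Wirtinger needs.

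Your closing claim that a compactness argument breaks down for $\gamma$ near $1$ is not right, though. Take the contradiction argument: $\|\varphi_n\|_{L^2}=1$ and $\|\nabla\varphi_n\|_{L^2}+\int_\Omega\rho_n|\varphi_n|\,dx\to 0$, so $\varphi_n\to c\neq 0$ in $L^2$. There you only need a lower bound, not convergence of $\rho_n\varphi_n$. Indeed, $\int_\Omega\rho_n|\varphi_n|\,dx\ge\int_\Omega\rho_n\min\{|\varphi_n|,|c|\}\,dx\to|c|$. This holds because the truncations are bounded and converge to $|c|$ in every $L^p$ with $p<\infty$, hence in $L^{\gamma/(\gamma-1)}$, and $\|\rho_n\|_{L^\gamma}\le E_0^{1/\gamma}$.

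So the compactness route also covers all $\gamma>1$. It is shorter and adapts verbatim to other kernels, such as Korn-type versions, but gives no explicit constant. What your approach buys is explicitness and elementarity, not necessity.
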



We begin with the standard energy estimates.
\begin{lemma}[Energy Equality]\label{lem2}
	Under the conditions of Theorem \ref{th1}, it holds for any $t \in (0,T)$ that
	\begin{equation}\label{E0}
		\begin{split}
			&\frac{1}{2}\|\sqrt{\rho} u \|_{L^2}^2 + \frac{1}{2}\|\nabla \chi\|_{L^2}^2
			+ \frac{A}{\gamma-1} \int_{\Om} \rho^{\gamma} \,dx + \int_{\Om} \rho F(\chi) \,dx \\
			&\quad + \int_0^t ( \|\mu\|_{L^2}^2 + \nu\|\nabla u\|_{L^2}^2
			+ ( \lambda + \nu ) \|\Div u\|_{L^2}^2 ) \,ds = E_0,
		\end{split}
	\end{equation}
	where
	$$
	E_0 = \frac{1}{2} \| \sqrt{\rho_0} u_0 \|_{L^2}^2
	+ \frac{A}{\gamma-1} \int_{\Om} \rho_0^{\gamma} \,dx
	+ \frac{1}{2} \| \nabla\chi_0 \|_{L^2}^2 + \int_{\Om} \rho_0 F(\chi_0) \,dx.
	$$
\end{lemma}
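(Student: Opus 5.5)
The plan is the standard energy method: test each equation of \eqref{NSAC} with the natural multiplier, integrate over $\Om$, and add. The setting of this section is a smooth solution with $\rho_0\ge\delta>0$, so all integrations by parts are justified. The boundary conditions \eqref{B1}, namely $u=0$ and $\partial_{\boldsymbol{n}}\chi=0$, kill every boundary term.

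\textbf{Momentum equation.} Multiply $\eqref{NSAC}_2$ by $u$. By the continuity equation $\eqref{NSAC}_1$,
\[
\int_\Om(\rho u_t+\rho u\cdot\nabla u)\cdot u\,dx=\frac12\frac{d}{dt}\int_\Om\rho|u|^2dx .
\]
The viscous term gives $-\int_\Om \mathcal L u\cdot u\,dx=\nu\|\nabla u\|_{L^2}^2+(\nu+\lambda)\|\Div u\|_{L^2}^2$. For the pressure, renormalize the continuity equation:
\[
\partial_t\Big(\frac{\rho^\gamma}{\gamma-1}\Big)+\Div\Big(\frac{\rho^\gamma u}{\gamma-1}\Big)+\rho^\gamma\Div u=0 .
\]
Hence $\int_\Om \nabla P\cdot u\,dx=-A\int_\Om\rho^\gamma\Div u\,dx=\frac{d}{dt}\frac{A}{\gamma-1}\int_\Om\rho^\gamma dx$.

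For the capillary term, use the identity
\[
\Div\Big(\nabla\chi\otimes\nabla\chi-\frac{|\nabla\chi|^2}{2}\mathbb I\Big)=\Delta\chi\,\nabla\chi .
\]
So the right-hand side contributes $-\int_\Om\Delta\chi\,\nabla\chi\cdot u\,dx$.

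\textbf{Allen--Cahn equations.} Multiply $\eqref{NSAC}_3$ by $\mu$ and substitute $\eqref{NSAC}_4$ for $\rho\mu$:
\[
\int_\Om(\chi_t+u\cdot\nabla\chi)(-\Delta\chi+\rho f(\chi))\,dx=-\|\mu\|_{L^2}^2 .
\]
The term $-\int_\Om\chi_t\Delta\chi\,dx$ equals $\frac12\frac{d}{dt}\|\nabla\chi\|_{L^2}^2$, using the Neumann condition. The term $-\int_\Om u\cdot\nabla\chi\,\Delta\chi\,dx$ exactly cancels the capillary contribution from the momentum equation.

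For the potential terms, note that $\rho(\chi_t+u\cdot\nabla\chi)f(\chi)=\rho\,\frac{D}{Dt}F(\chi)$. Combined with $\eqref{NSAC}_1$ this gives
\[
\int_\Om\rho(\chi_t+u\cdot\nabla\chi)f(\chi)\,dx=\frac{d}{dt}\int_\Om\rho F(\chi)\,dx .
\]

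\textbf{Conclusion.} Adding the two identities yields
\[
\frac{d}{dt}\Big(\frac12\|\sqrt\rho u\|_{L^2}^2+\frac12\|\nabla\chi\|_{L^2}^2+\frac{A}{\gamma-1}\int_\Om\rho^\gamma dx+\int_\Om\rho F(\chi)\,dx\Big)+\|\mu\|_{L^2}^2+\nu\|\nabla u\|_{L^2}^2+(\nu+\lambda)\|\Div u\|_{L^2}^2=0 .
\]
Integrating in time from $0$ to $t$ and using the initial data \eqref{I} gives \eqref{E0}.

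The only delicate points are the cancellation of the capillary stress against the transport term $u\cdot\nabla\chi\,\Delta\chi$, and the correct pairing of $\eqref{NSAC}_3$ with $\mu$ rather than with $\chi_t$. Since the computation is done for smooth positive-density solutions, there is no real obstacle.
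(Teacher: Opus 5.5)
Your proposal is correct and follows the paper's own argument: test $\eqref{NSAC}_2$ with $u$ and $\eqref{NSAC}_3$ with $\mu$, integrate by parts using \eqref{B1}, and add, so that the capillary term $\int_\Om \Delta\chi\,\nabla\chi\cdot u\,dx$ cancels against the transport term from the Allen--Cahn part. You also supply the details the paper leaves as ``easy'': the renormalized pressure identity, $\Div\big(\nabla\chi\otimes\nabla\chi-\frac{|\nabla\chi|^2}{2}\mathbb{I}\big)=\Delta\chi\,\nabla\chi$, and the transport identity for $\frac{d}{dt}\int_\Om\rho F(\chi)\,dx$.
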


\begin{proof}
	Multiplying  $\eqref{NSAC}_2$ by $u$ and $\eqref{NSAC}_3$ by $\mu$,
integrating over $\Omega\times(0,t)$, after integration by parts and utilizing the boundary conditions $\eqref{B1}$,
it is easy to arrive at \eqref{E0}.
\end{proof}

The following result is a direct consequence of Lemma \ref{lem1} and Lemma \ref{lem2}.
\begin{corollary}\label{cor1}
	Let {$(\rho, u, \chi, \mu)$} be a smooth solution to the system \eqref{NSAC},
	associated with the initial boundary conditions \eqref{I}-\eqref{B1}. It holds that
	\begin{align*}
		\sup_{0 \le t  \le T} \left( \|\chi\|_{H^1}
		+ \|f(\chi)\|_{L^2} + \|f'(\chi)\|_{L^3} \right) \le C .
	\end{align*}
\end{corollary}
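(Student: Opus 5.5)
The plan is to combine the energy equality of Lemma~\ref{lem2} with the first weighted Poincar\'e inequality of Lemma~\ref{lem1} to bound $\|\chi\|_{L^2}$. The bounds on $f(\chi)$ and $f'(\chi)$ then follow from the Sobolev embedding $H^1\hookrightarrow L^6$, since both are polynomials in $\chi$.

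\textbf{Step 1: $E_0$ is controlled by $\Phi_0$.} Since $q>3$, we have $W^{1,q}\hookrightarrow L^\infty$, so $\|\rho_0\|_{L^\infty}\le C\Phi_0$. This gives $\|\sqrt{\rho_0}u_0\|_{L^2}^2\le C\|\rho_0\|_{L^\infty}\|\nabla u_0\|_{L^2}^2$ by Poincar\'e for $u_0\in H_0^1$. It also gives $\int_\Om\rho_0^\gamma\,dx\le C$ because $\Om$ is bounded. Next, $H^2\hookrightarrow L^\infty$ implies $\int_\Om\rho_0F(\chi_0)\,dx\le C(1+\|\chi_0\|_{L^\infty}^4)\le C$. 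Finally, $\|\nabla\chi_0\|_{L^2}\le\|\chi_0\|_{H^2}$. Hence $E_0\le C$.

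\textbf{Step 2: consequences of \eqref{E0}.} Every term on the left of \eqref{E0} is nonnegative, since $F\ge0$. Therefore, uniformly in $t$,
\begin{equation*}
\|\nabla\chi\|_{L^2}^2+\int_\Om\rho^\gamma\,dx+\int_\Om\rho F(\chi)\,dx\le C .
\end{equation*}
The bound $\int_\Om\rho^\gamma\,dx\le C$ is exactly what Lemma~\ref{lem1} requires, with a constant depending only on $E_0$, and hence only on $\Phi_0$.

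\textbf{Step 3: the $L^2$ bound on $\chi$.} Apply the first inequality of Lemma~\ref{lem1} with $\varphi=\chi$; it remains to bound $\int_\Om\rho|\chi|\,dx$. Since $F(\chi)=\frac14(\chi^2-1)^2$, we have $\chi^2\le 1+|\chi^2-1| = 1+2\sqrt{F(\chi)}\le 2+F(\chi)$. Using Cauchy--Schwarz together with $\int_\Om\rho\,dx=1$, we get
\begin{equation*}
\int_\Om\rho|\chi|\,dx\le\Big(\int_\Om\rho\,dx\Big)^{1/2}\Big(\int_\Om\rho\chi^2\,dx\Big)^{1/2}\le\Big(2+\int_\Om\rho F(\chi)\,dx\Big)^{1/2}\le C .
\end{equation*}
Thus $\|\chi\|_{L^2}\le C$, and with Step 2 this gives $\sup_t\|\chi\|_{H^1}\le C$.

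\textbf{Step 4: the nonlinear terms.} By $H^1\hookrightarrow L^6$ in three dimensions, $\|\chi\|_{L^6}\le C$. Since $f(\chi)=\chi^3-\chi$,
\begin{equation*}
\|f(\chi)\|_{L^2}\le\|\chi\|_{L^6}^3+\|\chi\|_{L^2}\le C .
\end{equation*}
Since $f'(\chi)=3\chi^2-1$,
\begin{equation*}
\|f'(\chi)\|_{L^3}\le 3\|\chi\|_{L^6}^2+|\Om|^{1/3}\le C .
\end{equation*}

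The only nontrivial point is Step 3. In vacuum regions the energy gives no direct control of $\chi$, so we must pass from the weighted quantity $\int\rho F(\chi)$ to $\|\chi\|_{L^2}$. The elementary inequality $\chi^2\le 2+F(\chi)$, combined with mass conservation and Lemma~\ref{lem1}, handles exactly this. All constants are independent of $\delta$, since they depend only on $\Phi_0$ and $\Om$.
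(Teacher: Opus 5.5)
Your proof is correct and follows the route the paper intends: the paper calls the corollary a direct consequence of the energy equality (Lemma~\ref{lem2}) and the weighted Poincar\'e inequality (Lemma~\ref{lem1}), followed by Sobolev embedding, and gives no further detail. Your bound $E_0\le C(\Phi_0)$ and the elementary estimate $\chi^2\le 2+F(\chi)$, combined with $\int_\Om\rho\,dx=1$ to control $\int_\Om\rho|\chi|\,dx$, are exactly the steps that argument leaves implicit.
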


{Before establishing the a priori estimates, we present two preparatory lemmas. }
\begin{lemma}\label{lem3}
	Let {$(\rho, u, \chi, \mu)$} be a smooth solution to the system \eqref{NSAC},
	associated with the initial boundary conditions \eqref{I}-\eqref{B1}. It holds that
	\begin{align*}
		\int_{0}^{T} \big( \|\nabla u\|_{L^\infty} + \|\nabla^2 u\|_{L^q} \big) \, dt
		\le C T^{\frac{6-q}{4q}} \varPhi^{\gamma + 1}(T).
	\end{align*}
\end{lemma}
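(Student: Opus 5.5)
Since $q<6$ we have $W^{1,q}\hookrightarrow L^\infty$, and by Gagliardo--Nirenberg (or simply Sobolev) $\|\nabla u\|_{L^\infty}\le C\|\nabla u\|_{W^{1,q}}\le C(\|\nabla u\|_{L^2}+\|\nabla^2 u\|_{L^q})$. It therefore suffices to bound $\int_0^T\|\nabla^2 u\|_{L^q}\,dt$, because $\int_0^T\|\nabla u\|_{L^2}\,dt\le T\varPhi^{1/2}\le CT^{\frac{6-q}{4q}}\varPhi$ for $T\le 1$. For the second derivatives I will use the $W^{2,q}$ elliptic estimate for the Lam\'e operator with homogeneous Dirichlet data. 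Writing the momentum equation as
\begin{equation*}
\mathcal L u=\rho u_t+\rho u\cdot\nabla u+\nabla P(\rho)+\Delta\chi\nabla\chi ,
\end{equation*}
where I have used $\Div(\nabla\chi\otimes\nabla\chi-\frac12|\nabla\chi|^2\mathbb I)=\Delta\chi\nabla\chi$, this gives
\begin{equation*}
\|\nabla^2u\|_{L^q}\le C\big(\|\rho u_t\|_{L^q}+\|\rho u\cdot\nabla u\|_{L^q}+\|\nabla P\|_{L^q}+\|\Delta\chi\nabla\chi\|_{L^q}\big).
\end{equation*}

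The lower-order terms are handled directly. First, $\|\nabla P\|_{L^q}\le C\|\rho\|_{L^\infty}^{\gamma-1}\|\nabla\rho\|_{L^q}\le C\varPhi^{\gamma}$. Next, $\|\Delta\chi\nabla\chi\|_{L^q}\le\|\nabla\chi\|_{L^\infty}\|\nabla^2\chi\|_{L^q}\le C\|\chi\|_{H^3}^2$ since $q<6$. Its time integral is bounded by $\int_0^T\|\nabla^3\chi\|_{L^2}^2+CT\varPhi\le C\varPhi$; to extract a power of $T$ instead, I will interpolate, using $\|\chi\|_{H^2}\le\varPhi^{1/2}$ and $\|\nabla^2\chi\|_{L^q}\le\|\nabla^2\chi\|_{L^2}^{\frac{6-q}{2q}}\|\nabla^3\chi\|_{L^2}^{\frac{3q-6}{2q}}$ together with $\|\nabla\chi\|_{L^\infty}\le C\|\chi\|_{H^2}^{1/2}\|\chi\|_{H^3}^{1/2}$. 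The total exponent of $\|\nabla^3\chi\|_{L^2}$ is then $<2$, and H\"older in time yields a positive power of $T$ (at least $T^{\frac{6-q}{4q}}$). The convection term satisfies $\|\rho u\cdot\nabla u\|_{L^q}\le C\varPhi\|u\|_{L^\infty}\|\nabla u\|_{L^q}\le C\varPhi\|\nabla u\|_{L^2}^{\theta}\|u\|_{H^2}^{2-\theta}$ with $2-\theta<2$, so H\"older in time again gives a power of $T$ times a power of $\varPhi$.

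The main obstacle is the term $\|\rho u_t\|_{L^q}$, because only $\sqrt t\,\nabla u_t\in L^2L^2$ and $\sqrt\rho u_t\in L^2L^2$ are available. I will interpolate,
\begin{equation*}
\|\rho u_t\|_{L^q}\le C\|\rho u_t\|_{L^2}^{\frac{6-q}{2q}}\|\rho u_t\|_{L^6}^{\frac{3q-6}{2q}}\le C\varPhi^{\frac12}\|\sqrt\rho u_t\|_{L^2}^{\frac{6-q}{2q}}\|\nabla u_t\|_{L^2}^{\frac{3q-6}{2q}},
\end{equation*}
using $\|\rho\|_{L^\infty}\le C\varPhi$ and $u_t\in H_0^1$. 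Inserting the weight, I write this as $t^{-\frac{3q-6}{4q}}\|\sqrt\rho u_t\|_{L^2}^{\frac{6-q}{2q}}\|\sqrt t\nabla u_t\|_{L^2}^{\frac{3q-6}{2q}}$. The two norm factors have exponents summing to $1$, so H\"older in time with $L^2$ on both gives an $L^2_t$-norm at most $\varPhi^{1/2}$. What remains is $\|t^{-\frac{3q-6}{4q}}\|_{L^2(0,T)}$, which is finite because $\frac{3q-6}{2q}<1$ for $q<6$, and equals $CT^{\frac12-\frac{3q-6}{4q}}=CT^{\frac{6-q}{2q}}\cdot$const. This power is at least $T^{\frac{6-q}{4q}}$ since $T\le1$.

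Collecting all the terms, every contribution is bounded by $CT^{\frac{6-q}{4q}}$ times a power of $\varPhi$ no larger than $\varPhi^{\gamma+1}$. Here the pressure term gives $T\varPhi^\gamma$, and the other products give at most quadratic powers, which are absorbed using $\varPhi\ge1$ and $\gamma>1$. This yields the stated bound.
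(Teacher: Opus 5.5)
Your proposal is correct and is essentially the paper's proof: the $W^{2,q}$ Lam\'e estimate, the $L^2$--$L^6$ interpolation of $\rho u_t$ with the weight $t^{-\frac{3q-6}{4q}}$ inserted followed by H\"older in time, and H\"older-in-time bounds for the convection, pressure and capillary terms. Your sharper interpolations of the convection and capillary terms only improve their power of $T$ from $T^{1/4}$ to $T^{\frac{3}{2q}}$.

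There are three harmless slips:
\begin{itemize}
\item The density prefactor in the $\rho u_t$ bound is $\|\rho\|_{L^\infty}^{\frac{5q-6}{4q}}\le C\varPhi^{\frac{5q-6}{4q}}$, not $\varPhi^{1/2}$. The final power of $\varPhi$ is still below $\varPhi^{\gamma+1}$.
\item $\|t^{-\frac{3q-6}{4q}}\|_{L^2(0,T)}=CT^{\frac12-\frac{3q-6}{4q}}$ equals $CT^{\frac{6-q}{4q}}$ exactly, not $T^{\frac{6-q}{2q}}$. This term is precisely where the lemma's exponent comes from.
\item $W^{1,q}\hookrightarrow L^\infty$ holds because $q>3$, not because $q<6$.
\end{itemize}
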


\begin{proof}
	By the $W^{2,q}$-estimate of Lam\'{e} equation (cf.~\cite{ADN64, CCK04}), one has
	\begin{align*}
		\|\nabla^{2}u\|_{L^q}& \le C ( \|\rho u_{t}\|_{L^q}
		+ \| \rho ( u \cdot \nabla ) u \|_{L^q}
		+ \| \nabla (P(\rho))\|_{L^q} + \| \Delta \chi \nabla \chi \|_{L^q} ).
	\end{align*}
	It follows from the H\"{o}lder, Sobolev, Poincar\'e, Gagliardo-Nirenberg inequalities that
	\begin{align*}
		&\|\rho u_{t}\|_{L^q} \le C \|\rho u_{t}\|_{L^2}^{\frac{6-q}{2q}}
		\|\rho u_{t}\|_{L^6}^{\frac{3q-6}{2q}}
		\le C \|\rho\|_{L^\infty}^{\frac{5q-6}{4q}} \| \sqrt\rho u_{t}\|_{L^2}^{\frac{6-q}{2q}}
		\| \nabla u_{t}\|_{L^2}^{\frac{3q-6}{2q}}, \\
		&\| \rho ( u \cdot \nabla ) u \|_{L^q}
		\le \|\rho\|_{L^\infty} \|u\|_{L^\infty} {\|\nabla u\|_{L^q}}
		\le C \|\rho\|_{L^\infty} \|\nabla u\|_{L^2}^{\frac{1}{2}}
		\|\nabla^2 u\|_{L^2}^{\frac{3}{2}}, \\
		&\| \nabla (P(\rho))\|_{L^q} \le C \|\rho\|_{L^\infty}^{\gamma-1} \|\nabla\rho \|_{L^q}, \\
		&\| \Delta\chi \nabla\chi \|_{L^q} \le
		\|\nabla\chi\|_{L^\infty} \|\nabla^2 \chi\|_{L^q}
		\le C \|\nabla^2 \chi\|_{L^2}^{\frac{1}{2}}  \| \nabla^3 \chi \|_{L^2}^{\frac{3}{2}}.
	\end{align*}
	Integrating the above estimates over $(0,T)$, one has
	\begin{align*}
		\int_0^T \|\rho u_{t}\|_{L^q} \,dt &\le
		\int_0^T \|\rho\|_{L^\infty}^{\frac{5q-6}{4q}} \| \sqrt\rho u_{t}\|_{L^2}^{\frac{6-q}{2q}}
		\|\sqrt{t} \nabla u_{t}\|_{L^2}^{\frac{3q-6}{2q}} t^{-\frac{3q-6}{4q}} \,dt \\
		& \le C {\varPhi^{\frac{5q-6}{4q}}(T) \left( \int_0^T \|\sqrt\rho u_{t}\|^2_{L^2} \,dt
		\right)^{\frac{6-q}{4q}} \left( \int_0^T \|\sqrt{t} \nabla u_{t}\|^2_{L^2} \,dt
		\right)^{\frac{3q-6}{4q}} T^{\frac{6-q}{4q}}} \\
		& \le C T^{\frac{6-q}{4q}} \varPhi^{\frac{7q-6}{4q}}(T),  \\
		\int_0^T \| \rho ( u \cdot \nabla ) u \|_{L^q} \,dt
		&\le \int_0^T \|\rho\|_{L^\infty} \|\nabla u\|_{L^2}^{\frac{1}{2}}
		\|\nabla^2 u\|_{L^2}^{\frac{3}{2}} \,dt  \\
		&\le C \varPhi^{\frac{5}{4}}(T)  \left( \int_0^T \|\nabla^2 u \|_{L^2}^2 \,dt
		\right)^{\frac{3}{4}} T^{\frac{1}{4}}  \le CT^{\frac{1}{4}}\varPhi^2(T), \\
		\int_0^T \| \nabla (P(\rho))\|_{L^q} \,dt
		&\le \int_0^T \|\rho\|_{L^\infty}^{\gamma-1} \|\nabla\rho \|_{L^q} \,dt  \le CT\varPhi^{\gamma}(T),\\
		\int_0^T \| \Delta\chi \nabla\chi \|_{L^q} \,dt
		&\le C \int_0^T \|\nabla^2 \chi\|_{L^2}^{\frac{1}{2}}
		\| \nabla^3 \chi \|_{L^2}^{\frac{3}{2}} \,dt \\
		&\le C \varPhi^{\frac{1}{4}}(T) \left( \int_0^T \|\nabla^3 \chi\|_{L^2}^2 \,dt \right)^{\frac{3}{4}} T^{\frac{1}{4}} \le CT^{\frac{1}{4}}\varPhi^2(T).
	\end{align*}
	Combining the above estimates, we obtain
	\begin{align*}
		\int_0^T \|\nabla^2 u\|_{L^q}^2 \,dt
		\le C  \left(T^{\frac{6-q}{4q}} \varPhi^{\frac{7q-6}{4q}}(T)
		+ T^{\frac{1}{4}}\varPhi^{2}(T) + T \varPhi^{\gamma}(T) + T^{\frac{1}{4}}\varPhi^2(T)\right)
		\le C T^{\frac{6-q}{4q}} \varPhi^{1 + \gamma},
	\end{align*}
	where we have used $\frac{6-q}{4q} \le \frac{1}{4}$ for each $q \in (3, 6)$, $T \le 1$, $\gamma > 1$
	and $\varPhi(T) \ge 1$. It follows from the Sobolev and Poincar\'e inequalities that
	\begin{align*}
		\int_{0}^{T} \|\nabla u\|_{L^\infty} \,dt \le C \int_0^T \|\nabla^2 u\|_{L^q} \, dt
		\le C T^{\frac{6-q}{4q}} \varPhi^{\gamma + 1}(T).
	\end{align*}
	This completes the proof.
\end{proof}

\begin{lemma}\label{lem4}
	Let {$(\rho, u, \chi, \mu)$} be a smooth solution to the system \eqref{NSAC},
	associated with the initial boundary conditions \eqref{I}-\eqref{B1}. There exists a constant $ \varepsilon_0 \in ( 0, 1 ) $, depending only on $A$, $\gamma$, $\nu$,
	$\lambda$, $q$, $\Om$ and $\Phi_0$, such that, if
	\begin{equation}\label{Assump1}
		T^{\frac{6-q}{4q}}\varPhi^{\gamma + 1}(T) \le \varepsilon_0,
	\end{equation}
	then it holds that
	\begin{align*}
		\sup_{0 \le t  \le T} \left( \|\rho\|_{L^\infty} + \|\rho\|_{W^{1,q}} \right) \le C.
	\end{align*}
\end{lemma}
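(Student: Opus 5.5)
The plan is to use the transport structure of $\eqref{NSAC}_1$ together with Lemma~\ref{lem3}: under \eqref{Assump1}, $\int_0^T\|\nabla u\|_{L^\infty}\,dt\le C\varepsilon_0$. The density then cannot grow much, since the velocity gradient is small in $L^1(0,T;L^\infty)$ and the forcing from $\nabla^2 u$ is small in $L^1(0,T;L^q)$.

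\emph{Step 1 ($L^\infty$ bound).} Write $\eqref{NSAC}_1$ along particle paths $X(t;x)$, $\frac{d}{dt}X=u(X,t)$. Then $\frac{d}{dt}\rho(X(t),t)=-\rho\,\Div u$, so
$$
\rho(X(t),t)=\rho_0(x)\exp\Big(-\int_0^t\Div u(X(s),s)\,ds\Big).
$$
This yields
$$
\|\rho(t)\|_{L^\infty}\le\|\rho_0\|_{L^\infty}\exp\Big(\int_0^T\|\nabla u\|_{L^\infty}ds\Big)\le C\Phi_0e^{C\varepsilon_0}\le C
$$
for $\varepsilon_0\le1$. Here $\|\rho_0\|_{L^\infty}\le C\|\rho_0\|_{W^{1,q}}$ because $q>3$. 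Since we work with smooth solutions with $\rho_0\ge\delta$, the characteristics are well defined. Since $u|_{\partial\Om}=0$, they stay in $\overline\Om$.

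\emph{Step 2 ($W^{1,q}$ bound).} Differentiate $\eqref{NSAC}_1$, multiply by $q|\nabla\rho|^{q-2}\nabla\rho$ and integrate. The boundary terms vanish because $u=0$ on $\partial\Om$. The usual computation gives
$$
\frac{d}{dt}\|\nabla\rho\|_{L^q}\le C\|\nabla u\|_{L^\infty}\|\nabla\rho\|_{L^q}+C\|\rho\|_{L^\infty}\|\nabla^2u\|_{L^q}.
$$
This uses $\int u\cdot\nabla|\nabla\rho|^q=-\int\Div u|\nabla\rho|^q$. By Gronwall,
$$
\|\nabla\rho(t)\|_{L^q}\le\Big(\|\nabla\rho_0\|_{L^q}+C\int_0^T\|\nabla^2u\|_{L^q}dt\Big)\exp\Big(C\int_0^T\|\nabla u\|_{L^\infty}dt\Big).
$$
Lemma~\ref{lem3} and \eqref{Assump1} bound both integrals by $C\varepsilon_0$, so the right-hand side is at most $C(\Phi_0+\varepsilon_0)e^{C\varepsilon_0}\le C$. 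The $L^q$ norm of $\rho$ is controlled by $|\Om|^{1/q}\|\rho\|_{L^\infty}$. Together these give the claim.

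\emph{Main point of care.} The only subtle issue is that the constant $C$ in Lemma~\ref{lem3} must not depend on $\Phi(T)$ in a circular way. It does not: Lemma~\ref{lem3} already expresses everything through $\Phi(T)$, and \eqref{Assump1} absorbs that dependence. Hence $\varepsilon_0$ can be fixed at any value in $(0,1)$, for instance $\varepsilon_0=1/2$, chosen later as needed by subsequent lemmas. Justifying the characteristic and Gronwall manipulations poses no difficulty for smooth solutions with positive density, and all bounds are independent of $\delta$.
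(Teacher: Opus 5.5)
Your proof is correct and follows the paper's argument. The $W^{1,q}$ step (differentiating the continuity equation, testing with $q|\nabla\rho|^{q-2}\nabla\rho$, then Gr\"onwall plus Lemma~\ref{lem3}) is identical. For the $L^\infty$ bound you use characteristics, whereas the paper tests with $q\rho^{q-1}$ and passes to the sup norm; both give $\|\rho\|_{L^\infty}\le\|\rho_0\|_{L^\infty}\exp\{C\int_0^T\|\nabla u\|_{L^\infty}dt\}$.

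Your remark that this lemma needs no smallness of $\varepsilon_0$ is also accurate. The paper takes $\varepsilon_0$ small only to obtain the cleaner bound $2\|\rho_0\|_{L^\infty}$.
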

\begin{proof}
	Multiplying $\eqref{NSAC}_1$ by $ q\rho^{q-1}$ and integrating over $\Om$,
	it follows from integration by parts that
	\begin{align*}
			\frac{d}{dt}\|\rho\|_{L^q}^q & =  (1 - q ) \int_{\Om} \rho^{q} \Div u \,dx  \\
			&\le C\|\nabla u\|_{L^\infty}\|\rho\|_{L^q}^q .
	\end{align*}
	By the Gr\"onwall inequality, we obtain
	\begin{align*}
		\sup_{0 \le t \le T}\|\rho\|_{L^q}
		\le  \| \rho_0\|_{L^q} \Exp
		\left\{C\int_{0}^{T}\|\nabla u\|_{L^\infty} \, dt \right\},
	\end{align*}
	and
	\begin{align}\label{sup_rho_infty}
		\sup_{0 \le t  \le T} \|\rho\|_{L^\infty}
		\le \|\rho_0\|_{L^\infty} \Exp
		\left\{C \int_{0}^{T} \|\nabla u\|_{L^\infty} \,dt \right\}.
	\end{align}
	Combining \eqref{sup_rho_infty} with Lemma~\ref{lem3} to get
	\begin{align*}
		\sup_{0 \le t  \le T} \|\rho\|_{L^\infty}
		\le \|\rho_0\|_{L^\infty}
		\Exp \left\{ C T^{\frac{6-q}{4q}}\varPhi^{\gamma + 1}(T) \right\}.
	\end{align*}
	Choosing $\varepsilon_0$ sufficiently small, then it follows from \eqref{Assump1} that
	\begin{align*}
		\sup_{0 \le t  \le T} \|\rho\|_{L^\infty}
		\le 2 \|\rho_0\|_{L^\infty}.
	\end{align*}

	Differentiating $\eqref{NSAC}_1$ with respect to $x$, multiplying the resultant with $ q|\nabla\rho|^{q-2}\nabla\rho$ and integrating over $\Om$, it follows from integration by parts that
	\begin{align*}
		\frac{d}{dt} \|\nabla\rho\|_{L^q}^q
		&=  -q \int_{\Om} ( \nabla\rho \cdot \nabla ) u \cdot
		\nabla\rho | \nabla\rho |^{q - 2}  \,dx
		+ (1 - q ) \int_{\Om}  \Div u |\nabla\rho|^q \,dx \\
		&\quad - q \int_{\Om} \rho \nabla \Div u \cdot \nabla\rho |\nabla\rho|^{q-2} \,dx \\
		& \le C\|\nabla u\|_{L^\infty}\|\nabla \rho\|_{L^q}^q
		+ C\|\rho\|_{L^\infty} \|\nabla \rho\|_{L^q}^{q-1} \|\nabla^2 u \|_{L^q},
	\end{align*}
	which indicates that
	\begin{align}\label{nabla_rho_q} 
		\frac{d}{dt} \|\nabla\rho\|_{L^q}
		\le C\|\nabla u\|_{L^\infty} \|\nabla\rho\|_{L^q}
		+ C\|\rho\|_{L^\infty} \|\nabla^2 u \|_{L^q}.
	\end{align}
	Then, one obtains
	\begin{align*}
		\frac{d}{dt} \|\rho\|_{W^{1, q}}
		\le C\|\nabla u\|_{L^\infty} \|\rho\|_{W^{1, q}}
		+ C \|\rho\|_{L^\infty} \|\nabla^2 u \|_{L^q}
		\le C\|\nabla u\|_{L^\infty} \|\rho\|_{W^{1, q}}
		+ C \|\nabla^2 u \|_{L^q}.
	\end{align*}
	By Lemma~\ref{lem3} and the Gr\"onwall inequality, one has
	\begin{align*}
		\|\rho\|_{W^{1,q}} &\le \left( \| \rho_0\|_{W^{1,q}}
		+ \int_0^T  \|\nabla^2 u \|_{L^q} \,dt \right)
		\Exp \left\{C\int_{0}^{T}\|\nabla u\|_{L^\infty} \, dt \right\}\\
		&\le C\left(1 +   T^{\frac{6-q}{4q}}\varPhi^{\gamma + 1}(T) \right)
		\Exp \left\{ C T^{\frac{6-q}{4q}}\varPhi^{\gamma + 1}(T) \right\} \le C.
	\end{align*}
	This completes the proof.
\end{proof}

\begin{remark}
	Under the assumption \eqref{Assump1}, it follows from $\varPhi(T) \ge 1 $ that
	\begin{equation}\label{A2}
		T \varPhi^{\alpha}(T) \le \big( T^{\frac{6-q}{4q}} \varPhi^{\gamma+1}(T) \big)^{\frac{4q}{6-q}}
		(\varPhi (T) \big)^{\alpha - \frac{4q ( \gamma + 1 )}{6-q}} \le C,
	\end{equation}
	as long as $\alpha \le 4\gamma +4$.
\end{remark}

Now we turn to do some a priori estimates on phase variable $\chi$.
\begin{lemma}\label{lem:chi1}
	Under the assumptions of Lemma~\ref{lem4}, and if \eqref{Assump1} holds, then the following estimate holds
	\begin{equation*}
		\sup_{0 \le t \le T} \| \rho  \chi_t \|_{L^2}^2
		+ \int_{0}^{T} \| \nabla  \chi_t \|_{L^2}^2 \,dt \le  C .
	\end{equation*}
\end{lemma}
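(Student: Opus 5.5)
The plan is to eliminate $\mu$, differentiate the resulting equation for $\chi$ in time, and test with $\chi_t$. Every coefficient that appears in the Gr\"onwall step is made $L^1$-in-time bounded by a constant through \eqref{A2}, $T\varPhi^\alpha(T)\le C$, so the final bound is $C$ and not a power of $\varPhi$. Multiplying $\eqref{NSAC}_3$ by $\rho$ and using $\eqref{NSAC}_4$ gives
\begin{equation*}
\rho^2\chi_t+\rho^2 u\cdot\nabla\chi=\Delta\chi-\rho f(\chi),\qquad \partial_{\boldsymbol n}\chi|_{\partial\Om}=0 .
\end{equation*}
Since $\rho_0\ge\delta>0$ at this stage, everything is smooth. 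I will differentiate in $t$, multiply by $\chi_t$ and integrate. The Neumann condition gives $\int\Delta\chi_t\chi_t=-\|\nabla\chi_t\|_{L^2}^2$, and $\int(\rho^2\chi_t)_t\chi_t=\frac12\frac{d}{dt}\|\rho\chi_t\|_{L^2}^2+\int\rho\rho_t\chi_t^2$. This yields
\begin{equation*}
\frac12\frac{d}{dt}\|\rho\chi_t\|_{L^2}^2+\|\nabla\chi_t\|_{L^2}^2=-\int\rho\rho_t\chi_t^2-\int(\rho^2u\cdot\nabla\chi)_t\chi_t-\int\rho_t f(\chi)\chi_t-\int\rho f'(\chi)\chi_t^2 .
\end{equation*}

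Throughout, Lemma~\ref{lem1} (with $\alpha=1$) gives $\|\chi_t\|_{H^1}\le C(\|\nabla\chi_t\|_{L^2}+\|\rho\chi_t\|_{L^2})$. Lemma~\ref{lem4} gives $\|\rho\|_{L^\infty}+\|\rho\|_{W^{1,q}}\le C$, and Corollary~\ref{cor1} controls $\|\chi\|_{H^1}$ and $\|f(\chi)\|_{L^2}$. I will treat the terms as follows.

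\emph{The $\rho_t$ terms.} Using $\rho_t=-u\cdot\nabla\rho-\rho\Div u$, we get $\|\rho_t\|_{L^3}\le C(\|u\|_{L^\infty}+\|\nabla u\|_{L^3})\le C\|\nabla u\|_{L^2}^{1/2}\|\nabla^2u\|_{L^2}^{1/2}$. Hence
\begin{equation*}
\Big|\int\rho\rho_t\chi_t^2\Big|\le \|\rho_t\|_{L^3}\|\rho\chi_t\|_{L^2}\|\chi_t\|_{L^6}\le \varepsilon\|\nabla\chi_t\|_{L^2}^2+C(1+\|\rho_t\|_{L^3}^2)\|\rho\chi_t\|_{L^2}^2 .
\end{equation*}
The coefficient satisfies $\int_0^T\|\rho_t\|_{L^3}^2\le C\varPhi^{1/2}\int_0^T\|\nabla^2 u\|_{L^2}\le C(T\varPhi^2)^{1/2}\le C$. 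The pieces $2\rho\rho_t u\cdot\nabla\chi\,\chi_t$ and $\rho_t f(\chi)\chi_t$ are handled the same way, with $\|u\|_{L^\infty}$, $\|\nabla\chi\|_{L^\infty}\le C\|\chi\|_{H^3}$ and $\|f(\chi)\|_{L^6}\le C$. These produce terms linear in $\|\rho\chi_t\|_{L^2}$ whose coefficients are powers $<2$ of $\|u\|_{H^2}$ and $\|\chi\|_{H^3}$, so their time integrals are $\le C(T\varPhi^\alpha)^{\theta}\le C$.

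\emph{The remaining terms.} For the convective part I will use $|\int\rho^2 u\cdot\nabla\chi_t\,\chi_t|\le C\|u\|_{L^\infty}\|\rho\chi_t\|_{L^2}\|\nabla\chi_t\|_{L^2}$, whose Young coefficient $\|u\|_{L^\infty}^2$ again integrates to $\le C(T\varPhi^2)^{1/2}$. Since $f'(\chi)=3\chi^2-1\ge-1$, the last term is bounded by $\int\rho\chi_t^2\le\|\rho\chi_t\|_{L^2}\|\chi_t\|_{L^2}$, which is absorbed.

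\emph{The main obstacle} is the term $\int\rho^2u_t\cdot\nabla\chi\,\chi_t$. The naive bound leaves a coefficient $\|\sqrt\rho u_t\|_{L^2}^2$, which is only $\varPhi$-bounded and not controlled by $C$. I will instead write it as
\begin{equation*}
\|\rho u_t\|_{L^3}\|\nabla\chi\|_{L^6}\|\rho\chi_t\|_{L^2}\le C\|\sqrt\rho u_t\|_{L^2}^{1/2}\|\nabla u_t\|_{L^2}^{1/2}\|\rho\chi_t\|_{L^2} .
\end{equation*}
Inserting the weight $t^{-1/4}\cdot t^{1/4}$ and using H\"older gives
\begin{equation*}
\int_0^T\|\sqrt\rho u_t\|_{L^2}^{1/2}\|\nabla u_t\|_{L^2}^{1/2}dt\le \Big(\int_0^T\|\sqrt\rho u_t\|_{L^2}^2\Big)^{\frac14}\Big(\int_0^T\|\sqrt t\nabla u_t\|_{L^2}^2\Big)^{\frac14}\Big(\int_0^Tt^{-\frac12}\Big)^{\frac12}\le C T^{1/4}\varPhi^{1/2}\le C .
\end{equation*}
This coefficient multiplies $\|\rho\chi_t\|_{L^2}\le 1+\|\rho\chi_t\|_{L^2}^2$, which is acceptable for Gr\"onwall.

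After choosing $\varepsilon$ small and absorbing, Gr\"onwall gives the claimed bound, provided $\|\rho_0\chi_t(0)\|_{L^2}\le C$. This is exactly where the compatibility condition \eqref{com1} enters. From $\eqref{NSAC}_4$ at $t=0$ we have $\rho_0\mu_0=-\Delta\chi_0+\rho_0f(\chi_0)=\rho_0(f(\chi_0)-h)$, so $\mu_0=f(\chi_0)-h$. Then $\rho_0\chi_t(0)=h-f(\chi_0)-\rho_0u_0\cdot\nabla\chi_0$, whose $L^2$ norm is bounded by $C(\Phi_0)$.
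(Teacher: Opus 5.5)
Your proof is correct and is essentially the paper's argument: differentiate \eqref{NSAC_34} in time, test with $\chi_t$, make every right-hand contribution a power of $\varPhi(T)$ times a positive power of $T$ so that \eqref{A2} returns a constant, and use \eqref{com1} to bound $\rho_0\chi_t(0)=h-f(\chi_0)-\rho_0u_0\cdot\nabla\chi_0$ in $L^2$. The differences are only in how individual terms are estimated. The paper integrates the $\rho_t$-terms by parts via $\eqref{NSAC}_1$. It also handles the $u_t$-term without any singular weight, via $\|\sqrt\rho u_t\|_{L^2}\|\nabla\chi\|_{L^\infty}\|\rho\chi_t\|_{L^2}$ with $\|\nabla\chi\|_{L^\infty}\le C\|\nabla^2\chi\|_{L^2}^{1/2}\|\nabla^3\chi\|_{L^2}^{1/2}$ and $\|\rho\chi_t\|_{L^2}\le\varPhi^{1/2}(T)$. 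This already gives $T^{1/4}\varPhi^{3/2}(T)$, so your detour through $\sqrt t\,\nabla u_t$ works but is not needed.

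One correction: at this stage $\|\nabla\chi\|_{L^6}$ and $\|f(\chi)\|_{L^6}$ are not bounded by $C$. The uniform $H^2$ bound on $\chi$ is Corollary~\ref{cor2}, which is proved later, so they are only bounded by $C\varPhi^{1/2}(T)$ and $C\varPhi^{3/2}(T)$. Since each such term already carries a positive power of $T$, \eqref{A2} still absorbs them. Alternatively, use $\|f(\chi)\|_{L^2}\le C$ from Corollary~\ref{cor1}.
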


\begin{proof}
Rewrite the equations $\eqref{NSAC}_{3,4}$ as follows
\begin{equation}
	\label{NSAC_34}
	\rho^2  \chi_t + \rho^2 u \cdot \nabla \chi = \Delta \chi - \rho f(\chi).
\end{equation}
	Differentiating \eqref{NSAC_34} with respect to $t$ gives
	\begin{equation}
		\label{dt_NSAC_34}
		2 \rho   \rho_t \chi_t + \rho^2 \chi_{tt} + 2 \rho  \rho_t u \cdot \nabla \chi + \rho^2   u_t \cdot \nabla \chi + \rho^2 u \cdot \nabla  \chi_t = \Delta  \chi_t - \rho_t f(\chi) - \rho f'(\chi) \chi_t.
	\end{equation}
	Multiplying \eqref{dt_NSAC_34} by $ \chi_t$, and integrating over $\Om$, using $\eqref{NSAC}_1$, applying integration by parts, one has
\begin{align*}
	\frac{1}{2} \frac{d}{dt} \|\rho \chi_t\|_{L^2}^{2} + \|\nabla\chi_t\|_{L^2}^{2}
	=& \frac{1}{2} \int_{\Om} \rho^2 \Div u \chi_t^{2} \,dx
	- 2 \int_{\Om} \rho^{2} ( u \cdot \nabla \chi_t) \chi_t\,dx
	+ 2 \int_{\Om} \rho \rho_t (u \cdot \nabla \chi ) \chi_t \,dx\\
	& + \int_{\Om}\rho ( u_t \cdot \nabla \chi ) \rho \chi_t \,dx
	- \int_{\Om} \rho ( u \cdot \nabla\chi_t ) f(\chi) \,dx \\
	& - \int_{\Om} \rho f'(\chi) ( u \cdot \nabla\chi ) \chi_t \,dx
	- \int_{\Om} \rho f'(x) \chi_t^{2} \,dx \\
	=& \sum_{i=1}^{7}{I}_{i}.
\end{align*}
	Combining Lemma~\ref{lem1}, Lemma~\ref{lem4}, and Corollary~\ref{cor1}, we deduce that
\begin{align*}
	I_1 + I_2 &\le C \|\rho\|_{L^\infty} ( \|\nabla u\|_{L^3} \|\chi_t\|_{L^6}
	+ \|u\|_{L^\infty} \|\nabla\chi_t\|_{L^2} ) \|\rho \chi_t\|_{L^2} \\
	& \le C \|\nabla u\|_{L^2}^{\frac{1}{2}} \|\nabla^2 u\|_{L^2}^{\frac{1}{2}}
	( \| \rho \chi_t\|_{L^2} + \|\nabla\chi_t\|_{L^2}) \|\rho \chi_t\|_{L^2} \\
	& \le \frac{1}{4} \| \nabla \chi_t \|_{L^2}^2
	+ C (1 + \|\nabla u\|_{L^2} \|\nabla^2 u\|_{L^2} ) \|\rho \chi_t\|_{L^2}^2, \\
	I_3 + I_6 &\le C ( \|\rho_t\|_{L^3} + \|f'(\chi)\|_{L^3} )
	\|u\|_{L^\infty} \|\nabla\chi\|_{L^6} \| \rho\chi_t\|_{L^2}  \\
	&\le C ( \|\rho\|_{L^\infty} \|\nabla u\|_{L^3}
	+ \|\nabla\rho\|_{L^3} \|u\|_{L^\infty} + 1 ) \|u\|_{L^\infty}
	\|\nabla^2 \chi\|_{L^2} \|\rho \chi_t\|_{L^2} \\
	& \le C (1 + \|\nabla u\|_{L^2} \|\nabla^2 u\|_{L^2} )
	\|\nabla^2 \chi\|_{L^2} \| \rho\chi_t \|_{L^2},  \\
	I_4 &\le \|\rho\|_{L^\infty}^\frac{1}{2} \| \sqrt\rho u_t \|_{L^2}
	\|\nabla\chi\|_{L^\infty} \| \rho \chi_t\|_{L^2}  \\
	&\le C \| \sqrt\rho u_t\|_{L^2} \|\nabla^3\chi\|_{L^2}^{\frac{1}{2}}
	\|\nabla^2\chi\|_{L^2}^{\frac{1}{2}} \|\rho \chi_t\|_{L^2},   \\
	I_5 + I_7 &\le \|\rho\|_{L^\infty} \|u\|_{L^\infty}
	\|\nabla\chi_t\|_{L^2} \|f(\chi)\|_{L^2}
	+ \| \rho \chi_t\|_{L^2} \|f'(\chi)\|_{L^3} \|\chi_t\|_{L^6} \\
	& \le C \|\nabla u\|_{L^2}^{\frac{1}{2}} \|\nabla^2 u\|_{L^2}^{\frac{1}{2}}
	\|\nabla\chi_t\|_{L^2}
	+ C \|\rho \chi_t\|_{L^2}
	(\|\rho \chi_t\|_{L^2} + \|\nabla\chi_t\|_{L^2}) \\
	& \le \frac{1}{4} \|\nabla\chi_t\|_{L^2}^2
	+ C \|\nabla u\|_{L^2} \|\nabla^2 u\|_{L^2} + C \| \rho \chi_t\|_{L^2}^2.
\end{align*}	
	Then we obtain
	\begin{equation*}
		\begin{split}
			\frac{d}{dt} \|\rho \chi_t\|_{L^2}^{2} + \|\nabla \chi_t \|_{L^2}^{2}
			& \le C (1 + \|\nabla u\|_{L^2} \|\nabla^2 u\|_{L^2} )
			(1+ \|\nabla^2 \chi\|_{L^2}^2 + \| \rho\chi_t \|_{L^2}^2 )\\
			&\quad + C \| \sqrt\rho u_t\|_{L^2} \|\nabla^3\chi\|_{L^2}^{\frac{1}{2}}
			\|\nabla^2\chi\|_{L^2}^{\frac{1}{2}} \|\rho \chi_t\|_{L^2}   .
		\end{split}
	\end{equation*}
	Integrating it over $(0, T )$,  using \eqref{com1} and \eqref{A2}, together with the fact that $\varPhi(T) \geq 1$, we deduce from the H\"older inequality that
	\begin{align*}
		&\sup_{ 0 \le t \le T} \|\rho \chi_t\|_{L^2}^{2}
		+ \int_0^T \| \nabla\chi_t \|_{L^2}^{2} \,dt\\
		\le& \lim_{\tau \rightarrow 0}\|\rho \chi_t\|_{L^2}^{2} (\tau) + T + T \varPhi(T)+ T^{\frac{1}{2}} \varPhi^{\frac{3}{2}}(T)
		\left( \int_0^T \| \nabla^2 u \|_{L^2}^2 \,dt \right)^{\frac{1}{2}}\\
		&
		+ T^{\frac{1}{4}} \varPhi^{\frac{3}{4}}(T)
		\left( \int_0^T \| \sqrt{\rho} u_t \|_{L^2}^2 \,dt \right)^{\frac{1}{2}}
		\left( \int_0^T \| \nabla^3 \chi \|_{L^2}^2 \,dt \right)^{\frac{1}{4}}\\
		\le& C + C T \varPhi^2(T) +  T^{\frac{1}{4}} \varPhi^{\frac{3}{2}}(T) \le C,
	\end{align*}
	where we have used
	\begin{align*}
		\lim_{\tau \rightarrow 0}\|\rho \chi_t\|_{L^2}^{2} (\tau)
		&\le C ( \| \rho_0 ( u_0 \cdot \nabla\chi_0 ) \|_{L^2}^2
		+ \left\| \frac{\Delta\chi_0}{\rho_0} \right\|_{L^2}^2
		+ \| f(\chi_0) \|_{L^2}^2 ) \\
		& \le C ( \|\rho_0\|_{L^\infty}^2 \|u_0\|_{L^6}^2 \| \nabla \chi_0\|_{L^3}^2
		+\| h \|_{L^2}^2 + 1 ) \le C.
	\end{align*}
\end{proof}

\begin{lemma}\label{lem:chi2}
	Under the assumptions of Lemma~\ref{lem4}, and if \eqref{Assump1} holds, then the following estimate holds
	\begin{equation*}
		\int_0^T \|\nabla^3\chi\|_{L^2}^2 \,dt \le  C .
	\end{equation*}
\end{lemma}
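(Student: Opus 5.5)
Rather than a time-derivative energy estimate, I will treat \eqref{NSAC_34} as an elliptic Neumann problem at each fixed time:
\begin{equation*}
\Delta\chi = \rho^2\chi_t + \rho^2 u\cdot\nabla\chi + \rho f(\chi), \qquad \partial_{\boldsymbol n}\chi=0 \ \text{on } \partial\Om .
\end{equation*}
Standard $H^3$ regularity for the Neumann Laplacian gives
\begin{equation*}
\|\nabla^3\chi\|_{L^2}\le C\big(\|\nabla(\Delta\chi)\|_{L^2}+\|\chi\|_{H^1}\big).
\end{equation*}
Because $\partial_{\boldsymbol n}\chi=0$, the right-hand side $g=\Delta\chi$ satisfies the compatibility condition $\int_\Om g\,dx=0$. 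The $H^1$ term is already bounded by Corollary~\ref{cor1}. It therefore suffices to bound $\int_0^T\|\nabla(\rho^2\chi_t+\rho^2u\cdot\nabla\chi+\rho f(\chi))\|_{L^2}^2\,dt$.

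I will expand the gradient term by term, using Lemma~\ref{lem4} ($\|\rho\|_{L^\infty}+\|\nabla\rho\|_{L^q}\le C$ with $q>3$) and Corollary~\ref{cor1}.

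\begin{itemize}
\item For $\nabla(\rho^2\chi_t)$, I will use
\[
\|\rho\nabla\rho\,\chi_t\|_{L^2}\le C\|\nabla\rho\|_{L^3}\|\chi_t\|_{L^6}\le C(\|\rho\chi_t\|_{L^2}+\|\nabla\chi_t\|_{L^2})
\]
by Lemma~\ref{lem1}, together with $\|\rho^2\nabla\chi_t\|_{L^2}\le C\|\nabla\chi_t\|_{L^2}$. Lemma~\ref{lem:chi1} then makes the time integral of the square at most $C(1+T)$.
\item For $\nabla(\rho f(\chi))$, I need bounds on $\|\nabla\rho f(\chi)\|_{L^2}$ and $\|f'(\chi)\nabla\chi\|_{L^2}$. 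These involve $\|f(\chi)\|_{L^6}$ and $\|\chi\|_{L^\infty}$, so I get a bound by a power of $\|\chi\|_{H^2}\le\varPhi^{1/2}$.
\item For $\nabla(\rho^2u\cdot\nabla\chi)$, I will bound the pieces separately:
\[
\|\nabla\rho\, u\cdot\nabla\chi\|_{L^2}\le C\|\nabla\rho\|_{L^q}\|u\|_{L^\infty}\|\nabla\chi\|_{L^{r}},
\]
together with $\|\nabla u\|_{L^3}\|\nabla\chi\|_{L^6}$ and $\|u\|_{L^\infty}\|\nabla^2\chi\|_{L^2}$. These are controlled by $\|\nabla u\|_{H^1}\|\chi\|_{H^2}$.
\end{itemize}

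Squaring and integrating in time, the $\chi_t$ contribution is $\le C$. The remaining terms are bounded by $CT\varPhi^{k}(T)+C\varPhi(T)\int_0^T\|\nabla^2u\|_{L^2}^2\,dt$.

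The main obstacle is this last term: $\varPhi\int\|\nabla^2u\|^2\le\varPhi^2$ carries no small factor of $T$. To get one, I will instead interpolate $\|u\|_{L^\infty}\le C\|\nabla u\|_{L^2}^{1/2}\|\nabla^2u\|_{L^2}^{1/2}$. Then
\[
\int_0^T\|u\|_{L^\infty}^2\|\chi\|_{H^2}^2\,dt\le C\varPhi^{3/2}\,T^{1/2}\Big(\int_0^T\|\nabla^2u\|_{L^2}^2\Big)^{1/2}\le CT^{1/2}\varPhi^{2}.
\]
Similarly, $\|\nabla u\|_{L^3}$ interpolates between $\|\nabla u\|_{L^2}$ and $\|\nabla^2 u\|_{L^2}$, which again yields a positive power of $T$. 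Every such term is then of the form $T^{\beta}\varPhi^{\alpha}$ with $\beta>0$ and $\alpha$ modest. By \eqref{Assump1}, in the form \eqref{A2} applied to $T^{\beta}\varPhi^{\alpha}=(T\varPhi^{\alpha/\beta})^{\beta}$ with $\alpha/\beta\le4\gamma+4$, these are bounded by $C$, which completes the proof of Lemma~\ref{lem:chi2}.
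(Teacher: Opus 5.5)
Your proposal is correct and follows essentially the same route as the paper. The paper also uses a fixed-time $H^3$ Neumann estimate for $\nabla\Delta\chi$ obtained from \eqref{NSAC_34}, term-by-term bounds via Lemma~\ref{lem4}, Lemma~\ref{lem:chi1} and Corollary~\ref{cor1}, the interpolation $\|u\|_{L^\infty}\le C\|\nabla u\|_{L^2}^{1/2}\|\nabla^2u\|_{L^2}^{1/2}$ to gain a power of $T$, and \eqref{A2} to close. The only cosmetic difference is that you place $\nabla\chi$ in $L^6$ or $L^r$ rather than $L^\infty$, which spares you the paper's absorption of $\frac12\|\nabla\Delta\chi\|_{L^2}$ via Gagliardo--Nirenberg and Young.
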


\begin{proof}
	Applying the gradient operator to \eqref{NSAC_34}, we obtain
	\begin{align*}
		\|\nabla \Delta \chi\|_{L^2} &\le \big( \| 2 \rho \nabla\rho \chi_t \|_{L^2}
		+ \| \rho^2 \nabla\chi_t \|_{L^2}
		+ \| 2 \rho \nabla\rho ( u \cdot \nabla\chi ) \|_{L^2}
		+ \|\rho^2 \nabla u \nabla\chi\|_{L^2} \\
		&\quad + \| \rho^2 \nabla^2\chi u \|_{L^2}
		+ \|\nabla\rho f(\chi)\|_{L^2}
		+ \|\rho f'(\chi) \nabla\chi\|_{L^2} \big).
	\end{align*}
	By Lemma \ref{lem1}, Lemma~\ref{lem4} and Lemma~\ref{lem:chi1}, a direct computation shows
	\begin{align*}
		\| 2 \rho \nabla\rho \chi_t \|_{L^2}
		+ \| \rho^2 \nabla\chi_t \|_{L^2}
		&\le 2 \|\rho\|_{L^\infty} \|\nabla\rho\|_{L^3} \|\chi_{t}\|_{L^6}
		+ \|\rho\|_{L^\infty}^2 \|\nabla\chi_{t}\|_{L^2} \\
		&\le C (1 + \|\nabla\chi_{t}\|_{L^2} ), \\
		\|2 \rho \nabla\rho ( u \cdot \nabla ) \chi \|_{L^2}
		+ \|\rho^2 \nabla u \nabla\chi\|_{L^2}
		&\le ( 2 \|\rho\|_{L^\infty} \|\nabla\rho\|_{L^3} \|u \|_{L^6}
		+ \|\rho\|_{L^\infty}^2 \|\nabla u\|_{L^2} ) \|\nabla\chi\|_{L^\infty} \\
		&\le C \|\nabla u\|_{L^2} \|\nabla^2\chi\|_{L^2}^{\frac{1}{2}}
		\| \nabla ^3 \chi\|_{L^2} ^{\frac{1}{2}} \\
		&\le C \|\nabla u\|_{L^2} \|\nabla^2\chi\|_{L^2}^{\frac{1}{2}}
		\| \nabla \Delta \chi\|_{L^2} ^{\frac{1}{2}} \\
		&\le \frac{1}{2} \|\nabla \Delta \chi\|_{L^2}
		+ C \|\nabla u\|_{L^2}^2 \|\nabla^2\chi\|_{L^2},\\
		\|\nabla\rho f(\chi)\|_{L^2}
		+ \|\rho f'(\chi) \nabla\chi\|_{L^2}
		&\le C (\|\nabla \rho\|_{L^3} \|f(\chi) \|_{L^6}
		+ \|\rho\|_{L^\infty} \|f'(\chi) \|_{L^3}\|\nabla \chi\|_{L^6}) \\
		&\le C(1 + \|\nabla^2\chi\|_{L^2}),
	\end{align*}
	and
	\begin{align*}
		\| \rho^2 \nabla^2\chi u \|_{L^2} &\le \|\rho\|_{L^\infty}^2 \|\nabla^2\chi\|_{L^2} \|u\|_{L^\infty}\\
		&\le C \|\nabla^2\chi\|_{L^2} \|\nabla u\|_{L^2}^{\frac12} \|\nabla^2 u\|_{L^2}^{\frac12}.
	\end{align*}
	Then, using the $H^3$-estimates of Neumann-Laplacian and \eqref{B1}, it follows that
	\begin{equation}\label{nabla_cube_chi}
		\begin{aligned}
			\|\nabla^3\chi\|_{L^2}^2
			 &\le C \|\nabla \Delta \chi\|_{L^2}^2 \\
			&\le C (1  + \|\nabla\chi_{t}\|_{L^2}^2
			+ \|\nabla u\|_{L^2}^4 \|\nabla^2 \chi\|_{L^2}^2 + \|\nabla^2\chi\|_{L^2}^2
			+ \|\nabla^2\chi\|_{L^2}^2 \|\nabla u\|_{L^2} \|\nabla^2 u\|_{L^2}).
		\end{aligned}
	\end{equation}
	Therefore, one deduces by \eqref{A2}, \eqref{nabla_cube_chi},
	Lemma~\ref{lem:chi1} and the H\"older inequality that
	\begin{align*}
		&\int_0^T \|\nabla^3\chi\|_{L^2}^2 \,dt\\
		\le&{ C\int_0^T  \left(1 + \|\nabla\chi_{t}\|_{L^2}^2
		+ \|\nabla u\|_{L^2}^4 \|\nabla^2 \chi\|_{L^2}^2 + \|\nabla^2\chi\|_{L^2}^2
		+ \|\nabla^2\chi\|_{L^2}^2 \|\nabla u\|_{L^2} \|\nabla^2 u\|_{L^2} \right) \,dt }\\
		\le& C + C T \varPhi^3(T) + T \varPhi(T) + T^{\frac12} \varPhi^2(T) \le C.
	\end{align*}
	This completes the proof.
\end{proof}

The following a priori estimates are derived by treating the convection term
$\| \rho ( u \cdot \nabla ) \chi \|_{L^2}^2$ as an energy, without requiring high regularity of $u_t$.
\begin{lemma}\label{lem:u1}
	Under the assumptions of Lemma~\ref{lem4} and assuming that \eqref{Assump1} holds, we have
	\begin{equation*}
		\sup_{0 \le t \le T} \left( \|\nabla u\|_{L^2}^2
		+ \|\mu\|_{L^2}^2 + \| \rho ( u \cdot \nabla ) \chi \|_{L^2}^2 \right)
		+ \int_0^T ( \| \sqrt{\rho} u_t \|_{L^2}^2 + \| \nabla^2u \|_{L^2}^2 ) \,dt \le  C .
	\end{equation*}
\end{lemma}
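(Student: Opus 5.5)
The plan is a standard $H^1$ energy estimate for the momentum equation, obtained by testing $\eqref{NSAC}_2$ with $u_t$. The capillary term is handled through the identity $\Div(\nabla\chi\otimes\nabla\chi-\frac{|\nabla\chi|^2}{2}\mathbb I)=\Delta\chi\nabla\chi$, combined with $\Delta\chi=\rho f(\chi)-\rho\mu$ and $\mu=-\rho\chi_t-\rho u\cdot\nabla\chi$. These substitutions turn the worst part of the coupling into a time derivative of $\|\rho(u\cdot\nabla)\chi\|_{L^2}^2$ plus lower-order remainders.

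\textbf{Step 1 (energy identity).} Multiplying $\eqref{NSAC}_2$ by $u_t$ and integrating gives
$$\frac12\frac{d}{dt}\Big(\nu\|\nabla u\|_{L^2}^2+(\nu+\lambda)\|\Div u\|_{L^2}^2\Big)+\|\sqrt\rho u_t\|_{L^2}^2=-\int\rho(u\cdot\nabla)u\cdot u_t+\int P\,\Div u_t-\int\Delta\chi\nabla\chi\cdot u_t.$$
The pressure term is rewritten in the usual way as $\frac{d}{dt}\int P\Div u-\int P_t\Div u$, with $P_t=-\Div(Pu)-(\gamma-1)P\Div u$; after integration by parts it is bounded by $C\|\nabla u\|_{L^2}^2+C$ using Lemma~\ref{lem4}. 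The convection term is at most $\frac14\|\sqrt\rho u_t\|^2+C\|u\|_{L^6}^2\|\nabla u\|_{L^3}^2$.

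\textbf{Step 2 (coupling term).} This is the main obstacle: $\nabla\chi\cdot u_t$ cannot be controlled directly without $\nabla u_t$. Write $-\Delta\chi\nabla\chi\cdot u_t=(\rho\mu-\rho f(\chi))\nabla\chi\cdot u_t$ and expand $\rho\mu=-\rho^2\chi_t-\rho^2u\cdot\nabla\chi$. The piece $-\int\rho^2(u\cdot\nabla\chi)(u_t\cdot\nabla\chi)$ equals $-\frac12\frac{d}{dt}\|\rho(u\cdot\nabla)\chi\|_{L^2}^2$ plus terms containing $\rho\rho_t$ and $\nabla\chi_t$. Those are bounded using $\rho_t=-\Div(\rho u)$, Lemma~\ref{lem:chi1} for $\int_0^T\|\nabla\chi_t\|^2$, $\|\nabla\chi\|_{L^\infty}\le C\|\nabla^2\chi\|^{1/2}\|\nabla^3\chi\|^{1/2}$, and Lemma~\ref{lem:chi2}. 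The remaining pieces $\int\rho^2\chi_t\nabla\chi\cdot u_t$ and $\int\rho f(\chi)\nabla\chi\cdot u_t$ are bounded by $\|\sqrt\rho u_t\|\,\|\rho\chi_t\|_{L^3}\|\nabla\chi\|_{L^6}$ and similar products. Here $\|\rho\chi_t\|_{L^3}\le C\|\rho\chi_t\|^{1/2}(\|\rho\chi_t\|+\|\nabla\chi_t\|)^{1/2}$, and the $u_t$ factor is absorbed into $\frac14\|\sqrt\rho u_t\|^2$. Since $\|\mu\|_{L^2}\le\|\rho\chi_t\|+\|\rho u\cdot\nabla\chi\|$, the $\sup\|\mu\|$ bound follows from Lemma~\ref{lem:chi1} and the new energy term.

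\textbf{Step 3 ($\nabla^2u$ and closing).} Elliptic estimates for the Lam\'e system give
$$\|\nabla^2u\|_{L^2}\le C\big(\|\sqrt\rho u_t\|+\|\rho u\cdot\nabla u\|+\|\nabla P\|+\|\Delta\chi\nabla\chi\|\big),$$
which yields $\|\nabla^2u\|^2\le C\|\sqrt\rho u_t\|^2+C\|\nabla u\|^6+C+C\|\nabla^2\chi\|\,\|\nabla^3\chi\|$. Adding a small multiple of this to Step 1 and inserting it into the $\|\nabla u\|_{L^3}$ terms produces an inequality of the form
$$\frac{d}{dt}\mathcal E+\|\sqrt\rho u_t\|^2+\|\nabla^2u\|^2\le C(1+\|\nabla u\|^4+\|\nabla^3\chi\|^2+\|\nabla\chi_t\|^2)\,\mathcal E+\cdots,$$
where $\mathcal E\approx\|\nabla u\|^2+\|\rho u\cdot\nabla\chi\|^2$ up to the pressure term $\int P\Div u$, which is controlled by $\varepsilon\|\nabla u\|^2+C$. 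Integrating in time and using Lemmas~\ref{lem:chi1}--\ref{lem:chi2}, together with \eqref{A2} to make terms like $T\varPhi^\alpha(T)$ bounded, the superlinear terms become $\le C$. The initial value $\|\rho_0u_0\cdot\nabla\chi_0\|$ is bounded via $\|u_0\|_{L^6}\|\nabla\chi_0\|_{L^3}$. Rather than use Gr\"onwall on the superlinear $\|\nabla u\|^4$ factor, I would bound those factors by $\varPhi$ and rely on \eqref{A2}.
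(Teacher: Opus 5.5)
Your proposal is correct. It shares the paper's skeleton: test $\eqref{NSAC}_2$ with $u_t$, use the $H^2$ Lam\'e estimate, treat $\|\rho(u\cdot\nabla)\chi\|_{L^2}^2$ as an energy, and close with Lemmas~\ref{lem:chi1}--\ref{lem:chi2} and \eqref{A2}. The difference is how you organize the capillary term.

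The paper first removes the time derivative from $u_t$ altogether. It writes $-\int\Delta\chi\nabla\chi\cdot u_t=-\frac{d}{dt}\int\Delta\chi\nabla\chi\cdot u$ plus three terms of type $\int(\nabla\chi_t\otimes\nabla\chi):\nabla u$. Only then does it insert $\Delta\chi=\rho^2\chi_t+\rho^2u\cdot\nabla\chi+\rho F'(\chi)$ inside the derivative. Using $\|\mu\|_{L^2}^2=\|\rho\chi_t\|_{L^2}^2+2\int\rho^2\chi_t(u\cdot\nabla\chi)\,dx+\|\rho u\cdot\nabla\chi\|_{L^2}^2$, this produces $-\frac12\frac{d}{dt}(\|\mu\|_{L^2}^2+\|\rho u\cdot\nabla\chi\|_{L^2}^2)$ plus an exact derivative $G'(t)$. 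So $\sup\|\mu\|_{L^2}$ is part of the energy. The only capillary remainders are $\|\nabla\chi_t\|_{L^2}\|\nabla\chi\|_{L^\infty}\|\nabla u\|_{L^2}$, the same structure the paper reuses in Lemma~\ref{be2}.

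You substitute first and integrate by parts in time only the $\rho^2(u\cdot\nabla\chi)(u_t\cdot\nabla\chi)$ piece. This avoids the identity for $\partial_t(\Delta\chi\nabla\chi)$. The cost is that $\int\rho^2\chi_t\nabla\chi\cdot u_t$, $\int\rho f(\chi)\nabla\chi\cdot u_t$, and the $\rho\rho_t$ and $\nabla\chi_t$ remainders must be estimated directly. All of these close as you indicate: absorb $\|\sqrt\rho u_t\|_{L^2}$, use $\sup_t\|\rho\chi_t\|_{L^2}\le C$, Lemma~\ref{lem1} for $\|\chi_t\|_{L^6}$, and \eqref{A2}. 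In your route $\sup\|\mu\|_{L^2}$ then follows a posteriori.

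Two small corrections:
\begin{enumerate}
\item The capillary part of your elliptic bound is $C\|\nabla^2\chi\|_{L^2}^3\|\nabla^3\chi\|_{L^2}$, not $C\|\nabla^2\chi\|_{L^2}\|\nabla^3\chi\|_{L^2}$. This is harmless after \eqref{A2}.
\item Terms of the form $\|\nabla^3\chi\|_{L^2}^2\mathcal{E}$ must either go into Gr\"onwall or be split by Young into a pure $\|\nabla^3\chi\|_{L^2}^2$ forcing term, as the paper does. Bounding $\mathcal{E}$ by $\varPhi$ there gives no factor of $T$.
\end{enumerate}
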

\begin{proof}
	By Lemma~\ref{lem4} and $\eqref{NSAC}_2$,  one deduces by the standard $H^2$-estimate for the Lam\'e system that
	\begin{align*}
		\|\nabla^2 u\|_{L^2}^2 &\le C ( \| \rho u_t \|_{L^2}^2 + \| \rho ( u \cdot \nabla ) u \|_{L^2}^2
		+ \|\nabla P(\rho)\|_{L^2}^2 + \| \Delta\chi \nabla\chi \|_{L^2}^2 ) \\
		&\le C ( \|\rho\|_{L^\infty} \| \sqrt{\rho} u_t \| _{L^2}^2
		+ \|\rho\|_{L^\infty}^2 \|u\|_{L^6}^2 \|\nabla u\|_{L^3}^2
		+ \|\rho\|_{L^\infty}^{2\gamma-2} \|\nabla\rho\|_{L^2}^2
		+ \|\nabla\chi\|_{L^\infty}^2 \|\nabla^2\chi\|_{L^2}^2 ) \\
		&\le C (1+ \|\sqrt{\rho} u_t\| _{L^2}^2 + \|\nabla u\|_{L^2}^3 \|\nabla^2 u\|_{L^2}
		+ \|\nabla^2\chi\|_{L^2}^3 \| \nabla^3 \chi \|_{L^2} )\\
		&\le \frac{1}{2} \|\nabla^2 u\|_{L^2}^2 + C (1+ \| \sqrt{\rho} u_t \| _{L^2}^2
		+ \|\nabla u\|_{L^2}^6 + \|\nabla^2\chi\|_{L^2}^3 \| \nabla^3 \chi \|_{L^2} ),
	\end{align*}
	thus we have
	\begin{equation}\label{lem:u1:1}
		\|\nabla^2 u \|_{L^2}^2  \le C (1 +\| \sqrt{\rho} u_t \| _{L^2}^2 + \|\nabla u\|_{L^2}^6
		+ \|\nabla^2\chi\|_{L^2}^3 \|\nabla^3 \chi\|_{L^2}).
	\end{equation}
	From $\eqref{NSAC}_1$ and \eqref{NSAC_34}, after integration by parts, one obtains
	\begin{align*}
		&\int_{\Om} \nabla P(\rho) \cdot  u_{t} \,dx
		= -\frac{d}{dt} \int_\Om P(\rho) \Div u  \,dx
		+ \int_\Om P'(\rho) \rho_t \Div u \,dx, \\
		&\int_\Om  P'(\rho) \rho_t \Div u \,dx
		= -\int_\Om ( \nabla P(\rho) \cdot u ) \Div u \,dx
		- \gamma \int_\Om P(\rho) | \Div u |^2 \,dx ,\\
		&- \int_\Om \Div ( \nabla\chi \otimes \nabla\chi
		- \frac{ | \nabla\chi |^2}{2} \mathbb{I} ) \cdot u_t \,dx
		=  - \frac{d}{dt} \int_\Om \Delta \chi \nabla\chi \cdot u \,dx
		- \int_\Om(\nabla \chi_t \cdot \nabla ) u \cdot \nabla \chi \,dx\\
		&\quad\quad\quad\quad\quad\quad\quad\quad\quad\quad\quad\quad\quad\quad\quad\quad\quad\quad- \int_\Om(\nabla \chi \cdot \nabla ) u \cdot \nabla \chi_t \,dx
		+ \int_\Om \nabla \chi \cdot \nabla \chi_t \Div u \,dx,
	\end{align*}
	and
	\begin{align*}
		- \frac{d}{dt} \int_\Omega  \Delta \chi \nabla\chi \cdot u \,dx
		&= - \frac{d}{dt} \int_\Omega (  \rho^2\chi_t
		+ \rho^2 ( u \cdot \nabla\chi ) + \rho F'(\chi)  )
		(  u\cdot \nabla\chi ) \,dx \\
		&= - \frac{1}{2} \frac{d}{dt} \left( \| \rho ( u \cdot \nabla\chi ) \|_{L^2}^2
		+ \|\mu\|_{L^2}^2 \right) \\
		&\quad+ \frac{d}{dt} \left( \frac{1}{2} \|\rho \chi_t\|_{L^2}^2
		- \int_\Om \rho F'(\chi) (u \cdot \nabla\chi )\,dx \right),
	\end{align*}
	where we have used the identity
	\begin{align*}
		\|\mu\|_{L^2}^2 = \|\rho \chi_t\|_{L^2}^2
		+ 2 \int_\Om \rho^2 \chi_t ( u \cdot \nabla\chi ) \,dx
		+ \| \rho ( u \cdot \nabla\chi ) \|_{L^2}^2.
	\end{align*}
	Multiplying $\eqref{NSAC}_2$ by $u_{t}$, one deduces by the above identities that
	\begin{equation}
		\begin{split}\label{lem:u1:2}
			&\frac{1}{2} \frac{d}{dt} \left( \nu \|\nabla u\|_{L^2}^2
			+ (\lambda + \nu ) \|\Div u\|_{L^2}^2 + \|\mu\|_{L^2}^2
			+ \| \rho ( u \cdot \nabla\chi ) \|_{L^2}^2 \right)  + \|\sqrt{\rho} u_t\|_{L^2}^2\\
			=& - \int_{\Om} \rho ( u \cdot \nabla ) u \cdot u_t \,dx
			+ \gamma \int_{\Om} P(\rho) | \Div u|^2 \,dx
			+ \int_{\Om} \nabla P(\rho) \cdot  u \Div u \,dx \\
			&- \int_\Om ( \nabla\chi_t \cdot \nabla ) u \cdot \nabla\chi \,dx
			- \int_\Om ( \nabla\chi \cdot \nabla ) u \cdot \nabla\chi_t \,dx
			+ \int_\Om \nabla\chi \cdot \nabla\chi_t \Div u \,dx\\
			& +  \frac{d}{dt} \left( \int_{\Om} P(\rho) \Div  u \,dx
			+ \frac{1}{2} \|\rho \chi_t\|_{L^2}^2
			- \int_\Om \rho F'(\chi) (u \cdot \nabla\chi )\,dx \right)\\
			=& \sum_{i=1}^6 J_i + G'(t),
		\end{split}
	\end{equation}
	where
	$$G(t) :=  \int_{\Om} P(\rho) \Div  u \,dx
	+ \frac{1}{2} \|\rho \chi_t\|_{L^2}^2
	- \int_\Om \rho F'(\chi) (u \cdot \nabla\chi )\,dx. $$
	For each $\eta > 0 $, it follows from Lemma~\ref{lem4} that,
	\begin{align*}
		J_1 &\le \|\rho\|_{L^\infty}^{\frac{1}{2}}
		\| \sqrt{\rho}u_t \|_{L^2} \|u\|_{L^6} \|\nabla u\|_{L^3}
		\le C \| \sqrt{\rho} u_t \|_{L^2} \|\nabla u\|_{L^2}^{\frac{3}{2}}
		\|\nabla^2 u\|_{L^2}^{\frac{1}{2}}  \\
		&\le \frac{1}{4} \| \sqrt{\rho} u_t \|_{L^2}^2
		+ \eta \|\nabla^2 u\|_{L^2}^2 + C_\eta \|\nabla u\|_{L^2}^6, \\
		J_2 + J_3 &\le C \|\rho\|_{L^\infty}^{\gamma} \|\nabla u\|_{L^2}^2
		+ C \|\rho\|_{L^\infty}^{\gamma-1} \|\nabla\rho\|_{L^3}
		\|u\|_{L^6} \|\nabla u\|_{L^2}
		\le C \|\nabla u\|_{L^2}^2, \\
		\sum_{i=4}^6 {J_i} &\le C \|\nabla\chi_t\|_{L^2}
		\|\nabla\chi\|_{L^\infty} \|\nabla u\|_{L^2}
		\le C \|\nabla\chi_t\|_{L^2} \|\nabla^2\chi\|_{L^2}^{\frac{1}{2}}
		\| \nabla^3 \chi\|_{L^2}^{\frac{1}{2}}\|\nabla u\|_{L^2}.
	\end{align*}
	Substituting the above estimates and \eqref{lem:u1:1} into \eqref{lem:u1:2}, we obtain
	\begin{align*}
		&\frac{1}{2} \frac{d}{dt} \left( \nu \|\nabla u\|_{L^2}^2
		+ (\lambda + \nu ) \|\Div u\|_{L^2}^2 + \|\mu\|_{L^2}^2
		+ \| \rho ( u \cdot \nabla\chi ) \|_{L^2}^2 \right)  + \frac{3}{4} \|\sqrt{\rho} u_t\|_{L^2}^2
		+ \eta \|\nabla^2 u\|_{L^2} \\
	 \le& 2 \eta \|\nabla^2 u\|_{L^2}^2 + C_\eta \|\nabla u\|_{L^2}^6
		+ C ( \|\nabla\chi_t\|_{L^2} \|\nabla^2\chi\|_{L^2}^{\frac{1}{2}}
		\|\nabla^3\chi\|_{L^2}^{\frac{1}{2}} \|\nabla u\|_{L^2} + \|\nabla u\|_{L^2}^2 ) + G'(t)\\
		\le& C \eta \| \sqrt{\rho} u_t \| _{L^2}^2 + \tilde{C}_{\eta} \|\nabla u\|_{L^2}^6
		+ C (\|\nabla^2\chi\|_{L^2}^3 \|\nabla^3 \chi\|_{L^2}  + \|\nabla\chi_t\|_{L^2} \|\nabla^2\chi\|_{L^2}^{\frac{1}{2}}
		\|\nabla^3\chi\|_{L^2}^{\frac{1}{2}} \|\nabla u\|_{L^2} \\
		&+ \|\nabla u\|_{L^2}^2 + 1 ) + G'(t).
	\end{align*}
	Choosing $\eta$ sufficiently small, one deduces by the Young inequality that
	\begin{equation}\label{lem:u1:3}
		\begin{split}
			&\frac{d}{dt} \left( \nu \|\nabla u\|_{L^2}^2
			+ (\lambda + \nu ) \|\Div u\|_{L^2}^2 + \|\mu\|_{L^2}^2
			+ \| \rho ( u \cdot \nabla\chi ) \|_{L^2}^2 \right)  + \|\sqrt{\rho} u_t\|_{L^2}^2
			+ 2\eta \|\nabla^2 u\|_{L^2}\\
			\le&C ( \|\nabla u\|_{L^2}^6 + \|\nabla^2\chi\|_{L^2}^6
			+ \|\nabla^3 \chi\|_{L^2}^2 + \|\nabla \chi_t\|_{L^2}^2
			+ \|\nabla u\|_{L^2}^8 + \|\nabla^2\chi\|_{L^2}^4 + 1 ) + 2 G'(t).
		\end{split}
	\end{equation}
	It follows from \eqref{com1}, Lemma~\ref{lem4} and Lemma~\ref{lem:chi1}   that
	\begin{align*}
		 G(t) &\le \|P(\rho)\|_{L^2} \|\nabla u\|_{L^2}
		+ \frac{1}{2} \|\rho \chi_t\|_{L^2}
		+ \| \rho ( u \cdot \nabla ) \chi \|_{L^2} \|f(\chi)\|_{L^2}\\
		&\le \frac{\nu}{4} \|\nabla u\|_{L^2}^2
		+ \frac{1}{4} \| \rho ( u \cdot \nabla ) \chi \|_{L^2}^2 + C, \\
		 |G(0)| &\le \|P(\rho_0)\|_{L^2} \|\nabla u_0\|_{L^2}
		+ \frac{1}{2} \|\rho \chi_t\|_{L^2}(0)
		+ \| \rho_0 ( u_0 \cdot \nabla ) \chi_0 \|_{L^2} \| f(\chi_0) \|_{L^2}\\
		&\le C + \|\rho_0\|_{L^3} \|u_0\|_{L^6} \|\nabla \chi_0\|_{L^6}\le C.
	\end{align*}
	Plugging the above estimates into \eqref{lem:u1:3} and integrating over $(0,T)$, it follows from
	\eqref{com1}, \eqref{A2}, Lemma~\ref{lem:chi1} and Lemma~\ref{lem:chi2} that
	\begin{align*}
		&\sup_{0 \le t \le T} \left( \|\nabla u\|_{L^2}^2
		+\|\mu\|_{L^2}^2 + \| \rho ( u \cdot \nabla ) \chi \|_{L^2}^2 \right)
		+ \int_0^T ( \| \sqrt{\rho} u_t \|_{L^2}^2 +  \| \nabla^2u \|_{L^2}^2 ) \,dt\\
		\le& C \int_0^T ( \|\nabla u\|_{L^2}^6  + \|\nabla^2\chi\|_{L^2}^6
		+ \| \nabla^3\chi\|_{L^2}^2 + \|\nabla\chi_t\|_{L^2}^2 + \|\nabla u\|_{L^2}^8
		+ \|\nabla^2\chi\|_{L^2}^4) \,dt + C \\
		\le& C T \varPhi^{4}(T) + C T \varPhi^3(T) + C T \varPhi^2(T) + C \\
		\le& C.
	\end{align*}
	This completes the proof.
\end{proof}

\begin{corollary}\label{cor2}
	Under the assumptions of Lemma~\ref{lem4} and assuming that \eqref{Assump1} holds, we have
	\begin{equation*}
		\sup_{0 \le t \le T} \left( \|\rho_t\|_{L^2} + \|\nabla^2\chi\|_{L^2}
		+ \|\chi\|_{L^\infty} \right) \le  C .
	\end{equation*}
\end{corollary}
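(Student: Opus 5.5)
We bound the three quantities in Corollary~\ref{cor2} separately, using Lemma~\ref{lem4}, Lemma~\ref{lem:chi1}, Lemma~\ref{lem:u1} and Corollary~\ref{cor1}; all three estimates hold under \eqref{Assump1}.

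\emph{The bound on $\|\rho_t\|_{L^2}$.} From $\eqref{NSAC}_1$ we have $\rho_t=-u\cdot\nabla\rho-\rho\,\Div u$. Hence
\begin{equation*}
\|\rho_t\|_{L^2}\le \|u\|_{L^6}\|\nabla\rho\|_{L^3}+\|\rho\|_{L^\infty}\|\nabla u\|_{L^2}\le C\|\nabla u\|_{L^2}.
\end{equation*}
Here we used the Sobolev inequality, $q>3$ so that $\|\nabla\rho\|_{L^3}\le C\|\nabla\rho\|_{L^q}$, and Lemma~\ref{lem4}. The right-hand side is uniformly bounded by Lemma~\ref{lem:u1}.

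\emph{The bound on $\|\nabla^2\chi\|_{L^2}$.} We use \eqref{NSAC_34}, namely $\Delta\chi=\rho^2\chi_t+\rho^2u\cdot\nabla\chi+\rho f(\chi)$, which can be written as $\Delta\chi=\rho(\rho\chi_t+\rho u\cdot\nabla\chi)+\rho f(\chi)$. This gives
\begin{equation*}
\|\Delta\chi\|_{L^2}\le \|\rho\|_{L^\infty}\big(\|\rho\chi_t\|_{L^2}+\|\rho(u\cdot\nabla)\chi\|_{L^2}+\|f(\chi)\|_{L^2}\big).
\end{equation*}
Each term is bounded: $\|\rho\|_{L^\infty}$ by Lemma~\ref{lem4}, $\|\rho\chi_t\|_{L^2}$ by Lemma~\ref{lem:chi1}, $\|\rho(u\cdot\nabla)\chi\|_{L^2}$ by Lemma~\ref{lem:u1}, and $\|f(\chi)\|_{L^2}$ by Corollary~\ref{cor1}. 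Equivalently, since $\Delta\chi=\rho(f(\chi)-\mu)$, one can use $\sup_t\|\mu\|_{L^2}\le C$ from Lemma~\ref{lem:u1} directly. We then apply the $H^2$-estimate for the Neumann Laplacian, using $\partial_{\boldsymbol n}\chi=0$ from \eqref{B1}:
\begin{equation*}
\|\chi\|_{H^2}\le C(\|\Delta\chi\|_{L^2}+\|\chi\|_{H^1}).
\end{equation*}
The term $\|\chi\|_{H^1}$ is bounded by Corollary~\ref{cor1}, so $\sup_t\|\nabla^2\chi\|_{L^2}\le C$.

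\emph{The bound on $\|\chi\|_{L^\infty}$.} In three dimensions $H^2\hookrightarrow L^\infty$, so this follows from the $H^2$ bound just obtained together with Corollary~\ref{cor1}.

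The only point needing care is that the bounds must hold uniformly in $t$. This is exactly what the $\sup_t$ estimates of Lemmas~\ref{lem:chi1} and~\ref{lem:u1} provide, so no time integration is involved and the argument presents no real obstacle.
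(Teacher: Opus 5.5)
Your proof is correct and follows the paper's argument. You bound $\rho_t$ from $\eqref{NSAC}_1$ by H\"older and Sobolev, bound $\|\Delta\chi\|_{L^2}\le\|\rho\|_{L^\infty}(\|\mu\|_{L^2}+\|f(\chi)\|_{L^2})$ using $\eqref{NSAC}_4$ and $\sup_t\|\mu\|_{L^2}\le C$ from Lemma~\ref{lem:u1}, and conclude with $H^2\hookrightarrow L^\infty$; your explicit Neumann $H^2$-estimate with the $\|\chi\|_{H^1}$ term from Corollary~\ref{cor1} simply makes precise a step the paper leaves implicit.
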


\begin{proof}
	It follows from $\eqref{NSAC}_1$, $\eqref{NSAC}_4$, Lemma~\ref{cor1}, Lemma~\ref{lem4},
	the Sobolev and Poincar\'e inequalities that
	\begin{align*}
		&\|\rho_t\|_{L^2}  \le \|\nabla\rho\|_{L^3}\|u\|_{L^6}
		+ \|\rho\|_{L^\infty} \|\nabla u\|_{L^2} \le C \|\nabla u\|_{L^2} ,\\
		&\|\nabla^2\chi\|_{L^2} \le \|\rho\|_{\infty}
		(\|\mu\|_{L^2} + \|f(\chi)\|_{L^2} ) \le C (1 + \|\mu\|_{L^2} ), \\
		&\|\chi\|_{L^\infty} \le C \|\chi\|_{H^2} \le C (1 + \|\nabla^2\chi\|_{L^2} ),
	\end{align*}
	By using Lemma~\ref{lem:u1}, the proof is complete.
\end{proof}

In what follows, we do some time-weighted a priori estimates.
\begin{lemma}\label{lem:u2}
	Under the assumptions of Lemma~\ref{lem4} and assuming that \eqref{Assump1} holds, there holds
	\begin{equation*}
		\sup_{0 \le t  \le T} \| \sqrt{t}\sqrt{\rho}u_t \|_{L^2}^2
		+ \int_0^T \|\sqrt{t} \nabla u_t \|_{L^2}^2  \,dt  \le  C .
	\end{equation*}
\end{lemma}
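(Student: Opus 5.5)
We differentiate the momentum equation $\eqref{NSAC}_2$ in time, multiply by $t\,u_t$, and integrate over $\Om$. The time weight $t$ removes the need for any compatibility condition on $u_0$: the boundary term at $t=0$ vanishes, and the factor $t$ only produces $\|\sqrt\rho u_t\|_{L^2}^2$, which is already integrable by Lemma~\ref{lem:u1}. Using $\eqref{NSAC}_1$ to rewrite $\rho_t$ gives the standard identity
\begin{align*}
&\frac12\frac{d}{dt}\big(t\|\sqrt\rho u_t\|_{L^2}^2\big)+t\big(\nu\|\nabla u_t\|_{L^2}^2+(\nu+\lambda)\|\Div u_t\|_{L^2}^2\big)
=\frac12\|\sqrt\rho u_t\|_{L^2}^2\\
&\quad-t\int_\Om \rho u\cdot\nabla\big(|u_t|^2/2+(u\cdot\nabla u)\cdot u_t\big)\,dx
-t\int_\Om\rho u_t\cdot\nabla u\cdot u_t\,dx
+t\int_\Om P'(\rho)\rho_t\Div u_t\,dx\\
&\quad+t\int_\Om\Big(\nabla\chi_t\otimes\nabla\chi+\nabla\chi\otimes\nabla\chi_t-\nabla\chi\cdot\nabla\chi_t\,\mathbb I\Big):\nabla u_t\,dx .
\end{align*}
The convective terms are handled exactly as in the compressible Navier--Stokes theory (cf.\ \cite{HLX12,Li17}). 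We use Lemma~\ref{lem4} ($\rho$ bounded in $L^\infty\cap W^{1,q}$), Lemma~\ref{lem:u1} ($\nabla u$ bounded in $L^2$), Corollary~\ref{cor2}, and H\"older--Sobolev with $\|u_t\|_{L^6}\le C\|\nabla u_t\|_{L^2}$. Typical bounds are
\[
t\int\rho|u||u_t||\nabla u_t|\le Ct\|\rho^{1/2}u_t\|_{L^2}^{1/2}\|\nabla u_t\|_{L^2}^{3/2}\|u\|_{L^6}\ldots,
\]
\[
t\int\rho|u||\nabla u|^2|\nabla u_t|\le Ct\|\nabla u\|_{L^2}\|\nabla u\|_{L^6}^2\|\nabla u_t\|_{L^2}.
\]
Each of these terms ends up bounded by
\[
\tfrac{\nu}{8}t\|\nabla u_t\|_{L^2}^2+C(1+\|\nabla^2u\|_{L^2}^2)\big(1+t\|\sqrt\rho u_t\|_{L^2}^2\big).
\]
The pressure term reduces to $Ct\|\rho_t\|_{L^2}\|\nabla u_t\|_{L^2}$, and Corollary~\ref{cor2} bounds $\|\rho_t\|_{L^2}$.

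The new ingredient is the capillary term, which is bounded by
\[
Ct\|\nabla\chi\|_{L^\infty}\|\nabla\chi_t\|_{L^2}\|\nabla u_t\|_{L^2}\le \tfrac{\nu}{8}t\|\nabla u_t\|_{L^2}^2+Ct\|\nabla^2\chi\|_{L^2}\|\nabla^3\chi\|_{L^2}\|\nabla\chi_t\|_{L^2}^2 .
\]
I expect this to be the main obstacle. Lemma~\ref{lem:chi1} only gives $\nabla\chi_t\in L^2_tL^2_x$, while $\nabla^3\chi$ is only in $L^2_t$ by Lemma~\ref{lem:chi2}, so the product is not obviously integrable.

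To close the estimate, we would bound $\sqrt t\|\nabla\chi_t\|_{L^2}$ in $L^\infty_t$ by the same time-weighted technique. Concretely, we would multiply \eqref{dt_NSAC_34} by $t\chi_{tt}$, or by $t\chi_t$ after one further differentiation in time, to control $\sup_t t\|\nabla\chi_t\|_{L^2}^2$. This coupled estimate involves $t\|\sqrt\rho u_t\|_{L^2}\|\nabla u_t\|$-type terms. If the coupling is not yet available at this stage, a fallback is to avoid $\|\nabla\chi\|_{L^\infty}$ altogether. Split instead as $\|\nabla\chi\|_{L^6}\|\nabla\chi_t\|_{L^3}\|\nabla u_t\|_{L^2}$, and bound $\|\nabla\chi_t\|_{L^3}$ by interpolating $\nabla\chi_t$ between $L^2$ and $H^1$. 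Then use \eqref{dt_NSAC_34} to express $\Delta\chi_t$ in terms of $\rho^2\chi_{tt}$ and $\rho u_t$ and absorb. I would first try the cruder bound with $\|\nabla^2\chi\|_{L^2}\le C$ and $\int_0^T\|\nabla^3\chi\|_{L^2}^2\le C$. This gives the factor $t\|\nabla\chi_t\|_{L^2}^2\,\|\nabla^3\chi\|_{L^2}$, and I would check whether \eqref{Assump1} and \eqref{A2} (via $\varPhi(T)$, which contains $\sup_t t\|\nabla\chi_t\|_{L^2}^2$) make it $\le C\varPhi^{\alpha}(T)\,T^{1/2}\le C$.

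Collecting the bounds, we absorb the $\nabla u_t$ terms into the left side. This gives
\[
\frac{d}{dt}\big(t\|\sqrt\rho u_t\|_{L^2}^2\big)+\nu t\|\nabla u_t\|_{L^2}^2\le C(1+\|\nabla^2u\|_{L^2}^2)\,t\|\sqrt\rho u_t\|_{L^2}^2+C\|\sqrt\rho u_t\|_{L^2}^2+R(t),
\]
where $\int_0^TR\,dt\le C$ by Lemma~\ref{lem:u1}, Lemma~\ref{lem:chi1} and \eqref{A2}. Gr\"onwall's inequality, with the vanishing initial value of $t\|\sqrt\rho u_t\|_{L^2}^2$ and $\int_0^T\|\nabla^2u\|_{L^2}^2\,dt\le C$, then yields the lemma.
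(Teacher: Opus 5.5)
Your proposal is essentially the paper's proof: you differentiate $\eqref{NSAC}_2$ in $t$, test with $t u_t$, and close the capillary term $C\|\nabla^3\chi\|_{L^2}\|\sqrt{t}\nabla\chi_t\|_{L^2}^2$ by your ``cruder'' option, namely $\int_0^T\|\nabla^3\chi\|_{L^2}\|\sqrt{t}\nabla\chi_t\|_{L^2}^2\,dt\le \varPhi(T)\,T^{1/2}\big(\int_0^T\|\nabla^3\chi\|_{L^2}^2\,dt\big)^{1/2}\le C$ via Lemma~\ref{lem:chi2}, the definition of $\varPhi$, and \eqref{A2}, so the check you left open does succeed (the input needed there is Lemma~\ref{lem:chi2}, not Lemma~\ref{lem:chi1}). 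Your alternatives (a) and (b) are unnecessary, and (a) as a standalone prior bound would be circular since Lemma~\ref{lem:chi3} uses the present lemma; your other deviations are harmless, namely integrating $\rho_t(u\cdot\nabla)u\cdot u_t$ by parts instead of using $\|\rho_t\|_{L^2}\le C$, using Gr\"onwall instead of the $\varPhi$-bootstrap for $\|\nabla^2u\|_{L^2}\|\sqrt{t}\sqrt\rho u_t\|_{L^2}^2$, and losing a factor $2$ in front of $t\int_\Om\rho u\cdot\nabla|u_t|^2\,dx$.
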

\begin{proof}
	Differentiating $\eqref{NSAC}_2$ with respect to $t$, we have
	\begin{equation}
		\begin{split}\label{lem:u2:1}
			&\rho_t  u_t + \rho u_{tt} + \rho_t ( u \cdot \nabla ) u + \rho ( u_t \cdot \nabla ) u
			+ \rho(  u \cdot \nabla ) u_t + \nabla \Big(P'(\rho) \rho_t \Big) \\
			&\quad = \mathcal{L} u_t - \Div ( \nabla\chi_t \otimes \nabla\chi
			+ \nabla\chi \otimes \nabla\chi_t
			- \nabla\chi \cdot \nabla\chi_t \mathbb{I} ).
		\end{split}
	\end{equation}
	It follows from integration by parts and $\eqref{NSAC}_1$ that
	\begin{align*}
		&- \int_{\Om} \rho_t | u_t |^2 \,dx
		= \int_{\Om} \Div( \rho u ) | u_t |^2 \,dx = - \int_{\Om} \rho u \cdot \nabla|u_t|^2 \,dx, \\
		&- \int_{\Om} u_{t} \cdot\nabla \Big(P'(\rho) \rho_t \Big) \,dx
		= - A \gamma \int_{\Om} \rho^{\gamma-1} \rho_t \Div u_{t} \,dx.
	\end{align*}
	Testing \eqref{lem:u2:1} with $u_{t}$, and using the above identities, one deduces that
	\begin{align*}
		&\frac{1}{2} \frac{d}{dt} \| \sqrt\rho u_{t} \|_{L^2}^{2}
		+ \nu \|\nabla u_{t} \|_{L^2}^{2} + ( \lambda + \nu ) \| \Div u_{t} \|_{L^2}^{2} \\
		=& - \int_{\Om} \rho u \cdot \nabla|u_t|^2 \,dx
		- \int_{\Om} \rho_t ( u\cdot\nabla ) u \cdot u_t \,dx
		- \int_{\Om} \rho ( u_t \cdot \nabla ) u \cdot u_t \,dx \\
		& + A \gamma \int_{\Om} \rho^{\gamma-1} \rho_t \Div u_t \,dx
		+ \int_{\Om} ( \nabla\chi_t \cdot \nabla) u_t \cdot \nabla\chi \,dx
		+ \int_{\Om} ( \nabla\chi \cdot \nabla) u_t \cdot \nabla\chi_t \,dx \\
		& - \int_{\Om} \nabla\chi \cdot \nabla\chi_t \Div u_t \,dx.
	\end{align*}
	Multiplying the above equation by $t$, then we have
	\begin{equation}
		\begin{split}
			\label{lem:u2:2}
			&\frac{1}{2} \frac{d}{dt} \| \sqrt{t} \sqrt\rho u_{t} \|_{L^2}^{2}
			+ \nu \|\sqrt{t} \nabla u_{t}\|_{L^2}^{2}
			+ ( \lambda + \nu ) \| \sqrt{t} \Div u_{t} \|_{L^2}^{2} \\
			=& \frac{1}{2}\|\sqrt\rho u_{t} \|_{L^2}^{2} - t \int_{\Om} \rho u \cdot \nabla|u_t|^2 \,dx
			- t \int_{\Om} \rho_t ( u \cdot \nabla ) u \cdot u_t \,dx \\
			&- t \int_{\Om} \rho ( u_t \cdot \nabla ) u \cdot u_t \,dx
			+ A \gamma t \int_{\Om} \rho^{\gamma-1} \rho_t \Div u_t \,dx
			+ t \int_{\Om} ( \nabla\chi_t \cdot \nabla) u_t \cdot \nabla\chi \,dx \\
			& + t \int_{\Om} ( \nabla\chi \cdot \nabla) u_t \cdot \nabla\chi_t \,dx
			- t \int_{\Om} \nabla\chi \cdot \nabla\chi_t \Div u_t\,dx\\
			 =&\sum_{i=1}^{8}{K}_i.
		\end{split}
	\end{equation}
	Using the H\"{o}lder, Sobolev, Young, Poincar\'e and Gagliardo-Nirenberg inequalities, it follows from Lemma~\ref{lem4}, Lemma~\ref{lem:u1} and Corollary~\ref{cor2} that
	\begin{align*}
		K_2+ K_4 &\le C \|\rho\|_{L^\infty}^{\frac{1}{2}} \| \sqrt{t} \sqrt\rho u_t\|_{L^2}
		(  \|u\|_{L^\infty} \|\sqrt{t} \nabla u_t\|_{L^2}
		+ \|\nabla u\|_{L^3}\| \sqrt{t}  u_t\|_{L^6} ) \\
		&\le C \|\nabla u\|_{L^2}^{\frac{1}{2}} \|\nabla^2 u\|_{L^2}^{\frac{1}{2}}
		\| \sqrt{t} \sqrt\rho u_t \|_{L^2}\|\sqrt{t} \nabla u_t\|_{L^2} \\
		&\le \frac{\nu}{6} \| \sqrt{t} \nabla u_t\|_{L^2}^2
		+ C \|\nabla^2 u\|_{L^2} \|\sqrt{t} \sqrt\rho u_t \|_{L^2}^2, \\
		K_3 + K_5 &\le C t \| \rho_t\|_{L^2}( \|u\|_{L^6} \|\nabla u\|_{L^6}
		\| u_t \|_{L^6} + \|\rho\|_{L^\infty}^{\gamma-1} \| \nabla u_t \|_{L^2} ) \\
		&\le C\sqrt{t} ( \|\nabla u\|_{L^2} \|\nabla^2 u\|_{L^2} + 1 )
		\| \sqrt{t}\nabla u_t\|_{L^2} \\
		& \le \frac{\nu}{6} \|\sqrt{t} \nabla u_t\|_{L^2}^2
		+ C(1 + \|\sqrt{t} \nabla^2 u\|_{L^2}^2 ),\\
		K_6 + K_7 + K_8 &\le C \|\sqrt{t} \nabla\chi_t\|_{L^2}
		\|\nabla\chi\|_{L^\infty} \| \sqrt{t} \nabla u_t\|_{L^2} \\
		&\le C \|\sqrt{t} \nabla\chi_t\|_{L^2} \|\nabla^2\chi\|_{L^2}^{\frac{1}{2}}
		\|\nabla^3\chi\|_{L^2}^{\frac{1}{2}} \| \sqrt{t} \nabla u_t \|_{L^2} \\
		&\le \frac{\nu}{6} \| \sqrt{t} \nabla u_t\|_{L^2}^2
		+ C \| \nabla^3 \chi\|_{L^2} \|\sqrt{t} \nabla\chi_t\|_{L^2}^2.
	\end{align*}
	Substituting the above estimates into \eqref{lem:u2:2}, we obtain
	\begin{align*}
		&\frac{d}{dt}\|\sqrt{t}\sqrt\rho u_{t} \|_{L^2}^{2} + \nu \|\sqrt{t}\nabla u_{t} \|_{L^2}^{2} + (\lambda+\nu) \|\sqrt{t} \Div  u_{t} \|_{L^2}^{2} \\
		 \le& \| \sqrt\rho u_{t} \|_{L^2}^{2} + C (1 + \| \sqrt{t} \nabla^2 u \|_{L^2}^2 )
		+ C \|\nabla^2 u\|_{L^2} \| \sqrt{t} \sqrt\rho u_t \|_{L^2}^2
		+ C \| \nabla^3 \chi\|_{L^2} \|\sqrt{t} \nabla\chi_t\|_{L^2}^2.
	\end{align*}
	Integrating it over $(0,T)$, one deduces by Lemma~\ref{lem:chi2}, Lemma~\ref{lem:u1}
	and \eqref{A2} that
	\begin{align*}
		&\sup_{0 \le t  \le T} \| \sqrt{t} \sqrt{\rho}u_t \|_{L^2}^2
		+ \nu \int_0^T \|\sqrt{t} \nabla u_t \|_{L^2}^2  \,dt\\
		 \le& C \int_0^T (1 + \| \sqrt\rho u_{t} \|_{L^2}^{2}
		+ \|\nabla^2 u\|_{L^2}^2 ) \,dt + T^{\frac{1}{2}} \varPhi(T)
		\left( \big( \int_0^T \|\nabla^2 u\|_{L^2}^2 \,dt \big)^{\frac{1}{2}}
		+ \big( \int_0^T\| \nabla^3 \chi\|_{L^2} \,dt \big)^{\frac{1}{2}} \right) \\
		 \le& C (1 + T^{\frac{1}{2}} \varPhi(T)) \le C.
	\end{align*}
	The proof is complete.
\end{proof}

\begin{lemma}\label{lem:chi3}
Under the assumptions of Lemma~\ref{lem4} and assuming that \eqref{Assump1} holds, there holds
	\begin{equation*}
		\sup_{0 \le t  \le T} \|\sqrt{t}\nabla\chi_t\|_{L^2}^2
		+ \int_0^T \|\sqrt{t} \rho \chi_{tt} \|_{L^2}^2 \,dt \le  C .
	\end{equation*}
\end{lemma}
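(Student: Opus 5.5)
We test the time-differentiated phase equation \eqref{dt_NSAC_34} with $t\chi_{tt}$. In this way $\|\sqrt t\nabla\chi_t\|_{L^2}^2$ appears as an energy and $\|\sqrt t\rho\chi_{tt}\|_{L^2}^2$ as the dissipation. Integrating $\int_\Om \Delta\chi_t\,\chi_{tt}\,dx$ by parts, with $\partial_{\boldsymbol n}\chi_t=0$ on $\partial\Om$ (from \eqref{B1}), gives $-\frac12\frac{d}{dt}\|\nabla\chi_t\|_{L^2}^2$. After multiplying by $t$ we therefore obtain
\begin{align*}
\frac12\frac{d}{dt}\|\sqrt t\nabla\chi_t\|_{L^2}^2+\|\sqrt t\rho\chi_{tt}\|_{L^2}^2
=\frac12\|\nabla\chi_t\|_{L^2}^2-t\int_\Om\big(2\rho\rho_t\chi_t+2\rho\rho_t u\cdot\nabla\chi+\rho^2u_t\cdot\nabla\chi+\rho^2u\cdot\nabla\chi_t+\rho_tf(\chi)+\rho f'(\chi)\chi_t\big)\chi_{tt}\,dx .
\end{align*}
The first term on the right is integrable in time by Lemma~\ref{lem:chi1}.

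For the remaining terms, every one that carries a factor $\rho$ is handled the same way. We write $t\rho(\cdots)\chi_{tt}=\sqrt t(\cdots)\cdot\sqrt t\rho\chi_{tt}$ and absorb via Young's inequality. For example, $t\int\rho^2 u_t\cdot\nabla\chi\,\chi_{tt}\le C\|\sqrt t\sqrt\rho u_t\|_{L^2}\|\nabla\chi\|_{L^\infty}\|\sqrt t\rho\chi_{tt}\|_{L^2}$, and $\|\nabla\chi\|_{L^\infty}\le C\|\nabla^3\chi\|_{L^2}^{1/2}$ is square integrable in time by Lemma~\ref{lem:chi2} together with Corollary~\ref{cor2}. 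Similarly:
\begin{itemize}
\item $t\int\rho^2u\cdot\nabla\chi_t\chi_{tt}\le C\|u\|_{L^\infty}\|\sqrt t\nabla\chi_t\|_{L^2}\|\sqrt t\rho\chi_{tt}\|_{L^2}$, which is Gronwall-type with coefficient $\|\nabla^2u\|_{L^2}\in L^2$.
\item $t\int\rho f'(\chi)\chi_t\chi_{tt}\le C\sqrt t\|\chi_t\|_{L^2}\|\sqrt t\rho\chi_{tt}\|_{L^2}$, using $\|\chi\|_{L^\infty}\le C$ and Lemma~\ref{lem1}, Lemma~\ref{lem:chi1}.
\item $t\int 2\rho\rho_t u\cdot\nabla\chi\,\chi_{tt}$ uses $\rho_t=-u\cdot\nabla\rho-\rho\Div u$. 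It is bounded by $C\sqrt t(\|\nabla\rho\|_{L^q}\|u\|_{L^\infty}+\|\nabla u\|_{L^3})\|u\|_{L^\infty}\|\nabla\chi\|_{L^6}\|\sqrt t\rho\chi_{tt}\|_{L^2}$, and by Lemma~\ref{lem:u1} the prefactor is in $L^2(0,T)$.
\item $t\int2\rho\rho_t\chi_t\chi_{tt}$ is estimated the same way, using $\|\chi_t\|_{L^6}\le C(\|\rho\chi_t\|_{L^2}+\|\nabla\chi_t\|_{L^2})$, which is in $L^2$ in time.
\end{itemize}

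The main obstacle is the term $t\int\rho_t f(\chi)\chi_{tt}\,dx$, which has no weight $\rho$ in front of $\chi_{tt}$. Because of vacuum it cannot be absorbed by the dissipation. My first attempt is to use the continuity equation, $\rho_t=-\Div(\rho u)$, and integrate by parts in space:
\begin{align*}
t\int_\Om\rho_tf(\chi)\chi_{tt}\,dx=-t\int_\Om\rho u\cdot\nabla\big(f(\chi)\chi_{tt}\big)\,dx .
\end{align*}
This produces $-t\int\rho u\cdot\nabla\chi\,f'(\chi)\chi_{tt}$, which is fine since it carries $\rho$ and is bounded as above. It also produces $-t\int\rho f(\chi)u\cdot\nabla\chi_{tt}$, which is bad. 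For this piece I pull out the time derivative:
\begin{align*}
-t\int\rho f(\chi)u\cdot\nabla\chi_{tt}=-\frac{d}{dt}\Big(t\int\rho f(\chi)u\cdot\nabla\chi_t\Big)+\int\rho f(\chi)u\cdot\nabla\chi_t+t\int(\rho f(\chi)u)_t\cdot\nabla\chi_t .
\end{align*}
We then need to bound each piece:
\begin{itemize}
\item The boundary term satisfies $t\int\rho f(\chi)u\cdot\nabla\chi_t\le C\sqrt t\|\nabla u\|_{L^2}\|\sqrt t\nabla\chi_t\|_{L^2}\le\frac14\|\sqrt t\nabla\chi_t\|_{L^2}^2+C$, since $\|f(\chi)\|_{L^\infty}\le C$.
\item In the last integral, the contribution of $\rho_t f u$ is bounded by $C\sqrt t\|\rho_t\|_{L^2}\|u\|_{L^\infty}\|\sqrt t\nabla\chi_t\|_{L^2}$.
\item The contribution of $\rho f'(\chi)\chi_t u$ is bounded by $C\sqrt t\|\chi_t\|_{L^6}\|u\|_{L^3}\|\sqrt t\nabla\chi_t\|_{L^2}$.
\item The contribution of $\rho f u_t$ is bounded by $C\|\sqrt t\nabla u_t\|_{L^2}\|\sqrt t\nabla\chi_t\|_{L^2}$, where $\|\sqrt t\nabla u_t\|_{L^2}\in L^2(0,T)$ by Lemma~\ref{lem:u2}.
\end{itemize}

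Collecting these estimates, integrating over $(0,t)$ and noting that the weight $t$ kills all initial contributions, we apply Gronwall's inequality with $L^1$ coefficients built from $\|\nabla^2u\|_{L^2}^2$, $\|\nabla^3\chi\|_{L^2}$ and $\|\sqrt t\nabla u_t\|_{L^2}^2$. All of these are bounded by Lemmas~\ref{lem:chi1}--\ref{lem:u2}, which gives the claimed bound. The time-integrability of every coefficient is guaranteed by earlier lemmas, so \eqref{A2} is not even needed at this stage.
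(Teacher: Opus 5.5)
Your proposal is correct and is essentially the paper's argument: you test the time-differentiated phase equation with $t\chi_{tt}$, absorb every $\rho$-weighted term into $\|\sqrt t\rho\chi_{tt}\|_{L^2}^2$, and remove the unweighted term $t\int_\Om\rho_t f(\chi)\chi_{tt}\,dx$ through a total time derivative together with $\rho_t=-\Div(\rho u)$. The paper instead pulls out $\frac{d}{dt}\int_\Om\rho_t f(\chi)\chi_t\,dx$ first and then uses $\rho_{tt}=-\Div(\rho_t u+\rho u_t)$, whereas you integrate by parts in space first; it also closes with \eqref{A2} where you use Gronwall, and both differences are cosmetic.

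There is one bookkeeping slip. For $t\int_\Om 2\rho\rho_t\chi_t\chi_{tt}\,dx$, the prefactor $\sqrt t\|\rho_t\|_{L^3}\|\chi_t\|_{L^6}$ cannot be shown to lie in $L^2(0,T)$, because you only have $\|\rho_t\|_{L^3}^2\le C(1+\|\nabla^2u\|_{L^2})$. Keep the weight on $\chi_t$ instead, via $\|\sqrt t\chi_t\|_{L^6}\le C(\|\rho\chi_t\|_{L^2}+\|\sqrt t\nabla\chi_t\|_{L^2})$, and treat the term as a Gronwall term with coefficient $1+\|\nabla^2u\|_{L^2}$. This is exactly how you already handled $t\int_\Om\rho^2 u\cdot\nabla\chi_t\,\chi_{tt}\,dx$.
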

\begin{proof}
	Differentiating \eqref{NSAC_34} with respect to $t$, one obtains
	\begin{equation}
		\label{lem:chi3:1}
		2 \rho \rho_t \chi_t + \rho^2 \chi_{tt} + 2 \rho \rho_t u \cdot \nabla \chi
		+ \rho^2 u_t \cdot \nabla \chi + \rho^2 u \cdot \nabla  \chi_t
		= \Delta \chi_t - \rho_t f(\chi) - \rho f'(\chi) \chi_t.
	\end{equation}
	Note that
	\begin{align*}
		- \int_{\Om} \rho_{t} f(\chi) \chi_{tt} dx \,
		&= - \frac{d}{dt} \int_{\Om} \rho_t f(\chi)\chi_t \,dx
		+ \int_{\Om} \rho_{tt} f(\chi) \chi_t \,dx
		+ \int_{\Om} \rho_t ( f'(\chi) \chi_t ) \chi_t \,dx \\
		&= - \frac{d}{dt}\int_{\Om} \rho_t f(\chi)\chi_t \,dx
		- \int_{\Om} \Div (\rho_t u + \rho u_t) f(\chi) \chi_t \,dx
		+ \int_{\Om} \rho_t  f'(\chi)  \chi_t^2 \,dx \\
		&= - \frac{d}{dt}\int_{\Om} \rho_t f(\chi)\chi_t \,dx
		+ \int_{\Om} (\rho_t u + \rho u_t) ( f'(\chi) \nabla\chi \chi_t
		+ f(\chi) \nabla\chi_t ) \,dx \\
		&\quad+ \int_{\Om} \rho_t f'(\chi) \chi_t^2 \,dx.
	\end{align*}
	Multiplying \eqref{lem:chi3:1} by $ \chi_{tt}$ and utilizing the above
	identities, it follows from integration by parts that
	\begin{equation}
		\begin{aligned}\label{lem:chi3:2}
			&\frac{d}{dt} \left( \frac{1}{2}\|\nabla\chi_t\|_{L^2}^2
			+ \int_{\Om} \rho_t f(\chi) \chi_t \,dx \right)
			+ \| \rho \chi_{tt} \|_{L^2}^2 \\
			 =& -2 \int_{\Om}  \rho \rho_t \chi_t \chi_{tt} \,dx
			- 2 \int_{\Om} \rho \rho_t ( u \cdot \nabla\chi ) \chi_{tt} \,dx
			- \int_{\Om}  \rho^2 ( u_t \cdot \nabla \chi ) \chi_{tt} \,dx \\
			&- \int_{\Om}  \rho^2 ( u \cdot \nabla\chi_t ) \chi_{tt} \,dx
			+ \int_{\Om} \rho_t f'(\chi) ( u \cdot \nabla\chi ) \chi_t \,dx
			+ \int_{\Om} \rho_t  f(\chi) (u \cdot \nabla\chi_t ) \,dx \\
			& + \int_{\Om} \rho f'(\chi) ( u_t \cdot \nabla\chi ) \chi_t \,dx
			+ \int_{\Om} \rho f(\chi) ( u_t \cdot \nabla\chi_t) \,dx
			+ \int_{\Om} \rho_t f'(\chi) \chi_t^2  \,dx \\
			& - \int_{\Om} \rho f'(\chi) \chi_t \chi_{tt} \,dx.
		\end{aligned}
	\end{equation}
	Multiplying the above identity with $t$ to obtain
	\begin{equation}\label{lem:chi3:3}
		\begin{aligned}
			&\frac{1}{2} \frac{d}{dt} \|\sqrt{t} \nabla\chi_t\|_{L^2}^2 +
			\| \sqrt{t} \rho\chi_{tt}\|_{L^2}^2 \\
			=& \frac{d}{dt} [ t H(t) ] - H(t) + \frac{1}{2}\|\nabla\chi_t\|_{L^2}
			- 2 t \int_{\Om} \rho \rho_t \chi_t \chi_{tt} \,dx
			- 2 t \int_{\Om} \rho \rho_t ( u \cdot \nabla\chi ) \chi_{tt} \,dx \\
			& - t \int_{\Om}  \rho^2 ( u_t \cdot \nabla \chi ) \chi_{tt} \,dx
			- t \int_{\Om}  \rho^2 ( u \cdot \nabla\chi_t ) \chi_{tt} \,dx
			+ t \int_{\Om} \rho_t f'(\chi) ( u \cdot \nabla\chi ) \chi_t \,dx \\
			& + t \int_{\Om} \rho_t  f(\chi) (u \cdot \nabla\chi_t ) \,dx
			+ t \int_{\Om} \rho f'(\chi) ( u_t \cdot \nabla\chi ) \chi_t \,dx
			+ t \int_{\Om} \rho f(\chi) ( u_t \cdot \nabla\chi_t) \,dx \\
			& + t \int_{\Om} \rho_t f'(\chi) \chi_t^2  \,dx
			- t \int_{\Om} \rho f'(\chi) \chi_t \chi_{tt} \,dx \\
			=& \frac{d}{dt} [ t H(t) ] - H(t)
			+ \frac{1}{2}\|\nabla\chi_t\|_{L^2} + \sum_{i=1}^{10}{L_i},
		\end{aligned}
	\end{equation}
	where $H(t) := - \int_{\Om} \rho_t f(\chi)\chi_t \,dx$. It follows from Lemma \ref{lem1}, Lemma~\ref{lem4}--\ref{lem:u2}, the H\"{o}lder, Poincar\'e, Young and Gagliardo-Nirenberg inequalities that
	\begin{align*}
		| H(t) | &\le \|\rho_t\|_{L^2} \|f(\chi)\|_{L^\infty} \|\chi_t\|_{L^2}\\
		&\le C (\|\rho\chi_{t}\|_{L^2} + \|\nabla\chi_{t}\|_{L^2}) \\
		&\le C (1 + \|\nabla\chi_{t}\|_{L^2} ), \\
		L_1 + L_2 &\le \sqrt{t} \|\sqrt{t} \rho \chi_{tt}\|_{L^2} \|\rho_t\|_{L^3}
		( \| \chi_{t} \|_{L^6} + \|u\|_{L^\infty} \|\nabla\chi\|_{L^6} ) \\
		&\le \sqrt{t} \|\sqrt{t} \rho \chi_{tt}\|_{L^2} \|\nabla^2 u\|_{L^2}^{\frac{1}{2}}
		(\|\rho\chi_{t}\|_{L^2} + \|\nabla\chi_{t}\|_{L^2}
		+ \|u\|_{L^\infty} \|\nabla^2\chi\|_{L^2} )   \\
		&\le \sqrt{t} \|\sqrt{t} \rho \chi_{tt}\|_{L^2}  \|\nabla^2 u\|_{L^2}^{\frac{1}{2}}
		(1 + \|\nabla\chi_t\|_{L^2} + \|\nabla^2 u\|_{L^2}^{\frac{1}{2}} )   \\
		&\le \frac{1}{8} \|\sqrt{t} \rho \chi_{tt}\|_{L^2}^2
		+ C \|\nabla^2 u\|_{L^2} (1 + \|\sqrt{t}\nabla\chi_t\|_{L^2}^2)
		+ C \|\nabla^2 u\|_{L^2}^2, \\
		L_3 &\le \|\rho\|_{L^\infty}^{\frac{1}{2}} \|\sqrt{t} \sqrt\rho u_{t}\|_{L^2}
		\|\nabla\chi\|_{L^\infty} \|\sqrt{t} \rho \chi_{tt}\|_{L^2} \\
		&\le C \|\nabla^2\chi\|_{L^2}^{\frac{1}{2}} \| \nabla^3 \chi \|_{L^2}^{\frac{1}{2}}
		\|\sqrt{t} \rho \chi_{tt}\|_{L^2} \\
		& \le \frac{1}{8} \|\sqrt{t} \rho \chi_{tt}\|_{L^2}^2
		+ C \|\nabla^3\chi\|_{L^2},   \\
		L_4 &\le \|\sqrt{t} \rho\chi_{tt}\|_{L^2} \|\rho\|_{L^\infty}
		\|u\|_{L^\infty} \|\sqrt{t} \nabla\chi_t\|_{L^2} \\
		&\le C \| \rho \chi_{tt} \|_{L^2} \|\nabla u\|_{L^2}^{\frac{1}{2}}
		\|\nabla^2 u\|_{L^2}^{\frac{1}{2}} \|\sqrt{t} \nabla\chi_t\|_{L^2} \\
		&\le \frac{1}{8} \|\sqrt{t} \rho\chi_{tt}\|_{L^2}^2
		+ C \|\nabla^2 u\|_{L^2} \|\sqrt{t} \nabla\chi_t\|_{L^2}^2, \\
		L_5 + L_6 &\le C \sqrt{t} \|\rho_t\|_{L^2} \|u\|_{L^\infty}
		( \|f'(\chi)\|_{L^6} \|\nabla\chi\|_{L^6} \|\sqrt{t} \chi_t\|_{L^6}
		+ \|f(\chi)\|_{L^\infty} \|\sqrt{t} \nabla\chi_t\|_{L^2})  \\
		&\le C \|\nabla u\|_{L^2}^{\frac{1}{2}} \|\nabla^2 u\|_{L^2}^{\frac{1}{2}}
		(\|\rho\chi_{t}\|_{L^2} + \|\sqrt{t} \nabla\chi_t\|_{L^2} )     \\
		&\le C(1 + \|\nabla^2 u\|_{L^2}   + \|\sqrt{t} \nabla\chi_t\|_{L^2}^2 ),  \\
		L_7 +L_8&\le C \|\rho\|_{L^\infty}^{\frac{1}{2}} \|\sqrt{t} \sqrt\rho u_{t}\|_{L^2}
		( \|f'(\chi)\|_{L^6} \|\nabla\chi\|_{L^6} \|\sqrt{t} \chi_t\|_{L^6}
		+ \|f(\chi)\|_{L^\infty} \|\sqrt{t} \nabla\chi_t\|_{L^2})  \\
		&\le C (\|\rho\chi_{t}\|_{L^2}+  \|\sqrt{t} \nabla\chi_t\|_{L^2} )     \\
		&\le C (1  + \|\sqrt{t} \nabla\chi_t\|_{L^2}^2 ),
	\end{align*}
	and
\begin{align*}
		L_9 + L_{10}&\le C (\|f'(\chi)\|_{L^6} \|\rho_t\|_{L^2}  \|\sqrt{t} \chi_t\|_{L^6}^2
		+ \|f'(\chi)\|_{L^3} \|\sqrt{t}\rho\chi_{tt}\|_{L^2} \|\sqrt{t}\chi_{t}\|_{L^6}) \\
		&\le \frac{1}{8} \|\sqrt{t} \rho\chi_{tt}\|_{L^2}^2
		+ C (\|\rho\chi_t\|_{L^2}^2 + \|\sqrt{t} \nabla\chi_t\|_{L^2}^2) \\
		& \le \frac{1}{8} \|\sqrt{t} \rho\chi_{tt}\|_{L^2}^2
		+ C (1 + \|\sqrt{t} \nabla\chi_t\|_{L^2}^2).
	\end{align*}
	Plugging the above estimates into \eqref{lem:chi3:3} and using the Young inequality, one obtains
	\begin{equation}
		\begin{aligned} \label{lem:chi3:4}
			& \frac{d}{dt} \|\sqrt{t} \nabla\chi_t\|_{L^2}^2 + \|\sqrt{t} \rho \chi_{tt}\|_{L^2}^2 \\
			 \le& 2 \frac{d}{dt} [ t H(t) ] + C (1 + \|\nabla\chi_t\|_{L^2}^2
			+ \|\nabla^2 u\|_{L^2}^2 + \| \nabla^3 \chi \|_{L^2}^2 )
			+ C \|\nabla^2 u\|_{L^2} \|\sqrt{t}\nabla\chi_t\|_{L^2}^2.
		\end{aligned}
	\end{equation}
	By Lemma~\ref{lem1}, Lemma~\ref{lem:chi1}, Corollary \ref{cor2}, and Sobolev
	inequality, we have
		\begin{equation}
		\begin{aligned}\label{lem:chi3:5}
			2 t H(t) &\le 2 \sqrt{t} \|\rho_t\|_{L^2} \|f(\chi)\|_{L^3} \|\sqrt{t} \chi_t\|_{L^6}\\
			&\le \frac{1}{2} \|\sqrt{t} \nabla \chi_t\|_{L^2}^2 + C.
		\end{aligned}
	\end{equation}
	Integrating \eqref{lem:chi3:4} over $(0, T)$ and using the H\"older inequality, one deduces by \eqref{A2}, \eqref{lem:chi3:5}, and Lemma~\ref{lem:u1} that
	\begin{align*}
		&\sup_{0 \le t  \le T} \|\sqrt{t} \nabla\chi_t\|_{L^2}^2
		+ \int_0^T \|\sqrt{t} \rho \chi_{tt}\|_{L^2}^2 \,dt\\
		 \le& 2 \sup_{0 \le t  \le T} \left( t H(t) \right) + C(T+\varPhi(T))
		+ T^{\frac{1}{2}} \varPhi(T) \left( \int_0^T \|\nabla^2 u\|_{L^2}^2 \,dt \right)^{\frac{1}{2}} \\
		 \le& \frac{1}{2} \sup_{0 \le t  \le T} \|\sqrt{t} \nabla \chi_t\|_{L^2}^2 + C.
	\end{align*}
	Therefore, one has
	$$\sup_{0 \le t  \le T} \|\sqrt{t}\nabla\chi_t\|_{L^2}^2
	+ \int_0^T \|\sqrt{t} \rho \chi_{tt} \|_{L^2}^2 \,dt \le  C .$$
	The proof is complete.
\end{proof}

Finally, we summarize the prior estimates obtained previously and derive some additional estimates essential for proving uniqueness.
\begin{proposition}
	Under the assumptions of Lemma~\ref{lem4} and assuming that \eqref{Assump1} holds, there holds
	\begin{equation*}
		\begin{aligned}
			&\varPhi(T) + \sup_{0 \le t  \le T} \left( \|\rho_t\|_{L^2}^2
			+ \| \sqrt{t} \nabla^2 u \|_{L^2}^2
			+ \| \sqrt{t} \nabla^3\chi \|_{L^2}^2 + \|\mu\|_{L^2}^2 \right) \\
			&\quad+ \int_0^T \left( \|\nabla^2 u\|_{L^q} + \|\sqrt{t} \nabla^2 u\|_{L^q}^2
			+ \|\chi_t\|_{H^1}^2
			+ \| \sqrt{t} \rho \chi_{tt} \|_{L^2}^2\right) \,dt \le C.
		\end{aligned}
	\end{equation*}
\end{proposition}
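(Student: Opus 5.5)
The proposition mostly collects what has already been proved, and the plan is to assemble it in that order. By definition, $\varPhi(T)$ is controlled by the following pieces.
\begin{itemize}
\item Lemma~\ref{lem4} gives $\sup\|\rho\|_{W^{1,q}}$.
\item Lemma~\ref{lem:u1} gives $\sup\|\nabla u\|_{L^2}^2$, $\sup\|\mu\|_{L^2}^2$ and $\int_0^T(\|\sqrt\rho u_t\|_{L^2}^2+\|\nabla^2u\|_{L^2}^2)\,dt$.
\item Lemma~\ref{lem:u2} gives the weighted quantities $\sqrt t\sqrt\rho u_t$ and $\sqrt t\nabla u_t$.
\item Corollary~\ref{cor1} and Corollary~\ref{cor2} give $\sup\|\chi\|_{H^2}$ and $\sup\|\rho_t\|_{L^2}$.
\item Lemma~\ref{lem:chi1} gives $\sup\|\rho\chi_t\|_{L^2}^2$ and $\int_0^T\|\nabla\chi_t\|_{L^2}^2\,dt$.
\item Lemma~\ref{lem:chi2} gives $\int_0^T\|\nabla^3\chi\|_{L^2}^2\,dt$.
\item Lemma~\ref{lem:chi3} gives $\sup\|\sqrt t\nabla\chi_t\|_{L^2}^2$ and $\int_0^T\|\sqrt t\rho\chi_{tt}\|_{L^2}^2\,dt$.
\end{itemize}
Adding these yields $\varPhi(T)\le C$. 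Then $\int_0^T\|\nabla^2u\|_{L^q}\,dt\le C$ follows from Lemma~\ref{lem3} together with \eqref{Assump1}. For $\int_0^T\|\chi_t\|_{H^1}^2\,dt$, Lemma~\ref{lem1} with $\alpha=1$ gives $\|\chi_t\|_{L^2}\le C(\|\nabla\chi_t\|_{L^2}+\|\rho\chi_t\|_{L^2})$, and both terms on the right are already controlled.

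The genuinely new bounds are the time-weighted ones. For $\sup\|\sqrt t\nabla^2u\|_{L^2}^2$, I would multiply the $H^2$ Lamé estimate by $t$:
\[
t\|\nabla^2 u\|_{L^2}^2\le C\bigl(t\|\sqrt\rho u_t\|_{L^2}^2+t\|\nabla u\|_{L^2}^6+t+t\|\nabla^2\chi\|_{L^2}^3\|\nabla^3\chi\|_{L^2}\bigr).
\]
The first three terms are bounded by Lemmas~\ref{lem:u1} and \ref{lem:u2}. The last term needs $\sup t\|\nabla^3\chi\|_{L^2}^2$, which I would obtain first. Multiplying \eqref{nabla_cube_chi} by $t$ gives
\[
t\|\nabla^3\chi\|^2_{L^2}\le C\bigl(1+t\|\nabla\chi_t\|_{L^2}^2+t\|\nabla^2\chi\|_{L^2}^2\|\nabla u\|_{L^2}\|\nabla^2u\|_{L^2}\bigr).
\]
The last term contains $t\|\nabla^2u\|_{L^2}$, so the two estimates are coupled. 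To decouple them, I would rederive the bound on $\|\rho^2\nabla^2\chi\, u\|_{L^2}$ using $\|u\|_{L^\infty}\le C\|\nabla u\|_{L^2}^{1/2}\|\nabla^2u\|_{L^2}^{1/2}$. This gives $t\|\nabla^3\chi\|_{L^2}^2\le C(1+\sqrt t\,\|\sqrt t\nabla^2u\|_{L^2})$. Substituting this into the $u$ estimate and using $\|\nabla^3\chi\|_{L^2}\le\|\nabla^3\chi\|_{L^2}^2+1$ closes the system by Young's inequality. This yields $\sup(\|\sqrt t\nabla^2u\|_{L^2}^2+\|\sqrt t\nabla^3\chi\|_{L^2}^2)\le C$.

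Next is $\int_0^T\|\sqrt t\nabla^2u\|_{L^q}^2\,dt$. I would use the $W^{2,q}$ estimate from Lemma~\ref{lem3} multiplied by $\sqrt t$. The key term is
\[
\|\sqrt t\rho u_t\|_{L^q}\le C\|\sqrt t\sqrt\rho u_t\|_{L^2}^{\frac{6-q}{2q}}\|\sqrt t\nabla u_t\|_{L^2}^{\frac{3q-6}{2q}}.
\]
The first factor is bounded uniformly in $t$, so squaring and integrating requires only $\int_0^T\|\sqrt t\nabla u_t\|_{L^2}^{(3q-6)/q}\,dt$. Since $(3q-6)/q<2$, this is finite by Lemma~\ref{lem:u2} and Hölder's inequality. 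The remaining terms, each multiplied by $\sqrt t$, are handled as follows:
\begin{itemize}
\item $\|\rho u\cdot\nabla u\|_{L^q}\le C\|\nabla^2u\|_{L^2}^{3/2}$, which is square integrable once multiplied by $\sqrt t$, using $\sup\sqrt t\|\nabla^2u\|_{L^2}\le C$.
\item $\|\nabla P\|_{L^q}$ is bounded.
\item $\|\Delta\chi\nabla\chi\|_{L^q}\le C\|\nabla^3\chi\|_{L^2}^{3/2}$ is handled by $t\|\nabla^3\chi\|_{L^2}^3\le C\|\nabla^3\chi\|_{L^2}^2$.
\end{itemize}

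I expect the main obstacle to be the coupled weighted estimate for $\sqrt t\nabla^2u$ and $\sqrt t\nabla^3\chi$, in particular making sure that no factor $\|\nabla^2u\|_{L^2}$ without the weight survives at a power that cannot be absorbed.
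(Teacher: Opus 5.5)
Your proposal is correct and follows essentially the same route as the paper: collect the preceding lemmas for $\varPhi(T)$, combine \eqref{nabla_cube_chi} and \eqref{lem:u1:1} via Young's inequality to obtain the weighted bounds on $\sqrt{t}\nabla^3\chi$ and $\sqrt{t}\nabla^2 u$, and multiply the $W^{2,q}$ estimate by $\sqrt{t}$. The differences are minor. The paper decouples first, obtaining $\|\nabla^3\chi\|_{L^2}^2\le C(1+\|\nabla\chi_t\|_{L^2}^2+\|\sqrt\rho u_t\|_{L^2}^2)$ pointwise in time, and only then multiplies by $t$. It also bounds $\|\rho u_t\|_{L^q}\le C\|\nabla u_t\|_{L^2}$ directly instead of interpolating, which makes your H\"older-in-time step unnecessary.
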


\begin{proof}
	It follows from  Lemma~\ref{lem3}--\ref{lem:chi3} and Corollary~\ref{cor2} that
	\begin{equation*}
		\varPhi(T) + \sup_{0 \le t  \le T} (\|\rho_t\|_{L^2}^2
		+ \|\mu\|_{L^2}^2) + \int_0^T \left( \|\nabla^2 u\|_{L^q}
		+ \|\sqrt{t} \rho \chi_{tt}\|_{L^2}^2  \right)\,dt \le C.
	\end{equation*}
	Recalling \eqref{nabla_cube_chi} and \eqref{lem:u1:1}, one deduces by the Young inequality that
	\begin{align*}
		\|\nabla^3\chi\|_{L^2}^2 \leq C(1 + \| \nabla\chi_{t}\|_{L^2}^2  + \|  \sqrt{\rho} u_t
		\| _{L^2}^2),
	\end{align*}
	which further indicates that
	\begin{align*}
		\sup_{0 \le t  \le T} \| \sqrt{t} \nabla^3\chi \|_{L^2}^2
		&\le C \sup_{0 \le t  \le T} \left(1
		+ \| \sqrt{t} \nabla\chi_{t}\|_{L^2}^2
		+ \| \sqrt{t} \sqrt{\rho} u_t
		\| _{L^2}^2 \right)\\
		&\le C\left(1 + \varPhi(T) \right) \le C.
	\end{align*}
	Moreover, one has
	\begin{align*}
		\sup_{0 \le t  \le T} \| \sqrt{t} \nabla^2 u \|_{L^2}^2
		&\le C \sup_{0 \le t  \le T} \left( \| \sqrt{t} \sqrt{\rho} u_t
		\| _{L^2}^2 + \|\nabla u\|_{L^2}^6 + \|\nabla^2\chi\|_{L^2}^3
		\| \sqrt{t} \nabla^3 \chi\|_{L^2} + 1 \right) \\
		&\le  C \varPhi(T) + C \varPhi^3(T) + C \varPhi^{\frac{3}{2}}
		\sup_{0 \le t  \le T} \| \sqrt{t} \nabla^3\chi \|_{L^2}^2 \le C.
	\end{align*}
	One also deduces by Lemma~\ref{lem1} and $\varPhi(T) \le C$ that
	$$\int_0^T \|\chi_t\|_{H^1}^2 \,dt \le C. $$
	Besides, it follows from the $W^{2, q}-$estimates of $\eqref{NSAC}_2$,
	the H\"older, Poincar\'e, Sobolev inequalities that
		\begin{equation} \label{nabla2uq}
			\begin{aligned}		
				\|\nabla^{2}u\|_{L^q}&\le C ( \|\rho u_{t}\|_{L^q}
				+ \|\rho ( u \cdot \nabla ) u\|_{L^q}
				+ \|\nabla (P(\rho))\|_{L^q} + \|\Delta \chi \nabla \chi\|_{L^q}) \\
				&\le C (\|\rho\|_{L^\infty} \|u_t\|_{L^6}
				+ \|\rho\|_{L^\infty} \|u\|_{L^\infty} \|\nabla u\|_{L^6} \\
				&\quad+ \|\nabla \rho\|_{L^q} \|\rho\|_{L^\infty}^{\gamma-1}
				+ \|\nabla \chi\|_{L^\infty} \|\nabla^2\chi\|_{L^q}) \\
				&\le C (1 + \|\nabla u_t\|_{L^2} + \|\nabla^2 u\|_{L^2}^{\frac{3}{2}}
				+ \|\nabla^3 \chi\|_{L^2}^{\frac{3}{2}}).
			\end{aligned}
		\end{equation}
	Consequently, one has
	\begin{equation*}
		\begin{aligned}
			\int_0^T \|\sqrt{t} \nabla^2 u\|_{L^q}^2 \,dt
			&\le C \int_0^T (1 + \|\sqrt{t} \nabla u_t\|_{L^2}^2
			+ \|\sqrt{t} \nabla^2 u\|_{L^2} \|\nabla^2 u\|_{L^2}^2 + \|\sqrt{t}\nabla^3 \chi\|_{L^2} \|\nabla^3 \chi\|_{L^2}^2) \,dt \\
			&\le C (1 + \varPhi(T)) \le C.
		\end{aligned}
	\end{equation*}
	The conclusion follows directly from the estimates above.
\end{proof}

\vskip6mm
\section{Galerkin scheme}\label{Galerkin scheme}
Let $\{w_i\}_{i=1}^\infty$ be the eigenfunctions associated with the eigenvalues
$\{\lambda_i\}_{i=1}^\infty$ of the operator $\mathcal{L}$, solving
\begin{align*}
	\begin{cases}
		- \mathcal{L} w_i = \lambda_i w_i \quad &\text{in } \Omega,\\
		w_i = 0 \quad &\text{on } \partial \Omega.
	\end{cases}
\end{align*}
Here $0<\lambda_1\le\lambda_2\le\cdots$ and $\lambda_i\to\infty$ as $i\to\infty$.
The family $\{w_i\}$ can be chosen as an orthonormal basis of $L^2(\Omega)$ and
an orthogonal basis of $H_0^1(\Omega)$.
For any $N\in\mathbb{N}_+$, we define
$$
X_N := \mathrm{span}\{w_1,w_2,\ldots,w_N\},
$$
and denote by $\|\cdot\|_{X_N}$ the induced norm on $X_N$. Let
$$
\rho_{0N} = \rho_0 * \eta_{\frac1N} + \frac1N,
\qquad
h_N = h * \eta_{\frac1N} - C_N,
$$
where $\eta_{\frac1N}$ is a standard mollifier
and $C_N = \frac{\overline{\rho_{0N} h * \eta_{\frac1N}}}{\overline{\rho_{0N}}}$. Here $\overline{f}:= \frac1{|\Omega|}\int_\Om f\,dx$
denotes the spatial average over $\Om$ of an integrable function $f$.
Then we have
\begin{align}
	\rho_{0N} \to \rho_0 \quad &\text{in } W^{1,q}, \label{rho0n}\\
	h_N \to h \quad &\text{in } L^2, \notag \\
	\overline{\rho_{0N} h_N} = 0. \notag
\end{align}

By classical elliptic theory (see, e.g., \cite{BH74,LM68}), there exists a
$\chi_{0N}\in H^2$ satisfying
\begin{align*}
	\begin{cases}
		\Delta \chi_{0N} = \rho_{0N} h_N - \rho_0 h \quad &\text{in } \Omega,\\
		\partial_{\boldsymbol n}\chi_{0N}
		= 0 \quad &\text{on } \partial \Omega,
	\end{cases}
\end{align*}
and
\begin{align}\label{chi0n}
	\chi_{0N} \to \chi_0 \quad \text{in } H^2.
\end{align}
Let $u_{0N}=P_Nu_0$ be the $L^2$-projection of $u_0$ onto $X_N$, where $P_N$ is defined by
\[
P_N g := \sum_{i=1}^N \langle g,w_i\rangle w_i,
\qquad g\in L^2.
\]
It is straightforward to verify that
\begin{align}\label{u0n}
	u_{0N} \to u_0 \quad \text{in } H^1.
\end{align}
We are now ready to formulate the Galerkin approximate system:
\begin{equation}\label{appro_pro}
	\begin{cases}
		\partial_t\rho_N + \Div(\rho_N u_N) = 0,\\
		\rho_N\partial_t u_N
		+ P_N\!\left[\rho_N(u_N\!\cdot\!\nabla)u_N
		+ \nabla P(\rho_N)\right]
		= \mathcal{L}u_N
		- P_N\!\left[\Div\!\left(\nabla\chi_N\otimes\nabla\chi_N
		-\frac{|\nabla\chi_N|^2}{2}\mathbb{I}\right)\right],\\
		\rho_N\partial_t\chi_N + \rho_N u_N\cdot\nabla\chi_N = -\mu_N,\\
		\rho_N\mu_N = -\Delta\chi_N + \rho_N F'(\chi_N),\\
		(\rho_N,u_N,\chi_N)|_{t=0} = (\rho_{0N},u_{0N},\chi_{0N}),\\
		(u_N,\partial_{\boldsymbol n}\chi_N)|_{\partial\Omega} = (0,0).
	\end{cases}
\end{equation}

\subsection{Existence of approximate solutions}\label{Sec_3.1}
By Schaefer's fixed point theorem (cf.~\cite[Section 9.2.2, Theorem 4]{Evans10}), the well-posedness of system \eqref{appro_pro} can be established in a standard manner.

\begin{proposition}\label{appro_sol}
	For any $T>0$ and $N\in \mathbb{N}_+$, there exists some $(\rho_N, u_N, \chi_N, \mu_N)$ solving \eqref{appro_pro}, such that:
	\begin{itemize}
		\item[(a)] $\rho_N \in C^1(\overline{\Om} \times[0,T])$ and satisfies
		\begin{equation*}
			\partial_t \rho_N + \Div(\rho_N u_N) = 0 \quad \text{\rm in } \Om\times(0,T); \\
		\end{equation*}
		\item[(b)] $u_N \in C([0, T]; X_N)$ and, for every $\varphi_N \in X_N$ and almost every $t \in (0, T)$, satisfies
		\begin{equation}\label{Galerkin_Equation}
			\begin{split}
				&\langle \rho_N \partial_t u_N + \rho_N (u_N\cdot\nabla) u_N + \nabla (P(\rho_N)), \varphi_N \rangle + \nu \langle  \nabla u_N, \nabla \varphi_N \rangle + (\lambda + \nu) \langle \Div u_N, \Div \varphi_N \rangle \\
				=&\langle\Div(\nabla \chi_N \otimes \nabla \chi_N - \frac{|\nabla \chi_N|^2}  {2}\mathbb{I}), \varphi_N \rangle;
			\end{split}
		\end{equation}
		\item[(c)] $\chi_N \in C([0, T]; H^1) \cap L^{\infty} (0, T; H^2) \cap L^2(0,T;H^3)$, with
		$\partial_t \chi_N \in L^\infty ( 0, T; H^1)$, and satisfies
		\begin{equation*}
			\rho_N^2 \partial_t \chi_N + \rho_N^2 u_N \cdot \nabla \chi_N - \Delta \chi_N
			+ \rho_N F'(\chi_N) = 0 \quad \text{\rm a.e. in } \Om \times (0, T);
		\end{equation*}
		\item[(d)] the initial and boundary conditions hold
		$$
		(\rho_N(0), u_N(0), \chi_N(0))= (\rho_{0N}, u_{0N}, \chi_{0N}) \quad \text{a.e. in } \Om,
		$$
		and
		$$
		\partial_{\boldsymbol{n}} \chi_N = 0 \quad \text{a.e. on } \partial \Om \times (0, T);
		$$
		\item[(e)] the following energy equality holds for almost every $t \in (0, T)$
		\begin{equation*}
			\begin{aligned}
				E_N(t) + \int_0^t ( \| \mu_N \|_{L^2}^2
				+ \nu \| \nabla u_N \|_{L^2}^2
				+ ( \lambda + \nu ) \| \Div u_N \|_{L^2}^2 ) \, ds = E_N(0),
			\end{aligned}
		\end{equation*}
		where
		\begin{equation}\label{EN}
			E_N(t) = \frac{1}{2} \| \sqrt{\rho_N} u_N \|_{L^2}^2
			+ \frac{A}{\gamma-1} \int_{\Om} \rho_N^{\gamma} \,dx
			+ \frac{1}{2} \| \nabla\chi_N \|_{L^2}^2
			+ \int_{\Om} \rho_N F(\chi_N)\,dx.
		\end{equation}
	\end{itemize}
\end{proposition}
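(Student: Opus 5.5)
We construct the solution of \eqref{appro_pro} by Schaefer's fixed point theorem applied to a map on the velocity. Fix $N$ and $T>0$, and set $\mathcal{B}:=C([0,T];X_N)$. On the finite-dimensional space $X_N$ all norms are equivalent, so any $v\in\mathcal{B}$ is smooth in $x$ with $\|v\|_{C^k}\le C_N\|v\|_{X_N}$.

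Given $v\in\mathcal{B}$, the map $\mathcal{T}$ is defined in three steps.

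\textbf{Step 1: density.} Solve the linear transport equation $\partial_t\rho+\Div(\rho v)=0$, $\rho(0)=\rho_{0N}$, by characteristics. Since $\rho_{0N}\in C^1$, $\rho_{0N}\ge 1/N$ and $v$ is smooth and vanishes on $\partial\Om$, this gives $\rho\in C^1(\overline\Om\times[0,T])$ with
$$
\frac1N e^{-\int_0^t\|\Div v\|_{L^\infty}}\le\rho\le \|\rho_{0N}\|_{L^\infty}e^{\int_0^t\|\Div v\|_{L^\infty}}.
$$

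\textbf{Step 2: phase field.} Solve the nondegenerate parabolic problem
$$
\rho^2\chi_t+\rho^2 v\cdot\nabla\chi-\Delta\chi+\rho F'(\chi)=0,\qquad \partial_{\boldsymbol n}\chi=0,\qquad \chi(0)=\chi_{0N}.
$$
Because $\rho$ is bounded above and below, we divide by $\rho^2$ and use a local Galerkin or contraction argument in $L^\infty(H^2)\cap L^2(H^3)$, with $\chi_t\in L^\infty(H^1)$.

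The cubic nonlinearity does not spoil global existence on $[0,T]$, because of the energy identity obtained by testing with $\chi_t$ and with $-\Delta\chi$. Since $F\ge0$ and $H^2\hookrightarrow L^\infty$, the bounds close by Gronwall. The bound on $\chi_t\in L^\infty(H^1)$ uses the compatibility $\Delta\chi_{0N}=\rho_{0N}h_N-\rho_0h$, which makes $\chi_t(0)\in H^1$; since $\rho_{0N}>0$ and all data are mollified, this holds at fixed $N$. Uniqueness in this class follows from an $L^2$ energy estimate for the difference.

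\textbf{Step 3: velocity.} Solve the linear ODE system on $X_N$
$$
\langle\rho u_t,\varphi\rangle+\nu\langle\nabla u,\nabla\varphi\rangle+(\lambda+\nu)\langle\Div u,\Div\varphi\rangle=\langle -\rho(v\cdot\nabla)v-\nabla P(\rho)+\Div(\cdots\chi),\varphi\rangle,\qquad u(0)=u_{0N}.
$$
The mass matrix $(\int\rho w_iw_j)$ is uniformly positive definite because $\rho$ is bounded below, and the coefficients are continuous in $t$. Hence there is a unique solution $u\in C^1([0,T];X_N)$. Set $\mathcal{T}v:=u$.

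\textbf{Properties of $\mathcal{T}$.} Continuity of $\mathcal{T}$ follows from stability estimates for each step. Compactness follows from the Arzel\`a--Ascoli theorem, because $\|\partial_t u\|_{X_N}$ is bounded for $v$ in bounded sets. The main work is the uniform bound on the set $\{v=\sigma\mathcal{T}v,\ \sigma\in[0,1]\}$. For such $v$ we derive the energy inequality of Lemma~\ref{lem2} with a factor $\sigma$ inserted. Testing the $u$-equation with $u=v/\sigma$ reproduces the transport and pressure cancellations up to terms weighted by $\sigma$, and the capillary term cancels against $\mu$ from the $\chi$-equation. Because $\sigma\le1$, the $\sigma$-weighted inequality is a Gronwall-type estimate; where it is not directly closable, an equivalent approach is to use the Leray--Schauder form with the homotopy placed on the data.

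This yields $\sup_t\|\sqrt\rho u\|_{L^2}^2+\int_0^T\|\nabla u\|_{L^2}^2\le C$. Equivalence of norms on $X_N$ then gives $\int_0^T\|\Div v\|_{L^\infty}\le C_N T^{1/2}$. This closes the lower bound on $\rho$ from Step 1, and in turn the bound on $\sup_t\|v\|_{X_N}$.

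\textbf{Main obstacle.} We expect this a priori bound to be the hardest step. The energy identity must hold exactly even though the projections $P_N$ act on the nonlinearities; this works because $u\in X_N$ and $P_N$ is self-adjoint. The lower bound on the density must also be obtained without circularity. Once the fixed point exists, passing to $\sigma=1$ gives (a)--(d) from Steps 1--3. Part (e) follows by testing \eqref{Galerkin_Equation} with $\varphi_N=u_N$ and the $\chi$-equation with $\mu_N$, then integrating by parts exactly as in Lemma~\ref{lem2}.
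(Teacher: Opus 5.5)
There is a genuine gap in the a priori bound for the Schaefer set, and it comes from how you linearize in Step~3. You put the entire convection term $-\rho(v\cdot\nabla)v$ on the right-hand side, while the density is transported by $v$ itself. Take $v=\sigma\mathcal{T}v$, i.e.\ $u=v/\sigma$, and multiply your Step~3 equation by $\sigma$. For all $\varphi\in X_N$ this gives
\begin{equation*}
\langle\rho v_t,\varphi\rangle+\nu\langle\nabla v,\nabla\varphi\rangle+(\lambda+\nu)\langle\Div v,\Div\varphi\rangle
=-\sigma\big\langle\rho(v\cdot\nabla)v+\nabla P(\rho)+\Delta\chi\nabla\chi,\varphi\big\rangle .
\end{equation*}
Test with $\varphi=v$ and use $\rho_t=-\Div(\rho v)$; this gives $\langle\rho v_t,v\rangle=\frac12\frac{d}{dt}\|\sqrt\rho v\|_{L^2}^2-\int_\Om\rho(v\cdot\nabla)v\cdot v\,dx$. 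The pressure term becomes a time derivative through the continuity equation. The capillary term cancels against the $\chi$-equation tested with $\sigma(\chi_t+v\cdot\nabla\chi)$. What remains is
\begin{equation*}
\frac{d}{dt}\Big(\frac12\|\sqrt\rho v\|_{L^2}^2+\frac{\sigma}{2}\|\nabla\chi\|_{L^2}^2+\frac{\sigma A}{\gamma-1}\int_\Om\rho^\gamma\,dx+\sigma\int_\Om\rho F(\chi)\,dx\Big)+\sigma\|\mu\|_{L^2}^2+\nu\|\nabla v\|_{L^2}^2+(\lambda+\nu)\|\Div v\|_{L^2}^2=(1-\sigma)\int_\Om\rho(v\cdot\nabla)v\cdot v\,dx .
\end{equation*}
The right-hand side is cubic and has no sign. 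Even with equivalence of norms on $X_N$, you can only bound it by $C_N\|\rho\|_{L^\infty}(\inf\rho)^{-3/2}\|\sqrt\rho v\|_{L^2}^3$, and both density bounds depend on $v$ again. Against quadratic dissipation this is a superlinear differential inequality, of type $y'\le Cy^{3/2}$. Its comparison solution blows up in finite time, so no bound on a prescribed interval $[0,T]$ follows; Proposition~\ref{Uniform_bounds} needs $T=1$. Your step ``because $\sigma\le1$, the $\sigma$-weighted inequality is a Gronwall-type estimate'' is therefore false. The fallback ``homotopy placed on the data'' is not worked out, so it does not close the gap. What is needed is that $\sigma$ enter the transport identity and the convection term consistently.

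The fix, which is exactly the paper's linearization \eqref{App:u2}, is to make the convection term linear in the unknown. With $\tilde\rho=D_N(v)$ and $\tilde\chi=C_N(v)$, define $\mathcal{T}v=u$ by
\begin{equation*}
\langle\tilde\rho u_t+\tilde\rho(v\cdot\nabla)u,\varphi\rangle+\nu\langle\nabla u,\nabla\varphi\rangle+(\lambda+\nu)\langle\Div u,\Div\varphi\rangle=-\langle\nabla P(\tilde\rho)+\Delta\tilde\chi\nabla\tilde\chi,\varphi\rangle,\qquad\varphi\in X_N,
\end{equation*}
with $u(0)=u_{0N}$. This is still a linear ODE on $X_N$ with continuous coefficients and a positive definite mass matrix. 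Now $v=\sigma\mathcal{T}v$ gives $\langle\rho v_t+\rho(v\cdot\nabla)v,\varphi\rangle+\cdots=-\sigma\langle\nabla P(\rho)+\Delta\chi\nabla\chi,\varphi\rangle$. The convection term is now unweighted, so it cancels exactly against the transport identity, and the identity above holds with right-hand side $0$. From there your closing argument works as you describe: first the bound on $\int_0^T\|\Div v\|_{L^\infty}\,dt$, then the lower bound on $\rho$, then $\sup_t\|v\|_{X_N}$. Otherwise your outline follows the paper's structure.

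One smaller point: $\chi_t(0)\in H^1$ requires $\Delta\chi_{0N}\in H^1$. That holds if $\Delta\chi_{0N}=\rho_{0N}h_N$, which is smooth. It fails for the formula $\rho_{0N}h_N-\rho_0h$ as you copied it, since $\rho_0h$ is only in $L^2$.
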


For the sake of completeness, the proof of Proposition \ref{appro_sol} is given in the Appendix.

\subsection{Uniform bounds for the approximate solutions}\label{Sec_3.2}
Let $(\rho_N, u_N, \chi_N, \mu_N)$ be the strong solutions in Proposition~\ref{appro_sol}. Set
\begin{equation*}
	\begin{aligned}
		\varPhi_N(t) &= \sup_{0 \le s \le t} \left(\|\rho_N\|_{W^{1, q}}
		+ \|\big(\nabla u_N, \sqrt{s} \sqrt{\rho_N}  \partial_t u_N, \nabla^2 \chi_N,
		\rho_N \partial_t \chi_N, \sqrt{s} \nabla \partial_t \chi_N\big) \|^2_{L^2}\right) \\
		&\quad+ \int_0^t \left(\|\big(\nabla^2 u_N, \sqrt{\rho_N} \partial_t u_N,
		\sqrt{s} \nabla \partial_t u_N, \nabla^3 \chi_N,
		\nabla \partial_t \chi_N \big)\|^2_{L^2}\right) \,ds + 1.
	\end{aligned}
\end{equation*}

\begin{proposition}\label{Uniform_bounds}
	Let $\varepsilon_0$ be the constant given in Lemma \ref{lem4}. Then, there exists a time $T_0>0$
	and a constant $C>0$, depending only on $A$, $\gamma$, $\nu$,
	$\lambda$, $q$, $\Om$ and $\Phi_0$, such that
	\begin{equation*}
		T_0^{\frac{6-q}{4q}} \varPhi_N^{\gamma + 1}(T_0) \le \varepsilon_0.
	\end{equation*}
	Moreover, we have
	\begin{equation*}
		\begin{aligned}
			&\varPhi_N(T_0) +  \sup_{0  \le t  \le T_0} \left(\|\partial_t \rho_N\|_{L^2}^2
			+ \|\sqrt{t} \nabla^2 u_N\|_{L^2}^2 + \|\sqrt{t} \nabla^3 \chi_N\|_{L^2}^2
			+ \|\mu_N\|_{L^2}^2\right) \\
			\quad& + \int_0^{T_0} \left(\|\nabla^2 u_N\|_{L^q}
			+ \|\sqrt{t} \nabla^2 u_N\|_{L^q}^2 + \|\partial_t \chi_N\|_{H^1}^2
			+ \|\sqrt{t} \rho_N \partial_{tt}^2 \chi_N\|^2_{L^2}\right) \,dt \le C
		\end{aligned}
	\end{equation*}
	for all $N \in \mathbb{N}$.
\end{proposition}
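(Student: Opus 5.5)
The plan is a continuity (bootstrap) argument in time, applied to each fixed $N$. It rests on the fact that the a priori estimates of Section~\ref{section priori} carry over verbatim to the Galerkin solutions of Proposition~\ref{appro_sol}.

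\textbf{Step 1: the Section~\ref{section priori} estimates hold for the Galerkin solutions with $N$-independent constants.} The structural facts used there are:
\begin{itemize}[label=$\circ$]
\item $\rho_N\ge \delta_N>0$;
\item the energy identity (e) and the corresponding energy equality;
\item testing the momentum equation by $u_N$, $\partial_t u_N$, and by the time derivative tested against $\partial_t u_N$;
\item elliptic $H^2$ and $W^{2,q}$ estimates for $\mathcal L u_N$.
\end{itemize}
All tests of the momentum equation use functions in $X_N$, so the projection $P_N$ is harmless in the energy-type identities. The elliptic estimates are subtler, since $P_N$ does not commute with multiplication by $\rho_N$. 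I would handle them by writing $\mathcal L u_N = P_N[\rho_N\partial_t u_N+\rho_N u_N\cdot\nabla u_N+\nabla P(\rho_N)+\Div(\cdots)]$ and using that $P_N$ is the spectral projection of $\mathcal L$. It therefore commutes with $\mathcal L^{-1}$ and is bounded on $H^2\cap H_0^1$ (and, by the standard spectral argument on this Galerkin basis, in the $W^{2,q}$ regularity used here). Applying $\mathcal L^{-1}$ then gives the same bounds as before.

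The initial quantities are controlled uniformly in $N$ by \eqref{rho0n}, \eqref{chi0n}, \eqref{u0n}. The key point is that $\Delta\chi_{0N}=\rho_{0N}h_N-\rho_0h$ must yield $\lim_{\tau\to0}\|\rho_N\partial_t\chi_N\|_{L^2}\le C$ uniformly. I would verify this from the construction of the approximate data via $\|\Delta\chi_{0N}/\rho_{0N}\|_{L^2}$. If $\Delta\chi_{0N}$ is really meant to approximate $\Delta\chi_0=\rho_0h$ with $\Delta\chi_{0N}=\rho_{0N}h_N$, then this quotient is just $\|h_N\|_{L^2}\le C$.

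Granting this, we have a constant $C_1$, depending only on the listed parameters and $\Phi_0$, such that:
\begin{center}
whenever $T^{\frac{6-q}{4q}}\varPhi_N^{\gamma+1}(T)\le\varepsilon_0$, then $\varPhi_N(T)\le C_1$,
\end{center}
together with all the additional bounds of the final Proposition in Section~\ref{section priori}.

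\textbf{Step 2: closing the bootstrap.} Choose $T_0\le1$ so small that $T_0^{\frac{6-q}{4q}}(2C_1)^{\gamma+1}\le\varepsilon_0/2$. By the regularity in Proposition~\ref{appro_sol} (smooth density bounded below, finite-dimensional velocity, $\chi_N$ regular), the map $t\mapsto\varPhi_N(t)$ is continuous and nondecreasing. Its value at $t\to0^+$ is bounded by some $C_0\le C_1$ depending only on $\Phi_0$. The time-weighted terms vanish at $t=0$ and the remaining ones are controlled by the initial data, so we may take $C_1\ge C_0$.

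Define $T_N^*=\sup\{t\le T_0:\varPhi_N(t)\le 2C_1\}$. On $[0,T_N^*]$ we have $t^{\frac{6-q}{4q}}\varPhi_N^{\gamma+1}(t)\le\varepsilon_0/2<\varepsilon_0$, so Step 1 yields $\varPhi_N(t)\le C_1<2C_1$. By continuity, $T_N^*=T_0$. This gives both the smallness condition and the uniform bound, with constants independent of $N$.

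\textbf{Main obstacle.} The hardest part is the continuity of $\varPhi_N$ near $t=0$, especially of the $\sup$ of $\|\rho_N\partial_t\chi_N\|_{L^2}$, together with the uniform-in-$N$ control of its initial value. This is exactly where the compatibility condition \eqref{com1} and its approximation $h_N$ enter. The uniformity of the projected elliptic estimates is a secondary technical point, to be handled as described in Step 1.
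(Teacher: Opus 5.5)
Your Step 2 is the paper's argument in different packaging. The paper takes $T=1$ in Proposition~\ref{appro_sol}, uses continuity of $\Psi_N(t)=t^{\frac{6-q}{4q}}\varPhi_N^{\gamma+1}(t)$ to find $T_N$ with $\Psi_N(T_N)=\varepsilon_0$, and gets $\varPhi_N(T_N)\le C$ from the Section~\ref{section priori} estimates. It then reads off a uniform lower bound on $T_N$ and sets $T_0=\inf_N T_N$. Your fixed-$T_0$ continuity argument with threshold $2C_1$ is equivalent. Your reading of the approximate phase data is also correct. The displayed problem for $\chi_{0N}$ only makes sense for $\chi_{0N}-\chi_0$, i.e.\ $\Delta\chi_{0N}=\rho_{0N}h_N$, and $\overline{\rho_{0N}h_N}=0$ is exactly the Neumann solvability condition. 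The initial value of $\|\rho_N\partial_t\chi_N\|_{L^2}$ is then controlled by $\|h_N\|_{L^2}$ plus lower-order terms, uniformly in $N$.

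The genuine gap is the uniform $W^{2,q}$ bound, which you treat as secondary. From \eqref{Galerkin_Equation}, $\mathcal{L}u_N=P_Ng_N$ with $g_N=\rho_N\partial_tu_N+\rho_N(u_N\cdot\nabla)u_N+\nabla P(\rho_N)+\Delta\chi_N\nabla\chi_N$. For $H^2$ this is harmless, since $P_N$ is an $L^2$-orthogonal projection. For $L^q$, however, you only have $\|\nabla^2u_N\|_{L^q}\le C\|P_Ng_N\|_{L^q}$. The commutation $P_N\mathcal{L}^{-1}=\mathcal{L}^{-1}P_N$ only shows that uniform boundedness of $P_N$ on $W^{2,q}\cap W^{1,q}_0$ is \emph{equivalent} to $\sup_N\|P_N\|_{L^q\to L^q}<\infty$. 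There is no spectral argument for this, and it is false in general: partial sums of eigenfunction expansions of second-order elliptic operators are not uniformly bounded on $L^p$, $p\neq 2$, in dimension $\ge 2$ (the ball-multiplier phenomenon).

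The paper's own device does not escape this. Testing with $\varphi_N=P_N(|\mathcal{L}u_N|^{q-2}\mathcal{L}u_N)$ needs $\|\varphi_N\|_{L^{q'}}\le C\|\mathcal{L}u_N\|_{L^q}^{q-1}$, which is the dual statement $\sup_N\|P_N\|_{L^{q'}\to L^{q'}}<\infty$. So following the paper does not close the hole either.

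This is not a side issue. Without an $N$-independent bound on $\int_0^T(\|\nabla u_N\|_{L^\infty}+\|\nabla^2u_N\|_{L^q})\,dt$, Lemma~\ref{lem3} does not transfer to the Galerkin solutions, and hence neither does Lemma~\ref{lem4}. That lemma is the $L^\infty$ and $W^{1,q}$ control of $\rho_N$, on which every later estimate and the choice of $\varepsilon_0$ rest. So your $C_1$ is not shown to be independent of $N$. By comparison, the continuity of $\varPhi_N$ near $t=0$, which you single out as the main obstacle, is harmless for each fixed $N$.

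One way to close the gap is to apply the Section~\ref{section priori} estimates to approximate solutions with no projection in the momentum equation. For example, take strong solutions of the full system with density $\rho_0+\delta$ and compatible regularized phase data, constructed by linearization and iteration as in \cite{CCK04}, and continue them as long as the relevant norms stay finite. There $\|\nabla^2u\|_{L^q}\le C\|\mathcal{L}u\|_{L^q}$ is just the Lam\'e estimate. Your continuity argument then goes through verbatim with $\delta$ in place of $N$, which is precisely why the a priori estimates are stated independently of $\delta$.
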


\begin{proof} Taking $T = 1$ in Proposition~\ref{appro_sol} and denoting
	\begin{equation*}
		\Psi_N(t) = t^{\frac{6-q}{4q}} \varPhi_N^{\gamma + 1 }(t).
	\end{equation*}
	Since $\varepsilon_0 \in (0,1)$ and $\Psi_N$ is continuous on $[0,1]$,
	there exists some $T_N \in (0,1)$ such that
	$\Psi_N(T_N) = \varepsilon_0$. In a manner similar to that in Section \ref{section priori}, one can show that $\varPhi_N(T_N) \le C$ uniformly in $N$. By the definition of $\Psi_N$, we have
	\begin{align*}
		T_N &= \left( \varPhi_N(T_N)^{ -\gamma - 1 } \varepsilon_0 \right)^{\frac{4q}{6-q}} \\
		&\ge \left( C^{ -\gamma - 1 } \varepsilon_0 \right)^{\frac{4q}{6-q}},
	\end{align*}
	which implies that ${T_N}$ admits a uniform positive lower bound, independent of $N$.
	
	Now set $T_0 = \inf\limits_{N \in \mathbb{N}}{T_N}$, then $\varPhi_N(T_0) \le C$. Moreover, as in Section \ref{section priori}, we can show that
	\begin{align*}
		\int_0^{T_0} \| \nabla^2 u_N \|_{L^q} \,dt +  \sup_{0  \le t  \le T_0} \left( \| \partial_t \rho_N \|_{L^2}^2
		+ \| \sqrt{t} \partial_t \nabla \chi_N \|_{L^2}^2 + \| \sqrt{t} \nabla^3 \chi_N \|_{L^2}^2
		+ \| \sqrt{t} \nabla^2 u_N \|_{L^2}^2 \right) \le C.
	\end{align*}
	However, the upper bound of $\int_0^{T_0} \|\nabla^2 u\|_{L^q} \,dt$ is estimated
	slightly different to that in Section \ref{section priori}. To be specific, taking
	$\varphi_N = P_N( | \mathcal{L} u_N|^{q - 2} \mathcal{L} u_N )$ in \eqref{Galerkin_Equation} and using
	integration by parts, we have
	\begin{align*}
		&\| \nabla^2 u_N \|_{L^q}^q \le C \| \mathcal{L}u_N \|_{L^q}^q
		= C \langle \mathcal{L} u_N, \varphi_N \rangle \\
		&\quad= C \langle \rho_N \partial_t u_N + \rho_N (u_N\cdot\nabla) u_N + \nabla (P(\rho_N))
		+ \Delta \chi_N \nabla \chi_N , \varphi_N \rangle\\
		&\quad\le C (\|\rho_N \partial_t u_N \|_{L^q} + \| \rho_N (u_N\cdot\nabla) u_N \|_{L^q}
		+ \| \nabla (P(\rho_N)) \|_{L^q} + \| \Delta \chi_N \nabla \chi_N \|_{L^q}
		) \| \varphi_N \|_{L^{\frac{q}{q - 1}}}\\
		&\quad\le C (\|\rho_N \partial_t u_N \|_{L^q} + \| \rho_N (u_N\cdot\nabla) u_N \|_{L^q}
		+ \| \nabla (P(\rho_N)) \|_{L^q} + \| \Delta \chi_N \nabla \chi_N \|_{L^q}
		) \| \nabla^2 u_N \|_{L^q}^{q - 1}.
	\end{align*}
	That is,
	\begin{align*}
		\| \nabla^2 u_N \|_{L^q} &\le
		C (\|\rho_N \partial_t u_N \|_{L^q} + \| \rho_N (u_N\cdot\nabla) u_N \|_{L^q}
		+ \| \nabla (P(\rho_N)) \|_{L^q} + \| \Delta \chi_N \nabla \chi_N \|_{L^q} ).
	\end{align*}
	The rest are analogous to that in Section \ref{section priori}.
\end{proof}

\vskip6mm
\section{Proof of Theorem 1.1}\label{Proof of Theorem 1.1}

\subsection{Convergence of the approximate solutions}\label{Sec_4.1}
In this section, we prove the convergence of the approximate solutions in Proposition~\ref{appro_sol} and derive the regularity of the limit, which will be used in the proof of Theorem~\ref{th1}.
\begin{theorem}\label{Existence}
	Assume that all the conditions of Theorem~\ref{th1} are satisfied. Then, there exists a positive time $T_0>0$, depending only on $A$, $\gamma$, $\nu$,
	$\lambda$, $q$, $\Om$ and $\Phi_0$, such that the system \eqref{NSAC}, subject to \eqref{I}-\eqref{B1}, in $\Om \times (0, T_0)$, admits a solution $(\rho, u, \chi, \mu)$, satisfying all the properties listed in Definition~\ref{def1},
	except that the property $\sqrt{\rho}u \in C([0, T_0];L^2)$ is replaced by
	\begin{align*}
		\sqrt{\rho}u \in C((0, T_0];L^2),\quad \rho u \in C([0, T_0];L^2),
	\end{align*}
	and
	\begin{equation}\label{normcon}
		\lim_{t \rightarrow 0^+} \| \sqrt{\rho} u \|_{L^2}(t) = \| \sqrt{\rho_0} u_0 \|_{L^2}.
	\end{equation}
\end{theorem}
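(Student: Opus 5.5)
The plan is a compactness argument on the Galerkin approximations of Proposition~\ref{appro_sol}, using the uniform bounds of Proposition~\ref{Uniform_bounds} on $[0,T_0]$.

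\textbf{Step 1: weak limits.} From Proposition~\ref{Uniform_bounds}, extract a subsequence (still indexed by $N$) such that:
\begin{itemize}
\item $\rho_N\to\rho$ weakly-* in $L^\infty(0,T_0;W^{1,q})$ and $\partial_t\rho_N\to\rho_t$ weakly-* in $L^\infty(0,T_0;L^2)$;
\item $u_N\to u$ weakly-* in $L^\infty(0,T_0;H_0^1)$, weakly in $L^2(0,T_0;H^2)$ and in $L^1$-type bounds in $W^{2,q}$;
\item $\sqrt t\,\nabla^2u_N$ converges weakly-* in $L^\infty(0,T_0;L^2)$ and weakly in $L^2(0,T_0;L^q)$;
\item $\sqrt t\,\partial_tu_N\to\sqrt t\,u_t$ weakly in $L^2(0,T_0;H_0^1)$;
\item $\chi_N\to\chi$ weakly-* in $L^\infty(0,T_0;H^2)$ and weakly in $L^2(0,T_0;H^3)$, with $\partial_t\chi_N$ converging weakly in $L^2(0,T_0;H^1)$;
\item $\mu_N\to\mu$ weakly-* in $L^\infty(0,T_0;L^2)$.
\end{itemize}
Since $W^{1,q}\hookrightarrow\hookrightarrow C(\overline\Omega)$ for $q>3$, the Aubin--Lions lemma gives $\rho_N\to\rho$ in $C([0,T_0];C(\overline\Omega))$. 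Similarly $\chi_N\to\chi$ in $C([0,T_0];H^1)$ and strongly in $L^2(0,T_0;H^2)$. For $u_N$, on each $[\tau,T_0]$ with $\tau>0$ we have $u_N$ bounded in $L^\infty H^1\cap L^2H^2$ with $\partial_tu_N$ bounded in $L^2(\tau,T_0;H^1)$, so $u_N\to u$ strongly in $C([\tau,T_0];L^2)\cap L^2(\tau,T_0;H^1)$. Near $t=0$, the bound $\|u_N\|_{L^2H^2}\le C$ together with a diagonal argument gives strong convergence in $L^2(0,T_0;H^1)$. Weak lower semicontinuity transfers all the bounds of Definition~\ref{def1} to the limit, and $\sqrt\rho\,u_t$, $\rho\chi_t$ are identified as weak limits of the corresponding products because $\rho_N$ converges uniformly.

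\textbf{Step 2: passing to the limit in the equations.} The continuity equation and \eqref{NSAC_34} pass to the limit directly, since each nonlinear term is a product of one strongly and one weakly convergent factor; for instance $\rho_N^2u_N\cdot\nabla\chi_N$ and $\rho_NF'(\chi_N)$. This yields $\eqref{NSAC}_{3,4}$ a.e. The Neumann condition survives by trace continuity from the strong $L^2H^2$ convergence of $\chi_N$.

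For the momentum equation, test \eqref{Galerkin_Equation} with $\varphi\in X_M$ for fixed $M\le N$ and with $\psi\in C_c^\infty(0,T_0)$, let $N\to\infty$, then use density of $\bigcup_M X_M$ in $H_0^1$. The resulting equation holds a.e. because $u\in L^2H^2$. The initial data follow from \eqref{rho0n}, \eqref{chi0n} and \eqref{u0n}.

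\textbf{Step 3: time continuity near $t=0$ (the main obstacle).} Since $\sqrt\rho\,u_t$ is only controlled in $L^2(0,T_0;L^2)$ without a weight, we have $(\rho u)_t=\rho_tu+\rho u_t\in L^2(0,T_0;L^2)$, using $\rho_t\in L^\infty L^2$ and $u\in L^\infty L^6$ with $L^2L^\infty$ control. Hence $\rho u\in C([0,T_0];L^2)$, while $\sqrt\rho\,u\in C((0,T_0];L^2)$ follows from $u\in C((0,T_0];H^1)$.

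For \eqref{normcon}:
\begin{itemize}
\item Upper bound: pass to the limit in the energy equality of Proposition~\ref{appro_sol}(e), using weak lower semicontinuity on the left and the convergence $E_N(0)\to E_0$. This gives the energy inequality, hence $\limsup_{t\to0}\|\sqrt\rho u\|_{L^2}\le\|\sqrt{\rho_0}u_0\|_{L^2}$, because the remaining energy terms are continuous at $t=0$ ($\rho$ uniformly, $\chi$ in $C([0,T_0];H^1)$).
\item Lower bound: $\sqrt{\rho}u(t)$ is bounded in $L^2$, so along any sequence $t\to0$ it has a weak limit $w$. Since $\rho u(t)\to\rho_0u_0$ and $\sqrt\rho\to\sqrt{\rho_0}$ uniformly, $\sqrt{\rho_0}\,w=\rho_0u_0$, which identifies $w=\sqrt{\rho_0}u_0$ on $\{\rho_0>0\}$. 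Pairing with test functions supported in the vacuum set, where $\sqrt\rho\to0$ uniformly, gives $w=0$ there. Weak lower semicontinuity then yields $\liminf_{t\to0}\|\sqrt\rho u\|_{L^2}\ge\|\sqrt{\rho_0}u_0\|_{L^2}$.
\end{itemize}

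Combining the two bounds proves \eqref{normcon}. The delicate point is that the $u$ estimates degenerate as $t\to0$, so continuity of $\sqrt\rho\,u$ at $t=0$ cannot be read off from the a priori bounds and has to come from the energy argument above.
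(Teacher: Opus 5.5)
Your plan is sound, but there is a real gap at exactly the step you call the main obstacle.

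In Step 3 you show $(\rho u)_t=\rho_t u+\rho u_t\in L^2(0,T_0;L^2)$ for the \emph{limit}. This proves only that $\rho u(t)$ has \emph{some} strong $L^2$-limit as $t\to0^+$, not that this limit is $\rho_0u_0$. Your lower bound for \eqref{normcon} nevertheless uses ``$\rho u(t)\to\rho_0u_0$'' as known, and the same identification is needed for the initial condition \eqref{I}.

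Your remark in Step 2, that the initial data follow from \eqref{rho0n}, \eqref{chi0n}, \eqref{u0n}, does not cover $\rho u$. Two reasons:
\begin{itemize}
\item $u_N\to u$ strongly only in $C([\tau,T_0];L^2)$ with $\tau>0$, so $\rho_Nu_N(0)=\rho_{0N}u_{0N}$ cannot be passed to the limit at $t=0$.
\item By testing the momentum equation only with $\psi\in C_c^\infty(0,T_0)$, you discarded the initial term.
\end{itemize}
The repair is to run your product-rule computation uniformly in $N$, which is what the paper does for \eqref{CRU}. The bounds of Proposition~\ref{Uniform_bounds} give $\int_0^{T_0}\|\partial_t(\rho_Nu_N)\|_{L^2}^2\,dt\le C$, hence $\|\rho_Nu_N(t)-\rho_{0N}u_{0N}\|_{L^2}\le C\sqrt t$ for all $N$. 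Fixing $t>0$ and letting $N\to\infty$ then gives $\|\rho u(t)-\rho_0u_0\|_{L^2}\le C\sqrt t$. Alternatively, test with $\psi\in C_c^\infty([0,T_0))$ so that $\langle\rho_{0N}u_{0N},\varphi\rangle\psi(0)$ survives the limit, and combine this with your continuity of $\rho u$.

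Once this is fixed, your route to \eqref{normcon} is a legitimate variant of the paper's.
\begin{itemize}
\item The paper passes to the limit in the energy \emph{equality} itself. This requires the dissipation integrals to converge, i.e.\ strong $L^2(0,t;L^2)$ convergence of $\mu_N$ and $\nabla u_N$. Then \eqref{normcon} follows immediately from continuity of the other energy terms at $t=0$, without using \eqref{CRU}. The identification of the weak limits of $\sqrt\rho u(t)$ on $\{\rho_0>0\}$ and on the vacuum set is deferred to Section~\ref{Sec_4.2}, where it upgrades \eqref{normcon} to strong convergence.
\item You use only the energy inequality (lower semicontinuity of the dissipation) for the $\limsup$, and bring the weak-limit identification forward for the $\liminf$. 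This needs less compactness and yields $\sqrt\rho u(t)\to\sqrt{\rho_0}u_0$ in $L^2$ at once. But it makes \eqref{normcon} hinge on \eqref{CRU}, which is why the gap above is not cosmetic.
\end{itemize}

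A minor point: strong convergence of $u_N$ in $L^2(0,T_0;H^1)$ near $t=0$ comes from the $L^\infty(0,T_0;H^1)$ bound, via $\int_0^\delta\|\nabla u_N\|_{L^2}^2\,dt\le C\delta$. It does not come from the $L^2(0,T_0;H^2)$ bound, which does not rule out concentration at $t=0$.
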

\begin{proof}\textbf{Step 1. Limit passage. }
	Let $(\rho_{0N}, u_{0N}, \chi_{0N})$ be chosen as in Section~3, then we have
	\begin{align*}
		&\rho_{0N} \rightarrow \rho_0 \qquad \mathrm{in}\ W^{1,q}, \\
		&u_{0N} \rightarrow u_0 \qquad \mathrm{in}\ H^1, \\
		&\chi_{0N} \rightarrow \chi_0 \qquad \mathrm{in}\  H^2.
	\end{align*}
	It follows from Proposition~\ref{appro_sol} and Proposition~\ref{Uniform_bounds} that, there exists a time $T_0>0$ and a constant $C>0$, depending only on $A$, $\gamma$, $\nu$,
	$\lambda$, $q$, $\Om$ and $\Phi_0$, such that
		\begin{align*}
			&\sup\limits_{0 \le t \le T_0} \left(\|\partial_t \rho_N\|_{L^2}^2
			+ \|\rho_N\|_{W^{1,q}} \right) \le C, \\
			&\sup\limits_{0 \le t \le T_0} \left(\|\nabla u_N\|_{L^2}^2 + \|\sqrt{t} \nabla^2 u_N\|_{L^2}^2 \right)
			+ \int_0^{T_0} \left(\| \sqrt{t} \nabla \partial_t  u_N \|_{L^2}^2
			+ \| \nabla^2 u_N \|_{L^2}^2 \right) \,dt \le C,\\
			&\sup\limits_{0 \le t \le T_0} \left( \| \chi_N \|_{H^2}^2
			+ \| \sqrt{t} \nabla^3 \chi_N \|_{L^2}^2 \right)
			+ \int_0^{T_0} \| \partial_t \chi_N \|_{H^1}^2 \,dt\le C,\\
			&\sup\limits_{0 \le t \le T_0} \left( \| \rho_N \partial_t \chi_N\|_{L^2}^2
			+ \|\mu_N\|_{L^2}^2 + \| \sqrt{t} \partial_t \nabla\chi_N\|_{L^2}^2
			+ \| \sqrt{t} \sqrt{\rho_N} \partial_t u_N \|_{L^2}^2 \right) \le C, \\
			&\int_0^{T_0} \left( \|\sqrt{\rho_N} \partial_t u_N\|_{L^2}^2
			+ \| \nabla^2 u_N \|_{L^q}
			+ \|\sqrt{t} \nabla^2 u\|_{L^q}^2 + \|\nabla^3 \chi_N \|_{L^2}^2
			+ \| \sqrt{t} \rho_N \partial_{tt}^2 \chi_N \|_{L^2}^2 \right) \,dt \le C,
		\end{align*}
	for any $N \in \mathbb{N}$. Using the standard diagonal argument, we can extract a subsequence of
	$(\rho_N, u_N, \chi_N, \mu_N)$, still denoted by $(\rho_N, u_N, \chi_N, \mu_N)$, such that
	\begin{equation}\label{wc1}
		\begin{aligned}
			\begin{aligned}
				&\rho_N \wsc \rho &&\ws L^{\infty}(0, T_0; W^{1,q}), \\
				&\partial_t \rho_N \wsc \rho_t &&\ws L^{\infty}(0, T_0; L^2), \\
				&u_N \wsc u &&\ws L^{\infty}(0, T_0; H_0^1),\\
				&u_N \wc u &&\w L^2(0, T_0; H^2),\\
				&\partial_t u_N \wc u_t &&\w L^2(\delta, T_0; H^1_0),\\
			\end{aligned}
			\qquad
			\begin{aligned}
				&\chi_N \wsc \chi &&\ws L^{\infty}(0, T_0; H^2),\\
				&\chi_N \wc \chi &&\w L^2(0, T_0; H^3),\\
				&\partial_t \chi_N \wc \chi_t &&\w L^2(0, T_0; H^1),\\
				&\partial_t \chi_N \wc \chi_t &&\ws L^\infty(\delta, T_0; H^1),\\
				&\mu_N  \wsc \mu  && \ws L^\infty(0, T_0; L^2),
			\end{aligned}
		\end{aligned}
	\end{equation}
	for any $\delta \in (0, T_0)$. Here $(\rho, u, \chi, \mu)$ satisfies
	\begin{equation}\label{reg}
		\begin{cases}
			0 \le \rho \in L^{\infty}(0, T_0; W^{1, q}),
			\quad \rho_t \in L^\infty(0, T_0; L^2), \\
			u \in L^\infty(0, T_0; H_0^1) \cap L^2(0, T_0; H^2) \cap L^1(0, T_0; W^{2,q}),
			\quad \sqrt{t} u_t \in L^2(0, T_0; H_0^1), \\
			\sqrt{t} \nabla^2 u \in L^\infty(0, T_0;L^2) \cap L^2(0, T_0; L^q),  \\
			\chi \in L^{\infty}(0, T_0; H^2) \cap L^2(0, T_0; H^{3}),
			\quad \sqrt{t} \nabla^3 \chi \in L^{\infty}(0, T_0; L^2), \\
			\chi_t \in L^2(0, T_0; H^1), \quad \sqrt{t} \nabla\chi_t \in L^\infty(0, T_0; L^2), \\
			\mu \in L^\infty(0, T_0; L^2).
		\end{cases}
	\end{equation}
	Therefore, by the Aubin-Lions lemma, together with the embeddings $H^3 \hookrightarrow \hookrightarrow H^2 \hookrightarrow\hookrightarrow H^1\hookrightarrow \hookrightarrow L^2$, and $ W^{1, q} \hookrightarrow \hookrightarrow C(\overline{\Om}) \hookrightarrow L^2$, for $q \in (3, 6)$,
	we infer that
	\begin{align}
		\rho_N \rightarrow \rho &\s C([0, T_0]; C(\overline{\Om})), \label{sc1}\\
		u_N \rightarrow u &\s C([\delta, T_0]; L^2) \cap
		L^2(\delta, T_0; H_0^1), \label{sc2} \\
		\chi_N \rightarrow \chi &\s C([0, T_0]; H^1) \cap L^2(0, T_0; H^2) \label{sc3}
	\end{align}
	for any $\delta \in (0, T_0)$. By the convergence of the nonlinear terms \eqref{wc1} and \eqref{sc1}--\eqref{sc3}, we have the following convergence of the nonlinear terms
	\begin{equation}\label{cnl}
		\begin{aligned}
			(\rho_N u_N, \sqrt{\rho_N} u_N) &\rightarrow (\rho u,\sqrt{\rho} u)
			&&\s C([\delta, T_0]; L^2), \\
			(\rho_N\partial_t u_N ,\sqrt{\rho_N} \partial_t u_N)
			&\wc (\rho u_t ,\sqrt{\rho} u_t) &&\w L^2(\delta, T_0; L^2), \\
			\rho_N ( u_N \cdot \nabla ) u_N &\rightarrow \rho (u \cdot \nabla) u
			&& \s L^1(\Om \times (\delta, T_0)), \\
			P(\rho_N) &\rightarrow P(\rho) &&\s C([0, T_0]; C(\overline{\Om})), \\
			\Delta\chi_N \nabla\chi_N &\rightarrow \Delta\chi \nabla\chi &&\s
			L^1(\Om \times (0, T_0)). \\
			(\rho_N^2 \partial_t\chi_N, \rho_N \partial_t\chi_N) &\wc
			(\rho^2 \partial_t\chi, \rho \partial_t\chi) &&\w L^2(0, T_0; L^2), \\
			\rho_N^2 ( u_N \cdot \nabla ) \chi_N &\wc \rho^2 ( u \cdot \nabla ) \chi
			&&\w L^1(\Om \times (\delta, T_0)), \\
			\rho_N F'(\chi_N) &\rightarrow \rho F'(\chi) &&\s C([0, T_0]; L^2)
		\end{aligned}
	\end{equation}
	for any $\delta \in (0, T_0)$. From the weakly lower semi-continuity of the norms and \eqref{cnl}, one concludes that
	\begin{align*}
		\int_{\delta}^{T_0} \| \sqrt \rho u_t \|_{L^2}^2 \,dt
		\le \liminf_{N \rightarrow \infty} \int_{\delta}^{T_0} \| \sqrt{\rho_N}
		\partial_t u_N \|_{L^2}^2 \,dt \le C,
	\end{align*}
	for any $\delta \in (0,T_0)$.
	Thus, we have
	\begin{align*}
		\sqrt{\rho} u_t \in L^2(0, T; L^2).
	\end{align*}
	
	\textbf{Step 2. The existence. }
	It follows from \eqref{wc1}, \eqref{sc2}, \eqref{sc3}, \eqref{cnl}, and Proposition~\ref{appro_sol} that $(\rho, u, \chi, \mu)$ satisfies \eqref{NSAC} in the sense of distributions and also satisfies \eqref{B1} almost everywhere on $\partial \Omega \times (0, T_0)$. Furthermore, in view of the regularity property \eqref{reg} and an integration by parts argument, we conclude that $(\rho, u, \chi, \mu)$ satisfies \eqref{NSAC} almost everywhere in $\Omega \times (0, T_0)$.
	Thus, we have shown that $(\rho, u, \chi, \mu)$ satisfies all properties in Definition~\ref{def1}, except $\sqrt{\rho}u \in C([0, T_0];L^2)$ and the initial conditions \eqref{I}.

	It follows from \eqref{rho0n}, \eqref{chi0n}, \eqref{sc1} and \eqref{sc3} that
	\begin{equation}
		\begin{cases}\label{CRC}
			\rho \rightarrow \rho_0 \quad \text{in } C(\overline{\Om})
			\quad\text{as } t \rightarrow 0^+, \\
			\chi \rightarrow \chi_0 \quad \text{in } H^1
			\quad\text{as } t \rightarrow 0^+. \\
		\end{cases}
	\end{equation}
	We also deduce by \eqref{cnl} that
	\begin{align*}
		&(\sqrt{\rho}u, \rho u) \in C((0, T];L^2).
	\end{align*}
	By letting $N \rightarrow \infty$ in \eqref{EN} and \eqref{sc1}--\eqref{sc3}, one has
	\begin{align*}
		&\frac{1}{2}\|\sqrt{\rho} u \|_{L^2}^2(t) + \frac{1}{2} \|\nabla \chi\|_{L^2}^2 (t)
		+ \frac{A}{\gamma-1} \int_{\Om} \rho^{\gamma}(x, t) \,dx
		+ \int_{\Om} \rho(x, t) F(\chi(x, t)) \,dx \\
		&\quad + \int_0^t ( \|\mu\|_{L^2}^2 + \nu \|\nabla u\|_{L^2}^2
		+ ( \lambda + \nu ) \|\Div u\|_{L^2}^2 ) \, ds \\
		&= \frac{1}{2} \|\sqrt{\rho_0} u_0 \|_{L^2}^2 + \frac{1}{2} \|\nabla \chi_0\|_{L^2}^2
		+ \frac{A}{\gamma-1} \int_{\Om} \rho_0^{\gamma} \,dx
		+ \int_{\Om} \rho_0 F(\chi_0) \,dx
	\end{align*}
	for any $t \in (0, T_0)$.
	It follows from \eqref{CRC} that
	\begin{align*}
		&\lim_{t \rightarrow 0^+} \|\sqrt{\rho} u \|_{L^2}^2(t) = \|\sqrt{\rho_0} u_0 \|_{L^2}^2.
	\end{align*}
	
	It remains to show that
	\begin{equation}\label{CRU}
		\lim_{t \rightarrow 0^+} \| \rho u - \rho_0 u_0 \|_{L^2}(t) = 0.
	\end{equation}
	One deduces by the Gagliardo-Nirenberg and H\"older inequalities that $\int_{0}^{{T_0}} \| \partial_{t} (\rho_N u_N) \|_{L^2}^{2} \,dt \le C$.
	It follows from the H\"older inequality that
	\begin{align*}
		&\| \rho u - \rho_0 u_0 \|_{L^2}(t) \\
		\le& \| \rho u - \rho_N u_N \|_{L^2}(t)
		+ \| \rho_N u_N -\rho_{0N}u_{0N} \|_{L^2}(t)
		+ \| \rho_{0N} u_{0N} - \rho_{0N} u_0 \|_{L^2}
		+ \| \rho_{0N} u_0 - \rho_{0} u_0 \|_{L^2} \\
		\le& \| \rho u - \rho_N u_N \|_{L^2}(t)
		+ \int_0^{t}\|\partial_{t}(\rho_N u_N) \|_{L^2} \,ds
		+ \| \rho_{0N} \|_{L^\infty} \| u_{0N} - u_0 \|_{L^2}
		+\frac{\|\rho_0\|_{L^\infty}}{N} \|u_0\|_{L^2} \\
		\le& \| \rho u - \rho_N u_N \|_{L^2}(t)
		+ C \sqrt{t}
		+ C \| u_{0N} - u_0 \|_{L^2}
		+\frac{C}{N} \|u_0\|_{L^2}.
	\end{align*}
	Recalling \eqref{u0n}, one gets
	$\| \rho u - \rho_0 u_0 \|_{L^2}(t) \le C \sqrt{t}$.
	This completes the proof.
\end{proof}

\subsection{Existence}\label{Sec_4.2}
To prove Theorem \ref{th1}, it suffices to show that the solution $(\rho, u, \chi, \mu)$
obtained in Theorem \ref{Existence} satisfies $\sqrt{\rho}u \rightarrow \sqrt{\rho_0} u_0$ in $L^2$
as $t\rightarrow 0$. Thanks to \eqref{normcon}, it is sufficient to show
$\sqrt{\rho}u \wc \sqrt{\rho_0} u_0$ in $L^2$ as $t\rightarrow 0$.
Let $\{t_k\}$ be any decreasing sequence such that $t_k \downarrow 0$,
It remains to show that there exists a subsequence $\{t_{k_j}\}$ of $\{t_k\}$ such that
\begin{equation}\label{CRU2}
	 \sqrt{\rho} u (t_{k_j}) \wc \sqrt{\rho_0} u_0 \quad \text{in } L^2
	 \quad \text{as } j\rightarrow \infty.
\end{equation}
Owing to the uniform bound of $\|\sqrt{\rho} u\|_{L^2}(t_k)$, one can extract a subsequence $\{t_{k_j}\}$ of $\{t_k\}$ such that
$$\sqrt{\rho}u (t_{k_j}) \wc \psi  \quad \text{in } L^2
\quad \text{as } j\rightarrow \infty,$$
for some $\psi\in L^2$.
Consequently,
$$
\sqrt{\rho_0}\sqrt{\rho}u (t_{k_j}) \wc \sqrt{\rho_0}\psi
\quad \text{in } L^2 \quad \text{as } j\rightarrow \infty.
$$
On the other hand, by using \eqref{CRC}, the H\"{o}lder inequality and the uniform bounds of $u$ in $L^6$ and $\sqrt{\rho}$ in $L^\infty$, we have $\|\sqrt{\rho_0}\sqrt{\rho}u-\rho u\|_{L^2}\le \|\sqrt{\rho_0}-\sqrt{\rho}\|_{L^3}\|\sqrt{\rho}\|_{L^\infty}\|u\|_{L^6}\rightarrow 0$, as $t\rightarrow 0^+$.
Hence, from \eqref{CRU} we can deduce that
$$
\sqrt{\rho_0} \sqrt{\rho} u (t) \rightarrow \rho_0 u_0 \quad \text{in } L^2
\quad \text{as } t\rightarrow 0^+.
$$
By the uniqueness of weak limits, it follows that $\sqrt{\rho_0} \psi = \rho_0 u_0$ a.e. in $\Om$.
We decompose the domain into the non-vacuum and the vacuum regions:
$$\Om = \Om_+ \cup \Om_0, \quad \Om_+ = \{x\in\Om:\rho_0(x) > 0\},
\quad\Om_0 = \{x\in\Om:\rho_0(x) = 0\}. $$ Hence, one deduces that
$\psi = \sqrt{\rho_0} u_0$ a.e. in  $\Om_+$.
Let $\phi\in L^2$ be arbitrary, then we have
\begin{equation}\label{CRU3}
	\int_{\Om_+} (\sqrt{\rho} u - \sqrt{\rho_0} u_0)\phi \,dx (t_{k_j})
	=\int_{\Om_+} (\sqrt{\rho} u - \psi)\phi \,dx (t_{k_j})
	\rightarrow 0  \quad \text{as } j\rightarrow \infty
\end{equation}
and
\begin{equation}
	\begin{aligned}\label{CRU4}
	\int_{\Om_0} (\sqrt{\rho} u - \sqrt{\rho_0} u_0)\phi \,dx (t_{k_j})
	&=\int_{\Om_0} \sqrt{\rho} u \phi \,dx (t_{k_j}) \\
	&= \int_{\Om_0} (\sqrt{\rho} - \sqrt{\rho_0}) u \phi \,dx (t_{k_j})
	\rightarrow 0 \quad \text{as } j\rightarrow \infty.
	\end{aligned}
\end{equation} 	
Combining \eqref{CRU3} and \eqref{CRU4} yields \eqref{CRU2}. Since the sequence $\{t_k\}$ is arbitrary,
it follows that
$$\sqrt{\rho}u(t) \wc \sqrt{\rho_0} u_0 \quad \text{in } L^2 \quad \text{as } t\rightarrow 0^+. $$
This completes the proof. \qed

\subsection{Uniqueness}\label{Sec_4.3}
This section is devoted to proving the uniqueness of the strong solutions obtained in the previous section.

Let $(\rho_i,u_i,\chi_i)$ $(i=1,2)$ be two strong solutions to system \eqref{NSAC} in $\Om \times(0,T_0)$, subject to the initial condition \eqref{I} and the boundary condition \eqref{B1}. Denote $\rho=\rho_1-\rho_2$, $u=u_1-u_2$, $\chi=\chi_1-\chi_2$. It is straightforward to verify that
$(\rho, u, \chi)$ satisfies that
\begin{equation}
	\begin{split}\label{NSAC_diff}
		\begin{cases}
			\rho_{t} + u_1 \cdot \nabla \rho + \rho \Div u_1
			+ u \cdot \nabla \rho_2 + \rho_2 \Div u=0, \\
			\rho_1 u_{t}-\mathcal{L} u = -\rho \partial_t u_2
			- \rho_1 ( u_1 \cdot \nabla ) u - \rho_1 ( u \cdot \nabla ) u_2 - \rho ( u_2 \cdot \nabla) u_2
			\\\quad- \nabla\bigl( P(\rho_1) - P(\rho_2) \bigr)
			- \Div\bigl( \nabla \chi_1 \otimes \nabla \chi + \nabla \chi \otimes \nabla \chi_2 \bigr)
			- \frac{1}{2} \nabla \bigl( \nabla \chi \cdot ( \nabla \chi_1 + \nabla \chi_2) \bigr),
			\\
			\rho_1^2 \chi_{t} - \Delta \chi = - \rho ( \rho_1 + \rho_2 ) \partial_t \chi_2
			- \rho_1^2 u_1 \cdot \nabla \chi - \rho_1^2 u \cdot \nabla \chi_2
			- \rho (\rho_1 + \rho_2) u_2 \cdot \nabla \chi_2 \\
			\quad - \rho_1 \chi ( \chi_1^2 + \chi_1 \chi_2 + \chi_2^2 - 1) - \rho ( \chi_2^3 - \chi_2)
		\end{cases}
	\end{split}
\end{equation}
a.e. in $\Om\times(0, T_0)$ and $( u, \partial_{\boldsymbol{n}} \chi ) \big|_{\partial \Om} = 0$.
It follows from the H\"older and Poincar\'e inequalities that
\begin{align*}
	\|\sqrt{\rho_1} u\|_{L^2}^2(\tau) &= \|\rho_1 |u|^2\|_{L^1}(\tau)
	\le \|\rho_1 u\|_{L^2}(\tau) \|u\|_{L^2}(\tau) \\
	&\le C \left(\|\rho_1 u_1 - \rho_2 u_2\|_{L^2}(\tau)
	+ \|(\rho_1 - \rho_2) u_2 \|_{L^2}(\tau)\right) \|\nabla u\|_{L^2}(\tau) \\
	&\le C \left(\|\rho_1 u_1 - \rho_2 u_2\|_{L^2}(\tau)
	+ \|\rho_1 - \rho_2\|_{L^3}(\tau) \| u_2 \|_{L^6}(\tau)\right) \\
	&\le C \left(\|\rho_1 u_1 - \rho_2 u_2\|_{L^2}(\tau)
	+ \|\rho_1 - \rho_2\|_{L^3}(\tau)\right)
\end{align*}
for every $\tau>0$. Hence, one deduces by the regularities of $(\rho_i, u_i, \chi_i)$ that
\begin{equation}\label{IU}
	\lim_{\tau \rightarrow 0} \|(\rho,  \sqrt{\rho_1}u, \chi )\|_{L^2}(\tau) = 0,
\end{equation}

The proof of uniqueness proceeds in three steps: energy estimates for $(\rho, u, \chi)$, growth estimates for $(\rho, \chi)$, and a singular-weighted Gr\"onwall argument.

\textbf{Step 1. Energy inequalities. }
Multiplying $\eqref{NSAC_diff}_1$ by $\rho$ and integrating over $\Om$, it follows from
integration by parts, the H\"older, Sobolev and Poincar\'e inequalities that
\begin{equation}\label{rho_diff}
	\begin{split}
		\frac{d}{dt} \|\rho\|_{L^2}^{2}
		&= - \int_{\Om} \rho^{2} \Div u_{1} \,dx
		- 2 \int_{\Om} \rho  u \cdot \nabla \rho_{2} \,dx
		- 2 \int_{\Om} \rho_{2} \Div u \rho \,dx \\
		&\le C \| \nabla u_1 \|_{L^\infty} \|\rho\|_{L^2}^2
		+ C \|u\|_{L^6} \|\rho\|_{L^2} \| \nabla \rho_2 \|_{L^3}
		+ C \| \rho_2\|_{L^\infty} \|\nabla u\|_{L^2} \|\rho\|_{L^2}\\
		&\le C \|\nabla^2 u_1 \|_{L^q} \|\rho\|_{L^2}^2
		+ C \|\nabla u\|_{L^2} \|\rho\|_{L^2}.
	\end{split}
\end{equation}

Testing $\eqref{NSAC_diff}_2$ with $u$, it follows from integration by parts that
\begin{equation*}
	\begin{aligned}
		&\frac{1}{2} \frac{d}{dt} \|\sqrt{\rho_1} u\|_{L^2}^2
		+ \nu \|\nabla u\|_{L^2}^2 + ( \lambda + \nu ) \|\Div u\|_{L^2}^2 \\
	 	=& \int_\Om \rho_1 ( u \cdot \nabla ) u \cdot u_1 \,dx
		- \int_\Om \rho_1 ( u_1 \cdot \nabla ) u \cdot u \,dx
		- \int_\Om \rho_1 ( u \cdot \nabla ) u_2 \cdot u \,dx
		- \int_\Om \rho ( u_2 \cdot \nabla ) u_2 \cdot u \,dx\\
		& - \int_\Om \rho \partial_t u_2 \cdot u \,dx
		+ \int_\Om [ P( \rho_1 ) - P( \rho_2 )] \Div u \,dx
		+ \int_\Om ( \nabla \chi_1 \otimes \nabla \chi
		+ \nabla \chi \otimes \nabla \chi_2 ) : \nabla u \,dx \\
		& + \frac{1}{2} \int_\Om \nabla \chi \cdot ( \nabla \chi_1 + \nabla \chi_2 ) \Div u \,dx \\
		=& \sum_{i=1}^8 M_i.
	\end{aligned}
\end{equation*}
Using the H\"{o}lder, Young, Sobolev and Gagliardo-Nirenberg inequalities, one obtains
\begin{align*}
	M_1+M_2 &\le 2 \|\rho_1\|_{L^\infty}^{\frac{1}{2}} \|\sqrt{\rho_1} u\|_{L^2}
	\|\nabla u\|_{L^2} \| u_1 \|_{L^\infty}\\
	&\le C \|\sqrt{\rho_1}u\|_{L^2}\|\nabla u\|_{L^2} \|\nabla u_1\|_{L^2}^{\frac{1}{2}}
	\|\nabla^2 u_1\|_{L^2}^{\frac{1}{2}}\\
	&\le \frac{\nu}{10}\|\nabla u\|_{L^2}^2
	+ C \| \nabla^2 u_1 \|_{L^2} \|\sqrt{\rho_1}u\|_{L^2}^2, \\
	M_3 &\le  \| \nabla u_2 \|_{L^\infty} \|\sqrt{\rho_1} u\|_{L^2}^2
	\le C \|\nabla^2 u_2\|_{L^q} \|\sqrt{\rho_1} u\|_{L^2}^2, \\
	M_4 &\le \|\rho\|_{L^2} \| u_2 \|_{L^6} \| \nabla u_2 \|_{L^6} \|  u \|_{L^6}\\
	&\le \|\rho\|_{L^2} \| \nabla u_2 \|_{L^2}
	\|\nabla^2 u_2\|_{L^2} \|\nabla u\|_{L^2}\\
	&\le \frac{\nu}{10}\|\nabla u\|_{L^2}^2
	+ C \|\nabla^2 u_2\|_{L^2}^2 \|\rho\|_{L^2}^2. \\
	M_5 &\le \|\rho\|_{L^2}\|\partial_t u_2\|_{L^3}\|u\|_{L^6}\\
	&\le \frac{\nu}{10}\|\nabla u\|_{L^2}^2
	+ C\| \nabla \partial_t u_2\|_{L^2}^2 \|\rho\|_{L^2}^2. \\
	M_6 &\le C (\|\rho_1\|_{L^\infty}^{\gamma-1} + \|\rho_2\|_{L^\infty}^{\gamma-1} )\int_\Om |\rho \Div u | \,dx \\
	&\le \frac{\nu}{10} \|\nabla u\|_{L^2}^2
	+ C \|\rho\|_{L^2}^2. \\
	M_7 + M_8 &\le C \|\nabla \chi\|_{L^2} \big( \| \nabla \chi_1 \|_{L^\infty}
	+ \| \nabla \chi_2 \|_{L^\infty} \big) \|\nabla u\|_{L^2}\\
	&\le C \|\nabla \chi\|_{L^2}
	\big( \| \nabla^2 \chi_1 \|_{L^2}^\frac{1}{2} \|\nabla^3 \chi_1 \|_{L^2}^\frac{1}{2}
	+ \|\nabla^2 \chi_2\|_{L^2}^\frac{1}{2} \|\nabla^3 \chi_2 \|_{L^2}^\frac{1}{2}
	\big) \|\nabla u\|_{L^2}\\
	&\le \frac{\nu}{10}\|\nabla u\|_{L^2}^2
	+ C (\|\nabla^3 \chi_1 \|_{L^2} + \|\nabla^3 \chi_2 \|_{L^2})
	\|\nabla\chi\|_{L^2}^2.
\end{align*}
Thus, we have
\begin{equation}\label{rho1u}
	\begin{split}
		&\frac{d}{dt} \|\sqrt{ \rho_1} u \|_{L^2}^2 + \nu \|\nabla u\|_{L^2}^2
		\le C \big(\| \nabla^2 u_1 \|_{L^2} + \|\nabla^2 u_2\|_{L^q} \big)
		\|\sqrt{\rho_1} u\|_{L^2}^2 \\
		&\quad+ C \big(\|\nabla^3 \chi_1 \|_{L^2} + \|\nabla^3 \chi_2 \|_{L^2}\big)
		\|\nabla\chi\|_{L^2}^2
		+ \big(\|\nabla \partial_t u_2\|_{L^2}^2 + 1 \big) \|\rho\|_{L^2}^2.
	\end{split}
\end{equation}

Multiplying $\eqref{NSAC_diff}_3$ by $\chi$ and integrating over $\Om$, 
it follows from integration by parts and the identity $\rho (\rho_1 + \rho_2) = - \rho^2 + 2 \rho \rho_1$ that
\begin{align*}
		&\frac{1}{2} \frac{d}{dt} \|\rho_{1} \chi\|_{L^2}^2 + \|\nabla \chi\|_{L^2}^2 \\
		=& -\frac{1}{2} \int_{\Om} \rho_1^{2} \Div u_1 \chi^2 \,dx
		- \int_{\Om} \rho (\rho_1 + \rho_2) \partial_t \chi_{2} \chi \,dx
		- \int_{\Om} \rho_1^2 ( u \cdot \nabla) \chi_2 \chi \,dx \\
		& + \int_{\Om} \rho^2 ( u_2 \cdot \nabla ) \chi_2 \chi \,dx
		- 2 \int_{\Om} \rho_{1} \rho ( u_2 \cdot \nabla ) \chi_2 \chi  \,dx
		- \int_{\Om} \rho_{1} \chi^2 (\chi _{1}^{2}+\chi _{1}\chi _{2}+\chi _{2}^{2}-1) \,dx \\
		& - \int_{\Om} \rho(\chi_{2}^3 - \chi_2 ) \chi \,dx \\
		=& \sum_{i=1}^{7} M_{i}.
\end{align*}
Using Lemma~\ref{lem1}, the H\"{o}lder, Young and Sobolev inequalities, one arrives at
\begin{align*}
	M_1 &\le \frac{1}{2} \| \nabla u_1 \|_{L^\infty} \|\rho_1 \chi\|_{L^2}^2  \le C \| \nabla^2 u_1 \|_{L^q} \|\rho_1 \chi\|_{L^2}^2, \\
	M_2 &\le ( \|\rho_1\|_{L^\infty} + \|\rho_2\|_{L^\infty} )
	\|\rho\|_{L^2} \|\chi\|_{L^3}  \| \partial_t \chi_2 \|_{L^6}\\
	&\le C \|\rho\|_{L^2} (\|\rho_1\chi\|_{L^2} + \|\nabla \chi\|_{L^2})
	(\|\rho_2 \partial_t \chi_2\|_{L^2} + \|\nabla \partial_t \chi_2\|_{L^2}) \\
	&\le \frac{1}{6} \|\nabla \chi\|_{L^2}^2
	+ C (1 + \|\nabla \partial_t \chi_2\|_{L^2}^2) \|\rho\|_{L^2}^2
	+ C \|\rho_1 \chi\|_{L^2}^2, \\
	M_3 &\le \|\rho_1\|_{L^\infty}^{\frac{1}{2}} \|\sqrt{\rho_1} u\|_{L^2}
	\|\rho_1 \chi\|_{L^2} \|\nabla \chi_2\|_{L^\infty} \\
	&\le C \|\sqrt{\rho_1} u\|_{L^2} \|\rho_1 \chi\|_{L^2}
	\|\nabla^2 \chi_2\|_{L^2}^\frac{1}{2} \|\nabla^3 \chi_2\|_{L^2}^\frac{1}{2} \\
	&\le \frac{\eta}{2} \|\nabla^3 \chi_2\|_{L^2} \|\rho_1 \chi\|_{L^2}^2
	+ \frac{C}{\eta} \|\sqrt{\rho_1} u\|_{L^2}^2, \\
	M_4 &\le \|\rho\|_{L^2}^2 \| u_2 \|_{L^\infty}
	\| \nabla \chi_2 \|_{L^\infty} \|\chi\|_{L^\infty}\\
	&\le C \| \nabla u_2 \|_{L^2}^{\frac{1}{2}} \|\nabla^2 u_2\|_{L^2}^{\frac{1}{2}}
	\| \nabla^2 \chi_2 \|_{L^2}^{\frac{1}{2}} \|\nabla^3 \chi_2\|_{L^2}^{\frac{1}{2}}
	\|\rho\|_{L^2}^2 \\
	&\le C (1 + \|\nabla^2 u_2\|_{L^2} \|\nabla^3 \chi_2\|_{L^2}) \|\rho\|_{L^2}^2, \\
	M_5 &\le 2 \|\rho\|_{L^2} \|\rho_1 \chi\|_{L^2}
	\| u_2 \|_{L^\infty} \| \nabla \chi_2 \|_{L^\infty} \\
	&\le C \|\nabla^2 u_2\|_{L^2}^{\frac{1}{2}} \|\nabla^3 \chi_2\|_{L^2}^{\frac{1}{2}}
	\|\rho\|_{L^2} \|\rho_1 \chi\|_{L^2} \\
	&\le C \|\rho_1 \chi\|_{L^2}^2 + C \|\nabla^2 u_2\|_{L^2} \|\nabla^3 \chi_2\|_{L^2} \|\rho\|_{L^2}^2, \\
	M_6 &\le \|\rho_1 \chi\|_{L^2}
	\|\chi\|_{L^6} \| \chi_1^2 +\chi_2^2 + \chi_1 \chi_2 -1 \|_{L^3} \\
	&\le C \|\rho_1 \chi\|_{L^2} (\|\rho_1 \chi\|_{L^2} + \|\nabla \chi\|_{L^2})
	( \|\chi_1\|_{L^6}^2 + \|\chi_1\|_{L^6} \|\chi_2\|_{L^6} + \|\chi_2\|_{L^6}^2 + 1) \\
	&\le \frac{1}{6} \|\nabla \chi\|_{L^2}^2 + C \|\rho_1 \chi\|_{L^2}^2, \\
	M_7 &\le \|\rho\|_{L^2} \|\chi\|_{L^6} \|\chi_2^3 - \chi_2\|_{L^3} \\
	&\le C \|\rho\|_{L^2} (\|\rho_1 \chi\|_{L^2} + \|\nabla \chi\|_{L^2})
	\|\chi_2\|_{L^3} (\|\chi_2\|_{L^\infty}^2 + 1) \\
	&\le \frac{1}{6} \|\nabla \chi\|_{L^2}^2 + C \|\rho\|_{L^2}^2 + C \|\rho_1 \chi\|_{L^2}^2
\end{align*}
for every $\eta>0$. Hence, we have
\begin{equation}\label{rho1chi}
	\begin{aligned}
	&\frac{d}{dt} \|\rho_1 \chi\|_{L^2}^2 + \|\nabla \chi\|_{L^2}^2
	\le \eta \|\nabla^3 \chi\|_{L^2} \|\rho_1 \chi\|_{L^2}^2
	+ C (1 + \|\nabla^2 u_1\|_{L^q}) \|\rho_1 \chi\|_{L^2}^2 \\
	&\quad+ C (1 + \|\nabla \partial_t \chi_2\|_{L^2}^2
	+ \|\nabla^2 u_2\|_{L^2} \|\nabla^3 \chi_2\|_{L^2}) \|\rho\|_{L^2}^2
	+ \frac{C}{\eta} \| \sqrt{\rho_1}u\|_{L^2}
	\end{aligned}
\end{equation}
for every $\eta>0$.

\textbf{Step 2. Growth estimates. }
It follows from \eqref{rho_diff} that
\begin{equation*}
	\frac{d}{dt}\|\rho\|_{L^2}(t) \le C \|\nabla^2 u_1 \|_{L^q} \|\rho\|_{L^2} + C.
\end{equation*}
By applying the Gr\"onwall inequality, \eqref{IU} yields that
\begin{equation}\label{GE1}
	\|\rho\|_{L^2}(t) \le C t, \quad \forall t \in (0, T_0).
\end{equation}

Using $\eqref{GE1}$, one deduces by \eqref{rho1chi} and the Poincar\'e inequality that
\begin{equation*}
	\begin{aligned}
	&\frac{d}{dt} \|\rho_1 \chi\|_{L^2}^2 + \|\nabla \chi\|_{L^2}^2
	\le C (1 + \|\nabla^3 \chi\|_{L^2} + \|\nabla^2 u_1\|_{L^q}) \|\rho_1 \chi\|_{L^2}^2 \\
	&\qquad+ C (1 + \|\sqrt{t}\nabla \partial_t \chi_2\|_{L^2}^2
	+ \|\sqrt{t}\nabla^2 u_2\|_{L^2} \|\sqrt{t}\nabla^3 \chi_2\|_{L^2}) t
	+ C \|\sqrt{\rho_1}\|_{L^\infty} \|\nabla u\|_{L^2} \\
	&\quad\le C (1 + \|\nabla^3 \chi\|_{L^2} + \|\nabla^2 u_1\|_{L^q}) \|\rho_1 \chi\|_{L^2}^2 + C.
	\end{aligned}
\end{equation*}
Hence, it follows from the Gr\"onwall inequality and \eqref{IU} that
\begin{equation}\label{GE2}
	\|\rho_1 \chi\|_{L^2}^2(t) + \int_0^t \|\nabla \chi\|_{L^2}^2 (s) \, ds
	\le C t, \quad \forall t \in (0, T_0).
\end{equation}

\textbf{Step 3. Singular $t$-weighted energy inequalities. }
Multiplying \eqref{rho_diff} by $\frac{1}{t}$, one deduces by the Young inequality that
\begin{align*}
	\frac{d}{dt} \frac{\|\rho\|_{L^2}^2}{t} + \frac{\|\rho\|_{L^2}^2}{t^2}
	\le C \| \nabla^2 u_1 \|_{L^q} \frac{\|\rho\|_{L^2}^2}{t}
	+ \frac{1}{2}\frac{\|\rho\|_{L^2}^2}{t^2}
	+ C_1 \|\nabla u\|_{L^2}^2 .
\end{align*}
Therefore, we have
\begin{equation}\label{wrho}
	\frac{d}{dt} \frac{\|\rho\|_{L^2}^2}{t} + \frac{\|\rho\|_{L^2}^2}{2 t^2}
	\le C \| \nabla^2 u_1 \|_{L^q} \frac{\|\rho\|_{L^2}^2}{t}
	+ C_1 \|\nabla u\|_{L^2}^2.
\end{equation}

Multiplying \eqref{rho1chi} by $\frac{1}{\sqrt{t}}$ yields
\begin{equation*}
	\begin{aligned}
	&\frac{d}{dt} \frac{\|\rho_{1} \chi\|_{L^2}^2}{\sqrt{t}}
	+ \frac{\|\rho_{1} \chi\|_{L^2}^2}{2 t^{\frac{3}{2}}}
	+ \frac{\|\nabla \chi\|_{L^2}^2}{ \sqrt{t}} \\
	\le& \frac{C}{\eta\sqrt{t}} \| \sqrt{\rho_1}u\|_{L^2} + \eta \sqrt{t} \|\sqrt{t} \nabla^3 \chi\|_{L^2} \frac{\|\rho_1 \chi\|_{L^2}^2}{t^{\frac{3}{2}}} + C (1 + \|\nabla^2 u_1\|_{L^q}) \frac{\|\rho_{1} \chi\|_{L^2}^2}{\sqrt{t}} \\
	&+ C (1 + \sqrt{t}\|\nabla \partial_t \chi_2\|_{L^2}^2
	+ \|\nabla^2 u_2\|_{L^2} \|\sqrt{t}\nabla^3 \chi_2\|_{L^2}) \frac{\|\rho\|_{L^2}^2}{t} \\
	\le& \frac{C}{\eta\sqrt{t}} \| \sqrt{\rho_1}u\|_{L^2}
	+ \hat{C} \eta \sqrt{t} \frac{\|\rho_1 \chi\|_{L^2}^2}{t^{\frac{3}{2}}}
	+ (1 + \|\nabla^2 u_1\|_{L^q}) \frac{\|\rho_{1} \chi\|_{L^2}^2}{\sqrt{t}} \\
	&\qquad+  C (1 + \|\nabla \partial_t \chi_2\|_{L^2}
	+ \|\nabla^2 u_2\|_{L^2}) \frac{\|\rho\|_{L^2}^2}{t} \\
\end{aligned}
\end{equation*}
for each $\eta>0$. Let $\eta = \frac{1}{4\sqrt{t}\hat{C}}$, one gets
\begin{equation}
	\begin{aligned}\label{wchi}
		&\frac{d}{dt} \frac{\|\rho_{1} \chi\|_{L^2}^2}{\sqrt{t}}
		+ \frac{\|\rho_{1} \chi\|_{L^2}^2}{4 t^{\frac{3}{2}}}
		+ \frac{\|\nabla \chi\|_{L^2}^2}{ \sqrt{t}}
		\le C \| \sqrt{\rho_1}u\|_{L^2}
		+ (1 + \|\nabla^2 u_1\|_{L^q}) \frac{\|\rho_{1} \chi\|_{L^2}^2}{\sqrt{t}} \\
		&\quad +  C (1 + \|\nabla \partial_t \chi_2\|_{L^2}
		+ \|\nabla^2 u_2\|_{L^2}) \frac{\|\rho\|_{L^2}^2}{t}. \\
	\end{aligned}
\end{equation}

Recalling \eqref{rho1u}, we have
\begin{equation}\label{wu}
	\begin{split}
		&\frac{d}{dt} \|\sqrt{ \rho_1} u \|_{L^2}^2 + \nu \|\nabla u\|_{L^2}^2
		\le C \big(\| \nabla^2 u_1 \|_{L^2} + \|\nabla^2 u_2\|_{L^q} \big)
		\|\sqrt{\rho_1} u\|_{L^2}^2 \\
		& + C \big(\|\sqrt{t}\nabla^3 \chi_1 \|_{L^2}
		+ \|\sqrt{t}\nabla^3 \chi_2 \|_{L^2}\big)
		\frac{\|\nabla\chi\|_{L^2}^2}{\sqrt{t}}
		+ C \big(\|\sqrt{t} \nabla \partial_t u_2\|_{L^2}^2 + 1 \big)
		\frac{\|\rho\|_{L^2}^2}{t} \\
		\le& C_2 \frac{\|\nabla\chi\|_{L^2}^2}{\sqrt{t}}
		+ C \big(\| \nabla^2 u_1 \|_{L^2} + \|\nabla^2 u_2\|_{L^q} \big)
		\|\sqrt{\rho_1} u\|_{L^2}^2
		+ C \big(\|\sqrt{t} \nabla \partial_t u_2\|_{L^2}^2 + 1 \big)
		\frac{\|\rho\|_{L^2}^2}{t}
	\end{split}
\end{equation}
Let $C_3$ and $C_4$ be positive constants such that $\nu C_3 \ge 2C_1$ and $C_4 \ge 2C_2C_3$.
Multiplying \eqref{wchi} by $C_4$ and \eqref{wu} by $C_3$,
and combining the resulting with \eqref{wrho}, one obtains by the H\"older inequality that
\begin{align*}
	&\frac{d}{dt} \left( \frac{\|\rho\|_{L^2}^2}{t}
	+ C_3 \|\sqrt{ \rho_1} u \|_{L^2}^2
	+ C_4 \frac{\|\rho_{1} \chi\|_{L^2}^2}{\sqrt{t}} \right)
	+ \frac{\|\rho\|_{L^2}^2}{2 t^2}
	+  C_1 \|\nabla u\|_{L^2}^2
	+ C_4 \frac{\|\rho_{1} \chi\|_{L^2}^2}{4 t^{\frac{3}{2}}}
	+ C_2 C_3 \frac{\|\nabla \chi\|_{L^2}^2}{\sqrt{t}} \\
	&\quad \le C (1 + \| \nabla^2 u_1 \|_{L^q} + \|\nabla^2 u_2\|_{L^q}
	+ \| \nabla \partial_t \chi_2 \|_{L^2}
	+\| \sqrt{t}\nabla \partial_t u_2 \|_{L^2}^2 + 1)
	\left( \frac{\|\rho\|_{L^2}^2}{t}
	+ \|\sqrt{ \rho_1} u \|_{L^2}^2\right) \\
	&\qquad+ C (1 + \|\nabla^2 u_1\|_{L^q}) \frac{\|\rho_{1} \chi\|_{L^2}^2}{\sqrt{t}}.
\end{align*}
	Therefore, noting \eqref{IU}, \eqref{GE1} and \eqref{GE2}, one concludes from the Gr\"onwall inequality that
\begin{equation*}
	\left(\frac{\|\rho\|_{L^2}^2}{t}+ \|\sqrt{\rho_1} u\|_{L^2}^2
	+ \frac{\|\rho_{1} \chi\|_{L^2}^2}{\sqrt{t}} \right)(t)
	+ \int_0^t \left(\|\nabla u\|_{L^2}^2
	+ \frac{\|\nabla \chi\|_{L^2}^2}{\sqrt{t}}\right) \,ds \equiv 0
\end{equation*}
for every $t \in (0, T_0)$. By Lemma~\ref{lem1} and the Poincar\'e inequality, we have $(\rho, u, \chi) \equiv 0$ in $(0, T_0)$, which completes the proof of uniqueness. \qed

\section{Proof of Theorem 1.2}\label{Proof of Theorem 1.2}
Finally, we establish a blow-up criterion (Theorem 1.2) which shows that the obtained local strong solution cannot break down unless certain norms blow up.
We argue by contradiction. Were \eqref{Blowup_Criteria} false, i.e.
\begin{equation} \label{assump_contra}
	C_0:= \int_0^{T^*} \left( \|\nabla u\|_{L^\infty}
	+ \|u\|_{L^\infty}^2
	+ \|\nabla \chi\|_{L^\infty}^2 \right) < \infty,
\end{equation}
we will show that there exists a generic constant $C$, depending only on that depend on
$C_0$, $A$, $\gamma$, $\nu$, $\lambda$, $q$, $\Om$ and $\Phi_0$, such that
\begin{equation*}
	\sup_{0  \le t < T^*} \| ( \nabla \rho, \rho \chi_t, \mu,
	\nabla u, \Div u) \|_{L^2}^2
	+ \int_0^{T^*} { \| ( \sqrt{\rho} u_t, \nabla\chi_t,
		\nabla^2 u ) \|_{L^2}^2 } \, dt \le C
\end{equation*}
and
\begin{equation*}
	\sup_{0  \le t < T^*} ( \|\nabla \rho\|_{L^q}^2
	+ \| ( \sqrt{t\rho}  u_t, \sqrt{t} \nabla \chi_t,
	\sqrt{t} \nabla^2u ) \|_{L^2}^2 )
	+ \int_0^{T^*} { \| ( \sqrt{t} \nabla u_t, \sqrt{t} \Div u_t,
		\sqrt{t} \rho \chi_{tt}) \|_{L^2}^2 } \, dt \le C, \\
\end{equation*}
which contradicts the definition of $T^*$.

\begin{lemma}\label{be1}
	Under the assumption \eqref{assump_contra}, we have
	\begin{align*}
		\sup_{0 \le t < T_*} \|\rho\|_{L^\infty}^2 + \int_{0}^{T_*} ( \| \rho  \chi_t \|_{L^2}^2
		+ \|\nabla^2 \chi\|_{L^2}^2 ) ds  \le  C. 
	\end{align*}
\end{lemma}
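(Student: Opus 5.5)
The bound on $\rho$ follows from the transport structure, as in Lemma~\ref{lem4}. Inequality \eqref{sup_rho_infty} was derived from $\eqref{NSAC}_1$ alone, so it holds on $[0,t]$ for every $t<T^*$:
\[
\sup_{0\le t<T^*}\|\rho\|_{L^\infty}\le \|\rho_0\|_{L^\infty}\exp\Big\{C\int_0^{T^*}\|\nabla u\|_{L^\infty}\,dt\Big\}\le C(C_0,\Phi_0).
\]
Along particle paths we have $\frac{d}{dt}\rho=-\rho\Div u$, so no smallness of time is needed here. This gives the first term.

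For the integral bounds I use the energy equality of Lemma~\ref{lem2}, which holds up to $T^*$. It gives
\[
\sup_{t<T^*}\big(\|\nabla\chi\|_{L^2}^2+\|\sqrt\rho u\|_{L^2}^2\big)+\int_0^{T^*}\big(\|\mu\|_{L^2}^2+\|\nabla u\|_{L^2}^2\big)\,dt\le E_0 .
\]
By Corollary~\ref{cor1} we also control $\sup_t\|\chi\|_{H^1}$, $\|f(\chi)\|_{L^2}$ and $\|f'(\chi)\|_{L^3}$. That corollary uses only Lemma~\ref{lem1} and the energy, so it remains valid under \eqref{assump_contra}.

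For $\rho\chi_t$, I rewrite $\eqref{NSAC}_3$ as $\rho\chi_t=-\mu-\rho u\cdot\nabla\chi$. Then
\[
\|\rho\chi_t\|_{L^2}^2\le 2\|\mu\|_{L^2}^2+2\|\rho\|_{L^\infty}^2\|u\|_{L^\infty}^2\|\nabla\chi\|_{L^2}^2 .
\]
Integrating in time, the first term is bounded by $E_0$. The second is bounded by $C\sup_t\|\nabla\chi\|_{L^2}^2\int_0^{T^*}\|u\|_{L^\infty}^2\,dt\le CC_0$, using the $L^2_tL^\infty_x$ part of \eqref{assump_contra}.

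For $\nabla^2\chi$, I take $\eqref{NSAC}_4$, namely $-\Delta\chi=\rho\mu-\rho f(\chi)$ with $\partial_{\boldsymbol n}\chi=0$, and apply the $H^2$ estimate for the Neumann Laplacian:
\[
\|\nabla^2\chi\|_{L^2}\le C\big(\|\Delta\chi\|_{L^2}+\|\chi\|_{H^1}\big)\le C\|\rho\|_{L^\infty}\big(\|\mu\|_{L^2}+\|f(\chi)\|_{L^2}\big)+C .
\]
Squaring and integrating over $(0,T^*)$, the energy bound on $\int\|\mu\|_{L^2}^2$, the $L^\infty$ bound on $\rho$, and $T^*<\infty$ give $\int_0^{T^*}\|\nabla^2\chi\|_{L^2}^2\,dt\le C$.

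The only delicate point is whether the bound on $\|f(\chi)\|_{L^2}$ from Corollary~\ref{cor1} is independent of time. This needs control of $\|\chi\|_{L^6}$ through Lemma~\ref{lem1}, which uses $\int\rho|\chi|\le(\int\rho)^{1/2}(\int\rho\chi^2)^{1/2}$. The quantity $\int\rho\chi^2$ is in turn controlled by $\int\rho F(\chi)\le E_0$ together with $\int\rho=1$. I would check this uniformity first. Everything else is a direct combination of the bounds above.
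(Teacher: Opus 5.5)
Your proof is correct and is essentially the paper's argument: \eqref{sup_rho_infty} with \eqref{assump_contra} for $\|\rho\|_{L^\infty}$, the identity $\rho\chi_t=-\mu-\rho u\cdot\nabla\chi$ with the energy equality for $\int\|\rho\chi_t\|_{L^2}^2$, and $\eqref{NSAC}_4$ for $\Delta\chi$. You also spell out the Neumann $H^2$ step, and your uniformity check for Corollary~\ref{cor1} goes through as you sketch. The one difference is cosmetic: the paper bounds $\|\rho u\cdot\nabla\chi\|_{L^2}$ by $\|\rho\|_{L^\infty}^{1/2}\|\sqrt\rho u\|_{L^2}\|\nabla\chi\|_{L^\infty}$ and uses the $\nabla\chi$ part of \eqref{assump_contra}, whereas you put $L^\infty$ on $u$ and use the $\|u\|_{L^2_tL^\infty_x}$ part, which works equally well.
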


\begin{proof}
	According to  \eqref{sup_rho_infty} and \eqref{assump_contra}, we have
	\begin{align*}
		\sup_{0 \le t  \le T_*} \|\rho\|_{L^\infty}  \le \| \rho_0\|_{L^\infty} \Exp\left\{ \int_{0}^{T_*} \|\nabla u\|_{L^\infty} \,dt \right\}  \le C.
	\end{align*}
	It follows from $(\ref{NSAC})_3$ that
	\begin{align*}
		\| \rho\chi_t \|_{L^2}^2 & \le C\| \rho u \cdot \nabla \chi \|_{L^2}^2  + C\|\mu\|_{L^2}^2  \\
		& \le C( \|\rho\|_{L^\infty} \| \sqrt\rho u \|_{L^2}^2 \| \nabla\chi  \|_{L^\infty}^2 + \|\mu\|_{L^2}^2 ).
	\end{align*}
	Integrating the above estimates over $(0,T_*)$, combining \eqref{assump_contra} with \eqref{E0}, we have
	\begin{align*}
		\int_{0}^{T_*} \| \rho\chi_t \|_{L^2}^2  \,dt  \le C \int_{0}^{T_*} ( \|\rho\|_{L^\infty} \| \sqrt\rho u \|_{L^2}^2 \| \nabla\chi  \|_{L^\infty}^2 + \|\mu\|_{L^2}^2 ) \,dt  \le C.
	\end{align*}
	By using $\eqref{NSAC}_4$ and \eqref{E0},  we obtain
	\begin{align*}
		\int_{0}^{T_*} \left\| \Delta\chi \right\|_{L^2}^2 \, dt & \le
		C \int_{0}^{T_*} ( \| \rho \mu \|_{L^2}^2
		+ \| \rho f(\chi) \|_{L^2}^2  )  \, dt \\
		& \le C \int_{0}^{T_*} \|\rho\|_{L^\infty}^2 ( \|\mu\|_{L^2}^2
		+ \|f(\chi)\|_{L^2}^2) \, dt.
	\end{align*}
\end{proof}

\begin{lemma}\label{be2}
	Under the assumption \eqref{assump_contra}, we have
	\begin{equation*}
		\sup_{0  \le t < T^*} \| (  \nabla \rho, \rho \chi_t,
		\mu, \nabla u, \Div u) \|_{L^2}^2
		+  \int_0^{T^*} { \| ( \sqrt{\rho} u_t, \nabla\chi_t,
			\nabla^2 u ) \|_{L^2}^2 } \, dt \le C.
	\end{equation*}
\end{lemma}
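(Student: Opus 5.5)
We will prove Lemma~\ref{be2} by redoing the energy estimates of Section~\ref{section priori}, but now closing them with Gr\"onwall weights built from the quantities controlled by \eqref{assump_contra}. Smallness of $T$ is no longer available. The available inputs are:
\begin{itemize}
\item[(i)] $\sup_t\|\rho\|_{L^\infty}\le C$ from Lemma~\ref{be1};
\item[(ii)] the energy equality \eqref{E0} and Corollary~\ref{cor1};
\item[(iii)] the integrability of $\|\nabla u\|_{L^\infty}+\|u\|_{L^\infty}^2+\|\nabla\chi\|_{L^\infty}^2$ on $(0,T^*)$.
\end{itemize}
The estimates of $\nabla\rho$, $(\rho\chi_t,\nabla\chi_t)$ and $(\nabla u,\mu,\rho u\cdot\nabla\chi,\sqrt\rho u_t,\nabla^2u)$ are coupled. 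We therefore add them into a single functional and close with one Gr\"onwall argument.

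\textbf{Step 1 (density).} Differentiate $\eqref{NSAC}_1$ and test with $|\nabla\rho|^{p-2}\nabla\rho$ for $p=2$, as in \eqref{nabla_rho_q}. This gives
$$\frac{d}{dt}\|\nabla\rho\|_{L^2}\le C\|\nabla u\|_{L^\infty}\|\nabla\rho\|_{L^2}+C\|\nabla^2u\|_{L^2}.$$
We then bound $\|\nabla^2u\|_{L^2}$ by the $H^2$ Lam\'e estimate used in \eqref{lem:u1:1}, but estimate the nonlinear terms with the criterion quantities:
$$\|\rho u\cdot\nabla u\|_{L^2}\le C\|u\|_{L^\infty}\|\nabla u\|_{L^2},\qquad \|\Delta\chi\nabla\chi\|_{L^2}\le\|\nabla\chi\|_{L^\infty}\|\nabla^2\chi\|_{L^2},$$
$$\|\nabla P\|_{L^2}\le C\|\nabla\rho\|_{L^2}.$$
Together with $\|\nabla^2\chi\|_{L^2}\le C(1+\|\mu\|_{L^2})$ from $\eqref{NSAC}_4$, this yields
$$\|\nabla^2u\|_{L^2}\le C\big(\|\sqrt\rho u_t\|_{L^2}+\|u\|_{L^\infty}\|\nabla u\|_{L^2}+\|\nabla\rho\|_{L^2}+\|\nabla\chi\|_{L^\infty}(1+\|\mu\|_{L^2})\big).$$

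\textbf{Step 2 (velocity and chemical potential).} Repeat the computation \eqref{lem:u1:2} of Lemma~\ref{lem:u1}: test $\eqref{NSAC}_2$ with $u_t$ to obtain the energy $\nu\|\nabla u\|^2+(\lambda+\nu)\|\Div u\|^2+\|\mu\|^2+\|\rho u\cdot\nabla\chi\|^2$ plus the correction $G'(t)$. The right-hand side terms are bounded as follows:
\begin{itemize}
\item $J_1\le\|\sqrt\rho u_t\|\,\|u\|_{L^\infty}\|\nabla u\|$;
\item $J_2+J_3\le C\|\nabla u\|^2+C\|u\|_{L^\infty}\|\nabla\rho\|\,\|\nabla u\|$;
\item $J_4+J_5+J_6\le C\|\nabla\chi_t\|\,\|\nabla\chi\|_{L^\infty}\|\nabla u\|$.
\end{itemize}
The last group is absorbed by the dissipation $\|\nabla\chi_t\|^2$ from Step~3, leaving $\|\nabla\chi\|_{L^\infty}^2\|\nabla u\|^2$. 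The boundary term $G(t)$ is controlled by $\tfrac\nu4\|\nabla u\|^2+\tfrac14\|\rho u\cdot\nabla\chi\|^2+\tfrac12\|\rho\chi_t\|^2+C$. Note that $\|\rho\chi_t\|^2$ is itself part of the Step~3 energy, so $G(t)$ is absorbed by weighting that energy suitably.

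\textbf{Step 3 (phase field).} Repeat Lemma~\ref{lem:chi1}: test \eqref{dt_NSAC_34} with $\chi_t$. The terms $I_1,\dots,I_7$ are now bounded using $\|u\|_{L^\infty}$, $\|\nabla u\|_{L^\infty}$ and $\|\nabla\chi\|_{L^\infty}$ in place of the Gagliardo--Nirenberg bounds. Some terms deserve care:
\begin{itemize}
\item $I_3\le C\|\rho_t\|_{L^2}\|u\|_{L^\infty}\|\nabla\chi\|_{L^\infty}\|\rho\chi_t\|$, where $\|\rho_t\|_{L^2}\le C(\|u\|_{L^\infty}\|\nabla\rho\|+\|\nabla u\|)$.
\item $I_4\le C\|\sqrt\rho u_t\|\,\|\nabla\chi\|_{L^\infty}\|\rho\chi_t\|$, which costs $\varepsilon\|\sqrt\rho u_t\|^2$ and is absorbed by Step~2.
\item $I_7$ uses Lemma~\ref{lem1}.
\end{itemize}
The initial value $\|\rho\chi_t\|(0)$ is finite by \eqref{com1}.

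\textbf{Closing.} Let
$$Y=\|\nabla\rho\|^2+\|\nabla u\|^2+\|\mu\|^2+\|\rho u\cdot\nabla\chi\|^2+K\|\rho\chi_t\|^2$$
with $K$ large. Combining Steps~1--3 should give
$$Y'+\|\sqrt\rho u_t\|^2+\|\nabla\chi_t\|^2+\|\nabla^2u\|^2\le C\big(1+\|\nabla u\|_{L^\infty}+\|u\|_{L^\infty}^2+\|\nabla\chi\|_{L^\infty}^2\big)(1+Y)+2G'.$$
Integrating in time and applying Gr\"onwall, using \eqref{assump_contra}, gives the claim.

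The main obstacle is the cross term between $\nabla\rho$ and $\nabla^2u$. Feeding the Step~1 bound for $\|\nabla^2u\|_{L^2}$ into $\tfrac{d}{dt}\|\nabla\rho\|^2$ produces $\|\sqrt\rho u_t\|\,\|\nabla\rho\|$, which cannot be absorbed directly. I would first try Young's inequality, writing it as $\varepsilon\|\sqrt\rho u_t\|^2+C_\varepsilon\|\nabla\rho\|^2$ with $\varepsilon$ small relative to the Step~2 dissipation; this closes because $\|\nabla\rho\|^2$ enters with an integrable (constant) coefficient. If the pressure term in $\|\nabla^2u\|$ causes trouble, the fallback is the effective viscous flux decomposition $u=v+w$ with $\mathcal L w=\nabla P$, as in the compressible Navier--Stokes blow-up literature.
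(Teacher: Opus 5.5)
Your architecture matches the paper's: a combined functional with $\|\nabla\rho\|^2,\|\nabla u\|^2,\|\mu\|^2,\|\rho\chi_t\|^2$, the Lam\'e $H^2$ bound, and one Gr\"onwall argument. However, Step~3 fails at $I_3$, so the closing inequality you write does not follow. Your route bounds
\[
I_3\le C\|\rho_t\|_{L^2}\|u\|_{L^\infty}\|\nabla\chi\|_{L^\infty}\|\rho\chi_t\|_{L^2},\qquad \|\rho_t\|_{L^2}\le C(\|u\|_{L^\infty}\|\nabla\rho\|_{L^2}+\|\nabla u\|_{L^2}).
\]
This produces the term $C\|u\|_{L^\infty}^2\|\nabla\chi\|_{L^\infty}\|\nabla\rho\|_{L^2}\|\rho\chi_t\|_{L^2}$. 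Under \eqref{assump_contra} its weight $\|u\|_{L^\infty}^2\|\nabla\chi\|_{L^\infty}$ is not integrable in general: $\|u\|_{L^\infty}^2\in L^1$ and $\|\nabla\chi\|_{L^\infty}\in L^2$, so H\"older only gives $L^{2/3}(0,T^*)$. Hence it cannot be dominated by $C(1+\|\nabla u\|_{L^\infty}+\|u\|_{L^\infty}^2+\|\nabla\chi\|_{L^\infty}^2)(1+Y)$. Replacing a factor $\|u\|_{L^\infty}$ by $\|\nabla u\|_{L^2}^{1/2}\|\nabla^2u\|_{L^2}^{1/2}$ or $\|\nabla^2u\|_{L^2}$ and absorbing into dissipation does not help. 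It either leaves a non-integrable weight or a superlinear power of $Y$. By contrast, your $J_3$ is fine: there only one factor $\|u\|_{L^\infty}$ multiplies $\|\nabla\rho\|_{L^2}$, and Young gives the integrable weight $\|u\|_{L^\infty}^2$.

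The missing idea is to keep $\nabla\rho$ out of the $\rho\chi_t$ estimate entirely. Write $2\rho\rho_t=-u\cdot\nabla(\rho^2)-2\rho^2\Div u$ and integrate the first piece by parts; there is no boundary term since $u|_{\partial\Omega}=0$. Then $I_3$ becomes a combination of
\[
\int\rho^2\Div u\,(u\cdot\nabla\chi)\chi_t,\quad \int\rho^2(u\cdot\nabla)u\cdot\nabla\chi\,\chi_t,\quad \int\rho^2\,u\otimes u:\nabla^2\chi\,\chi_t,\quad \int\rho^2(u\cdot\nabla\chi)(u\cdot\nabla\chi_t).
\]
Use $\|\rho u\cdot\nabla\chi\|_{L^2}\le\|\mu\|_{L^2}+\|\rho\chi_t\|_{L^2}$ (from $\eqref{NSAC}_3$) and $\|\nabla^2\chi\|_{L^2}\le C(1+\|\mu\|_{L^2})$. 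These terms are then controlled by a small multiple of $\|\nabla\chi_t\|_{L^2}^2$ plus $Y$ times the integrable weights $\|\nabla u\|_{L^\infty}$, $\|u\|_{L^\infty}\|\nabla\chi\|_{L^\infty}\le\|u\|_{L^\infty}^2+\|\nabla\chi\|_{L^\infty}^2$ and $\|u\|_{L^\infty}^2$. This is exactly the paper's rearrangement into $O_3$--$O_6$. With it, the rest of your plan closes as written.

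The cross term $\|\sqrt\rho u_t\|\,\|\nabla\rho\|$ that you flagged as the main obstacle is harmless. Young's inequality handles it, and the effective-viscous-flux fallback is unnecessary.
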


\begin{proof}Denote
	\begin{align*}
		\mathcal{E}_1(t) &= \|(\nabla \rho, \rho \chi_t,
		\mu, \nabla u, \Div u)\|_{L^2}^2(t),\\
		\mathcal{D}_1( t ) &= \| ( \sqrt{\rho} u_t, \nabla\chi_t, \nabla^2 u ) \|_{L^2}^2 (t),
        \\
		\mathcal{F}_1(t) &= 1 + \|u\|_{L^\infty}^2(t) + \|\nabla \chi \|_{L^\infty}^2(t)
		+ \|\nabla u\|_{L^\infty}(t).
	\end{align*}
	It follows from $\eqref{NSAC}_1$ that
		\begin{align}\label{be2:1}
			\frac{d}{dt}\|\nabla\rho\|_{L^2}^2
			&\le C\|\nabla u\|_{L^\infty}\|\nabla\rho\|_{L^2}^2
			+ C \|\rho\|_{L^\infty} \|\nabla^2 u\|_{L^2} \|\nabla\rho\|_{L^2} \notag\\
			&\le \eta\|\nabla^2 u\|_{L^2}^2
			+ C_\eta(\|\nabla u\|_{L^\infty}+1) \|\nabla\rho\|_{L^2}^2
	\end{align}
	for every $\eta > 0$.

	Differentiating \eqref{NSAC_34} with respect to $t$, multiplying by $\chi_t$, and integrating over $\Omega$, we can obtain
	\begin{align*}
		&\frac{1}{2} \frac{d}{dt} \|\rho \chi_t\|_{L^2}^{2} + \| \nabla \chi_t \|_{L^2}^{2}    \\
		=& \frac{1}{2} \int_{\Om} \rho^2 \Div u \chi_t^{2} \,dx
		- 2 \int_{\Om} \rho^{2} (u \cdot \nabla\chi_t)\chi_t\,dx
		+ \int_{\Om} \rho^2 \Div u ( u\cdot \nabla \chi)  \chi_t \,dx     \\
		& -\int_{\Om} \rho^{2} ( u \cdot \nabla ) u \cdot \nabla \chi  \chi_t \,dx
		- \int_\Om \rho^2 u \otimes u : \nabla^2\chi \chi_t \,dx
		- \int_\Om \rho^2 (u \cdot \nabla\chi) (u \cdot \nabla\chi_t) \,dx   \\
		& - \int_{\Om} \rho (u_t \cdot\nabla \chi ) \rho \chi_t \,dx
		- \int_{\Om} \rho ( u \cdot \nabla \chi_t ) f(\chi) \,dx
		- \int_{\Om} \rho ( u\cdot \nabla \chi ) f'(\chi) \chi_t \,dx     \\
		& - \int_{\Om} \rho f'(x) \chi_t^{2} \,dx \\
		=& \sum_{i=1}^{10} O_i.
	\end{align*}
	Applying the H\"{o}lder, Sobolev and Young inequalities, and using $\eqref{NSAC}_3$, Lemma~\ref{lem1}--\ref{lem2}, Lemma~\ref{be1} and Corollary~\ref{cor1}, we obtain 
	\begin{align*}
		O_1 + O_3 &\le \|\nabla u\|_{L^\infty}
		(\|\rho \chi_t\|_{L^2}^2 + \|\mu\|_{L^2}^2 ), \\
		O_2 + O_6 + O_8 &\le C \|\rho\|_{L^\infty} \| u\|_{L^\infty} \|\nabla \chi_t\|_{L^2}
		(\|\rho \chi_t\|_{L^2} + \|\mu\|_{L^2} + \|f(\chi)\|_{L^2})\\
		&\le \frac{1}{4} \|\nabla \chi_t\|_{L^2}^2 + C \| u\|_{L^\infty}^2
		( \|\rho \chi_t\|_{L^2}^2 + \|\mu\|_{L^2}^2 + 1 ),\\
		O_4 &\le C \|\rho\|_{L^\infty}  \|u \|_{L^\infty} \|\nabla \chi\|_{L^\infty}
		\|\nabla u\|_{L^2} \|\rho \chi_t \|_{L^2} \\
		&  \le C( \|u\|_{L^\infty}^2 + \|\nabla\chi\|_{L^\infty}^2  ) ( \|\nabla u\|_{L^2}^2 + \|\rho \chi_t \|_{L^2}^2 ),   \\
		O_5 & \le C \|\rho\|_{L^\infty} \|u\|_{L^\infty}^2
		\|\nabla^2 \chi \|_{L^2} \|\rho\chi_t\|_{L^2} \\
		&\le C\|u\|_{L^\infty}^2  ( \| \Delta \chi \|_{L^2}^2
		+ \|\rho \chi_t \|_{L^2}^2  ) \\
		& \le C \|u\|_{L^\infty}^2  ( \|\mu\|_{L^2}^2
		+ \|f(\chi)\|_{L^2}^2
		+ \|\rho \chi_t \|_{L^2}^2 )    \\
		&\le C \| u\|_{L^\infty}^2
		( \|\rho \chi_t\|_{L^2}^2 + \|\mu\|_{L^2}^2 + 1 ),\\
		O_7 & \le \|\rho\|_{L^\infty}^{\frac{1}{2}}
		\| \sqrt \rho u_t \|_{L^2} \|\nabla \chi\|_{L^\infty} \|\rho\chi_t \|_{L^2}
		\le \frac{1}{16} \| \sqrt{\rho} u_t \|_{L^2}^2
		+ C \|\nabla \chi\|_{L^\infty}^2  \|\rho \chi_t\|_{L^2}^2 , \\
		O_9 + O_{10} &\le \| f'(\chi)\|_{L^3} \|\mu\|_{L^2}\|\chi_t\|_{L^6}
		\le C\|\mu\|_{L^2} ( \| \rho\chi_t \|_{L^2} + \|\nabla\chi_t\|_{L^2} ) \\
		&\le \frac{1}{4} \| \nabla \chi_t \|_{L^2}^2 + C \|\mu\|_{L^2}^2. 
	\end{align*}
	Thus, one obtains 
	\begin{align}\label{be2:2}
		\frac{d}{dt}\|\rho  \chi_t \|_{L^2}^{2} +\|\nabla \chi_t \|_{L^2}^{2}
		\le \frac{1}{8}\|\sqrt \rho u_t \|_{L^2}^{2}
		+ C \mathcal{F}_1(t) {(\mathcal{E}_1(t) + 1)}.
	\end{align}
	
	Testing $\eqref{NSAC}_2$ with $u_{t}$, and integrating over $\Omega$,
	one deduces by \eqref{lem:u1:2} and inetagration by parts that
	\begin{align*}
		&\frac{1}{2}\frac{d}{dt} \left(\nu \|\nabla u\|_{L^2}^2
		+ (\lambda + \nu) \|\Div u\|_{L^2}^2
		+ \|\mu\|_{L^2}^2
		+ \| \rho ( u \cdot \nabla \chi) \|_{L^2}^2\right)
		+ \| \sqrt \rho u_t \|_{L^2}^2 \\
		 =& - \int_\Om \rho (u \cdot \nabla )u \cdot u_t \,dx
		+ ( \gamma - 1) \int_\Om P(\rho) | \Div u|^2 \,dx
		- \int_\Om P(\rho) u \cdot \nabla \Div u \,dx  \\
		& - \int_\Om(\nabla \chi_t \cdot \nabla ) u \cdot \nabla \chi \,dx
		- \int_\Om(\nabla \chi \cdot \nabla ) u \cdot \nabla \chi_t \,dx
		+ \int_\Om \nabla \chi \cdot \nabla \chi_t \Div u \,dx\\
		& + \frac{d}{dt} \left(\int_\Om P(\rho) \Div u \,dx + \frac{1}{2} \|\rho \chi_t\|_{L^2}^2
		- \int_\Om \rho F'(\chi) (u \cdot \nabla \chi) \right) \\
		=& \sum_{i=1}^{6} P_i + \hat{G}'(t) + \frac{1}{2} \frac{d}{dt} \|\rho \chi_t\|_{L^2}^2. 	
	\end{align*}
	where $\hat{G}(t) :=  \int_{\Om} P(\rho) \Div  u \,dx
	- \int_\Om \rho F'(\chi) (u \cdot \nabla\chi )\,dx$.
	By virtue of the H\"{o}lder and Young inequalities, together with Lemma~\ref{lem2} and Lemma~\ref{be1}, we have
	\begin{align*}
		&P_1 \le C \|\rho\|_{L^\infty}^{\frac{1}{2}} \|u\|_{L^\infty}
		\|\nabla u\|_{L^2} \| \sqrt \rho u_t \|_{L^2}
		\le \frac{1}{8} \| \sqrt \rho u_t \|_{L^2}^2
		+ C \|u\|_{L^\infty}^2  \|\nabla u\|_{L^2}^2, \\
		&P_2 + P_3 \le C \|\rho\|_{L^\infty}^\gamma \|\nabla u\|_{L^2}^2
		+ C \|\rho\|_{L^\infty}^{\gamma - \frac{1}{2}} \| \sqrt \rho u \|_{L^2}^2
		\|\nabla^2 u\|_{L^2}^2 \le  \eta \|\nabla^2 u\|_{L^2}^2 + C, \\
		&P_4 + P_5 + P_6 \le 3 \| \nabla \chi_t \|_{L^2} \|\nabla u\|_{L^2}
		\|\nabla \chi\|_{L^\infty}
		\le \frac{1}{2} \| \nabla \chi_t \|_{L^2}^2
		+ C \|\nabla u\|_{L^2}^2 \|\nabla \chi\|_{L^\infty}^2
	\end{align*}
	for every $\eta > 0$. Thus, we have
	\begin{align}\label{be2:3}
		&\frac{1}{2}\frac{d}{dt} \left(\nu \|\nabla u\|_{L^2}^2
		+ (\lambda + \nu) \|\Div u\|_{L^2}^2
		+ \|\mu\|_{L^2}^2
		+ \| \rho ( u \cdot \nabla \chi) \|_{L^2}^2\right)
		+ \frac{7}{8} \|\sqrt \rho u_t\|_{L^2}^2 \notag\\
		&\quad \le \frac{1}{2} \| \nabla \chi_t \|_{L^2}^2 + \eta\|\nabla^2 u\|_{L^2}^2
		+ {C \mathcal{F}_1(t) (\mathcal{E}_1(t) + 1)
		+ \frac{1}{2} \frac{d}{dt} \|\rho \chi_t\|_{L^2}^2} + \hat{G}'(t).
	\end{align}
	We deduce  by \eqref{be2:1}, \eqref{be2:2} and \eqref{be2:3} that
	\begin{equation}
		\begin{aligned}\label{be2:4}
		&\frac{1}{2}\frac{d}{dt} \left( 2\|\nabla \rho\|_{L^2}^2 + \|\rho \chi_t\|_{L^2}^2
		+ \|\mu\|_{L^2}^2 + \|\rho (u \cdot \nabla \chi)\|_{L^2}^2
		+ \nu\|\nabla u\|_{L^2}^2 + (\lambda + \nu)\|\Div u\|_{L^2}^2 \right) \\
		&\quad + \frac{3}{4}\|\sqrt\rho u_t \|_{L^2}^2
		+ \|\nabla\chi_t\|_{L^2}^2 \le 2 \eta \|\nabla^2 u\|_{L^2}^2
		+ C \mathcal{F}_1(t) (\mathcal{E}_1(t) + 1) + \hat{G}'(t).
		\end{aligned}
	\end{equation}

	Applying the elliptic estimates to $\eqref{NSAC}_2$, one has
	\begin{align*}
		\| \nabla^2u \|_{L^2}^2 &\le C ( \| \rho u_t \|_{L^2}^2
		+ \| \rho ( u \cdot \nabla ) u \|_{L^2}^2
		+ \| \nabla P \|_{L^2}^2 + \| \Delta \chi \nabla \chi \|_{L^2}^2 ) \\
		&\le C ( \|\rho\|_{L^\infty}^{\frac{1}{2}} \| \sqrt \rho u_t \|_{L^2}^2
		+ \|\rho\|_{L^\infty}^2 \|u\|_{L^\infty}^2 \|\nabla u\|_{L^2}^2
		+ \|\rho\|_{L^\infty}^{2 ( \gamma - 1 ) } \|\nabla \rho\|_{L^2}^2
		+ \|\nabla \chi\|_{L^\infty}^2 \| \Delta \chi \|_{L^2}^2  )  \\
		& \le C ( \| \sqrt \rho u_t \|_{L^2}^2
		+ \|u\|_{L^\infty}^2 \|\nabla u\|_{L^2}^2 + \|\nabla \rho\|_{L^2}^2
		+ \|\nabla \chi\|_{L^\infty}^2 (\|\mu\|_{L^2}^2 + 1 )  ),
	\end{align*}
	that is,
	\begin{align}\label{be2:5}
		\|\nabla^2 u\|_{L^2}^2 \le C \|\sqrt \rho u_t\|_{L^2}^2
		+ C (\|u\|_{L^\infty}^2 + \|\nabla \chi\|_{L^\infty}^2 + 1) (\mathcal{E}_1(t) + 1).
	\end{align}
	Plugging \eqref{be2:5} into \eqref{be2:4} and choosing $\eta$ sufficiently small, one gets
	\begin{align*}
		&\frac{1}{2}\frac{d}{dt} \left( 2\|\nabla \rho\|_{L^2}^2 + \|\rho \chi_t\|_{L^2}^2
		+ \|\mu\|_{L^2}^2 + \|\rho (u \cdot \nabla \chi)\|_{L^2}^2
		+ \nu\|\nabla u\|_{L^2}^2 + (\lambda + \nu)\|\Div u\|_{L^2}^2 \right)\\
		&\quad+ \frac{1}{2}\|\sqrt\rho u_t\|_{L^2}^2
		+ \frac{1}{2}\|\nabla\chi_t\|_{L^2}^2
		+ \eta \|\nabla^2 u\|_{L^2}^2
		\le C \mathcal{F}_1(t) (\mathcal{E}_1(t) + 1) + \hat{G}'(t).
	\end{align*}
	Integrating the above estimate over $(0, t)$, we obtain
	\begin{align}\label{be2:6}
		\mathcal{E}_1(t) + \int_0^t \mathcal{D}_1(s) \, ds
		\le C \int_0^t \mathcal{F}_1(s) ( \mathcal{E}_1(s) + 1 )\, ds
		+ C_2 |\hat{G}(t)|,
	\end{align}
	where we have used the fact that
	\begin{align*}
		|\hat{G}(0)| &\le \left| \int_\Om P(\rho_0) \Div u_0 \,dx \right|
		+ \left| \int_\Om \rho_0 F'(\chi_0)(u_0 \cdot \nabla\chi_0) \,dx \right| \\
		& \le C \|P(\rho_0)\|_{L^2} \|\nabla u_0\|_{L^2}
		+ C \|\rho_0\|_{L^\infty} \| F'(\chi_0) \|_{L^2} \|u_0\|_{L^6}
			\|\nabla \chi_0\|_{L^2}^{1/2}\|\nabla \chi_0\|_{L^6}^{1/2} \\
		&\le C,
	\end{align*}
	and
	$
	\mathcal{E}_1(0) \le C.
	$
	Besides, observing that
	\begin{align*}
		C_2 |\hat{G}(t)|
		&\le C \|P(\rho)\|_{L^2} \|\nabla u\|_{L^2}(t)
		+  C \|F'(\chi)\|_{L^2} (\|\mu\|_{L^2} + \|\rho \chi_t\|_{L^2})(t)\\
		&\le C \mathcal{E}_1^{\frac{1}{2}}(t) \le \frac{1}{2} \mathcal{E}_1(t) + C.
	\end{align*}
	Plugging the above estimate into \eqref{be2:6}, then we have
	\begin{equation*}
		\mathcal{E}_1(t) + \int_0^t \mathcal{D}_1(s) \, ds
		\le C \int_0^t \mathcal{F}_1(s) ( \mathcal{E}_1(s) + 1 )\, ds.
	\end{equation*}
	It follows from $\mathcal{F}_1 \in L^1(0, T^*)$ and the Gr\"onwall inequality that
	\begin{equation*}
		\sup_{0 \le t < T^*} \mathcal{E}_1(t) + \int_0^{T^*} \mathcal{D}_1(s) \, ds \le C.
	\end{equation*}
	The proof is complete.
\end{proof}

The last estimates is the most difficult, as it requires combining the estimates for $\rho, u, \chi$ and carefully handling the time weights.
\begin{lemma}\label{be3}
	Under the assumption \eqref{assump_contra}, we have
	\begin{align*}
		&\sup_{0  \le t < T^*} \left(\|\nabla \rho\|_{L^q}^2
		+ \|(\sqrt{t} \sqrt\rho  u_t, \sqrt{t} \nabla \chi_t,
		\sqrt{t} \nabla^2u , \sqrt{t} \nabla^3\chi)\|_{L^2}^2 \right)  \\
		&+ \int_0^{T^*} { \big(\|(\sqrt{t} \nabla u_t, \sqrt{t} \Div u_t,
			\sqrt{t} \rho \chi_{tt}, \nabla^3 \chi) \|_{L^2}^2 + \|\nabla^2 u\|_{L^q}
			+ \|\sqrt{t} \nabla^2 u\|_{L^q}^2}\big) \,dt \le C. 
	\end{align*}
\end{lemma}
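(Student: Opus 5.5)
The plan is to redo the time-weighted estimates of Lemma~\ref{lem:u2} and Lemma~\ref{lem:chi3} on $(0,T^*)$, with the smallness condition \eqref{Assump1} replaced by the bounds of Lemma~\ref{be1}, Lemma~\ref{be2} and the integrability of $\mathcal{F}_1$. Then $\|\nabla\rho\|_{L^q}$ is closed via \eqref{nabla_rho_q}.

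\textbf{Step 1: $\nabla^3\chi$ in $L^2_tL^2_x$.} As in \eqref{nabla_cube_chi}, the Neumann $H^3$-estimate applied to \eqref{NSAC_34} gives
$$\|\nabla^3\chi\|_{L^2}\le C(1+\|\nabla\chi_t\|_{L^2}+\|\nabla u\|_{L^2}\|\nabla\chi\|_{L^\infty}+\|u\|_{L^\infty}\|\nabla^2\chi\|_{L^2}+\|\nabla^2\chi\|_{L^2}).$$
Here $\|\nabla^2\chi\|_{L^2}\le C(1+\|\mu\|_{L^2})\le C$ by Lemma~\ref{be2}, and $\|\nabla\rho\|_{L^3}$ is handled using $\|\rho\chi_t\|_{L^2}$. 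Squaring and integrating with \eqref{assump_contra} and Lemma~\ref{be2} gives $\int_0^{T^*}\|\nabla^3\chi\|_{L^2}^2\,dt\le C$.

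\textbf{Step 2: the weighted quantities $\sqrt t\sqrt\rho u_t$ and $\sqrt t\nabla\chi_t$.} Multiply \eqref{lem:u2:2} and \eqref{lem:chi3:3} by $t$ and add them. The terms $K_i$ are bounded using $\|u\|_{L^\infty}\le C\|\nabla u\|_{L^2}^{1/2}\|\nabla^2u\|_{L^2}^{1/2}$ and $\|\rho_t\|_{L^2}\le C$ (Lemma~\ref{be2}), giving coefficients $\|\nabla^2u\|_{L^2}$, which is in $L^2_t$. The coupling terms $K_6$--$K_8$ give $C\|\nabla\chi\|_{L^\infty}^2\|\sqrt t\nabla\chi_t\|_{L^2}^2$. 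The terms $L_i$ are treated the same way, with $\|\nabla\chi\|_{L^\infty}^2$ and $\|u\|_{L^\infty}^2$ appearing as Gr\"onwall coefficients. The term $tH(t)$ is absorbed as in \eqref{lem:chi3:5}. The sources $\|\sqrt\rho u_t\|_{L^2}^2$ and $\|\nabla\chi_t\|_{L^2}^2$ are integrable by Lemma~\ref{be2}. Gr\"onwall with the $L^1$ coefficient $\mathcal{F}_1+\|\nabla^2u\|_{L^2}^2$ then gives the sup and dissipation bounds.

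\emph{Main obstacle.} I expect the hardest terms to be $L_3=-t\int\rho^2(u_t\cdot\nabla\chi)\chi_{tt}$ and $K_3+K_5$. For $L_3$, bound it by $\|\sqrt t\rho\chi_{tt}\|\,\|\nabla\chi\|_{L^\infty}\|\sqrt t\sqrt\rho u_t\|$, so that after Young's inequality the coefficient $\|\nabla\chi\|_{L^\infty}^2$ is integrable. This avoids the $\varPhi$-smallness used in Section~2. For $K_5=A\gamma t\int\rho^{\gamma-1}\rho_t\Div u_t$, use $\|\rho_t\|_{L^2}\le C$ and Young's inequality.

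\textbf{Step 3: remaining bounds.} From \eqref{lem:u1:1}, rewritten as \eqref{be2:5}, we get $\sqrt t\nabla^2u\in L^\infty L^2$. The $H^3$ bound of Step 1 gives $\sqrt t\nabla^3\chi\in L^\infty L^2$. The estimate \eqref{nabla2uq} is modified using $\|\nabla\rho\|_{L^q}$, $\|u\|_{L^\infty}$ and $\|\nabla\chi\|_{L^\infty}$:
$$\|\nabla^2u\|_{L^q}\le C(\|\nabla u_t\|_{L^2}+\|u\|_{L^\infty}\|\nabla^2u\|_{L^2}+\|\nabla\rho\|_{L^q}+\|\nabla\chi\|_{L^\infty}\|\nabla^3\chi\|_{L^2}+\dots).$$
Integrating, $\int_0^{T^*}\|\nabla u_t\|_{L^2}\,dt\le\big(\int t^{-1}\big)^{1/2}$ diverges, so instead use the Gagliardo--Nirenberg interpolation of Lemma~\ref{lem3}, giving $t^{-(3q-6)/(4q)}$, which is integrable. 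Insert this into \eqref{nabla_rho_q}:
$$\frac{d}{dt}\|\nabla\rho\|_{L^q}\le C\|\nabla u\|_{L^\infty}\|\nabla\rho\|_{L^q}+C\|\nabla^2u\|_{L^q},$$
where the $\|\nabla\rho\|_{L^q}$ contribution inside $\|\nabla^2u\|_{L^q}$ is linear and is absorbed into the Gr\"onwall coefficient. Gr\"onwall then bounds $\|\nabla\rho\|_{L^q}$ and $\int\|\nabla^2u\|_{L^q}\,dt$. Finally, multiplying the estimate by $\sqrt t$ and squaring gives $\int\|\sqrt t\nabla^2u\|_{L^q}^2\,dt\le C$.
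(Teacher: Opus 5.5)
\textbf{The gap: Steps 1 and 2 need $\nabla\rho\in L^3$ before Step 3 supplies it.} You run three estimates in sequence: $\nabla^3\chi$, then the weighted $u_t,\chi_t$ energy, then $\|\nabla\rho\|_{L^q}$. The first two cannot close before the third, because both need $\nabla\rho\in L^3$, while Lemma~\ref{be2} only gives $\nabla\rho\in L^\infty(0,T^*;L^2)$.

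In Step~1, the gradient of \eqref{NSAC_34} contains $2\rho\nabla\rho\,\chi_t$, which does not appear in your displayed inequality. Since $\chi_t$ is only in $H^1\subset L^6$, this term needs $\|\nabla\rho\|_{L^3}$. The quantity $\|\rho\chi_t\|_{L^2}$ only controls $\|\chi_t\|_{L^6}$ through Lemma~\ref{lem1}; it cannot replace $\|\nabla\rho\|_{L^3}$. So $\int_0^{T^*}\|\nabla^3\chi\|_{L^2}^2\,dt\le C$ is not available at that stage.

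The same obstruction appears in Step~2. In $L_1=-2t\int\rho\rho_t\chi_t\chi_{tt}$ you have $\rho\chi_{tt}\in L^2$ and $\chi_t\in L^6$, so you need $\rho_t\in L^3$, i.e.\ again $\nabla\rho\in L^3$. Moreover, $\|\rho_t\|_{L^2}\le C$ does not follow from Lemma~\ref{be2}; it was Corollary~\ref{cor2}, which used $\nabla\rho\in L^q$. Here you only have $\|\rho_t\|_{L^2}\le C(1+\|u\|_{L^\infty})\le C(1+\|\nabla^2u\|_{L^2}^{1/2})$. Absorbing $tH(t)$ therefore requires a bound on $\|\sqrt t\nabla^2u\|_{L^2}$. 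Through \eqref{be2:5} and $\|\nabla\chi\|_{L^\infty}$, that bound again passes through $\nabla^3\chi$ and hence $\nabla\rho$.

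This is why the paper puts $\|\nabla\rho\|_{L^q}^2$ into the same energy $\mathcal{E}_2=\|\nabla\rho\|_{L^q}^2+\|\sqrt t\sqrt\rho u_t\|_{L^2}^2+\|\sqrt t\nabla\chi_t\|_{L^2}^2$. It uses $\|\nabla\rho\|_{L^3}\le C\|\nabla\rho\|_{L^2}^{\frac{2q-6}{3q-6}}\|\nabla\rho\|_{L^q}^{\frac{q}{3q-6}}$ to obtain \eqref{be3:chi1}, and closes everything with a single Gr\"onwall argument.

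\textbf{The missing idea: the real obstacle is $L_1$.} Once the estimates are coupled, $L_1$ is the hard term, not $L_3$ or $K_3+K_5$; your treatment of those is fine. If you handle $L_1$ as in Lemma~\ref{lem:chi3}, via $\|\sqrt t\rho\chi_{tt}\|_{L^2}\|\rho_t\|_{L^3}\|\sqrt t\chi_t\|_{L^6}$ and Young's inequality, you get $\|u\|_{L^\infty}^2\|\nabla\rho\|_{L^q}^{\frac{2q}{3q-6}}\|\sqrt t\nabla\chi_t\|_{L^2}^2$. This is of order $\mathcal{E}_2^{1+\frac{q}{3q-6}}$, which is superlinear, so Gr\"onwall fails. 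The paper instead uses $\rho_t=-\Div(\rho u)$ and integration by parts to obtain
\[ -2\int_\Omega\rho\rho_t\chi_t\chi_{tt}\,dx=\int_\Omega\rho^2\Div u\,\chi_t\chi_{tt}\,dx-\frac{d}{dt}\int_\Omega\rho^2(u\cdot\nabla\chi_t)\chi_t\,dx+2\int_\Omega\rho\rho_t(u\cdot\nabla\chi_t)\chi_t\,dx+\int_\Omega\rho^2(u_t\cdot\nabla\chi_t)\chi_t\,dx. \]
After this, $\rho_t$ no longer multiplies $\chi_{tt}$. The remaining term $2t\int\rho\rho_t(u\cdot\nabla\chi_t)\chi_t$ is bounded by $C(1+\|\nabla^2u\|_{L^2}^2+\|\nabla\chi_t\|_{L^2}^2)(1+\mathcal{E}_2)$, which is linear in $\mathcal{E}_2$ with an integrable coefficient. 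The time-derivative term goes into $\hat H$ and is absorbed using \eqref{be3:u3}.

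Your Step~3 mechanism is right: interpolating $\|\rho u_t\|_{L^q}$ to get an integrable power of $t$, then using Gr\"onwall on \eqref{nabla_rho_q}. But it has to be part of this joint estimate, not a separate final step.
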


\begin{proof}Denote
	\begin{align*}
		\mathcal{E}_2(t) &= \|\nabla\rho\|_{L^q}^2(t)
		+ \|( \sqrt{t} \sqrt\rho  u_t, \sqrt{t} \nabla \chi_t ) \|_{L^2}^2(t), \\
		\mathcal{D}_2(t) &= \|( \sqrt{t} \nabla u_t, \sqrt{t} \Div u_t,
		\sqrt{t} \rho \chi_{tt}, \nabla^3 \chi ) \|_{L^2}^2(t), \\
		\mathcal{F}_2(t) &= 1 + t^{- \frac{3q-6}{2q}}
		+ \|\nabla u\|_{L^\infty}(t) + \|\nabla \chi\|_{L^\infty}^2(t)
		+ \|( \nabla \chi_t, \nabla^2 u, \sqrt{\rho}  u_t ) \|_{L^2}^2(t).
	\end{align*}

	{\textbf{Step 1. Estimates for the density.}}
	It follows from \eqref{nabla_rho_q} that
	\begin{align*}
		\frac{d}{dt} \|\nabla \rho\|_{L^q}^2
		\le C_1 ( \|\nabla u\|_{L^\infty} \|\nabla\rho\|_{L^q}^2
		+ \|\nabla^2 u\|_{L^q} \|\nabla\rho\|_{L^q} ).
	\end{align*}
	Recall the elliptic estimates
	\begin{align*}
		\|\nabla^2 u\|_{L^q} &\le C_2 ( \|\rho u_{t}\|_{L^q}
		+ \| \rho ( u \cdot \nabla ) u \|_{L^q} + \| \nabla (P(\rho)) \|_{L^q}
		+ \| \Delta \chi \nabla \chi \|_{L^q} ).
	\end{align*}
	Combining the H\"older, Young and Gagliardo-Nirenberg inequalities with Lemma~\ref{lem1} and  Lemma~\ref{be1}--\ref{be2}, we have
	\begin{align*}
		\|\rho u_{t}\|_{L^q}
		&\le C \|\rho\|_{L^\infty}^{\frac{5q - 6}{4q}}
		\|\sqrt \rho u_{t}\|_{L^2}^{\frac{6 - q}{2q}}
		\|\nabla u_{t}\|_{L^2}^{\frac{3q - 6}{2q}}\\
		&\le \frac{\eta}{C_1 C_2} \|\sqrt{t} \nabla u_{t}\|_{L^2}^2
		+ C (\|\sqrt \rho u_{t}\|_{L^2}^2
		+ \big( \frac{1}{\eta t} \big)^{\frac{3q - 6}{2q}}), \\
		\| \rho ( u \cdot \nabla ) u \|_{L^q}
		&\le \|\rho\|_{L^\infty} \|u\|_{L^\infty} \|\nabla u\|_{L^q}\\
		&\le C \|\nabla u\|_{L^2}^{\frac{1}{2}} \|\nabla^2 u\|_{L^2}^{\frac{1}{2}}
		\|\nabla u\|_{L^2}^{\frac{6 - q}{2q}} \|\nabla^2 u\|_{L^2}^{\frac{3q - 6}{2q}}
		\le C\|\nabla^2 u\|_{L^2}^{2 - \frac{3}{q}},\\
		\|\nabla P(\rho)\|_{L^q} &\le C \|\rho\|_{L^\infty}^{\gamma - 1} \|\nabla \rho\|_{L^q}
		\le C\|\nabla \rho\|_{L^q},\\
		\| \Delta \chi \nabla \chi \|_{L^q}
		&\le \|\nabla \chi\|_{L^\infty} \|\nabla^2 \chi\|_{L^q}\\
		&\le C\|\nabla^2 \chi\|_{L^2}^{\frac{1}{2}}
		\| \nabla^3 \chi \|_{L^2}^{\frac{1}{2}}
		\|\nabla^2 \chi\|_{L^2}^{\frac{6 - q}{2q}}
		\| \nabla^3 \chi \|_{L^2} ^{\frac{3q - 6}{2q}}
		\le C\| \nabla^3 \chi \|_{L^2} ^{2 - \frac{3}{q}}
	\end{align*}
	for every $\eta > 0$. By using the Young inequality, the above estimates yields
		\begin{align}\label{be3:rho1}
			\|\nabla^2 u\|_{L^q} &\le C ( \|\sqrt{t} \nabla u_{t}\|_{L^2}^2
			+  \mathcal{F}_2(t) +\mathcal{E}_2(t)
			+ \| \nabla^3 \chi \|_{L^2} ^{2 - \frac{3}{q}} ),
		\end{align}
		and
		\begin{align}\label{be3:rho2}
			\frac{d}{dt} \|\nabla \rho\|_{L^q}^2
			\le \eta \|\sqrt{t} \nabla u_{t}\|_{L^2}^2 \|\nabla\rho\|_{L^q}
			+ C \mathcal{F}_2(t) \mathcal{E}_2(t)
			+ C ( \big( \frac{1}{\eta t} \big)^{\frac{3q - 6}{2q}}
			+ \| \nabla^3 \chi \|_{L^2} ^{2 - \frac{3}{q}} ) \|\nabla\rho\|_{L^q}
		\end{align}
		for every $\eta > 0$. Taking $\eta = \frac{\nu}{8 \big( \nabla \|\rho\|_{L^q} + 1\big)}$ in \eqref{be3:rho2}, one has
		\begin{align}\label{be3:rho3}
			\frac{d}{dt} \|\nabla \rho\|_{L^q}^2
			\le \frac{\nu}{8} \|\sqrt{t} \nabla u_{t}\|_{L^2}^2
			+ C \mathcal{F}_2(t) \big( \mathcal{E}_2(t) + 1 \big)
			+ C \| \nabla^3 \chi \|_{L^2} ^{2 - \frac{3}{q}} \|\nabla\rho\|_{L^q}.
	\end{align}
	
	Applying the operator $\nabla$ to $\eqref{NSAC}_4$, then we have
	\begin{align*}
		\|\nabla \Delta \chi\|_{L^2} &\le \big( \| 2 \rho \nabla\rho \chi_t \|_{L^2}
		+ \| \rho^2 \nabla\chi_t \|_{L^2}
		+ \| 2 \rho \nabla\rho ( u \cdot \nabla\chi ) \|_{L^2}
		+ \|\rho^2 \nabla u \nabla\chi\|_{L^2} \\
		&\quad + \| \rho^2 \nabla^2\chi u \|_{L^2}
		+ \|\nabla\rho f(\chi)\|_{L^2}
		+ \|\rho f'(\chi) \nabla\chi\|_{L^2} \big).
	\end{align*}
	By Lemma~\ref{lem1}--\ref{lem2}, Lemma~\ref{be1}--\ref{be2}, the H\"older, Young, Sobolev and Gagliardo-Nirenberg inequalities,
	terms on the right-hand side are estimated as follows
	\begin{align*}
		\| 2 \rho \nabla\rho \chi_t \|_{L^2}
		+ \| \rho^2 \nabla\chi_t \|_{L^2}
		&\le 2 \|\rho\|_{L^\infty} \|\nabla\rho\|_{L^3} \|\chi_{t}\|_{L^6}
		+ \|\rho\|_{L^\infty}^2 \|\nabla\chi_{t}\|_{L^2} \\
		&\le C \|\nabla\rho\|_{L^2}^{\frac{2q-6}{3q-6}} \|\nabla\rho\|_{L^q}^{\frac{q}{3q-6}}
		( \| \rho \chi_{t} \|_{L^2} + \|\nabla\chi_{t}\|_{L^2} )
		+ C \|\nabla\chi_{t}\|_{L^2}\\
		&\le C (1 + \|\nabla\chi_{t}\|_{L^2} )
		( \|\nabla \rho\|_{L^q}^{\frac{q}{3q-6}} + 1 ), \\
		\|2 \rho \nabla\rho (u \cdot \nabla) \chi\|_{L^2}
		+ \|\rho^2 \nabla u \nabla\chi\|_{L^2}
		&\le ( 2 \|\rho\|_{L^\infty} \|\nabla\rho\|_{L^2} \|u\|_{L^\infty}
		+ \|\rho\|_{L^\infty}^2 \|\nabla u\|_{L^2} ) \|\nabla\chi\|_{L^\infty} \\
		&\le C ( \|\nabla u\|_{L^2}^{\frac{1}{2}} \|\nabla^2 u\|_{L^2}^{\frac{1}{2}}
		+ 1 ) \|\nabla^2 \chi\|_{L^2}^{\frac{1}{2}} \|\nabla ^3 \chi\|_{L^2} ^{\frac{1}{2}} \\
		&\le C ( \|\nabla^2 u\|_{L^2}^{\frac{1}{2}}
		+ 1 ) \| \nabla \Delta \chi\|_{L^2} ^{\frac{1}{2}} \\
		&\le \frac{1}{2} \|\nabla \Delta \chi\|_{L^2}
		+ C ( \|\nabla^2 u\|_{L^2} + 1 ),\\
		\| \rho^2 \nabla^2\chi u \|_{L^2}
		&\le  C \|\rho\|_{L^\infty}^2 \|\nabla^2 \chi\|_{L^2} \| u\|_{L^\infty}
		\le C \|\nabla^2 u\|_{L^2}^{\frac{1}{2}} \\
		&\le C ( \|\nabla^2 u\|_{L^2} + 1 ),\\
		\|\nabla\rho f(\chi)\|_{L^2}
		+ \|\rho f'(\chi) \nabla\chi\|_{L^2}
		&\le C ( \|\nabla \rho\|_{L^2} \|f(\chi) \|_{L^\infty}
		+ C \|\rho\|_{L^\infty} \|f'(\chi) \|_{L^3}\|\nabla \chi\|_{L^6} )
		\le C.
	\end{align*}
	Thus, we have
	\begin{align*}
		\|\nabla \Delta \chi\|_{L^2} &\le C (1 + \|\nabla\chi_{t}\|_{L^2} )
		(1 + \|\nabla \rho\|_{L^q}^{\frac{q}{3q-6}} ) + C \|\nabla^2 u\|_{L^2}.
	\end{align*}
	Then the $H^3-$ estimates of Neumann-Laplacian implies that
	\begin{align}\label{be3:chi1}
		\| \nabla^3 \chi \|_{L^2} &\le C (1 + \|\nabla\chi_{t}\|_{L^2} )
		(1 + \|\nabla \rho\|_{L^q}^{\frac{q}{3q-6}} ) + C \|\nabla^2 u\|_{L^2}.
	\end{align}
	Noticing that for $q\in(3,6)$, it holds that
	\begin{align*}
		\| \nabla^3 \chi \|_{L^2} ^{2 - \frac{3}{q}} \|\nabla\rho\|_{L^q}
		&\le C (1 + \|\nabla\chi_{t}\|_{L^2}^{2 - \frac{3}{q}} )
		(1 + \|\nabla \rho\|_{L^q}^{\frac{q}{3q-6}(2 - \frac{3}{q})} )
		\|\nabla\rho\|_{L^q} + C \|\nabla^2 u\|_{L^2}^{2 - \frac{3}{q}} \|\nabla\rho\|_{L^q}\\
		&\le C (1 + \|\nabla\chi_{t}\|_{L^2}^2 + \|\nabla^2 u\|_{L^2}^2 )
		(1 + \|\nabla \rho\|_{L^q}^2 ),
	\end{align*}
    Combining \eqref{be3:rho3} with \eqref{be3:chi1}, and using the Young inequality, one gets
	\begin{align}\label{be3:rho4}
		\frac{d}{dt} \|\nabla \rho\|_{L^q}^2
		\le \frac{\nu}{8} \|\sqrt{t} \nabla u_{t}\|_{L^2}^2
		+ C \mathcal{F}_2(t) \big( \mathcal{E}_2(t) + 1 \big).
	\end{align}

	{\textbf{Step 2. Estimates for the velocity.}}	
	Recalling \eqref{lem:u2:2}, we have
	\begin{equation}
		\begin{split}
			\label{be3:u1}
			&\quad \frac{1}{2} \frac{d}{dt} \| \sqrt{t} \sqrt\rho u_{t} \|_{L^2}^{2}
			+ \nu \|\sqrt{t} \nabla u_{t}\|_{L^2}^{2}
			+ ( \lambda + \nu ) \| \sqrt{t} \Div u_{t} \|_{L^2}^{2} \\
			&= \frac{1}{2}\|\sqrt\rho u_{t}\|_{L^2}^{2}
			- t \int_{\Om} \rho u \cdot \nabla|u_t|^2 \,dx
			- t \int_{\Om} \rho_t ( u\cdot\nabla ) u \cdot u_t \,dx\\
			&\quad - t \int_{\Om} \rho ( u_t \cdot \nabla ) u \cdot u_t \,dx
			+ A \gamma t \int_{\Om} \rho^{\gamma-1} \rho_t \Div u_t \,dx
			+ 2 t \int_{\Om} ( \nabla\chi_t \otimes \nabla\chi ) : Du_t \,dx \\
			&\quad - t \int_{\Om} \nabla\chi \cdot \nabla\chi_t \Div u_t\,dx
			= \sum_{i=1}^{7}{Q}_i.
		\end{split}
	\end{equation}
	Using Lemma~\ref{lem1}--\ref{lem2}, Lemma~\ref{be1}--\ref{be2}, and applying the H\"{o}lder, Sobolev, Young, Korn, Poincar\'e and Gagliardo-Nirenberg inequalities, one obtains
	\begin{align*}
		Q_2 &\le C\|\rho\|_{L^\infty}^{\frac{1}{2}} \|u\|_{L^\infty}
		\| \sqrt{t} \sqrt\rho u_t\|_{L^2} \|\sqrt{t} \nabla u_t\|_{L^2} \\
		&\le C \|\nabla u\|_{L^2}^{\frac{1}{2}} \|\nabla^2 u\|_{L^2}^{\frac{1}{2}}
		\| \sqrt{t} \sqrt\rho u_t \|_{L^2}\|\sqrt{t} \nabla u_t\|_{L^2} \\
		&\le \frac{\nu}{8} \| \sqrt{t} \nabla u_t\|_{L^2}^2
		+ C \|\sqrt{t} \sqrt\rho u_t \|_{L^2}^2 \|\nabla^2 u\|_{L^2}, \\
		Q_3 + Q_5& \le C \sqrt{t} \|\rho_t\|_{L^2} ( \|u\|_{L^\infty}
    	\|\nabla u\|_{L^3} \|\sqrt{t} u_t\|_{L^6} + \|\rho\|_{L^\infty}^{\gamma-1}
    	\|\sqrt{t} \Div u_t\|_{L^2}) \\
    	&\le C\sqrt{t} (1 + \|\nabla^2 u\|_{L^2}^{\frac{1}{2}})
    	(\|\nabla u\|_{L^2}^{\frac{1}{2}} \|\nabla^2 u\|_{L^2}^{\frac{1}{2}}
    	+ 1) \|\sqrt{t} \nabla u_t\|_{L^2}\\
    	&\le \frac{\nu}{8} \|\sqrt{t} \nabla u_t\|_{L^2}^2
		+ C(1 + \|\nabla^2 u\|_{L^2}^2 ),\\
		Q_4&\le \|\nabla u\|_{L^\infty} \| \sqrt{t} \sqrt\rho u_t\|_{L^2}^2 \\
		Q_6 + Q_7&\le C \sqrt{t} \|\sqrt{t} \nabla\chi_t\|_{L^2}  \| \nabla\chi\|_{L^\infty}
		( \| Du_t\|_{L^2} + \| \Div u_t\|_{L^2} ) \\
		&\le C \|\sqrt{t} \nabla\chi_t\|_{L^2}
		\| \nabla\chi\|_{L^\infty} \| \sqrt{t} \nabla u_t \|_{L^2} \\
		&\le \frac{\nu}{8} \|\sqrt{t} \nabla u_t\|_{L^2}^2
		+ C \| \nabla\chi\|_{L^\infty}^2 \|\sqrt{t} \nabla\chi_t\|_{L^2}^2,
	\end{align*}
	where we have used the fact that
	\begin{align}\label{be3:rho5}
		\|\rho_t\|_{L^2} &\le \|\nabla \rho\|_{L^2} \|u\|_{L^\infty}
		+ \|\rho\|_{L^\infty} \|\Div u\|_{L^2}\notag\\
		&\le C (1 + \|\nabla u\|_{L^2}^{\frac{1}{2}} \|\nabla^2 u\|_{L^2}^{\frac{1}{2}} )\notag\\
		&\le C (1 + \|\nabla^2 u\|_{L^2}^{\frac{1}{2}}).
	\end{align}
	Substituting the above estimates into $\eqref{be3:u1}$, we obtain
	\begin{align}\label{be3:u2}
		\frac{d}{dt}\| \sqrt{t} \sqrt\rho u_{t} \|_{L^2}^{2}
		+ \frac{5\nu}{4} \|\sqrt{t} \nabla u_{t}\|_{L^2}^{2}
		+ 2 ( \lambda + \nu) \|\sqrt{t} \Div u_{t} \|_{L^2}^{2}
		\le C \mathcal{F}_2(t) (1 + \mathcal{E}_2(t)).
	\end{align}

    Moreover, by using \eqref{be2:5}, \eqref{be3:chi1}, Lemma~\ref{be1}--\ref{be2},
	the Young inequality and Gagliardo-Nirenberg inequality, we show
	\begin{align*}
		\|\nabla^2 u\|_{L^2}^2 &\le C (1 + \|\sqrt \rho u_t\|_{L^2}^2 + \|u\|_{L^\infty}^2
		+ \|\nabla \chi\|_{L^\infty}^2 ) \\
		&\le  C (1 + \|\sqrt \rho u_t\|_{L^2}^2 + \|\nabla u\|_{L^2} \|\nabla^2 u\|_{L^2}
		+ \|\nabla^2 \chi\|_{L^2}  \| \nabla ^3 \chi \|_{L^2} )\\
		&\le C (1 + \|\sqrt \rho u_t\|_{L^2}^2 + \|\nabla^2 u\|_{L^2}
		+ (1 + \|\nabla\chi_{t}\|_{L^2}) (1 + \|\nabla \rho\|_{L^q})) \\
		&\le \frac{1}{2}  \|\nabla^2 u\|_{L^2}^2 +
		C (1 + \|\sqrt \rho u_t\|_{L^2}^2 + \|\nabla\chi_{t}\|_{L^2}^2
		+ \|\nabla \rho\|_{L^q}^2  ).
	\end{align*}
	That is,
	\begin{align}\label{be3:u3}
		\| \sqrt{t} \nabla^2 u \|_{L^2}^2
		\le C (1 + \mathcal{E}_2(t)).
	\end{align}
    This estimate will be used in the next step.

	{\textbf{Step 3. Estimates for the phase variable. }}
	Recalling \eqref{lem:chi3:2}, that is
	\begin{align*}
		&\quad\frac{d}{dt} \left( \frac{1}{2}\|\nabla\chi_t\|_{L^2}^2
		+ \int_{\Om} \rho_tf(\chi)\chi_t \,dx \right) +   \|\rho\chi_{tt}\|_{L^2}^2 \notag \\
		&= -2 \int_{\Om}  \rho \rho_t \chi_t \chi_{tt} \,dx
		- 2 \int_{\Om} \rho \rho_t ( u \cdot \nabla\chi ) \chi_{tt} \,dx
		- \int_{\Om}  \rho^2 ( u_t \cdot \nabla \chi ) \chi_{tt} \,dx \notag \\
		&\quad - \int_{\Om}  \rho^2 ( u \cdot \nabla\chi_t ) \chi_{tt} \,dx
		+ \int_{\Om} \rho_t f'(\chi) ( u \cdot \nabla\chi ) \chi_t \,dx
		+ \int_{\Om} \rho_t  f(\chi) (u \cdot \nabla\chi_t ) \,dx \notag \\
		&\quad +\int_{\Om} \rho f'(\chi) ( u_t \cdot \nabla\chi ) \chi_t \,dx
		+ \int_{\Om} \rho f(\chi) ( u_t \cdot \nabla\chi_t) \,dx
		+ \int_{\Om} \rho_t f'(\chi) \chi_t^2  \,dx \notag \\
		&\quad - \int_{\Om} \rho f'(\chi) \chi_t \chi_{tt} \,dx.
	\end{align*}
	Multiplying both sides of the above equation by $t$ and applying the identity
	\begin{align*}
		- 2 \int_{\Om} \rho \rho_{t} \chi_t \chi_{tt} \,dx
		&= 2 \int_{\Om} \rho \Div ( \rho u ) \chi_t \chi_{tt} \,dx\\
		&= \int_{\Om} ( \nabla ( \rho^2 ) \cdot u + 2 \rho^2 \Div u ) \chi_t \chi_{tt} \,dx\\
		&= \int_{\Om} \rho^2 \Div u \chi_t \chi_{tt} \,dx
		- \int_{\Om} \rho^2 ( u \cdot \nabla \chi_t ) \chi_{tt} \,dx
		- \int_{\Om} \rho^2 ( u \cdot \nabla \chi_{tt} ) \chi_t \,dx\\
		&= \int_{\Om} \rho^2 \Div u \chi_t \chi_{tt} \,dx
		- \frac{d}{dt} \int_{\Om} \rho^2 ( u \cdot \nabla \chi_t ) \chi_t \,dx \\
		&\quad+ 2 \int_{\Om} \rho \rho_t ( u \cdot \nabla \chi_t ) \chi_t \,dx
		+ \int_{\Om} \rho^2 ( u_t \cdot \nabla \chi_t ) \chi_t \,dx,
	\end{align*}
	we have
	\begin{align}\label{be3:chi2}
				&\frac{1}{2} \frac{d}{dt} \|\sqrt{t} \nabla\chi_t\|_{L^2}^2 +
			\| \sqrt{t} \rho\chi_{tt}\|_{L^2}^2   \nonumber\\
			=& \frac{d}{dt} [ t \hat{H}(t) ] - \hat{H}(t)
			+ \frac{1}{2}\|\nabla\chi_t\|_{L^2}
			+ t \int_{\Om} \rho^2 \Div u \chi_t \chi_{tt} \,dx \nonumber\\
			& + 2 t \int_{\Om} \rho \rho_t ( u \cdot \nabla \chi_t ) \chi_t \,dx
			+ t\int_{\Om} \rho^2 ( u_t \cdot \nabla\chi_t ) \chi_t \,dx
			- 2 t \int_{\Om} \rho \rho_t ( u \cdot \nabla\chi ) \chi_{tt} \,dx   \nonumber\\
			& - t \int_{\Om}  \rho^2 ( u_t \cdot \nabla \chi ) \chi_{tt} \,dx
			- t \int_{\Om}  \rho^2 ( u \cdot \nabla\chi_t ) \chi_{tt} \,dx
			+ t \int_{\Om} \rho_t f'(\chi) ( u \cdot \nabla\chi ) \chi_t \,dx \\
			& + t \int_{\Om} \rho_t  f(\chi) (u \cdot \nabla\chi_t ) \,dx
			+ t \int_{\Om} \rho f'(\chi) ( u_t \cdot \nabla\chi ) \chi_t \,dx
			+ t \int_{\Om} \rho f(\chi) ( u_t \cdot \nabla\chi_t) \,dx  \nonumber \\
			& + t \int_{\Om} \rho_t f'(\chi) \chi_t^2  \,dx
			- t \int_{\Om} \rho f'(\chi) \chi_t^2  \,dx   \nonumber\\
			=& \frac{d}{dt} [ t \hat{H}(t) ] - \hat{H}(t)
			+ \frac{1}{2}\|\nabla\chi_t\|_{L^2} + \sum_{i=1}^{12}{R}_i,  \nonumber
	\end{align}
	where
	\begin{equation*}
		\hat{H}(t) = - \int_{\Om} \rho^2 ( u \cdot \nabla \chi_t ) \chi_t \,dx
		- \int_{\Om} \rho_t f(\chi) \chi_t \,dx.
	\end{equation*}
	Applying Lemma~\ref{lem1}--\ref{lem2}, Lemma~\ref{be1}--\ref{be2}, it follows from \eqref{be2:5}, \eqref{be3:chi1}, \eqref{be3:rho5}, \eqref{be3:u3},
	the H\"older, Young, Poincar\'e and Gagliardo-Nirenberg inequalities that
	\begin{align*}
		|\hat{H}(t)|&\le \|\rho\|_{L^\infty} \|\rho \chi_t\|_{L^2}
		\|u\|_{L^\infty} \|\nabla\chi_t\|_{L^2}
		+ \|\rho_t\|_{L^2} \|f(\chi)\|_{L^3} \|\chi_t\|_{6}\\
		&\le C  \|\nabla u\|_{L^2}^{\frac{1}{2}} \|\nabla^2 u\|_{L^2}^{\frac{1}{2}}
		\|\nabla\chi_t\|_{L^2} + C ( \|\nabla^2 u\|_{L^2}^{\frac{1}{2}} + 1 )
		(\|\rho \chi_t\|_{L^2} + \|\nabla\chi_t\|_{L^2})\\
		&\le C (1 + \|\nabla^2 u\|_{L^2}
		+ \|\nabla\chi_t\|_{L^2}^2 ) \le C \mathcal{F}_2(t), \\
		R_1 + R_6&\le \|\sqrt{t} \rho\chi_{tt}\|_{L^2} \|\rho\|_{L^\infty}
		(\|\nabla u\|_{L^3} \|\sqrt{t} \chi_t\|_{L^6}
		+ \|u\|_{L^\infty} \|\sqrt{t} \nabla\chi_t\|_{L^2} ), \\
		&\le C \|\sqrt{t} \rho\chi_{tt}\|_{L^2}
		\|\nabla u\|_{L^2}^{\frac{1}{2}} \|\nabla^2 u\|_{L^2}^{\frac{1}{2}}
		(\|\rho \chi_t\|_{L^2} + \| \sqrt{t} \nabla\chi_t \|_{L^2}) \\
		&\le \frac{1}{8} \|\sqrt{t} \rho\chi_{tt}\|_{L^2}^2
		+ C \|\nabla^2 u\|_{L^2} (1 + \|\sqrt{t} \nabla\chi_t\|_{L^2}^2) \\
		&\le \frac{1}{8} \|\sqrt{t} \rho\chi_{tt}\|_{L^2}^2
		+ C \mathcal{F}_2(t) (1 + \mathcal{E}_2(t)),   \\
		R_2&\le 2 \sqrt{t} \|\rho\|_{L^\infty} \|\rho_t\|_{L^3}
		\|u\|_{L^\infty} \| \sqrt{t}\nabla \chi_t \|_{L^2} \|\chi_t\|_{L^6}\\
		&\le C  \|\nabla u\|_{L^2}^{\frac{1}{2}} \|\nabla^2 u\|_{L^2}
		(\|\nabla \rho\|_{L^q} + 1) \|\sqrt{t} \nabla \chi_t\|_{L^2}
		(\|\rho \chi_t\|_{L^2} + \|\nabla\chi_t\|_{L^2}) \\
		&\le C  (1 + \|\nabla^2 u\|_{L^2}^2 + \| \nabla \chi_t \|_{L^2}^2)
		(1 + \|\nabla \rho\|_{L^q}^2 + \|\sqrt{t} \nabla \chi_t\|^2_{L^2})\\
		&\le C \mathcal{F}_2(t) ( \mathcal{E}_2(t) + 1  ),   \\
		R_3 + R_5 &\le \|\rho\|_{L^\infty}^{\frac{1}{2}} \|\sqrt{t} \sqrt\rho u_{t}\|_{L^3}
		(  \|\rho\|_{L^\infty} \|\sqrt{t} \nabla\chi_t\|_{L^2} \|\chi_t\|_{L^6}
		+ \| \sqrt{t} \rho\chi_{tt}\|_{L^2} \|\nabla\chi\|_{L^6}) \\
		&\le C \|\sqrt{t} \sqrt\rho u_{t}\|_{L^2}^{\frac{1}{2}}
		\|\sqrt{t} \sqrt\rho u_{t}\|_{L^6}^{\frac{1}{2}}
		\big(\|\sqrt{t} \nabla\chi_t\|_{L^2} (\|\rho \chi_t\|_{L^2} + \|\nabla\chi_t\|_{L^2})
		+ \| \sqrt{t} \rho\chi_{tt}\|_{L^2}\big) \\
		&\le C \|\rho\|_{L^\infty}^{\frac{1}{4}}
		\|\sqrt{t} \sqrt\rho u_{t}\|_{L^2}^{\frac{1}{2}}
		\|\sqrt{t} \nabla u_{t}\|_{L^2}^{\frac{1}{2}}
		\big(\|\sqrt{t} \nabla\chi_t\|_{L^2} (1 + \|\nabla\chi_t\|_{L^2})
		+ \| \sqrt{t} \rho\chi_{tt}\|_{L^2} \big) \\
		&\le \frac{1}{8} \| \sqrt{t} \rho\chi_{tt}\|_{L^2}^2
		+ \frac{\nu}{16} \| \sqrt{t} \nabla u_t \|_{L^2}^2
		+ C \| \sqrt{t} \sqrt\rho u_{t} \|_{L^2}^2
		+ C (1 + \|\nabla\chi_t\|_{L^2}^2) \|\sqrt{t} \nabla\chi_t\|_{L^2}^2 \\
		& \le \frac{1}{8} \| \sqrt{t} \rho\chi_{tt}\|_{L^2}^2
		+ \frac{\nu}{16} \| \sqrt{t} \nabla u_t \|_{L^2}^2
		+ C \mathcal{F}_2(t)  \mathcal{E}_2(t),   \\
		R_4& \le \sqrt{t} \|\sqrt{t} \rho\chi_{tt}\|_{L^2} \|\rho_t\|_{L^2}
		\|u\|_{L^\infty} \|\nabla\chi\|_{L^\infty} \\
		&\le C \sqrt{t} \|\sqrt{t} \rho\chi_{tt}\|_{L^2}
		(1 + \|\nabla^2 u\|_{L^2}^{\frac{1}{2}} )
		\|\nabla u\|_{L^2}^{\frac{1}{2}} \|\nabla^2 u\|_{L^2}^{\frac{1}{2}}
		\|\nabla^2\chi\|_{L^2}^{\frac{1}{2}} \|\nabla^3\chi\|_{L^2}^{\frac{1}{2}}\\
		&\le C \|\sqrt{t} \rho\chi_{tt}\|_{L^2} (1 + \|\nabla^2 u\|_{L^2} )
		\left((1 + \|\sqrt{t} \nabla\chi_{t} \|_{L^2})
		(1 + \|\nabla\rho\|_{L^q}^{\frac{q}{3q-6}})
		+ \|\sqrt{t} \nabla^2 u\|_{L^2}\right)^{\frac{1}{2}} \\
		&\le \frac{1}{8} \|\sqrt{t} \rho\chi_{tt}\|_{L^2}^2
		+ C \mathcal{F}_2(t) (\mathcal{E}_2(t) + 1) ,   \\
		R_7 + R_8&\le C \sqrt{t} \|\rho_{t}\|_{L^2} \left(\|u\|_{L^6} \|f'(\chi)\|_{L^\infty}
		\|\nabla\chi\|_{L^6}\| \sqrt{t} \chi_t\|_{L^6} + \|u\|_{L^\infty} \|f(\chi)\|_{L^\infty}\|\sqrt{t} \nabla\chi_t\|_{L^2}\right) \\
		&\le C (1 + \|  \nabla^2 u\|_{L^2}^{\frac{1}{2}}) (\|\nabla^2\chi\|_{L^2}
		( \|\rho\chi_t\|_{L^2} + \|\sqrt{t} \nabla\chi_t\|_{L^2})
		+ \|\sqrt{t} \nabla\chi_t\|_{L^2}) \\
		&\le C \mathcal{F}_2(t) (1 + \mathcal{E}_2(t)), \\
		R_9 + R_{10}&\le C \|\rho\|_{L^\infty}^{\frac{1}{2}} \|\sqrt{t} \sqrt\rho u_{t}\|_{L^2}
		( \|f'(\chi)\|_{L^6} \|\nabla\chi\|_{L^6} \|\sqrt{t} \chi_{t}\|_{L^6}
		+ \|f(\chi)\|_{L^\infty} \|\sqrt{t} \nabla\chi_t\|_{L^2}) \\
		& \le C \|\sqrt{t} \sqrt\rho u_{t}\|_{L^2} ( \|\nabla^2\chi\|_{L^2}
		(\|\rho\chi_t\|_{L^2} + \|\sqrt{t} \nabla\chi_t\|_{L^2})
		+ \|\sqrt{t} \nabla\chi_t\|_{L^2} )  \\
		&\le C (1 + \mathcal{E}_2(t)),   \\
		R_{11} + R_{12}&\le C ( \|f'(\chi)\|_{L^6} \|\rho_t\|_{L^2}
		\| \sqrt{t} \chi_t\|_{L^6}^2 + \|f'(\chi)\|_{L^3}
		\|\sqrt{t} \rho\chi_{tt}\|_{L^2} \|\sqrt{t} \chi_{t}\|_{L^6}) \\
		&\le \frac{1}{8} \|\sqrt{t} \rho\chi_{tt}\|_{L^2}^2
		+ C (1 + \|\nabla^2 u\|_{L^2}^{\frac{1}{2}})
		( \|\rho\chi_t\|_{L^2}^2 + \|\sqrt{t} \nabla\chi_t\|_{L^2}^2) \\
		& \le \frac{1}{8} \|\sqrt{t}\rho\chi_{tt}\|_{L^2}^2
		+ C \mathcal{F}_2(t) (1 + \mathcal{E}_2(t)),
	\end{align*}
	where we have used
	\begin{align*}
		\|\rho_t\|_{L^3} &\le \|\rho\|_{L^\infty} \|\nabla u\|_{L^3}
		+ \|u\|_{L^\infty} \|\nabla\rho\|_{L^3}\\
		&\le C  \|\nabla u\|_{L^2}^{\frac{1}{2}}  \|\nabla^2 u\|_{L^2}^{\frac{1}{2}}
		(1 + \|\nabla\rho\|_{L^2}^{\frac{2q - 6}{3q - 6}}
		\|\nabla\rho\|_{L^q}^{\frac{q}{3q - 6}})\\
		&\le C \|\nabla^2 u\|_{L^2}^{\frac{1}{2}}
		(1 + \|\nabla\rho\|_{L^q} ).
	\end{align*}
	Plugging the above estimates into \eqref{be3:chi2}, one has
	\begin{equation}\label{be3:chi3}
		\frac{d}{dt} \|\sqrt{t} \nabla\chi_t\|_{L^2}^2
		+  \| \sqrt{t} \rho\chi_{tt}\|_{L^2}^2  \le 2\frac{d}{dt} [ t \hat{H}(t) ]
		+ \frac{\nu}{8} \| \sqrt{t} \nabla u_t \|_{L^2}^2
		+ C \mathcal{F}_2(t) ( \mathcal{E}_2(t) + 1).
	\end{equation}

	{\textbf{Step 4. Closure of the estimates. }}
	Combining \eqref{be3:chi1}, \eqref{be3:rho4}, \eqref{be3:u2} with \eqref{be3:chi3} gives
	\begin{align*}
		\mathcal{E}_2'(t) +  \mathcal{D}_2(t) \le 2\frac{d}{dt} [ t \hat{H}(t) ]
		+ C \mathcal{F}_2(t) ( \mathcal{E}_2(t) + 1 ).
	\end{align*}
	Integrating over $(0, t)$, one gets
	\begin{align*}
		\mathcal{E}_2(t) + \int_0^t \mathcal{D}_2(s) \, ds
		\le 2 t \hat{H}(t) + C \int_0^t \mathcal{F}_2(s) ( \mathcal{E}_2(s) + 1 )\, ds.
	\end{align*}
	By using the estimate of $\| \sqrt{t} \nabla^2 u \|_{L^2}$ in \eqref{be3:u3},
	we have
	\begin{align*}
		2 t \hat{H}(t) &\le 2 t \|\rho\|_{L^\infty} \|u\|_{L^\infty}
		\| \nabla \chi_t  \|_{L^2} \|\rho \chi_t\|_{L^2}
		+ 2 t \|\rho_t\|_{L^2} \|f(\chi)\|_{L^3} \|\chi_t\|_{L^6}\\
		&\le C t^{\frac{1}{4}} \|\nabla u\|_{L^2}^{\frac{1}{2}}
		\| \sqrt{t} \nabla^2 u \|_{L^2}^{\frac{1}{2}}
		\| \sqrt{t} \nabla \chi_t  \|_{L^2}
		+ C t^{\frac{1}{4}} (1 + \| \sqrt{t} \nabla^2 u \|_{L^2}^{\frac{1}{2}} )
		(\|\rho\chi_t\|_{L^2} + \|\sqrt{t} \nabla\chi_t\|_{L^2}) \\
		&\le C (1 + \mathcal{E}_2^{\frac{1}{4}}(t)) \mathcal{E}_2^{\frac{1}{2}}(t)
		+ C (1 + \mathcal{E}_2^{\frac{1}{4}}(t))
		(1 + \mathcal{E}_2^{\frac{1}{2}}(t))\\
		&\le \frac{1}{2} \mathcal{E}_2(t) + C.
	\end{align*}
	Hence, one gets
	\begin{align*}
		\frac{1}{2}\mathcal{E}_2(t) + \int_0^t \mathcal{D}_2(s) \, ds
		\le C \int_0^t \mathcal{F}_2(s) ( \mathcal{E}_2(s) + 1 )\, ds.
	\end{align*}
	It follows from $\mathcal{F}_2 \in L^1(0, T^*)$ and the Gr\"onwall inequality that
	\begin{equation}\label{be3:1}
		\sup_{0 \le t < T^*} \mathcal{E}_2(t) + \int_0^{T^*} \mathcal{D}_2(t) \, dt \le C.
	\end{equation}
	By using \eqref{be3:rho1}, \eqref{be3:chi1},
	\eqref{be3:u3} and \eqref{be3:1}, there holds
	\begin{equation}\label{be3:2}
		\sup_{0 \le t < T^*} \left(\mathcal{E}_2(t) + \| \sqrt{t} \nabla^2u \|_{L^2}^2(t)
		+ \|\sqrt{t} \nabla^3\chi\|_{L^2}^2(t)\right)
		+ \int_0^{T^*} \left( \mathcal{D}_2(t) + \|\nabla^2 u\|_{L^q}(t) \right) \,dt \le C.
	\end{equation}
	Recalling \eqref{nabla2uq}, we have
		\begin{equation*}
			\begin{aligned}		
				\|\nabla^{2}u\|_{L^q}&\le C ( \|\rho u_{t}\|_{L^q}
				+ \|\rho ( u \cdot \nabla ) u\|_{L^q}
				+ \|\nabla (P(\rho))\|_{L^q} + \|\Delta \chi \nabla \chi\|_{L^q}) \\
				&\le C (\|u_t\|_{L^6} + \|u\|_{L^\infty} \|\nabla u\|_{L^6} 			
				+ \|\nabla \rho\|_{L^q} \|\rho\|_{L^\infty}^{\gamma-1}
				+ \|\nabla \chi\|_{L^\infty} \|\nabla^2\chi\|_{L^q}) \\
				&\le C (1 + \|\nabla u_t\|_{L^2} + \|\nabla^2 u\|_{L^2}^{\frac{3}{2}}
				+ \|\nabla^3 \chi\|_{L^2}^{\frac{3}{2}}).
			\end{aligned}
		\end{equation*}
		Combining the above inequalities and \eqref{be3:2} yields
		\begin{equation*}
			\begin{aligned}
				\int_0^{T^*} \|\sqrt{t} \nabla^2 u\|_{L^q}^2 \,dt
				&\le C \sup_{0 \le t < T^*} \left(\mathcal{E}_2^{\frac{1}{2}}(t)
				+ \|\sqrt{t} \nabla^2u\|_{L^2}(t) + \|\sqrt{t} \nabla^3\chi\|_{L^2}(t)
				\right) \int_0^{T^*} \mathcal{D}_2(t)\,dt \\
				&\le C.
			\end{aligned}
		\end{equation*}
	This completes the proof.
\end{proof}

\vskip2mm
{\bf Acknowledgment.}
Li's work is supported by the National Natural Science Foundation of China (No.12371205)
and the Natural Science Foundation of Guangdong Province (No.2025A151\\5012026, 2025A1515040001).

\vskip6mm
\section{Appendix: Proof of Proposition~\ref{appro_sol}}

{Before proceeding with the proof, we first recall the following fixed point theorem. }
\begin{lemma}[Schaefer's fixed point theorem]\label{SFT}
	Let $X$ be a Banach space and let $\Lambda : X \to X$ be a continuous and compact operator.
	Assume further that the set
	\begin{align*}
		\left\{ x \in X: x = \kappa \Lambda x \ \text{for some } 0 \le \kappa \le 1 \right\}
	\end{align*}
	is bounded in $X$. Then $\Lambda$ admits at least one fixed point in $X$.
\end{lemma}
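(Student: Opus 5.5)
The statement to be proved is Schaefer's fixed point theorem (Lemma~\ref{SFT}), a classical result. I plan to derive it from Schauder's fixed point theorem by a radial retraction. This theorem says that a continuous map of a closed, bounded, convex set $K\subset X$ into itself, whose image is relatively compact, has a fixed point.

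\textbf{Construction.} Let $M$ bound the set $\{x: x=\kappa\Lambda x,\ 0\le\kappa\le1\}$, and fix $R> M$. Define the retraction
\[
r(y)=\begin{cases} y, & \|y\|\le R,\\ Ry/\|y\|, & \|y\|>R.\end{cases}
\]
Then $r$ is continuous from $X$ onto the closed ball $\overline{B}_R$. Set $\tilde\Lambda:=r\circ\Lambda$ restricted to $\overline{B}_R$.

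\textbf{Applying Schauder.} $\tilde\Lambda$ is continuous, being a composition of continuous maps, and it maps $\overline{B}_R$ into itself. Compactness of $\Lambda$ means $\Lambda(\overline{B}_R)$ is relatively compact. Since $r$ is continuous, $r\big(\overline{\Lambda(\overline{B}_R)}\big)$ is compact and contains $\tilde\Lambda(\overline{B}_R)$. Schauder's theorem therefore gives $x\in\overline{B}_R$ with $\tilde\Lambda x=x$.

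\textbf{Removing the retraction.} If $\|\Lambda x\|\le R$, then $x=\Lambda x$ and we are done. Otherwise $x=R\,\Lambda x/\|\Lambda x\|=\kappa\Lambda x$ with $\kappa=R/\|\Lambda x\|\in(0,1)$. Then $\|x\|=R>M$, which contradicts the a priori bound. Hence $x$ is a fixed point of $\Lambda$.

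\textbf{Main obstacle.} The only real point is confirming the hypotheses of Schauder's theorem. These are convexity and closedness of the ball, and relative compactness of the image of $r\circ\Lambda$, which we get from the continuity of $r$ as above. Schauder's theorem itself I would take as known, as in \cite[Section 9.2.2]{Evans10}. I would also note that "compact operator" is used here to mean that bounded sets are mapped to precompact sets.
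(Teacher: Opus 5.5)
Your proof is correct. It is essentially the standard radial-retraction argument in \cite[Section 9.2.2]{Evans10}, which is all the paper relies on here: it states the lemma with that citation and gives no proof of its own. The only cosmetic difference is the form of Schauder's theorem used. Evans applies the compact-convex form to the closed convex hull of $r\circ\Lambda(\overline{B}_R)$, which is compact by Mazur's theorem, whereas you invoke the equivalent closed-bounded-convex form with precompact image; if you cite Evans for Schauder's theorem, you should note that bridging step.
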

Now we are ready to give the proof of proposition~\ref{appro_sol}.
Given $\tilde{u} \in C([0, T]; X_N)$. It follows from the classical theory (cf. \cite[Lemma 1.3]{LS75, LS78}) that, there exists a unique $\tilde{\rho} \in C^1(\overline{\Om}\times [0, T])$, such that
\begin{equation*}
	\begin{aligned}
		\partial_t \tilde{\rho} + \Div(\tilde{\rho} \tilde{u} ) &= 0 && \text{\rm in } \Om\times(0,T),\\
		\tilde{\rho}|_{t = 0} &= \rho_{0N} && \text{\rm in } \Om.
	\end{aligned}
\end{equation*}
In fact, $\tilde{\rho}$ is given by
\begin{equation}\label{App:rho1}
	\tilde{\rho}(x, t) = \rho_{0N}\left(U(x, 0; t)\right) \Exp \left\{
	-\int_0^t \Div \tilde{u}\left( U(x, \tau; t), \tau\right) \, d\tau \right\},
	\quad (x, t) \in \overline{\Om} \times [0, T],
\end{equation}
where $U \in C(\overline{\Om} \times [0, T] \times [0, T])$ is the solution to
\begin{equation*}
	\begin{cases}
		\frac{\partial}{\partial t}U(x, t; s) = \tilde{u}(U(x, t; s), t), &\quad 0 \le t \le T,\\
		U(x, s; s) = x, &\quad 0 \le s \le T, x \in \overline{\Om}.
	\end{cases}
\end{equation*}
It follows from \eqref{App:rho1} and the regularity of the eigenfunctions that
\begin{equation}\label{App:rho2}
	\begin{aligned}
		\tilde{\rho}(x, t) \ge \left(\inf_{\overline{\Om}} \rho_{0N} \right)
		\Exp \left\{ - \int_0^t \| \nabla \tilde{u} \|_{L^\infty} \, d\tau \right\}
		\ge \frac{C}{N} \Exp \left\{ -
		\int_0^t \| \nabla \tilde{u} \|_{L^2} \, d\tau \right\}.
	\end{aligned}
\end{equation}
Define the map
\begin{equation*}
	\begin{aligned}
		D_N: C([0, T]; X_N) &\rightarrow C^1(\overline{\Om} \times [0, T]), \\
		\tilde{u} &\rightarrow \tilde{\rho}.
	\end{aligned}
\end{equation*}
Owing to the uniform positive lower bound of $\tilde{\rho}$ and $\chi_{0N}\in H^2$, the well-posedness of the
semi-linear parabolic equation implies that there exists a unique solution $\tilde{\chi} \in L^2(0, T; H^2) \cap H^1(0, T; L^2)$ satisfies
\begin{equation}\label{App:chi1}
	\begin{cases}
		\partial_t \tilde{\chi}  + \tilde{u} \cdot \nabla \tilde{\chi}
		- \frac{1}{\tilde{\rho}^2} \Delta \tilde{\chi}
		+ \frac{1}{\tilde{\rho}} F' (\tilde{\chi})
		= 0 &\text{\rm in } \Om \times (0, T),\\
		\partial_{\boldsymbol{n}} \tilde{\chi} = 0
		&\text{\rm on } \partial \Om \times (0, T), \\
		\tilde{\chi}|_{t = 0} = \chi_{0N} &\text{\rm in } \Om.
	\end{cases}
\end{equation}
Define the map
\begin{equation*}
	\begin{aligned}
		C_N: C([0, T]; X_N) &\rightarrow C([0, T]; H^1), \\
		\tilde{u} &\rightarrow \tilde{\chi},
	\end{aligned}
\end{equation*}
where we have used the continuous embedding $L^2(0, T; H^2) \cap H^1(0, T; L^2)
\hookrightarrow  C([0, T]; H^1)$.
Within $D_N(\tilde{u})$ and $C_N(\tilde{u})$, we seek the solution to the following problem
\begin{equation}\label{App:u1}
	\begin{cases}
		D_N(\tilde{u}) \partial_t u_N + P_N[D_N(\tilde{u}) (u_N \cdot \nabla) u_N]
		- \mathcal{L}u_N
		= G_N(\tilde{u}), \\
		u_N |_{t=0} = u_{0N},
	\end{cases}
\end{equation}
where $G_N(\tilde{u}) = - P_N[\nabla (P(D_N(\tilde{u}))) + \Div(\nabla C_N(\tilde{u}) \otimes
\nabla C_N(\tilde{u}) - \frac{|\nabla C_N(\tilde{u})|^2}{2}\mathbb{I})]$.
To this end, we consider the linearized problem
\begin{equation}\label{App:u2}
	\begin{cases}
		D_N(\tilde{u}) \partial_t u_N + P_N[D_N(\tilde{u}) (\tilde{u} \cdot \nabla) u_N]
		- \mathcal{L}u_N
		= G_N(\tilde{u}), \\
		u_N |_{t=0} = u_{0N}.
	\end{cases}
\end{equation}

Once the problem \eqref{App:u2} is uniquely solvable for any given $\tilde{u}$, we can define a solution map
\begin{equation*}
	\begin{aligned}
		\Lambda_N: C([0, T]; X_N) &\rightarrow C([0, T]; X_N), \\
		\tilde{u} &\rightarrow u_N,
	\end{aligned}
\end{equation*}
where $u_N$ is the unique solution to the system \eqref{App:u2}.
{We first claim that $\Lambda_N$ is well-defined. Furthermore, we claim that
the mapping $\Lambda_N$ satisfies the following properties:

1. $\Lambda_N$ is continuous.

2. $\Lambda_N$ is compact.

3. the set $\{\, u \in C([0,T];X_N) : u = \kappa \Lambda_N(u),\ \kappa \in [0,1] \,\}$ is bounded in $C([0,T]; X_N)$.
\\
Then one can apply Schaefer's fixed point theorem (see Lemma \ref{SFT}) to obtain a fixed point
$u$. More precisely, we have found
$u = \Lambda_N (u)$, $\rho = D_N(u)$ and $\chi = C_N(u)$, which together solve \eqref{appro_pro}.
So it remains for us to prove the unique solvability of \eqref{App:u2} and Claim 1--3.

\vskip2mm
\textbf{Unique solvability of \eqref{App:u2}}.
We reformulate problem \eqref{App:u2} in the following equivalent form
\begin{equation}\label{App:u3}
	\begin{cases}
		\langle D_N(\tilde{u}) \partial_t u_N, w_k \rangle
		+ \langle D_N(\tilde{u}) (\tilde{u} \cdot \nabla) u_N, w_k \rangle
		+ \nu \langle \nabla u, \nabla w_k \rangle
		+ (\nu + \lambda) \langle \Div u, \Div w_k \rangle\\
		\quad = \langle \nabla P(D_N(\tilde{u}))
		- \Delta C_N(\tilde{u}) \nabla C_N(\tilde{u}) , w_k \rangle,
		\quad 0 \le t \le T, \\
		\langle u_N(0), w_k \rangle = \langle u_0, w_k \rangle, \quad k = 1, 2, \cdots, N.
	\end{cases}
\end{equation}
Suppose that $u_N$ has the form $u_N(x, t) = \sum\limits_{j=1}^N {\alpha_j}(t) w_j(x)$. It follows from integration by parts and the properties of the eigenfunction that problem \eqref{App:u3} is equivalent to the following problem
\begin{equation}\label{App:u4}
	\begin{cases}
		\sum_{j=1}^N \langle D_N(\tilde{u}) w_j, w_k \rangle \alpha_j'(t)
		+ \sum_{j=1}^N \langle D_N(\tilde{u}) (\tilde{u} \cdot \nabla) w_j, w_k \rangle \alpha_j(t) + \lambda_k \alpha_k(t) \\
		\quad = \langle \nabla P(D_N(\tilde{u}))
		- \Delta C_N(\tilde{u}) \nabla C_N(\tilde{u}), w_k \rangle,
		\quad 0 \le t \le T, \\
		\alpha_k(0) = \langle u_0, w_k \rangle,  \quad k = 1, 2, \cdots, N.
	\end{cases}
\end{equation}
Denote
\begin{align*}
	&a(t) = (\alpha_1(t), \cdots, \alpha_N(t))^T, \quad
	\mathcal{A}(t) = (\mathcal{A}_{ij}(t)), \quad
	\mathcal{B}(t) = (\mathcal{B}_{ij}(t)), \quad
	\mathcal{F}(t) = (\mathcal{F}_1(t), \cdots, \mathcal{F}_N(t))^T,\\
	&\mathcal{A}_{ij}(t) = \langle D_N(\tilde{u}) w_j, w_i \rangle, \quad
	\mathcal{B}_{ij}(t) = \langle D_N(\tilde{u}) (\tilde{u} \cdot \nabla) w_j, w_i \rangle
	+ \delta_{ij} \lambda_j , \\
	&\mathcal{F}_{j}(t) = \langle \nabla P(D_N(\tilde{u}))
	- \Delta C_N(\tilde{u}) \nabla C_N(\tilde{u}), w_j \rangle,
	\quad i,j = 1, \cdots, N.
\end{align*}
Then, problem \eqref{App:u4} can be rewritten  in the compact form
\begin{equation}
	\begin{cases}\label{Odes1}
		\mathcal{A}(t) a'(t) + \mathcal{B} (t) a(t) = \mathcal{F}(t), \quad 0 \le t \le T, \\
		a(0) = (\langle u_0, w_1 \rangle, \cdots, \langle u_0, w_N \rangle)^{T}.
	\end{cases}
\end{equation}
It follows from \eqref{App:rho2} that the matrix $\mathcal{A}(t)$ is positive definite for any $t\in [0,T]$.
Indeed, for any $\beta=(\beta_1,\cdots,\beta_N)^T\in\mathbb{R}^N$, we have
\begin{align*}
		\beta^T \mathcal{A}(t) \beta
		&= \sum_{i,j=1}^N \langle D_N(\tilde{u}) w_j, w_i \rangle \beta_j \beta_i
		= \Big\langle D_N(\tilde{u}) \sum_{j=1}^N \beta_j w_j,
		\sum_{i=1}^N \beta_i w_i \Big\rangle \\
		&= \int_{\Omega} D_N(\tilde{u})
		\Big| \sum_{j=1}^N \beta_j w_j \Big|^2 dx
		\ge \inf_{\Omega} D_N(\tilde{u})
		{\Big| \sum_{j=1}^N \beta_j^2 \Big|}.
\end{align*}
Since $D_N(\tilde{u})=\tilde{\rho}$ satisfies \eqref{App:rho2}, it follows that
\[
\beta^T \mathcal{A}(t) \beta >0 \qquad \text{for all } \beta\neq 0.
\]
Hence, $\mathcal{A}(t)$ is positive definite and therefore invertible for all $t\in[0,T]$.

Consequently, problem \eqref{Odes1} can be rewritten as
\begin{equation}
	\begin{cases}\label{Odes2}
		a'(t) + \mathcal{A}(t)^{-1} \mathcal{B} (t) a(t)
		= \mathcal{A}(t)^{-1} \mathcal{F}(t), \quad 0 \le t \le T, \\
		a(0) = (\langle u_0, w_1 \rangle, \cdots, \langle u_0, w_N \rangle)^{T}.
	\end{cases}
\end{equation}
Note that
\begin{equation*}
	\begin{aligned}
		\mathcal{F}_{j}(t) &= \langle \nabla P(D_N(\tilde{u}))
		- \Delta C_N(\tilde{u}) \nabla C_N(\tilde{u}), w_j \rangle \\
		&= \langle \nabla P(D_N(\tilde{u}))
		+ \nabla C_N(\tilde{u}) \otimes \nabla C_N(\tilde{u}), \nabla w_j \rangle
		- \frac{1}{2} \langle |\nabla C_N(\tilde{u})|^2, \Div w_j \rangle.
	\end{aligned}
\end{equation*}
Moreover,
\[
\mathcal{A},\mathcal{B}\in C([0,T];\mathbb{R}^{N\times N}),
\qquad
\mathcal{F}\in C([0,T];\mathbb{R}^{N}),
\]
since
\[
D_N(\tilde{u})\in C^1(\overline{\Omega}\times[0,T]),
\quad
C_N(\tilde{u})\in C([0,T];H^1),
\quad
w_k\in C^2(\overline{\Omega}).
\]
Together with the fact that $\mathcal{A}(t)$ is invertible for all
$t\in[0,T]$, we deduce that
\[
\mathcal{A}^{-1}\in C([0,T];\mathbb{R}^{N\times N}).
\]
Therefore, by the classical theory of linear ordinary differential
equations with continuous coefficients, problem \eqref{Odes2}
admits a unique global solution
\[
a\in C^1([0,T];\mathbb{R}^N).
\]
Consequently,
\[
u_N(x,t)=\sum_{j=1}^N \alpha_j(t)w_j(x)
\in C^1([0,T];X_N).
\]

\textbf{The proof of Claim 2. } Let $K > 0$ be given and define
\begin{align*}
	B_K=\{\,u\in C([0,T];X_N): \|u\|_{C([0,T];X_N)}\le K \,\}.
	\end{align*}
For any $\tilde{u} \in B_K$, it follows from \eqref{App:rho2} that there exists a positive constant $\underline{\rho}=\underline{\rho}(K,N)$ such that
\begin{align*}
	\underline{\rho}=\inf_{\overline{\Omega}\times[0,T]} D_N(\tilde u)>0.
\end{align*}

Let $t\in[0,T]$ be arbitrary. By the non-singularity and positive definiteness of $\mathcal{A}(t)$, we have
\begin{align*}
	\|\mathcal{A}^{-1}(t)\|
	&:= \max_{y\neq 0} \frac{|\mathcal{A}^{-1}(t)y|}{|y|}
	= \max_{z\neq 0} \frac{|z|}{|\mathcal{A}(t)z|}
	= \left(\min_{z\neq 0} \frac{|\mathcal{A}(t)z|}{|z|}\right)^{-1} \\
	&\le \left(\min_{z\neq 0} \frac{(z, \mathcal{A}(t) z)}{|z|^2}\right)^{-1}
	\le \frac{1}{\underline{\rho}},
\end{align*}
where $|\cdot|$ and $(\cdot,\cdot)$ denote the Euclidean norm and inner product in $\mathbb{R}^N$, respectively, and $\|\cdot\|$ denote the Euclidean norm in $\mathbb{R}^{N \times N}$.
Here we used the fact that for any $z\in\mathbb{R}^N$,
\begin{align*}
	(z, \mathcal{A}(t) z)
	&= \sum_{i,j=1}^{3} z_i z_j \int_\Omega D_N(\tilde u) w_i w_j \, dx
	= \int_\Omega D_N(\tilde u) \Big|\sum_{i=1}^{3} z_i w_i\Big|^2 \, dx \\
	&\ge \underline{\rho} \int_\Omega \Big|\sum_{i=1}^{3} z_i w_i\Big|^2 \, dx
	= \underline{\rho} \sum_{i,j=1}^{3} z_i z_j \int_\Omega w_i w_j \, dx
	= \underline{\rho} |z|^2.
\end{align*}
Consequently, we obtain
\begin{equation}\label{App:A}
	\max_{0\le t \le T} \|\mathcal{A}^{-1}(t)\| \le \frac{1}{\underline{\rho}}.
\end{equation}
In what follows, $C$ denotes a generic positive constant depending only on
$A$, $\gamma$, $K$, $N$, $\underline{\rho}$, $\|\rho_0\|_{L^\infty}$,
$\|\nabla \chi_0\|_{L^2}$, $T$, and $|\Omega|$.

Since $\dim X_N < \infty$, all norms on $X_N$ are equivalent.
Let
\[
\tilde{\mu} := {- \tilde{\rho}}
 \Big(\tilde{\chi}_t + (\tilde{u}\cdot\nabla)\tilde{\chi}\Big).
\]
Multiplying \eqref{App:chi1} by ${\tilde{\rho}}\tilde{\mu}$ and integrating by parts, we obtain
\begin{align}\label{App:chi2}
	\frac{1}{2} \frac{d}{dt} \|\nabla \tilde{\chi}\|_{L^2}^2
	+ \frac{d}{dt} \int_\Omega \tilde{\rho} F(\tilde{\chi})\,dx + \|\tilde{\mu}\|_{L^2}^2
	&= - \int_\Omega \Delta \tilde{\chi} (\tilde{u}\cdot\nabla)\tilde{\chi}\,dx \notag\\
	&= \int_\Omega (\nabla \tilde{\chi} \otimes \nabla \tilde{\chi}) : \nabla \tilde{u}\,dx
	- \frac{1}{2} \int_\Omega |\nabla \tilde{\chi}|^2 \Div \tilde{u}\,dx \notag\\
	&\le C \|\nabla \tilde{u}\|_{L^\infty} \|\nabla \tilde{\chi}\|_{L^2}^2
	\le C \|\nabla \tilde{\chi}\|_{L^2}^2,
\end{align}
where we used the identity
\[
\int_\Omega \tilde{\rho} f(\tilde{\chi}) \big(\tilde{\chi}_t + (\tilde{u}\cdot\nabla)\tilde{\chi}\big)\,dx
= \frac{d}{dt} \int_\Omega \tilde{\rho} F(\tilde{\chi})\,dx.
\]
Integrating \eqref{App:chi2} over $(0,t)$ yields the uniform bound
\[
\|\nabla \tilde{\chi}\|_{L^2}^2(t) + \int_\Omega \tilde{\rho}(x,t) F(\tilde{\chi}(x,t))\,dx
+ \int_0^t \|\tilde{\mu}\|_{L^2}^2 \,ds \le C, \quad 0\le t\le T.
\]
Consequently, one gets
\begin{equation}\label{App:chi3}
	\max_{0\le t\le T} \Big( \|\nabla C_N(\tilde{u})\|_{L^2}^2
	+ \int_\Omega D_N(\tilde{u}) F(C_N(\tilde{u}))\,dx \Big)
	+ \int_0^T \|\tilde{\mu}\|_{L^2}^2\,dt \le C.
\end{equation}

Combining \eqref{App:chi3} with the H\"older inequality gives
\begin{equation*}
	\begin{aligned}
		|\mathcal{B}_{ij}(t)| &\le \| D_N(\tilde{u}) \|_{L^\infty} \| \tilde{u} \|_{L^\infty}
		\| \nabla w_j \|_{L^2} \| w_i \|_{L^2} + \lambda_N\le C,\\
		|\mathcal{F}_j(t)| &\le C (\| D_N(\tilde{u}) \|_{L^\infty}^{\gamma}
		\| \nabla w_j \|_{L^1} + \| \nabla C_N(\tilde{u})
		\|_{L^2}^2 \| \nabla \tilde{u} \|_{L^\infty})
		\le C,
	\end{aligned}
\end{equation*}
for any $i,j = 1, \cdots, N$ and $t \in [0, T]$.
Hence, we have
\begin{equation}\label{App:BF}
	\max_{0\le t\le T} \big(\|\mathcal{B}(t)\| + |\mathcal{F}(t)|\big) \le C.
\end{equation}

Multiplying \eqref{Odes2} by $a(t)$ and using \eqref{App:A}, \eqref{App:BF}, we obtain
\[
\frac{1}{2} \frac{d}{dt} |a(t)|^2 \le C \big(|a(t)|^2 + 1\big), \quad 0\le t\le T,
\]
which, by the Gr\"onwall inequality, gives
\begin{equation}\label{App:u5}
	\sup_{0\le t\le T} |a(t)|^2 = \sup_{0\le t\le T} \|\Lambda_N(\tilde{u})\|_{L^2}^2 \le C.
\end{equation}
Similarly, multiplying \eqref{Odes2} by $a'(t)$ and using \eqref{App:A}, \eqref{App:BF}, \eqref{App:u5}, we obtain
\[
\sup_{0\le t\le T} |a'(t)|^2 = \sup_{0\le t\le T} \|\partial_t \Lambda_N(\tilde{u})\|_{L^2}^2 \le C.
\]
Combining the above, we conclude
\begin{equation}\label{App:u6}
	\sup_{0\le t\le T} \big( \|\Lambda_N(\tilde{u})\|_{L^2}^2 + \|\partial_t \Lambda_N(\tilde{u})\|_{L^2}^2 \big) \le C.
\end{equation}
Finally, since $X_N$ is finite dimensional, the Arzel\`{a}-Ascoli theorem implies that $\overline{\Lambda_N(B_K)}$ is compact, and thus $\Lambda_N$ is a compact operator.

\vskip2mm
\textbf{The proof of Claim 3. }
Suppose that $u = \kappa \Lambda_N(u)$ for some $\kappa \in (0, 1]$, and define
\begin{align*}
	\rho := D_N(u), \ \chi := C_N(u), \
	\mu := - \rho \chi_t - \rho (u \cdot \nabla) \chi, \
	u = \sum_{i = 1}^N c_i(t) w_i(x),
\end{align*}
where $c_i = \langle u, w_i \rangle$. It follows from \eqref{App:u1} and the definitions of
$D_N$, $C_N$ that, $(\rho, u, \chi)$ satisfies
\begin{equation}\label{claim3:1}
	\begin{cases}
		\rho_t + \Div (\rho u)=0,\\
		\langle   \rho \partial_t u + \rho (u \cdot \nabla) u, w_k \rangle
		+ \nu \langle \nabla u, \nabla w_k \rangle
		+ (\nu + \lambda) \langle \Div u, \Div w_k \rangle \\
		\quad= \kappa \langle \nabla P(\rho) - \Delta\chi \nabla\chi, w_k \rangle,  \\
		\rho^2 \chi_t + \rho^2 ( u \cdot \nabla ) \chi - \Delta\chi + \rho F' (\chi) = 0, \\
		(u, \partial_{\boldsymbol n}\chi)|_{\partial \Om \times (0, T)} = 0, \\
		(\rho, u, \chi)|_{t=0} = (\rho_{0N}, \kappa u_{0N}, \chi_{0N}), \quad k = 1, \cdots, N,
	\end{cases}
\end{equation}
where we have used $\Lambda_N u = \frac{1}{\kappa} u$. Multiplying $\eqref{claim3:1}_2$ with
$c_k$ and summing over $k = 1, \cdots, N$, one deduces by integration by parts and $\eqref{claim3:1}_1$ that
\begin{equation}\label{claim3:2}
	\begin{aligned}
		&\frac{d}{dt} \left( \frac{1}{2} \| \sqrt{\rho} u \|_{L^2}^2
		+ \frac{\kappa A} {\gamma-1}  \int_{\Om} \rho^{\gamma} \,dx \right)
		+ \nu \|\nabla u\|_{L^2}^2 + ( \lambda + \nu) \|\Div u\|_{L^2}^2 \\
		&\quad + \kappa \int_{\Om} \Delta\chi ( u \cdot \nabla ) \chi \,dx = 0.
	\end{aligned}
\end{equation}
where we have used the following identity
\begin{equation*}
	\begin{split}
		\kappa \langle \nabla P(\rho), u \rangle
		= - \kappa A \frac{d}{dt} \int_{\Om} \rho^{\gamma} \,dx
		+ \kappa \gamma \int_{\Om} \nabla P(\rho) \cdot u \,dx.
	\end{split}
\end{equation*}
Multiplying $\eqref{claim3:1}_3$ with
$\kappa (\chi_t + (u \cdot \nabla) \chi)$ and integrating over $\Om$, it follows from integration by parts and $\eqref{claim3:1}_1$ that
\begin{equation}\label{claim3:3}
	\frac{\kappa}{2} \frac{d}{dt} \|\nabla \chi\|_{L^2}^2
	+ \kappa \frac{d}{dt} \int_{\Om} \rho F(\chi) dx + \kappa \|\mu\|_{L^2}^2
	- \kappa \int_{\Om} \Delta\chi ( u \cdot \nabla ) \chi \,dx = 0,
\end{equation}
where we have used
\begin{align*}
	\kappa \int_{\Om} \rho f(\chi) \big( \chi_t + ( u \cdot \nabla ) \chi \big) \,dx
	= \kappa \frac{d}{dt} \int_{\Om} \rho F(\chi) dx.
\end{align*}
Combining \eqref{claim3:2} with \eqref{claim3:3}, one has
\begin{align*}
	& \frac{d}{dt} \left( \frac{1}{2} \| \sqrt{\rho} u \|_{L^2}^2
	+ \frac{\kappa}{2} \|\nabla \chi\|_{L^2}^2
	+ \frac{\kappa A}{\gamma-1} \int_{\Om} \rho^{\gamma} \,dx
	+ \kappa \int_{\Om}\rho F(\chi)dx \right)\\
	&\quad + \kappa \|\mu\|_{L^2}^2 + \nu \|\nabla u\|_{L^2}^2
	+ ( \lambda + \nu) \|\Div u\|_{L^2}^2  = 0.
\end{align*}
Integrating it over $(0, t)$, it follows that
\begin{equation*}
	\frac{1}{2} \| \sqrt{\rho} u \|_{L^2}^2(t)
	+ \nu \int_0^t \|\nabla u\|_{L^2}^2(s) \, ds \le E_{0,\kappa},
\end{equation*}
for any $t \in [0, T]$, where
\begin{equation*}
	E_{0,\kappa} = \frac{\kappa^2}{2} \| \sqrt{\rho_{0N}} u_{0N} \|_{L^2}^2
	+ \frac{\kappa A}{\gamma-1} \int_{\Om} \rho_{0N}^{\gamma}(x) \,dx
	+ \frac{\kappa}{2} \| \nabla\chi_{0N} \|_{L^2}^2
	+ \kappa \int_{\Om} \rho_{0N} F(\chi_{0N}) (x, t) \,dx.
\end{equation*}
Combining the above estimate with \eqref{App:rho2}, it follows
from the H\"older inequality that
\begin{align*}
	\|u\|_{L^2}^2(t) &\le \sup_{(x, s) \in \overline{\Omega} \times [0, t]}
	\frac{1}{\rho(x, s)} \| \sqrt{\rho} u \|_{L^2}^2(t)\\
	&\le C N E_{0, \kappa}\Exp \left\{ \int_0^t \|\nabla u\|_{L^2}(s) \,ds \right\}\\
	&\le C N E_{0, \kappa} \Exp(T^{\frac{1}{2}} E_{0, \kappa}),
\end{align*}
for any $t \in [0, T]$. Since $E_{0, \kappa}$ is non-decreasing with respect to $\kappa$, we conclude that
\begin{equation*}
	\|u\|_{C([0, T]; X_N)} \le  C N E_{0, 1} \Exp(T^{\frac{1}{2}} E_{0, 1}).
\end{equation*}
This completes the proof of Claim 3.

\vskip2mm
\textbf{The proof of Claim 1. } Suppose that
\begin{align*}
	\tilde{u}_m &\rightarrow \tilde{u} \quad \text{in } C([0, T]; X_N) \quad
	\text{with } \| \tilde{u}_m \|_{C([0, T]; X_N) } \le 2 \| \tilde{u} \|_{C([0, T]; X_N)} .
\end{align*}
Let $K = 2 \| \tilde{u} \|_{C([0, T]; X_N)}$ and denote
\begin{align}\label{claim1:rho}
	\tilde{\rho}_m = D_N(\tilde{u}_m),~ \tilde{\chi}_m = C_N(\tilde{u}_m), ~
	\tilde{\mu}_m = - \tilde{\rho}_m \partial_t\tilde{\chi}_m
	- \tilde{\rho}_m (\tilde{u}_m \cdot \nabla) \tilde{\chi}_m, \quad m = 1, 2, \cdots
\end{align}
Then we infer from \eqref{App:rho1} that
\begin{equation*}
	\tilde{\rho}_m \rightarrow \tilde{\rho} = D_N(\tilde{u})
	\quad \text{in } C(\overline{\Om} \times [0, T]).
\end{equation*}
It follows from \eqref{App:chi3} and \eqref{App:u6} that
\begin{equation}
	\begin{aligned}\label{claim1:E}
		&\sup_{0 \le t \le T} \left( \|\Lambda_N(\tilde{u}_m)\|_{L^2}^2
		+ \|\partial_t\Lambda_N(\tilde{u}_m)\|_{L^2}^2 + \|\nabla \tilde{\chi}_m\|_{L^2}^2
		+ \int_{\Om} \tilde{\rho}_m F(\tilde{\chi}_m) \,dx (t) \right) \\
		&\quad+ \int_0^T \|\tilde{\mu}_m\|_{L^2}^2 \,dt \le C.
	\end{aligned}
\end{equation}
Combining the above estimates with Lemma~\ref{lem1} and using the same argument as in Lemma~\ref{cor1}, we deduce
\begin{equation*}
	\sup_{0 \le t \le T}  \| \tilde{\chi}_m \|_{H^1}^2 \le C.
\end{equation*}
Note that
\begin{align*}
	\sup_{0 \le t \le T} \| \tilde{\rho}_m (\tilde{u}_m \cdot \nabla) \tilde{\chi}_m \|_{L^2}
	\le \sup_{0 \le t \le T} \left( \| \tilde{\rho}_m \|_{L^\infty}
	\| \tilde{u}_m \|_{L^\infty} \| \nabla\tilde{\chi}_m \|_{L^2} \right) \le C,
\end{align*}
and
\begin{align*}
	\inf_{\overline{\Om} \times [0, T]} \tilde{\rho}_m
	\ge \frac{1}{N}  \Exp \left\{ -\int_0^t \| \nabla \tilde{u}_m \|_{L^\infty} \, d\tau \right\}
	\ge \frac{C}{N}.
\end{align*}
Plugging the above estimates into \eqref{claim1:E}, one gets
\begin{equation*}
	\sup_{0 \le t \le T} \left( \|\Lambda_N(\tilde{u}_m)\|_{L^2}^2
	+ \|\partial_t\Lambda_N(\tilde{u}_m)\|_{L^2}^2 + \|\tilde{\chi}_m\|_{H^1}^2\right)
	+ \int_0^T ( \|\partial_t\tilde{\chi}_m\|_{L^2}^2
	+ \|\nabla^2\tilde{\chi}_m\|_{L^2}^2) \,dt \le C.
\end{equation*}
By the Banach-Alaoglu theorem, the Arzel\`{a}-Ascoli theorem, and the Aubin-Lions lemma, we can extract a subsequence of $\{(\tilde{\chi}_m, \tilde{u}_m)\}$, still denoted by $\{(\tilde{\chi}_m, \tilde{u}_m)\}$, such that
\begin{align}
	\Lambda_N(\tilde{u}_m) \rightarrow \hat{u} &\s C([0, T]; X_N), \label{claim1:u1}\\
	\partial_t \Lambda_N(\tilde{u}_m) \wsc \partial_t\hat{u}
	&\ws L^{\infty}(0, T; L^2), \label{claim1:u2}\\
	\tilde{\chi}_m \rightarrow \hat{\chi} &\s C([0, T]; L^2)
	\cap L^2(0, T; H^1),  \label{claim1:chi1}\\
	\tilde{\chi}_m \wc \hat{\chi} &\w L^2((0, T); H^2)
	\cap H^1(0, T; L^2),  \label{claim1:chi2}
\end{align}
where $(\hat{u}, \hat{\chi})$ satisfies
$$\hat{u} \in C([0, T]; X_N) \cap W^{1,\infty}(0, T; X_N), \quad
\hat{\chi} \in L^2(0,T; H^2)\cap H^1(0, T; L^2). $$
Substituting $\tilde{\chi}_m$ and $\tilde{\rho}_m$ into \eqref{App:chi1}, we obtain
\begin{equation*}
	\begin{cases}
		\partial_t \tilde{\chi}_m  + \tilde{u}_m \cdot \nabla \tilde{\chi}_m
		- \frac{1}{\tilde{\rho}_m^2} \Delta \tilde{\chi}_m
		+ \frac{1}{\tilde{\rho}_m} F' (\tilde{\chi}_m)
		= 0 &\text{\rm in } \Om \times (0, T),\\
		\partial_{\boldsymbol{n}} \tilde{\chi}_m = 0
		&\text{\rm on } \partial \Om \times (0, T), \\
		\tilde{\chi}_m|_{t = 0} = \chi_{0N} &\text{\rm in } \Om.
	\end{cases}
\end{equation*}
Passing to the limit $m\rightarrow\infty$, it follows from \eqref{claim1:rho}, \eqref{claim1:chi1} and \eqref{claim1:chi2} that $\hat{\chi}$ satisfies
\begin{equation*}
	\begin{cases}
		\partial_t \hat{\chi}  + \tilde{u} \cdot \nabla \hat{\chi}
		- \frac{1}{\tilde{\rho}^2} \Delta \hat{\chi}
		+ \frac{1}{\tilde{\rho}} F' (\hat{\chi})
		= 0 &\text{\rm in } \Om \times (0, T),\\
		\partial_{\boldsymbol{n}} \hat{\chi} = 0
		&\text{\rm on } \partial \Om \times (0, T), \\
		\hat{\chi}|_{t = 0} = \chi_{0N} &\text{\rm in } \Om.
	\end{cases}
\end{equation*}
Then, the unique solvability of system \eqref{App:chi1} implies that $\hat{\chi} = \tilde{\chi}$.
Similarly, substituting $\tilde{u}_m$, $\tilde{\chi}_m$ and $\tilde{\rho}_m$ into \eqref{App:u3}, it follows from integration by parts that
\begin{equation*}
	\begin{cases}
		\langle \tilde{\rho}_m \partial_t \Lambda_N(\tilde{u}_m)
		+ \tilde{\rho}_m (\tilde{u}_m \cdot \nabla) \Lambda_N(\tilde{u}_m), w_k \rangle
		+ \langle P(\tilde{\rho}_m), \Div w_k \rangle\\
		\quad = - \nu \langle \nabla \Lambda_N(\tilde{u}_m), \nabla w_k \rangle
		- (\nu + \lambda) \langle \Div \Lambda_N(\tilde{u}_m), \Div w_k \rangle \\
		\qquad+ \langle \nabla\tilde{\chi}_m \otimes \nabla\tilde{\chi}_m, \nabla w_k \rangle
		- \frac{1}{2} \langle |\nabla\tilde{\chi}_m|^2 , \Div w_k \rangle,
		\quad 0 \le t \le T, \\
		\langle \Lambda_N(\tilde{u}_m)(\cdot, 0), w_k \rangle
		= \langle u_0, w_k \rangle, \quad k = 1, 2, \cdots, N.
	\end{cases}
\end{equation*}
Passing to the limit $m\rightarrow\infty$ and combining \eqref{claim1:u1}--\eqref{claim1:chi2} with the above convergence results, we have
\begin{equation*}
	\begin{cases}
		\langle \tilde{\rho} \partial_t \hat{u}
		+ \tilde{\rho} (\tilde{u} \cdot \nabla) \hat{u} , w_k \rangle
		+ \nu \langle \nabla \hat{u} , \nabla w_k \rangle
		+ (\nu + \lambda) \langle \Div \hat{u} , \Div w_k \rangle\\
		\quad = - \langle P(\tilde{\rho}), \Div w_k \rangle
		+ \langle \nabla\tilde{\chi} \otimes \nabla\tilde{\chi}, \nabla w_k \rangle
		- \frac{1}{2} \langle |\nabla\tilde{\chi}|^2 , \Div w_k \rangle,
		\quad \text{a.e. } 0 \le t \le T, \\
		\langle \hat{u} (\cdot, 0), w_k \rangle
		= \langle u_0, w_k \rangle, \quad k = 1, 2, \cdots, N.
	\end{cases}
\end{equation*}
Denote $\hat{u} = \sum\limits_{j=1}^N {\hat{\alpha}_j}(t) w_j(x)$ and
$\hat{a}(t) = (\hat{\alpha}_1(t), \cdots, \hat{\alpha}_N(t))^T$, where
$\hat{\alpha}_j(t) = \langle \hat{u}(\cdot, t), w_j \rangle$.
Using integration by parts, it follows that $\hat{a}$ satisfies
\begin{equation*}
	\begin{cases}
		\hat{a}'(t) + \mathcal{A}(t)^{-1} \mathcal{B} (t) \hat{a}(t)
		= \mathcal{A}(t)^{-1} \mathcal{F}(t), \quad 0 \le t \le T, \\
		\hat{a}(0) = (\langle u_0, w_1 \rangle, \cdots, \langle u_0, w_N \rangle)^{T}.
	\end{cases}
\end{equation*}
By the unique solvability of the ODE system \eqref{Odes2}, we conclude that
\begin{equation*}
	a(t) = \hat{a}(t) \quad \text{a.e. in }(0, T),
\end{equation*}
which implies
\begin{equation*}
	\Lambda_N(\tilde{u}) = \hat{u} \quad \text{a.e. in }(0, T).
\end{equation*}
Thus, the proof of the Claim 1 is complete.

\bigskip



\begin{thebibliography}{12}
	\bibitem{ADN64}
	S.~Agmon, A.~Douglis, and L.~Nirenberg,
	Estimates near the boundary for solutions of elliptic partial differential equations satisfying general boundary conditions. II,
	Comm. Pure Appl. Math., \textbf{17} (1964), 35--92.
	
	
	\bibitem{BH74}
	J.~P.~Bourguignon and H.~Brezis,
	Remarks on the Euler equation,
	\textit{J. Funct. Anal.}, \textbf{15} (1974), no.~4, 341--363.
	
	\bibitem{BLesgen99}
	T. Blesgen, A generalisation of Navier-Stokes equations to two-phase-flows, J. Phys. D: Appl. Phys., \textbf{32} (1999), 1119.
	
	
	\bibitem{CK03}
	H. Choe, and H. Kim, Strong solutions of the Navier-Stokes equations for isentropic compressible fluids, J. Differential Equations, \textbf{190}(2) (2003), 504--523.
	
	\bibitem{CCK04}
	Y.~Cho, H.~J.~Choe, and H.~Kim,
	Unique solvability of the initial boundary value problems for compressible viscous fluids,
	J. Math. Pures Appl, \textbf{83}(2) (2004), 243--275.
	
	\bibitem{CHS24}
	Y. Chen, H. Hong, X. Shi, Stability of the phase separation state for compressible Navier-Stokes/Allen-Cahn system, Acta Math. Appl. Sin. Engl. Ser, \textbf{40}(1) (2024) 45--74.
	
	\bibitem{CHHS21}
	Y. Chen, Q. He, B. Huang, X. Shi, Global strong solution to a thermodynamic compressible diffuse interface model with temperature dependent heat-conductivity in 1-D,
	Math. Methods Appl. Sci., \textbf{17} (2021), 12945--12962.
	
	\bibitem{CHHS25}
	Y. Chen, Q. He, B. Huang, X. Shi, The Cauchy Problem for Non-Isentropic Compressible Navier-Stokes/Allen-Cahn system with Degenerate Heat-Conductivity, Acta Math. Appl. Sin. Engl. Ser., \textbf{41} (2025), 1088--1105.
	

    \bibitem{CK06}
	Y.~Cho and H.~Kim,
	Existence results for viscous polytropic fluids with vacuum, J. Differential Equations, \textbf{228} (2006), no.~2, 377--411.

	
	\bibitem{CG17}
	M. Chen, X. Guo, Global large solutions for a coupled compressible Navier-Stokes/Allen Cahn system with initial vacuum, Nonlinear Anal. Real World Appl., \textbf{37} (2017), 350--373.
	
	\bibitem{CWZ19}
	S. Chen, H. Wen, C. Zhu, Global existence of weak solution to compressible Navier-Stokes/Allen-Cahn system in three dimensions, J. Math. Anal. Appl., \textbf{477} (2019), 1265--1295.
	
	\bibitem{CZ21}
	S. Chen, C. Zhu, Blow-up criterion and the global existence of strong/classical solutions to Navier-Stokes/Allen-Cahn system, Z. Angew. Math. Phys., \textbf{72} (1)(2021), No.~14, 24.
	
	\bibitem{DLL13}
	S. Ding, Y. Li, W. Luo, Global solutions for a coupled compressible Navier-Stokes/Allen Cahn system in 1D, J. Math. Fluid Mech., \textbf{15} (2013), no.~2, 335--360.
	
	\bibitem{DLT19}
	S. Ding, Y. Li, Y. Tang, Strong solutions to 1D compressible Navier-Stokes/Allen-Cahn system with free boundary, Math. Methods Appl. Sci., \textbf{42} (14)(2019), 4780--4794.

   	\bibitem{DLW24}	
   S. Ding, Y. Li and Y. Wang, Global solutions to 1D compressible Navier-Stokes/Allen-Cahn system with density-dependent viscosity and free-boundary, Acta Math. Sci. Ser. B (Engl. Ed.), \textbf{44} (2024), no.~1, 195--214.

	\bibitem{Evans10}
	L.~C.~Evans,
	\textit{Partial Differential Equations}, Graduate Studies in Mathematics, Vol.~19,
	American Mathematical Society, Providence, RI, 2010.

	\bibitem{FL19}
    J. Fan and F. Li, Regularity criteria for Navier-Stokes-Allen-Cahn and related systems,
    Front. Math. China, {\bf 14} (2019), no.~2, 301--314.

	\bibitem{FNG25}
    L. Fang, R. Nei and Z. Guo, Global well-posedness of the nonhomogeneous incompressible Navier-Stokes-Cahn-Hilliard system with Landau potential, J. Differential Equations, {\bf 445} (2025), Paper No.~113585, 33 pp.


    \bibitem{FDG25}
    L. Fang, X. Duan and Z. Guo, Well-posedness for a diffuse interface model of non-Newtonian two-phase flows,
    preprint, arXiv:2511.08876 (2025).

	\bibitem{F04}
	E.~Feireisl,
	\textit{Dynamics of Viscous Compressible Fluids},
	Oxford Lecture Series in Mathematics and Its Applications, Vol.~26,
	Oxford University Press, Oxford, 2004.
	
	\bibitem{FPRS10}
	E. Feireisl, H. Petzeltová, E. Rocca and G. Schimperna, Analysis of a phase-field model for two-phase compressible fluids, Math. Models Methods Appl. Sci., \textbf{20} (2010), no.~7, 1129--1160.
	
	
	
	\bibitem{GT20}
    A. Giorgini and R. Temam, Weak and strong solutions to the nonhomogeneous incompressible Navier-Stokes-Cahn-Hilliard system,
    J. Math. Pures Appl., (9) {\bf 144} (2020), 194--249.

	\bibitem{HLX12}
	X. Huang, J. Li, and Z. Xin, Global well-posedness of classical solutions with large oscillations
		and vacuum to the three-dimensional isentropic compressible Navier-Stokes equations, Comm. Pure
	Appl. Math., \textbf{65} (2012), 549--585.
	
	
	\bibitem{HWW12}
	T. Huang, C. Wang and H. Wen, Strong solutions of the compressible nematic liquid crystal flow, J. Differ. Equ., \textbf{252} (2012), 2222--2256.
	
	
	
	
	\bibitem{JWX25}
    L. Jiang, J. Wu, F. Xu, On the uniqueness of strong solution to the nonhomogeneous incompressible Navier-Stokes-Cahn-Hilliard system,
    preprint (2025), arXiv:2508.09761.


	\bibitem{JLY14}
	Q. Jiu, M. Li, and Y. Ye, Global classical solution of the Cauchy problem to 1D compressible
	Navier-Stokes equations with large initial data, J. Differential Equations, \textbf{257} (2014), 311--350.
	
	\bibitem{Kotschote12}
	M. Kotschote, Strong solutions of the Navier-Stokes equations for a compressible fluid of Allen-Cahn type, Arch. Ration. Mech. Anal., \textbf{206} (2012), no.~2, 489--514.
	
	\bibitem{LS75}
	O. A. Ladyzhenskaya, V. A. Solonnikov, On unique solvability of an initial boundary value problem for viscous incompressible nonhomogeneous liquids, Zap. Nau\v cn. Sem. Leningrad. Otdel. Mat. Inst. Steklov. (LOMI) {\bf 52} (1975), 52--109, 218--219.

	\bibitem{LS78}
	O. A. Ladyzhenskaya, V. A. Solonnikov, Unique solvability of an initial and boundary-value problem
	for viscous incompressible nonhomogeneous fluids, J. Math. Sci, \textbf{9} (1978),  697--749.

	\bibitem{Li17}
	J. Li,  Local existence and uniqueness of strong solutions to the Navier-Stokes equations with nonnegative density, J. Differential Equations, \textbf{263} (2017), 6512--6536.
	
	\bibitem{LZ23}
	J.~Li and Y.~Zheng,
	{\it Local existence and uniqueness of heat-conductive compressible Navier-Stokes equations in the presence of vacuum without initial compatibility conditions},
	J. Math. Fluid Mech., \textbf{25} (2023), no.~1,~14.
	
	\bibitem{LM68}
	J.~L.~Lions and E.~Magenes,
	\textit{Problèmes aux limites non homogènes et applications},
	Vol.~1, Grundlehren der mathematischen Wissenschaften, Springer, Berlin, 1968, 165--167.
	
	%

	\bibitem{LDH16}
    Y. Li, S. Ding and M. Huang, Blow-up criterion for an incompressible Navier-Stokes/Allen-Cahn system with different densities,
    Discrete Contin. Dyn. Syst. Ser. B, {\bf 21} (2016), no.~5, 1507--1523.

    \bibitem{LH18}
    Y. Li and M. Huang, Strong solutions for an incompressible Navier-Stokes/Allen-Cahn system with different densities,
    Z. Angew. Math. Phys., {\bf 69} (2018), no.~3, Paper No.~68, 18 pp.
	
	
    \bibitem{LX25}
    Y. Li, M. Xie, Incompressible limit of strong solutions to the diffuse interface model for two-phase flows,
    preprint, arXiv: 2503.00857v1 (2025).

	

	\bibitem{LY25}
    Y. Li, W. Ye, Well-posedness of the nonhomogeneous incompressible Navier-Stokes/Allen-Cahn system,
    preprint, arXiv: 2503.03279v1 (2025).
	


	\bibitem{LYZ18}
    T. Luo, H. Yin and C. Zhu, Stability of the rarefaction wave for a coupled compressible Navier-Stokes/Allen-Cahn system,
    Math. Methods Appl. Sci., {\bf 41} (2018), no.~12, 4724--4736.
	
	\bibitem{LYZ20}
    T. Luo, H. Yin and C. Zhu, Stability of the composite wave for compressible Navier-Stokes/Allen-Cahn system,
    Math. Models Methods Appl. Sci., {\bf 30} (2020), no.~2, 343--385.

	\bibitem{SS93}
	R. Salvi, I. Stra{\v s}kraba, Global existence for viscous compressible fluids and their behavior as $t\to \infty$, J. Fac. Sci. Univ. Tokyo Sect. IA, Math., \textbf{40} (1993), 17--51.

	\bibitem{SZW20}
    C. Song, J. Zhang and Y. Wang, Time-periodic solution to the compressible Navier-Stokes/Allen-Cahn system, Acta Math. Sin. (Engl. Ser.),
    \textbf{36} (2020), no.~4, 419--442.

	\bibitem{S21}
    M. Su, On global classical solutions to one-dimensional compressible Navier-Stokes/Allen-Cahn system with density-dependent viscosity and vacuum, Bound. Value Probl., \textbf{2021} (2021), Paper No.~92, 21 pp.
	
	\bibitem{YDL22}
	Y. Yan, S. Ding, Y. Li, Strong solutions for 1D compressible Navier-Stokes/Allen-Cahn system with phase variable dependent viscosity, J. Differential Equations, \textbf{326} (2022), 1--48.

	\bibitem{Z16}
    J. Zhang, A regularity criterion for the 3D incompressible density-dependent Navier-Stokes-Allen-Cahn equations,
    J. Partial Differ. Equ., {\bf 29} (2016), no.~2, 116--123.

    \bibitem{Z21}
    J. Zhang, Regularity of solutions to 1D compressible Navier-Stokes-Allen-Cahn system, Appl. Anal., \textbf{100} (2021), no.~9, 1827-1842.

	\bibitem{Zhao22}
	X. Zhao, Global well-posedness and decay estimates for three-dimensional compressible Navier-Stokes-Allen-Cahn systems, Proc. Roy. Soc. Edinburgh Sect. A,
	\textbf{152} (2022), no.~5, 1291--1322.
	
\end{thebibliography}
\end{document}